\documentclass[12 pt]{amsart}
\usepackage[dvipsnames]{xcolor}
\usepackage{amsmath,amsthm,amssymb,shuffle,stmaryrd,mathtools,pdftexcmds}
\usepackage{subcaption}
\DeclareCaptionSubType * [Alph]{table}

\usepackage{ytableau}
\newcommand{\blank}{\phantom{2}}
\newcommand{\OG}{\mathrm{OG}}

\usepackage{enumitem}
\usepackage[hmargin=1 in, vmargin = 1 in]{geometry}
\usepackage{hyperref}
\usepackage{tikz-cd}
\tikzcdset{every label/.append style = {font = \footnotesize}}

\usepackage{algorithm}
\usepackage{algorithmic}


\definecolor{darkblue}{rgb}{0.0,0,0.7}
\newcommand{\darkblue}{\color{darkblue}}
\newcommand{\newword}[1]{\textcolor{darkblue}{\textbf{\emph{#1}}}}

\newcommand{\Z}{\mathbb{Z}}

\newcommand{\C}{\mathbb{C}}

\newcommand{\algcomment}[1]{\emph{ \scriptsize \darkblue \textbackslash \textbackslash #1}}

\newcommand{\Grass}[2]{\mathrm{Gr}_{#1}(\C^{#2})}

\newcommand{\LG}{\mathrm{LG}}

\newcommand{\Plactic}{{\mathbf{P}}}
\newcommand{\sPlactic}{\mathbf{S}}

\newcommand{\SSYT}{\mathrm{SSYT}}
\newcommand{\SYT}{\mathrm{SYT}}
\newcommand{\shSSYT}{\mathrm{shTab}}
\newcommand{\shSYT}{\mathrm{shSYT}}

\newcommand{\monomial}{\mathfrak{M}}

\newcommand{\PlacticSchur}{\mathcal{S}}
\newcommand{\FreeSchur}{\widehat{\mathcal{S}}}
\newcommand{\sPlacticSchur}{\mathcal{P}}
\newcommand{\sFreeSchur}{\widehat{\mathcal{P}}}

\newcommand{\symmetricgroup}{\mathfrak{S}}

\newcommand{\twiddle}{\mathord \sim}

\newcommand{\reading}{\mathsf{rw}}

\newcommand{\cA}{\mathcal{A}}
\newcommand{\cB}{\mathcal{B}}
\newcommand{\cN}{\mathcal{N}}
\newcommand{\cD}{\mathcal{D}}
\newcommand{\cNprime}{\mathcal{N'}}
\newcommand{\seq}{\mathsf{seq}}

\newcommand{\mix}{\mathrm{mix}}
\newcommand{\SW}{\mathrm{SW}}
\newcommand{\SK}{\mathrm{SK}}

\DeclareFontFamily{U}{cbgreek}{}
\DeclareFontShape{U}{cbgreek}{m}{n}{
        <-6>    grmn0500
        <6-7>   grmn0600
        <7-8>   grmn0700
        <8-9>   grmn0800
        <9-10>  grmn0900
        <10-12> grmn1000
        <12-17> grmn1200
        <17->   grmn1728
      }{}
\DeclareFontShape{U}{cbgreek}{bx}{n}{
        <-6>    grxn0500
        <6-7>   grxn0600
        <7-8>   grxn0700
        <8-9>   grxn0800
        <9-10>  grxn0900
        <10-12> grxn1000
        <12-17> grxn1200
        <17->   grxn1728
      }{}

\DeclareRobustCommand{\qoppa}{%
  \text{\usefont{U}{cbgreek}{\normalorbold}{n}\symbol{19}}%
}

\makeatletter
\newcommand{\normalorbold}{%
  \ifnum\pdf@strcmp{\math@version}{bold}=\z@ bx\else m\fi
}
\makeatother

\newcommand{\npowerser}[1]{ \!
  \left\langle\mkern-3mu\left\langle#1\right\rangle\mkern-3mu\right\rangle \!}

\usepackage{CJKutf8}
\newcommand{\zh}[1]{\begin{CJK}{UTF8}{gbsn}#1\end{CJK}}
\newcommand{\yama}{\text{\zh{山}}}

\newcommand{\ct}{\mathsf{ct}}

\newcommand{\bbb}{\mathsf{\mathbf{b}}}

\newtheorem{thm}{Theorem}[section]
\newtheorem{theorem}[thm]{Theorem}

\newtheorem{lemma}[thm]{Lemma}
\newtheorem{corollary}[thm]{Corollary}
\newtheorem{proposition}[thm]{Proposition}

\DeclareMathOperator{\rectify}{rect}

\theoremstyle{remark}
\newtheorem{definition}[thm]{Definition}
\newtheorem{remark}[thm]{Remark}

\newenvironment{example}
  {\pushQED{\qed}\examplex}
  {\popQED\endexamplex}

\numberwithin{equation}{section}


\subjclass[2020]{05E05, 05E10, 14M15, 14N15}

\AtBeginDocument{%
   \def\MR#1{}
}

\hyphenation{Rich-ard-son Grothen-dieck Grass-mann-ian Noe-ther-ian Pfaff-ian Zeit-schrift Lusz-tig Kazh-dan Rajch-got semi-stand-ard}

\begin{document}

\title{Constructed tableaux and a new shifted Littlewood--Richardson rule}

\author{Santiago Estupi\~n\'an-Salamanca}
\address[SES]{Department of Combinatorics \& Optimization, University of Waterloo, Waterloo ON, Canada}
\email{sestupinan@uwaterloo.ca}
\author{Oliver Pechenik}
\address[OP]{Department of Combinatorics \& Optimization, University of Waterloo, Waterloo ON, Canada}
\email{opecheni@uwaterloo.ca}

\date{\today}

\keywords{shifted plactic monoid, Schur $P$-function, projective representation theory, isotropic Grassmannian}

\begin{abstract}
We give a new Littlewood--Richardson rule for the Schubert structure coefficients of isotropic Grassmannians, equivalently for the multiplication of $P$-Schur functions. Serrano (2010) previously gave a formula in terms of classes in his \emph{shifted plactic monoid}. However, this formula is challenging to use because of the difficulty of characterizing shifted plactic classes. We give the first algebraic proof of this formula. We then use it to obtain a new rule that is easy to implement. Our rule is based on identifying a subtle analogue of \emph{Yamanouchi tableaux}, which we characterize. We show that for some families of structure coefficients, our rule leads to an algorithm with exponentially better time complexity than the original rule of Stembridge (1989).
\end{abstract}

\maketitle
\setcounter{tocdepth}{1}
\tableofcontents

\section{Introduction}

The \emph{Grassmannian} $\Grass{k}{n}$ is the parameter space for $k$-dimensional linear subspaces of the vector space $\C^n$. Its integral cohomology has a basis of \emph{Schubert classes} $\sigma_\lambda$, where $\lambda$ ranges over integer partitions with at most $k$ parts and all parts at most $n-k$.
A classical Littlewood--Richardson rule for computing the structure coefficients $c_{\lambda,\mu}^\nu \in \Z_{\geq 0}$ appearing in the formula
\[
\sigma_\lambda \cdot \sigma_\mu  = \sum_{\nu} c_{\lambda,\mu}^\nu \, \sigma_\nu
\]
can be understood to identify these structure coefficients with counts of \emph{semistandard Young tableaux} as follows. First, one starts with the \emph{plactic monoid} $\Plactic$ \cite{Lascoux.Schutzenberger:plaxique}, which is a quotient of the free monoid on $\{1 < 2 < \cdots \}$ by certain homogeneous degree $3$ equivalences (\emph{Knuth relations} \cite{Knuth}); then one picks a family of plactic equivalence classes that is closed under taking right factors, and combinatorially characterizes the elements of these classes as well as their left factors; such a characterization of the left factors then yields a combinatorially defined set whose cardinality is $c_{\lambda,\mu}^\nu$. The traditional choice here is to take the plactic classes of \emph{highest-weight tableaux}, for which the elements of the plactic classes are characterized by a \emph{Yamanouchi} (a.k.a.~\emph{ballot} or \emph{lattice}) condition. The appearance of the plactic monoid $\Plactic$ in this story can be understood from the fact that $\Plactic$ is the largest quotient of the free monoid whose monoid algebra contains a copy of $H^\star(\Grass{k}{n})$ (and satisfying a few other simple properties) \cite{Lascoux.Schutzenberger:plaxique,EstupinanSalamanca.Pechenik}.

For the \emph{isotropic Grassmannians} $\OG(n,2n+1) \subset \Grass{n}{2n+1}$ (resp.\ $\LG(n,2n) \subset \Grass{n}{2n}$), parameterizing those $n$-planes in $\C^{2n+1}$ (resp.\ $\C^{2n}$) that are isotropic with respect to a nondegenerate symmetric (resp.\ skew-symmetric) bilinear form, one again has an integral cohomology basis of Schubert classes $\pi_\lambda$ (resp.\ $\qoppa_\lambda$), where $\lambda$ now ranges over \emph{strict} integer partitions with at most $n$ parts and all parts at most $n$. The analogues of the Littlewood--Richardson coefficients for $\OG(n,2n+1)$ are the structure constants $b_{\lambda, \mu}^\nu \in \Z_{\geq 0}$ defined by 
\[
\pi_\lambda \cdot \pi_\mu = \sum_{\nu} b_{\lambda,\mu}^\nu \pi_\nu,
\]
and we are interested in combinatorial formulas for these integers. One might expect an analogous problem for $\LG(n,2n)$; however, we have
\[
\qoppa_\lambda \cdot \qoppa_\mu = \sum_{\nu} 2^{\ell(\lambda) + \ell(\mu) - \ell(\nu)} \, b_{\lambda,\mu}^\nu  \qoppa_\nu,
\]
where $\ell(\xi)$ denotes the length of the partition $\xi$, so in fact the problem for $\LG(n,2n)$ is essentially equivalent to the $\OG(n,2n+1)$ problem.

The first combinatorial rule for $b_{\lambda, \mu}^\nu$ was given by J.~Stembridge \cite{Stembridge} in terms of \emph{shifted semistandard tableaux}, with the relation to isotropic Grassmannians discovered slightly later by P.~Pragacz \cite{Pragacz}. Our main result is a new combinatorial formula for $b_{\lambda, \mu}^\nu$, which we obtain by a strategy analogous to that that above for $c_{\lambda,\mu}^\nu$. The key tool is the \emph{shifted plactic monoid} $\sPlactic$ of L.~Serrano \cite{Serrano}, which is the largest quotient of the free monoid whose monoid algebra contains a copy of $H^\star(\OG(n,2n+1))$ (and satisfying a few other simple properties) \cite{EstupinanSalamanca.Pechenik}. 
We provide the first algebraic proof of Serrano's \cite{Serrano} shifted Littlewood--Richardson rule in terms of shifted plactic classes. 
While the rule of Stembridge \cite{Stembridge} can be understood in terms of \emph{shifted highest-weight tableaux}, the shifted plactic class of such tableaux turns out to be frustratingly complicated to describe combinatorially. We identify a different and less obvious shifted plactic class whose elements we characterize by a \emph{barely Yamanouchi} condition, which is a proper strengthening of the classical Yamanouchi condition on words and which is easy to describe. By further characterizing left factors of such words, we obtain our new tableau formula for $b_{\lambda, \mu}^\nu$: 

\begin{theorem}\label{thm:main}
    If $\lambda,\mu,$ and $\nu$ are strict partitions,
    then $b_{\lambda, \mu}^{\nu}$ equals the number of tableaux of shape $\lambda$ constructed from $\mu<\nu.$
\end{theorem}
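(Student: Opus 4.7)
The plan is to follow exactly the strategy sketched in the introduction, which mirrors the classical Littlewood--Richardson proof but in the shifted setting. First I would invoke the algebraic proof of Serrano's shifted Littlewood--Richardson rule, which (according to the earlier part of the paper) re-expresses $b_{\lambda,\mu}^\nu$ as the number of shifted plactic class representatives of a certain form: concretely, one fixes a canonical word $w_\nu$ representing a shifted plactic class indexed by $\nu$, and counts the ways to write $w_\nu \equiv u \cdot v$ in $\sPlactic$ where $u$ is the reading word of a fixed shifted tableau of shape $\lambda$ and $v$ has content $\mu$ (or some symmetric variant).

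The next step is to choose the representative of the $\nu$-class cleverly. Rather than using shifted highest-weight tableaux (whose shifted plactic class is combinatorially intractable), the plan is to use a representative whose shifted plactic class is exactly the set of \emph{barely Yamanouchi} words of the appropriate content. I would prove this characterization in two directions: (1) check that barely Yamanouchi is preserved under the shifted Knuth relations (and the mixed relations defining $\sPlactic$), and (2) show that every barely Yamanouchi word can be transformed via these relations into the chosen representative. Since the family of barely Yamanouchi words is manifestly closed under taking right factors, this simultaneously verifies the closure property needed by the strategy.

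With this in hand, the remaining task is to characterize the \emph{left factors} of barely Yamanouchi words inside the $\nu$-class. I would show that such a left factor, when reorganized, is precisely the reading word of a ``tableau of shape $\lambda$ constructed from $\mu<\nu$'' (and that this assignment is a bijection). The definition of ``constructed tableau'' should be tailored so that the defining inequalities correspond exactly to the conditions forced on a prefix $u$ by the requirement that the complementary suffix $v$ be barely Yamanouchi of content making $u \cdot v$ represent the $\nu$-class. Combining the three bijections --- Serrano's rule, the barely Yamanouchi characterization of the chosen class, and the constructed-tableau characterization of left factors --- yields Theorem~\ref{thm:main}.

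The main obstacle I anticipate is step two: proving that the chosen shifted plactic class is \emph{exactly} the set of barely Yamanouchi words. The forward direction (closure under the shifted Knuth and mixed relations) requires a case analysis similar to, but more delicate than, the classical one, because the ``barely'' strengthening rules out certain configurations that ordinary Yamanouchi words allow, and one must check that no relation can introduce such a configuration. The reverse direction (any barely Yamanouchi word can be rewritten to the canonical representative) is essentially a shifted insertion/rectification argument, but it needs to be done in a way that respects the barely Yamanouchi property throughout. Once this plactic-class characterization is established, the bijection with constructed tableaux should be a comparatively direct unpacking of definitions.
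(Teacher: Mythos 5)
Your plan matches the paper's actual proof essentially step for step: the algebraic proof of Serrano's rule (Theorem~\ref{thm:placticShiftedLR}), the choice of the barely Yamanouchi class in place of shifted highest-weight tableaux, the two-directional characterization via closure under the shifted plactic relations (Lemma~\ref{lem:barelyYamanouchiareShiftedLattice}) and an insertion argument (Theorem~\ref{thm:inBarelyYamanouchiClassiffshiftedYamanouchi}), closure under right factors, and finally identifying left factors with constructed tableaux (Theorem~\ref{thm:main_fullversion}). The one miscalibration is your claim that the last step is ``a comparatively direct unpacking of definitions'' --- in the paper the left-factor characterization (via interlacing words and the long sequence of insertion lemmas in Sections~\ref{sec:interlacingTableaux}--\ref{sec:interlacing=constructed}) is by far the most technical part of the argument, considerably harder than the plactic-class characterization you flag as the main obstacle.
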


Theorem~\ref{thm:main} was first announced in the FPSAC extended abstract \cite{EstupinanSalamanca.Pechenik:FPSAC}. We briefly defer the definition of ``constructed'' until Section~\ref{sec:constructed_intro}; the reader with some background in classical tableau combinatorics should be able to use Theorem~\ref{thm:main} to calculate structure coefficients after finishing this introduction.
For an explicit and precise algorithmic implementation of Theorem~\ref{thm:main} and further examples of its use, see Section~\ref{sec:examples}. A Python implementation is available at \cite{shifted-code}.

While our new rule and the older rule of Stembridge \cite{Stembridge} both involve shifted semistandard tableaux, the nature of the rules is extremely different and it does not seem straightforward to translate between them. We find the new rule more convenient for calculations by hand, as we find that using Stembridge's rule often involves drawing many tableaux that are then determined not to contribute to $b_{\lambda, \mu}^\nu$. We also show that, for some special classes of coefficients $b_{\lambda, \mu}^\nu$, our rule leads to an algorithm with exponentially better time complexity. For further shifted Littlewood--Richardson rules and proofs, see \cite{Assaf, Cho, Choi.Kwon, Gillespie.Levinson.Purbhoo,Grancharov.Jung.Kang.Kashiwara.Kim, Huang.Chu.Li, Nguyen, Serrano, Shigechi, Shimozono, Stembridge}; although we do not here carry out a detailed comparison of our rule with these, we remark that our formula appears to be meaningfully different from any of the previously known rules.

\begin{example}
    Let $\lambda = (5,3,1)$, $\mu = (5,4)$, and $\nu = (6,5,4,2,1)$. Then the structure coefficient $b_{\lambda,\mu}^\nu$ equals $2$, as witnessed by the $2$ constructed tableaux
    \[
      \begin{ytableau}
        {\color{orange}{1}}    & {\color{red}{1}} & 1 & 4' & {\color{blue}{6}}     \\
        \none&{\color{red}{2}}& 2 &{\color{brown}{5'}}   \\
        \none&\none    & 3    
    \end{ytableau}
    \quad
    \text{and}
    \quad
    \begin{ytableau}
        {\color{orange}{1}}    & {\color{red}{1}} & 1 & 4' & {\color{blue}{6}}     \\
        \none&{\color{red}{2}}& 2 &{\color{brown}{5}}   \\
        \none&\none    & 3    
    \end{ytableau}
    \]
    per Theorem~\ref{thm:main}. (Here, the coloring is redundant, but records steps of the construction process.) 

    In contrast, the Stembridge rule \cite{Stembridge} realizes $b_{\lambda,\mu}^\nu = 2$ from the skew tableaux
    \[
    \ytableaushort{\blank \blank \blank \blank \blank 1, \none \blank \blank \blank {1'} {2'},\none \none \blank {1'}1 {2'}, \none \none \none 1 {2'}, \none \none \none \none 2}
*[*(Gray)]{5,1+3,2+1} 
\quad
\text{and}
\quad
    \ytableaushort{\blank \blank \blank \blank \blank 1, \none \blank \blank \blank {1'} {2'},\none \none \blank {1'}12, \none \none \none 1 {2'}, \none \none \none \none 2}
*[*(Gray)]{5,1+3,2+1} 
    \]
    For the reader who is familiar with the Stembridge rule, we suggest the exercise of checking that there are no other skew tableaux satisfying the required conditions. 
\end{example}

In addition to this computational efficiency, a further motivation for our development of Theorem~\ref{thm:main} is the desire for analogues in richer cohomology theories. In torus-equivariant cohomology $H_T(\OG(n, 2n+1))$, there is no proved combinatorial rule for computing the analogues of $b_{\lambda,\mu}^\nu$ and indeed only a conjectural rule in a case of quotienting away almost all of the $T$-weights \cite{Robichaux.Yadav.Yong}. (Here, at least it is known that the $H_T(\OG(n, 2n+1))$ and $H_T(\LG(n, 2n))$ problems are essentially equivalent; see, e.g., \cite[Theorem~8.2]{Robichaux.Yadav.Yong}.) The partial conjecture of \cite{Robichaux.Yadav.Yong} is based on combining the non-equivariant rule of Stembridge \cite{Stembridge} with the ``edge-labeled'' combinatorics of \cite{Thomas.Yong:HT}. One might hope that the new rule of Theorem~\ref{thm:main} might extend to $H_T$ more easily. 

Similarly, the $K$-theoretic Schubert calculus of isotropic Grassmannians is only partly understood. There are combinatorial formulas for the analogues of $b_{\lambda,\mu}^\nu$ in $K^0(\OG(n, 2n+1))$ \cite{Clifford.Thomas.Yong, Buch.Samuel, Pechenik.Yong:genomic}. However, in the $K$-theoretic setting, the corresponding problem for $K^0(\LG(n, 2n))$ is \emph{not} equivalent to the $\OG(n, 2n+1)$ problem. Indeed, there is no known combinatorial formula for the $K$-theoretic Schubert structure constants of $K^0(\LG(n, 2n))$ (even conjecturally). (However, see \cite{Buch.Ravikumar} for a proved Pieri formula in this context and \cite{Pechenik.Yong:genomic} for conjectural bounds.) We hope that Theorem~\ref{thm:main} can provide a new avenue of attack on this problem. Further discussion of the equivariant and $K$-theoretic settings may appear elsewhere.

\subsection{Constructed tableaux}
\label{sec:constructed_intro}

Here we give the definitions of constructed tableaux needed to understand and apply Theorem~\ref{thm:main}. We momentarily assume the reader is familiar with the basic definitions of shifted shapes and shifted tableaux; if not, see Section~\ref{sec:background} as needed.

\begin{definition}
A skew shape is a \newword{horizontal strip} if it contains at most one box in each column, and it is a
\newword{vertical strip} if it has at most one box in each row. A set of boxes $\gamma$ is a \newword{generalized rimhook} it can be partitioned into a vertical strip $\xi/\pi$ and a horizontal strip $\theta / \eta$ with $\xi \subseteq \eta$. We write $\gamma = \xi/\pi \olessthan \theta/\eta$. If we can choose the partitions so that $\eta = \xi$, we say $\gamma$ is a \newword{rimhook}.

 Let $T$ be a tableau of generalized rimhook shape $\gamma = \xi /\pi \olessthan \theta/\eta$. We say that $T$ is a \newword{Serrano--Pieri strip} if $\xi/\pi$ is filled with unprimed letters that increase from top to bottom, $\theta/\eta$ is filled with primed letters that increase from left to right, and each label in $\xi/\pi$ is less than every label in $\theta/\eta$.
(Note that this is backwards from the ordinary Pieri fillings the reader may be familiar with in other contexts!)
\end{definition}

\begin{example}
In the tableau
    \ytableausetup{boxsize=normal}
    \[
\ytableaushort{1 {2'} {3'} 4 {*(SkyBlue) 5} {7'} {*(SkyBlue) 9'}, \none 5 {6'} {*(SkyBlue) 6} {*(SkyBlue) 8'}, \none \none {*(SkyBlue) 7}},
    \]
    the boxes shaded in blue form a Serrano--Pieri strip. Note that the blue boxes do not form a skew shape, so that it is not a rimhook, but only a generalized rimhook.
\end{example}

\begin{remark}
    Observe that the horizontal or vertical strip in a Serrano--Pieri strip is allowed to be empty. That is, a horizontal strip is a Serrano--Pieri strip, and likewise a vertical strip. 
\end{remark}

\begin{definition}\label{def:constructibleDef}
    Let $\alpha = (\alpha_1, \dots, \alpha_\ell), \beta = (\beta_1, \dots, \beta_\ell)$ be sequences of nonnegative integers with $\alpha_i < \beta_i$ for all $i$. 
    A tableau $T$  of shape $\lambda$ is \newword{constructible} from $\alpha < \beta$ if 
    
        \begin{itemize}
            \item for every $j$, the letters of $(\alpha_j, \beta_j]$ in $T$ form a Serrano--Pieri strip; and
            \vspace{0.5 cm}
            \item the unprimed entries of $(\alpha_j, \beta_j]$ occur before the unprimed entries of $(\alpha_i, \beta_i]$ for all $i<j$ when on the same row; and
            \vspace{0.5 cm}
            \item the primed entries of $(\alpha_j, \beta_j]$ occur before the primed entries of $(\alpha_i, \beta_i]$ for all $i<j$ when on the same column.
    \end{itemize}
\end{definition}

\begin{definition}
\label{def:extend}
Let $T$ be constructible from $\alpha < \beta$ and let $\gamma=(\xi/\pi) \olessthan (\theta/\eta)$ be the Serrano--Pieri strip for the interval $(\alpha_j, \beta_j]$. We say $\gamma$ can be \newword{extended} if there is a Serrano--Pieri strip $\gamma'=(\xi'/\pi') \olessthan (\theta'/\eta')$ such that $\gamma' \supsetneq \gamma$ and the letters in $\gamma'$ (including the primed ones but ignoring their primes) form an interval $(k,e]\supsetneq (\alpha_j,\beta_j]$.
\end{definition}

\begin{definition}\label{def:constructedDef}
    We say $T$ is \newword{constructed} from $\alpha<\beta$, if it is constructible from $\alpha<\beta$ and none of the Serrano--Pieri strips $\gamma=(\xi/\pi) \olessthan (\theta/\eta)$ in the decomposition can be extended. 
\end{definition}

\begin{example}
    The tableau on the left is constructible but not constructed, since the sequence encoded in blue can be extended using elements from the sequence encoded in red. 
           \[ \begin{ytableau}
        1   &2' & 3' &{\color{blue}{3}} & 4'              & 5' &{\color{blue}{8'}}&{\color{blue}{9'}} &{\color{red}{10'}}&{\color{red}{11'}}\\
        \none&{\color{blue}{4}}&{\color{red}{4}}&{\color{blue}{5'}}&{\color{blue}{6'}}&{\color{blue}{7'}}&   {\color{red}{9'}}      \\
        \none&\none            & {\color{red}{5}}                &{\color{red}{6'}}&   {\color{red}{7'}}            &  {\color{red}{8'}}  
    \end{ytableau} \quad \longrightarrow \quad \begin{ytableau}
        1   &2' & 3' &{\color{blue}{3}} & 4'              & 5' &{\color{blue}{8'}}&{\color{blue}{9'}} &{\color{blue}{10'}}&{\color{blue}{11'}}\\
        \none&{\color{blue}{4}}&{\color{red}{4}}&{\color{blue}{5'}}&{\color{blue}{6'}}&{\color{blue}{7'}}&   {\color{red}{9'}}      \\
        \none&\none            & {\color{red}{5}}                &{\color{red}{6'}}&   {\color{red}{7'}}            &  {\color{red}{8'}}  
    \end{ytableau} \qedhere \]
\end{example}

The intuitive idea behind the notion of a tableau $T$ of shape $\lambda$ constructed from $\alpha < \beta$ is as follows. First, one places the elements in the segment $(\mu_{\ell(\nu)}, \nu_{\ell(\nu)}]$ so that they form a vertical strip and a horizontal strip consisting of unprimed and primed elements, respectively, and in such a manner that the entries of the vertical strip are left of those in the horizontal strip.  
Then, a similar process is followed for the subsequent segments, in such a manner that they are placed below or to the right of the previous segments, and so that no later segment extends an earlier segment. As the unprimed entries of the next segment are positioned in various rows and columns, they may be placed before some primed entries of previous segments in that row, displacing those entries to the right, or be placed above primed entries of previous segments in that column, displacing those entries downwards. 
However, when placing the primed entries of the next segment, they may not displace unprimed entries of previous segments. 

\begin{example}
   Consider the strict partitions $\nu= (11,9,5)$, $\mu=(4,2)$, and $\lambda=(8,7,4)$. The following is an illustration of how to sequentially compute a tableau constructible from $\mu<\nu$ of shape $\lambda$:

   \begin{align*}
       \begin{ytableau}
        1   &2' & 3' & 4'              & 5' & & &\\
        \none& & & & & &   &    \\
        \none&\none            &                 & &             &   
    \end{ytableau} &\longrightarrow \begin{ytableau}
        1   &2' & 3' & {\color{blue}{3}} & 4'              & 5' & {\color{blue}{9'}} &\\
        \none& {\color{blue}{4}} & {\color{blue}{5'}} & {\color{blue}{6'}} & {\color{blue}{7'}} & {\color{blue}{8'}} &   &    \\
        \none&\none            &                 & &             &   
    \end{ytableau} \\
    &\longrightarrow \begin{ytableau}
                1   &2' & 3' &{\color{blue}{3}} & 4'              & 5' &{\color{red}{5}}&{\color{blue}{9'}}\\
                \none&{\color{blue}{4}}& {\color{blue}{5'}}&{\color{blue}{6'}}&{\color{blue}{7'}}& {\color{blue}{8'}} &   {\color{red}{10'}}  &{\color{red}{11'}}     \\
                \none&\none            & {\color{red}{6}}                &{\color{red}{7'}}&   {\color{red}{8'}}            &  {\color{red}{9'}}  
            \end{ytableau}
   \end{align*}
    The resulting tableau is also constructed as none of the sequences can be extended. 
\end{example}

\subsection{Organization} The remainder of this paper is organized as follows. Section~\ref{sec:background} recalls necessary background on tableaux and symmetric functions. Section~\ref{sec:LR} gives the first algebraic proof of Serrano's shifted Littlewood--Richardson rule \cite{Serrano} in terms of shifted plactic classes, as well as of a rule equivalent to that of Stembridge \cite{Stembridge}. 
In Section~\ref{sec:barelyYamanouchi}, we introduce the family of \emph{barely Yamanouchi tableaux}, which will serve as our shifted analogues of ordinary Yamanouchi tableaux, and provide a combinatorial characterization. Section~\ref{sec:shifted_lattice} introduces a \emph{shifted lattice} condition on words and establishes some basic properties; we later show that barely Yamanouchi words (i.e., words that mixed insert to barely Yamanouchi tableaux) satisfy this shifted lattice condition. Section~\ref{sec:interlacingTableaux} introduces \emph{interlacing words} to characterize the left factors of barely Yamanouchi words and establishes many technical properties of the \emph{interlacing tableaux} that arise as mixed insertions of interlacing words. In Section~\ref{sec:construct}, we establish some basic combinatorial properties of constructed tableaux. Section~\ref{sec:insertions} studies the properties of tableaux obtained by mixed insertion of certain words into constructed tableaux. In Section~\ref{sec:interlacing=constructed}, we prove Theorem~\ref{thm:main} by establishing that constructed tableaux, interlacing tableaux, and mixed insertions of left factors of barely Yamanouchi words are all the same class of objects. Section~\ref{sec:examples} provides examples of the application of Theorem~\ref{thm:main} together with an algorithmic implementation of that rule, which can be used efficiently by hand.
In Section~\ref{sec:Pieri}, we use Theorem~\ref{thm:main}
to give a new proof of the Hiller-Boe shifted Pieri formula \cite{Hiller.Boe}. 
In Section~\ref{sec:complexity}, we discuss the time complexity of Theorem~\ref{thm:main} in comparison to that of Stembridge \cite{Stembridge}, and show that our rule is exponentially faster for certain families of structure coefficients.

\section{Background}\label{sec:background}

\subsection{Combinatorics for ordinary Grassmannians}
A \newword{partition} is a finite nonincreasing sequence of positive integers $\lambda = (\lambda_1, \lambda_2, \dots, \lambda_\ell)$ with $\lambda_1 \geq \lambda_2 \geq \dots \geq \lambda_\ell > 0$. We call $\ell = \ell(\lambda)$ the \newword{length} of the partition $\lambda$. We draw a partition $\lambda$ as its \newword{Young diagram}, an array of $\lambda_i$ left-justified boxes in each row $i$. Here we take the convention that row $1$ is the top row. For example, the Young diagram of $\theta = (3,1)$ is $\ytableausetup{smalltableaux} \ydiagram{3,1}$.

A \newword{semistandard (Young) tableau} of \newword{shape} $\lambda$ is a filling of the Young diagram of $\lambda$ with positive integers so that each row is nondecreasing when read from left to right and each column is strictly increasing when read from top to bottom. For example, $\ytableaushort{114,2}$ is a semistandard tableau of shape $\theta$.
The \newword{content} of a tableau $T$ is the integer vector $(c_1, c_2, \dots)$ where $c_i$ records the number of boxes of $T$ filled with the value $i$. We say a semistandard tableau is \newword{standard} if there is some $k$ such that $c_i = \begin{cases}
    1, & \text{if $i \leq k$};\\
    0, & \text{if $i > k$};
\end{cases}$
for all $i$. We write $\SSYT(\lambda)$ for the set of all semistandard tableaux of shape $\lambda$ and $\SYT(\lambda)$ for the subset consisting of standard tableaux. The
\newword{highest-weight tableau} (or \newword{Yamanouchi tableau}) $Y_\lambda \in \SSYT(\lambda)$ is the unique tableau of its shape such that all entries in each row $i$ are equal to $i$. For example, 
\[
Y_{(4,2,1)} = \ytableaushort{1111,22,3}.
\]
\ytableausetup{nosmalltableaux}

We write $\cN \coloneqq \mathbb{Z}_{>0}$ for the alphabet of positive integers. Then $\cN^*$ denotes the set of all finite words in the alphabet $\cN$. A \newword{biword} is an ordered pair $(u,v) \in (\cN \times \cN)^*$ of words of the same length. We also equivalently think of a biword as a word in the alphabet $\cN \times \cN$ of ordered pairs of positive integers. It is convenient to write an ordered pair $(i,j)$ as $\binom{i}{j}$ and we will often do so without comment. We write $[n]$ as shorthand for the set $\{1, 2, \dots, n\}$.

We identify $\cN^*$ with a special class of biwords as follows. Let $w = w_1w_2\ldots w_n \in \cN^*$ and $C\subseteq \cN$ with $|C|=n$ and $c_1<c_2<\dots <c_n \in C$.  Then $\binom{C}{w}$ is the biword 
    \[
    \binom{C}{w} \coloneqq
    \begin{pmatrix}
        c_1 & c_2 & c_3 & \dots & c_n \\
        w_1 & w_2 & w_3 & \dots & w_n
    \end{pmatrix}
     = \binom{c_1}{w_1}\cdot \binom{c_2}{w_2} \cdot \cdots \cdot \binom{c_n}{w_n}.
     \]
     We often identify $w$ with $\binom{[n]}{w}$.

We order pairs $\binom{i_1}{j_1}, \binom{i_2}{j_2}$ \newword{lexicographically} with respect to the top coordinate by  $\binom{i_1}{j_1} \leq \binom{i_2}{j_2}$ if and only if $i_1 < i_2$ or both $i_1 = i_2$ and $j_1 \leq j_2$. We define lexicographic order with respect to the bottom coordinate similarly.
Given a biword $\binom{u}{v}$, let $\overrightarrow{\binom{u}{v}}$ be the unique biword given by sorting the ordered pairs lexicographically with respect to the top coordinate, so that they increase from left to right.
Similarly, we write $\underrightarrow{\binom{u}{v}}$ for the biword given by sorting lexicographically with respect to the bottom coordinate. If two biwords $b, b' \in (\cN \times \cN)^*$ satisfy $\underrightarrow{b} = \underrightarrow{b'}$, we write $b \approx b'$.

\begin{example}\label{ex:biword_ordering}
    Consider the word $w=31542$ and the biword 
    \[
    \binom{[5]}{w}=
    \begin{pmatrix}
        1 & 2 & 3 & 4 & 5 \\
        3 & 1 & 5 & 4 & 2
    \end{pmatrix}.
    \]
    Then 
    \[
    \overrightarrow{\binom{[5]}{w}} = \binom{[5]}{w},
    \]
    while 
    \[
    \underrightarrow{\binom{[5]}{w}} =   \begin{pmatrix}
        2 & 5 & 1 & 4 & 3 \\
        1 & 2 & 3 & 4 & 5
    \end{pmatrix}.
    \]
    Note that the top line of this latter biword is $25143$, which equals $31542^{-1}$ when thought of as the one-line notation of a permutation.
\end{example}

We now recall properties of the \newword{RSK insertion} algorithm \cite{Robinson, Schensted, Knuth}, which constructs an ordered pair of semistandard tableaux of the same shape associated to any biword. For a modern treatment of this algorithm, see, e.g., \cite{Fulton:YT}. Let $b = \binom{u}{v}$ be a biword with $u,v \in \cN^*$. Then we row insert the word $v$ from left to right producing the \newword{RSK insertion tableau} $P(b)$; at the insertion of $v_i$ we create a new box $\bbb$, and we record $u_i$ in the corresponding box $\bbb$ of the \newword{RSK recording tableau} $Q(b)$. Note that $Q(b)$ is a standard tableau if and only if $u \in \symmetricgroup_n$ is the one-line notation of a permutation.
The \newword{plactic monoid} is the quotient $\Plactic$ of the free monoid on $\cN$, where we declare two words $w, w'$ are equivalent if $P\binom{[n]}{w} = P\binom{[n]}{w'}$. Knuth \cite{Knuth} showed that $\Plactic$ can be defined using only degree $3$ equivalences; for an alternative characterization, see \cite{Lascoux.Schutzenberger:plaxique, EstupinanSalamanca.Pechenik}. A word $w$ is \newword{Yamanouchi} if $P\binom{[n]}{w} = Y_\lambda$ for some $\lambda$; note that the set of Yamanouchi words is a union of plactic equivalence classes. The Yamanouchi words are exactly those $w$ such that in each final segment of $w$ the number of $i$s is at least the number of $(i+1)$s (for all $i > 0$).

For $T \in \SYT(\lambda)$, the \newword{free Schur function} $\FreeSchur_T$ is the noncommutative power series 
\[
\FreeSchur_T \coloneqq  \sum_{Q\binom{[n]}{w} = T} x^w \in \Z\npowerser{x_1, x_2, \dots},
\]
where $x^w \coloneqq x_{w_1} x_{w_2} \dots x_{w_n}$. The \newword{plactic Schur function} $\PlacticSchur_\lambda$ is the image of $\FreeSchur_T$ under declaring $x^w = x^{w'}$ when $w,w'$ are equivalent in the plactic monoid. (Although $\PlacticSchur_\lambda$ appears to depend on a choice of $T \in \SYT(\lambda)$, it does not.) The \newword{Schur function} $s_\lambda$ is the image of $\PlacticSchur_\lambda$ (equivalently, of $\FreeSchur_T$) in the ring of power series of commuting variables.

\subsection{Combinatorics for isotropic Grassmannians}\label{sec:isotropic}

A \newword{partition} $\lambda = (\lambda_1, \lambda_2, \dots, \lambda_\ell)$ with $\lambda_1 \geq \lambda_2 \geq \dots \geq \lambda_\ell > 0$ is \newword{strict} if its parts are all distinct. A strict partition $\lambda$ has a \newword{shifted Young diagram} obtained by indenting the $i$th row of the ordinary Young diagram for $\lambda$ by $i-1$ positions. The \newword{(main) diagonal} consists of the leftmost box in each row.
For example, the shifted Young diagram of $\theta = (3,1)$ is $\ytableausetup{smalltableaux} \ydiagram{3,1+1}*[*(red)]{1,1+1}$, where we have shaded the boxes of the main diagonal in \textcolor{red}{red}.

We write $\cD$ for the doubled alphabet 
\[
1' < 1 < 2' < 2 < 3' < \cdots
\]
and set $\cN' \coloneqq  \cD \setminus \cN$.
A \newword{shifted (semistandard Young) tableau} of \newword{shape} $\lambda$ is a filling of the shifted Young diagram of $\lambda$ with elements of $\cD$ so that rows and columns are nondecreasing, each $i' \in \cN'$ appears at most once in any row, each $i \in \cN$ appears at most once in any column, and no $i' \in \cN'$ appears on the main diagonal. 
For example,     \begin{ytableau}
            1 &1 & 5'&6 \\
            \none&4&5'&7
    \end{ytableau} is a shifted tableau of shape $(4,3)$.
The \newword{content} of a shifted tableau $T$ is the integer vector $(c_1, c_2, \dots)$ where $c_i$ records the number of boxes of $T$ that are filled with either $i$ or $i'$. We say a shifted tableau $T$ is \newword{standard} if there is some $k$ such that $c_i = \begin{cases}
    1, & \text{if $i \leq k$};\\
    0, & \text{if $i > k$};
\end{cases}$
for all $i$ and $T$ contains no primed entries. We write $\shSSYT(\lambda)$ for the set of all shifted tableaux of shape $\lambda$ and $\shSYT(\lambda)$ for the subset consisting of standard tableaux. The
\newword{shifted Yamanouchi tableau} $\yama_\lambda \in \shSSYT(\lambda)$ (pronounced ``Yama'') is the unique tableau of its shape such that all entries in each row $i$ are equal to $i$. For example, 
\[
\yama_{(4,2,1)} = \ytableaushort{1111,\none 22,\none  \none 3}.
\]

\ytableausetup{nosmalltableaux}

Given a shifted tableau $T$ and a letter $x \in \cN$, we insert $x$ in row $i$ of $T$ as follows. If no entry of row $i$ is strictly greater than $x$, we place $x$ in a new box at the right end of row $i$. Otherwise, let $y \in \cD$ be the leftmost entry that is strictly greater than $x$. We replace the entry $y$ with the letter $x$. (Note that, in general, this construction may not produce a valid tableau; however, we will only apply it in cases where the result is valid.) 

Similarly, we insert $x' \in \cN'$ into column $j$ of $T$ as follows. If no entry of column $j$ is strictly greater than $x'$, we place $x'$ in a new box at the bottom of column $j$. Otherwise, let $y \in \cD$ be the topmost entry that is strictly greater than $x'$. We replace the entry $y$ with the letter $x'$.

We now recall the \newword{mixed insertion} algorithm of \cite{Haiman}.
Let $b = (u,v) \in (\cN \times \cN)^*$ be a biword. Then the \newword{mixed insertion tableau} of $b$ is the shifted tableau $P_\mix(b)$ constructed as follows:

\begin{itemize}
    \item Starting with an empty shifted tableau, insert $v_1, v_2, \dots, v_n$ in turn into the first row.
    \begin{itemize}
        \item[$\bullet$] Each time that an element $y' \in \cN'$ in column $j$ is replaced, insert $y'$ into column $j+1$.
        \item[$\bullet$] Each time that an element $y \in \cN$ is replaced on the main diagonal of column $j$, insert $y'$ into column $j+1$.
        \item[$\bullet$] Each time that an element $y \in \cN$ is replaced in an off-diagonal position of row $i$, insert $y$ into row $i+1$.
    \end{itemize}
\item The resulting tableau is $P_\mix(b)$.
\end{itemize}

Insertion of each letter $v_k$ results in adding a unique new box to the shape of the tableau $P_\mix(b)$ being constructed. Recording $u_k$ in the corresponding box of another tableau of the same shape yields the \newword{mixed recording tableau} $Q_\mix(b)$.

\begin{figure}[htbp]
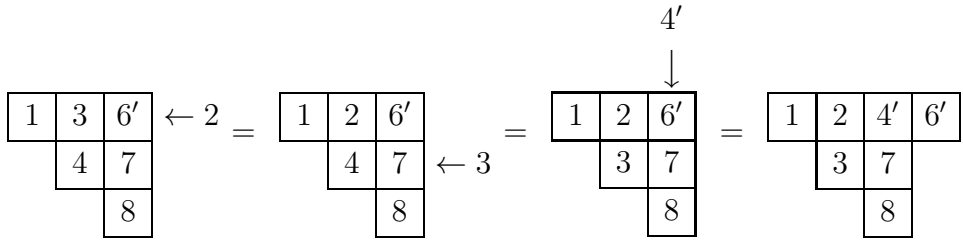

    \centering
    \[
  \begin{ytableau}
        1&3&6'&\none[\quad \leftarrow 2]\\
        \none&4&7 &\none\\
        \none & \none & 8 & \none
    \end{ytableau}  \hspace{3 mm} =\hspace{2 mm}
      \begin{ytableau}
        1&2&6'&\none\\
        \none&4&7 &\none[\quad \leftarrow 3]\\
        \none & \none & 8 & \none
    \end{ytableau}  \hspace{3 mm} =\hspace{2 mm}
    \raisebox{3.1em}{\begin{ytableau}
        \none&\none&\none[4']\\
        \none&\none&\none[\big \downarrow]\\
        1&2&6'\\
        \none&3&7 \\
        \none & \none & 8 
    \end{ytableau}} \hspace{2 mm} = \hspace{2 mm}
    \begin{ytableau}
        1&2&4'&6'\\
        \none&3&7 \\
        \none & \none & 8 
    \end{ytableau}
\]
    \caption{An example of the mixed insertion algorithm. The depicted shifted tableaux illustrate the process of inserting the number $2$ into the shifted semistandard Young tableau on the far left.}
    \label{fig:mixedEx}
\end{figure}

The \newword{shifted plactic monoid} $\sPlactic$ is the quotient of the free monoid on $\cN$, where we declare two words $w, w'$ are equivalent if $P_\mix\binom{[n]}{w} = P_\mix \binom{[n]}{w'}$. Serrano \cite{Serrano} showed that $\sPlactic$ is the quotient by $8$ relations of degree $4$:
\begin{proposition}[\cite{Serrano}]\label{def:shifted_plactic}
    The shifted plactic monoid $\sPlactic$ is the quotient of the free monoid on $\cN$ by the \newword{shifted plactic relations}:
    \begin{align}
        &abdc\sim adbc \quad \text{for all }  a\leq b \leq c<d; \tag{SP.1}\label{eq:SP1} \\
    &acdb\sim acbd \quad \text{for all }  a\leq b < c \leq d; \tag{SP.2}\label{eq:SP2} \\ 
        &dacb\sim adcb \quad \text{for all }  a\leq b < c <d;\tag{SP.3}\label{eq:SP3} \\ 
        &badc\sim bdac \quad \text{for all }  a< b \leq c<d; \tag{SP.4}\label{eq:SP4} \\ 
        &cbda\sim cdba \quad \text{for all }  a< b <c \leq d; \tag{SP.5}\label{eq:SP5} \\ 
        &dbca\sim bdca \quad \text{for all }  a< b \leq c<d; \tag{SP.6}\label{eq:SP6} \\ 
        &bcda\sim bcad \quad \text{for all }  a< b \leq c\leq d; \label{eq:SP7} \tag{SP.7} \\ 
        &cadb\sim cdab \quad \text{for all }  a\leq  b < c\leq d. \tag{SP.8} \label{eq:SP8}
    \end{align}
\end{proposition}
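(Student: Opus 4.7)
The plan is to prove the two directions separately: first, that each of the eight relations (\ref{eq:SP1})--(\ref{eq:SP8}) preserves mixed insertion (soundness), and second, that any two words with the same mixed insertion tableau can be connected by a sequence of these relations (completeness). Since the statement is attributed to Serrano, the goal is to reconstruct, in an algebraically transparent way, Serrano's original argument.

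For soundness, I would verify each relation by direct computation on the four-letter pattern. Fix a relation such as $abdc \sim adbc$ with $a \leq b \leq c < d$, and consider mixed-inserting both words into an arbitrary shifted tableau $T$. It suffices to analyze what happens in the topmost row of $T$: the four letters trigger a finite set of possible bumping sequences depending on the preexisting row entries relative to $a,b,c,d$, and on whether any letter crosses the main diagonal. For each configuration one checks that the two words produce the same final row and bump out the same multiset of letters (in the same positions) into subsequent rows or columns. Since rows (and columns, after a diagonal crossing) below the top one receive equivalent bumped strings, an induction on the number of rows above which insertion occurs closes the soundness direction. The relations (\ref{eq:SP1})--(\ref{eq:SP4}) handle strictly off-diagonal row bumping (and mirror the three classical Knuth relations, with the fourth case needed because the shifted alphabet has equality allowed in two places), while (\ref{eq:SP5})--(\ref{eq:SP8}) handle the mixed cases where a diagonal crossing turns an unprimed bump into a primed one and redirects bumping from rows to columns.

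For completeness, the tactic is to define, for each shifted tableau $P$, a canonical \emph{shifted reading word} $\reading(P)$ (for instance, reading the rows bottom-to-top and left-to-right, dropping the prime on the leftmost occurrence of each value so that the result lies in $\cN^*$), and to show by induction on $|w|$ that every $w \in \cN^*$ is equivalent under (\ref{eq:SP1})--(\ref{eq:SP8}) to $\reading(P_\mix\binom{[|w|]}{w}))$. The inductive step amounts to a single-letter insertion lemma: if $P' = P_\mix(\reading(P) \cdot x)$, then the words $\reading(P) \cdot x$ and $\reading(P')$ are connected by a bounded sequence of applications of the eight relations, and this sequence can be read off from the bumping path of $x$ through $P$. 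Each bump at one cell corresponds to one of the eight local rewrites; bumps off the main diagonal are handled by (\ref{eq:SP1})--(\ref{eq:SP4}) and the diagonal-crossing bumps by (\ref{eq:SP5})--(\ref{eq:SP8}).

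The main obstacle is the diagonal crossing, where an unprimed letter bumped onto the main diagonal reappears as a primed letter that then propagates column-wise rather than row-wise. In the ordinary plactic case every bump stays in its row and the algebra decomposes cleanly into row-level Knuth moves; here the bookkeeping must track both row and column rewrites simultaneously and respect the interaction between them. I would expect the completeness argument, specifically the single-letter insertion lemma at the moment a bump hits the main diagonal, to be the most delicate step, and I would isolate it as its own technical lemma with several subcases indexed by the exact position at which the crossing occurs.
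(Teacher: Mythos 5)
The paper does not prove this proposition: it is quoted directly from Serrano's work and used as a black box, so there is no internal argument to compare against. Your overall strategy --- soundness of the eight relations under mixed insertion, followed by completeness via reduction of every word to a canonical representative of its insertion tableau --- is the standard shape for establishing a Knuth-type presentation and is, in outline, how such results are proved.

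There is, however, a concrete gap in your completeness direction: the canonical word you propose is the wrong object. The identity $P_\SW(\reading(T)) = T$ holds for Sagan--Worley insertion, not for mixed insertion; the two algorithms are related by the duality of Equation~\eqref{eq:duality}, which swaps the roles of the $P$- and $Q$-tableaux. Consequently the row reading word of $P$ does not in general mixed-insert back to $P$, so your induction has nothing to reduce to. (Also, ``dropping the prime on the leftmost occurrence of each value'' does not yield a word in $\cN^*$, since a primed value may occur in several rows of a shifted tableau.) The canonical representative of a shifted plactic class in Serrano's development is instead the reading word of an associated \emph{semistandard decomposition tableau}, a concatenation of hook words reflecting the hook-subword version of Greene's theorem (Theorem~\ref{thm:greeneGeneralization}); identifying this object and proving the single-letter insertion lemma for it --- including the diagonal-crossing cases you correctly flag as delicate --- is the substantive content of the theorem rather than a bookkeeping step. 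Indeed, the present paper remarks in Section~\ref{sec:interlacing=constructed} that computing mixed reading words is ``quite difficult in general,'' which is precisely the obstruction your sketch elides.
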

For an intrinsic characterization by universal property, see \cite{EstupinanSalamanca.Pechenik}. It is straightforward to see from either of these perspectives that $\Plactic$ is a quotient of $\sPlactic$.

We now recall how to insert primed letters into rows and unprimed letters into columns.
We insert $x' \in \cN'$ into row $i$ as follows. If all entries of row $i$ are strictly less than $x'$, we place $x'$ in a new box at the right end of row $i$. Otherwise, let $y \in \cD$ be the leftmost entry that is greater than or equal to $x'$. We replace the entry $y$ with the letter $x'$, unless $y = x$ is on the diagonal, in which case we replace $y$ with the letter $x$.

Similarly, we insert $x \in \cN$ into column $j$ as follows. If all entries of column $j$ are strictly less than $x$, we place $x$ in a new box at the bottom of column $j$. Otherwise, let $y \in \cD$ be the topmost entry that is greater than or equal to $x$. We replace the entry $y$ with the letter $x$.

We may now recall the \newword{Sagan--Worley insertion} \cite{Worley, Sagan} of a biword $b=(u,v)  \in  (\cN \times \cD)^*$. 
\begin{itemize}
    \item Starting with an empty shifted tableau, insert $v_1, v_2, \dots, v_n$ in turn into the first row.
    \begin{itemize}
        \item[$\bullet$] Each time that an element $y \in \cD$ in an off-diagonal position of column $j$ is replaced in a column insertion, insert $y\in \cD$ into column $j+1$.
        \item[$\bullet$] Each time an element $y \in \cD$ in an off-diagonal position of row $i$ is replaced in a row insertion, insert $y\in \cD$ into row $i+1$.
        \item[$\bullet$] Each time an element $y' \in \cN'$ is replaced on the main diagonal of column $j$ from insertion of $x' = y'$, insert $y\in \cN$ into column $j+1$.
        \item[$\bullet$] Each time an element $y \in \cD$ is replaced on the main diagonal of column $j$ from insertion of $x \neq y$, insert $y\in \cD$ into column $j+1$.
    \end{itemize}
    \item The resulting tableau is the \newword{Sagan--Worley insertion tableau} $P_\SW(b)$. (In general, this tableau may have primed entries on the diagonal and so may not strictly be a shifted semistandard tableau as we have defined such tableaux; this scenario will not arise when $v \in \cN^*$.)
\end{itemize}

Insertion of each letter $v_k$ results in adding a unique new box $\bbb$ to the shape of the tableau $P_\SW(b)$ being constructed. Recording $u_k$ in the corresponding box $\bbb$ of another tableau of the same shape when $\bbb$ appeared from a row insertion, while recording $u'_k$ when $\bbb$ appeared from a column insertion, yields the \newword{Sagan--Worley recording tableau} $Q_\SW(b)$.

\begin{figure}[htbp]
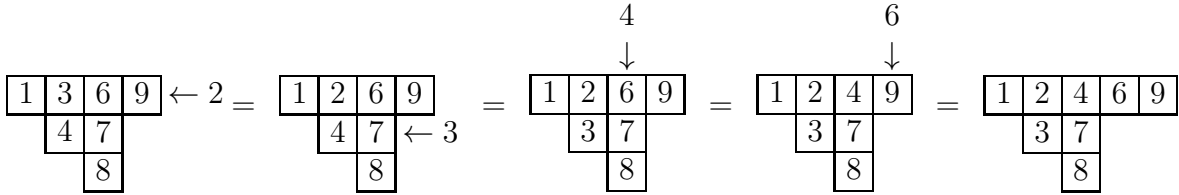

    \centering
    \ytableausetup{boxsize=1.2em}
    \[
  \begin{ytableau}
        1&3&6& 9 &\none[\quad \leftarrow 2]\\
        \none&4&7 &\none\\
        \none & \none & 8 & \none
    \end{ytableau}  \hspace{3 mm} =\hspace{2 mm}
      \begin{ytableau}
        1&2&6&9 &\none\\
        \none&4&7 &\none[\quad \leftarrow 3]\\
        \none & \none & 8 & \none
    \end{ytableau}  \hspace{0 mm} =\hspace{2 mm}
    \raisebox{2.5em}{\begin{ytableau}
        \none&\none&\none[4]\\
        \none&\none&\none[ \downarrow]\\
        1&2&6 & 9\\
        \none&3&7 \\
        \none & \none & 8 
    \end{ytableau}}
    \hspace{2 mm} = \hspace{2 mm}
    \raisebox{2.5em}{\begin{ytableau}
      \none & \none & \none & \none[6] \\
      \none&\none& \none & \none[ \downarrow]\\
        1&2&4&9\\
        \none&3&7 \\
        \none & \none & 8 
    \end{ytableau}}
    \hspace{2 mm} = \hspace{2 mm}
     \begin{ytableau}
        1&2&4&6& 9\\
        \none&3&7 \\
        \none & \none & 8 
    \end{ytableau}
\]     \ytableausetup{boxsize=normal}
    \caption{An example of the Sagan--Worley insertion algorithm. The depicted shifted tableaux illustrate the process of inserting the number $2$ into the shifted semistandard Young tableau on the far left.}
    \label{fig:SaganWorleyEx}
\end{figure}

Although {\it prima facie} very different, these two insertion algorithms are related by a duality on biwords. For a biword $b \coloneqq \binom{u}{v}$, let $\sigma(b) \coloneqq \binom{v}{u}$. Then, for all biwords $b\in (\cN \times \cN)^*$, we have \cite{Haiman, Fomin} (see also, \cite[Theorem~3.6]{SerranoThesis}) that
\begin{equation}\label{eq:duality}
    P_\mix(\overrightarrow{b}) = Q_\SW(\sigma(\underrightarrow{b})) \quad \text{and} \quad Q_\mix(\overrightarrow{b}) = P_\SW(\sigma(\underrightarrow{b})).  
\end{equation}

For $T \in \shSYT(\lambda)$, the \newword{free $P$-Schur function} $\sFreeSchur_T$ is
\begin{align}\label{eq:freePSchurDef}
    \sFreeSchur_T \coloneqq \sum_{Q_\mix\binom{[n]}{w} = T} x^w \in \Z \npowerser{x_1, x_2 ,\dots}.
\end{align}
 The \newword{plactic $P$-Schur function} $\sPlacticSchur_\lambda$ is the image of $\sFreeSchur_T$ under the quotient where $x^w = x^{w'}$ whenever $w, w'$ are equivalent in the shifted plactic monoid $\sPlactic$. (Again, the plactic $P$-Schur function is independent of the choice of  $T \in \shSYT(\lambda)$.) We write $\Z \npowerser{\sPlactic}$ for the span of the plactic $P$-Schur functions. 
    The \newword{$P$-Schur function} $P_\lambda$ is the symmetric function 
    \[P_\lambda \coloneqq \sum_{T\in \shSSYT(\lambda)}x^{c(T)}=\sum_{T\in \shSSYT(\lambda)}x_1^{c_1}x_2^{c_2}\ldots x_{\ell}^{c_{|\lambda|}}.\] The $P$-Schur function $P_\lambda$ is the image of the plactic $P$-Schur function $\sPlacticSchur_\lambda$ under the abelianization map making all variables commute.

    The \newword{$Q$-Schur function} $Q_\lambda$ is 
    defined by 
    \[
    Q_\lambda \coloneqq 2^{\ell(\lambda)} P_\lambda.
    \]
We can also think of $Q_\lambda$ as the generating function for a generalization of shifted semistandard tableaux where we allow primed entries to appear on the main diagonal, as in the most general kind of Sagan--Worley insertion tableau.

\section{New proof of Serrano's shifted plactic Littlewood--Richardson rule}\label{sec:LR}
In this section, we recall Serrano's shifted Littlewood--Richardson rule \cite{Serrano} in terms of shifted plactic classes; see Theorem~\ref{thm:placticLRrule}. We then give a new proof of this rule. Our proof differs from Serrano's in proceeding by developing relations among free $P$-Schur functions. In this way, our proof more closely tracks the approach to the ordinary Littlewood--Richardson rule taken in \cite{Lascoux.Schutzenberger:plaxique} (see also, \cite{Lothaire}). In Corollary~\ref{cor:stembridge},  we recall a shifted Littlewood--Richardson rule equivalent to Stembridge's \cite{Stembridge}, and we extract a new algebraic proof for this as well.

We write $[w]$ for the shifted plactic class of a word $w$. If $T$ is a shifted tableau with $P_\mix(w) = T$, we write $[T] \coloneqq [w]$.

\begin{theorem}[Serrano \cite{Serrano}]\label{thm:placticLRrule}
    Let $\lambda, \mu, \nu$ be strict partitions and fix a tableau $T$ of shifted shape $\nu$. Then the shifted Littlewood--Richardson coefficient $b_{\lambda,\mu}^\nu$ can be determined as follows:
    \[ b_{\lambda,\mu}^\nu = |\{([U],[V]) \: : \: [U]\cdot [V]=[T]\in \sPlactic, \: U \in \shSSYT(\lambda), \text{ and }  V \in \shSSYT(\mu) \} | . \]
\end{theorem}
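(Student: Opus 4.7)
The plan is to adapt the Lascoux-Schutzenberger proof of the classical Littlewood-Richardson rule to the shifted setting, by proving a multiplication formula for plactic $P$-Schur functions and then abelianizing. Specifically, since $\sPlacticSchur_\lambda$ is by definition the sum $\sum_{[U]:\, \text{sh}(U) = \lambda} x^{[U]}$ over shifted plactic classes of the appropriate shape, direct multiplication in $\Z \npowerser{\sPlactic}$ yields
\[
\sPlacticSchur_\lambda \cdot \sPlacticSchur_\mu = \sum_{[U],[V]} x^{[U]\cdot[V]} = \sum_{[P]} a_{[P]} \, x^{[P]},
\]
where $a_{[P]}$ counts the pairs $([U],[V]) \in \shSSYT(\lambda) \times \shSSYT(\mu)$ with $[U]\cdot[V] = [P]$. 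This count is precisely the quantity in the theorem statement (with $[T]=[P]$). If we can show that $a_{[P]}$ depends only on $\text{sh}([P])$, the formula collapses to $\sPlacticSchur_\lambda \cdot \sPlacticSchur_\mu = \sum_\nu a_{\lambda,\mu}^\nu \sPlacticSchur_\nu$. Abelianizing then gives $P_\lambda \cdot P_\mu = \sum_\nu a_{\lambda,\mu}^\nu P_\nu$, and comparison with the defining formula $P_\lambda \cdot P_\mu = \sum_\nu b_{\lambda,\mu}^\nu P_\nu$, combined with the linear independence of the $P_\nu$, forces $a_{\lambda,\mu}^\nu = b_{\lambda,\mu}^\nu$.

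The heart of the argument is therefore showing that $a_{[P]}$ depends only on the shape of $[P]$. I would attempt this via the duality (equation (2.3)) between mixed insertion $(P_\mix, Q_\mix)$ and Sagan-Worley insertion $(P_\SW, Q_\SW)$. Under this duality, a factorization $w = w_1 w_2$ witnessing $[U] \cdot [V] = [P]$ corresponds to a biword factorization, and the counting problem transports to a statement about Sagan-Worley tableaux of a fixed shape in which the roles of insertion and recording tableaux are swapped. In that dual picture, varying $[P]$ among classes of shape $\nu$ corresponds to varying a recording-type datum whose total count is manifestly shape-invariant. A shifted analogue of jeu de taquin on the resulting skew Sagan-Worley pictures should then yield an explicit bijection between the factorizations of any two classes of the same shape.

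The main obstacle will be precisely this shape-invariance step. It is more delicate than in the classical case because the shifted plactic relations (SP.1)-(SP.8) are degree four rather than three, because mixed insertion interleaves row and column insertions that interact nontrivially with word factorizations, and because the Sagan-Worley dual can introduce primed entries on the diagonal so that one must work with the $Q$-Schur rather than $P$-Schur side at intermediate stages. Routing the bijection carefully through the Sagan-Worley recording tableau, where swapping $P$ and $Q$ trades our count for an easier one, should be the key technical maneuver and is likely what distinguishes this algebraic proof from Serrano's original argument.
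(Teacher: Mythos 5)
Your overall strategy is the same as the paper's (Section~\ref{sec:LR}): multiply in the shifted plactic algebra, reduce everything to showing that the number of factorizations $[U]\cdot[V]=[P]$ depends only on $\mathrm{sh}(P)$, then abelianize and invoke linear independence of the $P_\nu$. That reduction, and the final comparison step, are correct. But the central claim --- shape-invariance of $a_{[P]}$ --- is exactly where all of the mathematical content lives, and your proposal does not prove it; it asserts that in the Sagan--Worley dual picture the count is ``manifestly shape-invariant,'' which begs the question. Varying $[P]$ over classes of a fixed shape does not obviously correspond to varying a recording tableau: one must actually show that the set of concatenations $w_1w_2$ with prescribed mixed recording tableaux is a union of full shifted plactic classes, one for each standard tableau of each shape $\nu$ occurring, with the correct multiplicities. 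Saying that ``a shifted analogue of jeu de taquin \ldots should then yield an explicit bijection'' defers the entire difficulty.

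The paper's execution fills this gap by working one level up, with the free $P$-Schur functions $\sFreeSchur_T$ indexed by \emph{standard} recording tableaux. The product $\sFreeSchur_{T_\lambda}\sFreeSchur_{T_\mu}$ is rewritten via the duality \eqref{eq:duality} as a sum over biwords whose top row lies in a shuffle $r\shuffle r'$, and the key technical input is Lemma~\ref{lem:TheGreatShuffleLemma} (built on Lemmas~\ref{lem:algLemma} and~\ref{lem:shuffleDescription}), which uses the compatibility of Sagan--Worley insertion with restriction to the subalphabets $\cA<\cB$ and the uniqueness of the straight-shape tableau in each shifted Knuth class to prove
$\sFreeSchur_{T_\lambda}\sFreeSchur_{T_\mu}=\sum_{S\in T_\lambda\shuffle T_\mu'}\sFreeSchur_S$.
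Shape-invariance then comes for free, since each $\sFreeSchur_S$ projects to $\sPlacticSchur_{\mathrm{sh}(S)}$, and Theorem~\ref{thm:placticLRrule} follows by expanding each $\sPlacticSchur_\nu$ as a sum over classes of shape $\nu$. If you want to complete your argument, this shuffle analysis (or an equivalent) is the piece you must supply; note also that your concern about primed diagonal entries in the Sagan--Worley dual does not arise here, since the relevant biwords have bottom rows in $\cN^*$.
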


For $T$ a shifted semistandard tableau, the \newword{reading word} of $T$ is the word $\reading(T)$ obtained by reading the entries of $T$ by rows from left to right and from bottom to top. For example, the reading word of 
\[
S = \ytableaushort{1 {2'} 3, \none 2}
\]
is $\reading(S) = 21 2' 3$. It is straightforward that $P_\SW(\reading(T)) = T$ for all $T$.

Let $\mathcal{A} < \mathcal{B}$ be two alphabets with $a < b$ for all $a \in \mathcal{A}$ and $b \in \mathcal{B}$. 
If $u$ and $v$ are words in the alphabets $\cA$ and $\cB$, respectively, then $u \shuffle v$ denotes the set of \newword{shuffles}, i.e., the set of words $w \in (\cA \cup \cB)^*$ such that $w|_\cA = u$ and $w|_\cB = v$. If $C$ and $D$ are sets of words, then \[
C \shuffle D \coloneqq  \bigcup_{\substack{c \in C \\ d \in D}} c \shuffle d.\]

The following is a related, but more subtle, definition of shuffle for a pair of tableaux.
 \begin{definition}\label{def:tableauShuffle}
Suppose $T$ and $T'$ are shifted standard Young tableaux in the alphabets $\mathcal{A}$ and $\mathcal{B}$, respectively.
We write $T\shuffle T'$ for the set of shifted standard tableaux $S$ such that for some word $u$, $u|_{\mathcal{A}} = \reading(T)$, $P_{\SW}(u|_{\mathcal{B}})= T'$, and $P_\SW(u)=S$. 
 \end{definition}

Sagan \cite{Sagan} shows that two words $w$ and $u$ without repetitions or primed entries have the same Sagan--Worley insertion if and only if they are related to each other by a finite sequence of applications of the equivalences 
\begin{align}
        &\mathbf{x}acb\mathbf{y}\sim_\SK \mathbf{x}cab\mathbf{y}, \quad &\text{for all} \quad a< b < c \in \cN \quad \text{and} \quad \mathbf{x,y} \in \cN^*;\\
        &\mathbf{x}bca\mathbf{y} \sim_\SK \mathbf{x}bac\mathbf{y}, \quad &\text{for all} \quad a< b < c \in \cN \quad \text{and} \quad \mathbf{x,y} \in \cN^*;\\
        &ab\mathbf{x}\sim_\SK ba\mathbf{x}, \quad &\text{for all} \quad a,b\in \cN \quad \text{and} \quad \mathbf{x} \in \cN^*.
    \end{align}
 Importantly, these equivalences are invariant under restriction to intervals in $\mathcal{A}\cup \mathcal{B}\subseteq \cN.$

Consider $S=P_\SW(u)\in T\shuffle T'.$ The following lemma gives an alternative characterization of Definition~\ref{def:tableauShuffle}. For the definition of \emph{jeu de taquin rectification} in this context, see \cite{Worley, Sagan}.

\begin{lemma}\label{lem:algLemma}
    Fix $T$ and $T'$ shifted standard tableaux in the alphabets $\mathcal{A}$ and $\mathcal{B}$, respectively. Then $T\shuffle T'$ is equal to the set of tableaux $R$ such that for some $u$ with $P_\SW(u)=R$, we have $u|_\mathcal{A}=\reading(T)$ and that $R|_\mathcal{B}$ rectifies to $T'$ under shifted jeu de taquin.
\end{lemma}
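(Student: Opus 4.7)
The proof reduces to establishing a single restriction principle and then applying it to both inclusions. The principle I would prove is: for any word $w \in \cN^*$ and any interval $I \subseteq \cN$,
\[
P_\SW(w|_I) \;=\; \rectify\!\bigl(P_\SW(w)|_I\bigr),
\]
where the right-hand side denotes the shifted jeu de taquin rectification of the skew shifted tableau obtained by retaining only the boxes of $P_\SW(w)$ whose entries lie in $I$.

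To derive this principle, I would combine Sagan's Knuth-like characterization of Sagan--Worley insertion with the interval-invariance of the relations $\sim_\SK$ recalled in the paragraph immediately preceding the lemma. Sagan's theorem \cite{Sagan} yields $w \sim_\SK \reading(P_\SW(w))$, and restricting to $I$ (using the stated interval-invariance) gives $w|_I \sim_\SK \reading(P_\SW(w))|_I$. A direct inspection of the row-by-row reading order shows that this restricted reading word equals $\reading\bigl(P_\SW(w)|_I\bigr)$. Applying $P_\SW$ to both sides, and using that Sagan--Worley insertion on the reading word of a shifted skew tableau agrees with jeu de taquin rectification, produces the displayed identity.

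With the restriction principle in hand, both directions of the lemma are essentially bookkeeping, applied with $I = \mathcal{B}$. For the forward containment, given $S \in T \shuffle T'$ with witness $u$, the principle yields $\rectify(S|_\mathcal{B}) = \rectify(P_\SW(u)|_\mathcal{B}) = P_\SW(u|_\mathcal{B}) = T'$, as required; the condition $u|_\mathcal{A} = \reading(T)$ carries over verbatim. Conversely, given $R$ and a witnessing $u$ with $P_\SW(u) = R$ and $\rectify(R|_\mathcal{B}) = T'$, the same principle produces $P_\SW(u|_\mathcal{B}) = \rectify(R|_\mathcal{B}) = T'$, so $u$ also witnesses $R \in T \shuffle T'$ in the sense of Definition~\ref{def:tableauShuffle}.

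The main technical obstacle is the restriction principle itself. Even there, the core algebraic input---that $\sim_\SK$-equivalent words remain equivalent after restriction to an interval---has already been recorded in the paper; what remains is the essentially clerical verification that the reading word commutes with interval restriction on a shifted shape, and that shifted jeu de taquin rectification of a skew shape coincides with Sagan--Worley insertion of its reading word. These are standard facts in the shifted theory, but deserve careful statement because diagonal positions can behave subtly under the Sagan--Worley rules.
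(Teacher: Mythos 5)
Your proposal is correct and follows essentially the same route as the paper: the ``restriction principle'' you isolate is exactly the identity $\rectify(P_\SW(u)|_\mathcal{B}) = P_\SW(u|_\mathcal{B})$ that the paper derives inline, using the same ingredients (Sagan's shifted Knuth characterization, interval-invariance of $\sim_\SK$, commutation of the reading word with restriction, and uniqueness of the straight-shape representative in each shifted Knuth class, which is your ``jeu de taquin agrees with Sagan--Worley on reading words''). The only difference is presentational: you state the principle for a general interval and then spell out both inclusions, whereas the paper proves it directly for $\mathcal{B}$ and lets the resulting equality of conditions give set equality.
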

\begin{proof}
       Fix $T$ and $T'$ and suppose that $S=P_\SW(u)\in T\shuffle T'$, $u|_\mathcal{A}=\reading(T)$ and $P_{\SW}(u|_\mathcal{B})=T'$. Since rectification respects shifted Knuth equivalence \cite[Theorem~6.4.3]{Worley} and Sagan--Worley insertion does as well \cite[Theorem~6.2.2]{Worley}, we have that
    \begin{align*}
        \reading(\rectify(S|_\mathcal{B})) &\sim_{\SK} \reading(S|_\mathcal{B}) \\
        &= \reading(P_\SW(u)|_\mathcal{B}) \\
        &\sim_\SK \reading(P_\SW(u|_\mathcal{B})) 
    \end{align*}
    But there is a unique straight-shaped tableau in each shifted Knuth equivalence class \cite[Theorem~7.2]{Sagan}, so this implies
    \[\rectify(S|_\mathcal{B})  =  P_{\SW}(u|_\mathcal{B})=T'. \qedhere\]
\end{proof}

The next lemma characterizes words whose insertion is an element of the shuffle $T \shuffle T'$.

\begin{lemma}\label{lem:shuffleDescription}
     Let $\mathcal{A} < \mathcal{B}$ be two alphabets with $\mathcal{A},\mathcal{B}\subseteq \cN$. Suppose $T \in \shSYT(\lambda)$ and $T' \in \shSYT(\mu)$ are shifted standard tableaux in the alphabets $\mathcal{A}$ and $\mathcal{B}$, respectively. Then
         \begin{equation}\label{eq:shuffleDescription}
        \{ u \: : \: P_{\SW}(u) \in  T\shuffle T'\} =\{v \: : \: v \in (\mathcal{A}\cup \mathcal{B})^*, \:  P_{\SW}(v|_{\mathcal{A}})=T, \:\text{and }\: P_\SW(v|_\mathcal{B})=T'\}.
    \end{equation}
\end{lemma}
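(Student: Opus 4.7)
The plan is to prove both inclusions. The forward direction ($\subseteq$) should unwind Definition~\ref{def:tableauShuffle} via Sagan's characterization of $\sim_\SK$ by $P_\SW$ and invariance under interval restriction, which is already in hand. The reverse direction ($\supseteq$) is the more delicate one, and I will route it through Lemma~\ref{lem:algLemma} together with the standard fact that shifted insertion respects restriction to initial/terminal alphabets.

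For the forward direction, I will take a word $u$ with $P_\SW(u) \in T \shuffle T'$ and apply Definition~\ref{def:tableauShuffle} to obtain a witness word $u^*$ satisfying $u^*|_\mathcal{A} = \reading(T)$, $P_\SW(u^*|_\mathcal{B}) = T'$, and $P_\SW(u^*) = P_\SW(u)$. Sagan's \cite{Sagan} theorem then gives $u \sim_\SK u^*$, which (since $\sim_\SK$ preserves the multiset of letters) forces $u \in (\mathcal{A} \cup \mathcal{B})^*$. Invariance of $\sim_\SK$ under restriction to intervals, as recalled in the excerpt, will then yield $u|_\mathcal{A} \sim_\SK u^*|_\mathcal{A} = \reading(T)$ and $u|_\mathcal{B} \sim_\SK u^*|_\mathcal{B}$; applying $P_\SW$ produces the desired equalities $P_\SW(u|_\mathcal{A}) = T$ and $P_\SW(u|_\mathcal{B}) = T'$.

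For the reverse direction, I will take $v \in (\mathcal{A} \cup \mathcal{B})^*$ with $P_\SW(v|_\mathcal{A}) = T$ and $P_\SW(v|_\mathcal{B}) = T'$, set $S \coloneqq P_\SW(v)$, and aim to show $S \in T \shuffle T'$ via Lemma~\ref{lem:algLemma}. That lemma reduces the task to verifying (a) the existence of some $u$ with $P_\SW(u) = S$ and $u|_\mathcal{A} = \reading(T)$, and (b) that $S|_\mathcal{B}$ rectifies to $T'$ under shifted jeu de taquin. Both (a) and (b) follow from the shifted analogues, attributed to \cite{Sagan, Worley}, of the classical fact that RSK commutes with restriction to an initial (resp.\ terminal) alphabet: the $\mathcal{A}$-sub-filling $S|_\mathcal{A}$ coincides exactly with $P_\SW(v|_\mathcal{A}) = T$ as shifted tableaux, while the skew sub-filling $S|_\mathcal{B}$ jeu-de-taquin rectifies to $P_\SW(v|_\mathcal{B}) = T'$. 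Granted these, choosing $u \coloneqq \reading(S)$ exhibits (a), since $\reading(S)|_\mathcal{A} = \reading(S|_\mathcal{A}) = \reading(T)$, while (b) is the second assertion directly.

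The main obstacle will be securing the two shifted RSK restriction facts. I expect these to admit a short induction on the length of $v$: inserting a $\mathcal{B}$-letter into the partial tableau only bumps entries strictly greater than it (hence only other $\mathcal{B}$-entries), leaving the $\mathcal{A}$-sub-filling untouched; and inserting an $\mathcal{A}$-letter updates the $\mathcal{A}$-sub-filling exactly as the corresponding insertion on the $\mathcal{A}$-only partial tableau does, with any displaced $\mathcal{B}$-entries sliding along outside the $\mathcal{A}$-sub-shape. Once these two facts are in place, the rest of the argument is a clean definition-chase via Sagan's characterization of $\sim_\SK$.
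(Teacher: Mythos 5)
Your argument is correct, and it rests on exactly the same toolkit as the paper's proof: Sagan's theorem that $P_\SW$-equality coincides with $\sim_\SK$, the invariance of $\sim_\SK$ under restriction to the sub-alphabets $\mathcal{A}$ and $\mathcal{B}$, the identity $P_\SW(\reading(S))=S$, and the commutation of Sagan--Worley insertion with restriction to the initial alphabet $\mathcal{A}$ (together with the rectification statement for $\mathcal{B}$). The organization differs, though. The paper introduces the word-level shuffle $[\reading(T)] \shuffle \{w : P_{\SW}(w) = T'\}$ as an intermediate set and shows it equals each side of \eqref{eq:shuffleDescription} in turn, whereas you prove the two containments directly. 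Your forward containment is in fact a little leaner: by extracting a witness word $u^*$ from Definition~\ref{def:tableauShuffle} and restricting the equivalence $u \sim_\SK u^*$ to each sub-alphabet, you avoid any appeal to $P_\SW(u)|_\mathcal{A} = P_\SW(u|_\mathcal{A})$ in that direction, while the paper does use it there. For the reverse containment you route through Lemma~\ref{lem:algLemma}; be aware that the containment of that lemma you are invoking is the one the paper's proof of it leaves implicit, though it follows from the same Knuth-equivalence chain, and that the paper instead just exhibits the witness $z = \reading(P_\SW(v))$ and checks Definition~\ref{def:tableauShuffle} directly. Either way, both reductions bottom out at the two restriction facts you correctly isolate as the only content genuinely requiring proof, and your sketched induction for them (a $\mathcal{B}$-letter only ever bumps $\mathcal{B}$-letters; an $\mathcal{A}$-letter acts on the $\mathcal{A}$-subtableau as it would in isolation) is the standard and valid argument.
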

\begin{proof}
    We prove that the shuffle $[\reading(T)] \shuffle \{w \: : \: P_{\SW}(w) = T'    \}$ equals both sides of \eqref{eq:shuffleDescription}, starting with the right side. 

   Suppose $v$ is an element of the right side of \eqref{eq:shuffleDescription}.
   Since $P_{\SW}(v|_{\mathcal{A}})=P_{\SW}(\reading(T))$, we have that $v|_{\mathcal{A}} \sim_\SK \reading(T)$ even if $v|_\cA \neq \reading(T)$. But then $v\in [\reading(T)] \shuffle \{w \: : \: P_{\SW}(w) = T'    \}$. 

   Conversely, if $c \in [\reading(T)] \shuffle \{w \: : \: P_{\SW}(w) = T'    \}$, then $c|_\mathcal{A} \sim_{\SK} \reading(T)$ and $P_\SW(c|_\mathcal{A})=P_\SW(\reading(T))=T$. Hence $c  \in \{v \: : \: v \in (\mathcal{A}\cup \mathcal{B})^*, \:  P_{\SW}(v|_{\mathcal{A}})=T, \:\text{and }\: P_\SW(v|_\mathcal{B})=T'\}.$

   Consider now $u$, an element of the left side of \eqref{eq:shuffleDescription}. By definition, $u$ satisfies that $P_{\SW}(u)=S$ for some $S\in T\shuffle T'$.  Say $S \in \shSYT(\nu)$. As all such choices of $S$ are tableaux in the alphabet $\mathcal{A}\cup \mathcal{B}$, the tableaux $T$ and $T'$ are standard, and $\mathcal{A} < \mathcal{B}$, it follows from the definition of Sagan--Worley insertion that $P_\SW(u)|_\mathcal{A} =P_\SW(u|_\mathcal{A})$. Since $S|_\mathcal{A}= P_\SW(u)|_\mathcal{A}$, this yields    \[P_\SW(u|_\mathcal{A}) = S|_\mathcal{A}.\]
But by Definition~\ref{def:tableauShuffle}, there exists $r \in (\cA \cup \cB)^*$ such that $S=P_\SW(r)$ and $r|_\mathcal{A} = \reading(T)$. Hence,
\[ S|_{\mathcal{A}} =P_{\SW}(r)|_\mathcal{A} =P_{\SW}(r|_\mathcal{A})=P_{\SW}(\reading(T))=T,      \]
using again that $\mathcal{A}<\mathcal{B}.$ Thus $P_{\SW}(u|_\mathcal{A})=T$ and therefore $u|_\mathcal{A}\in [\reading(T)]$. 

Similarly, 
\[P_{\SW}(u|_\mathcal{B}) = S|_\mathcal{B} = T'.   \]
In other words, $u|_\mathcal{A} \sim_{\SK} \reading(T) $ and $P_{\SW}(u|_\mathcal{B}) = T'$, implying \[
u\in [\reading(T)] \shuffle \{w \: : \: P_{\SW}(w) = T'    \}.\]

Letting now $c\in [\reading(T)] \shuffle \{w \: : \: P_{\SW}(w) = T'    \}$, $R=P_{\SW}(c)$, and $z=\reading(R)$, we have $z\sim_{\SK} c$ so that
\[T=P_{\SW}(c)|_\mathcal{A} = P_{\SW}(z)|_\mathcal{A} = P_{\SW}(z|_\mathcal{A}).      \]
Similarly,
\[T' = P_{\SW}(c)|_\mathcal{B} = P_{\SW}(z)|_\mathcal{B}. \]
Now, note that $z|_\mathcal{A}=\reading(T)$  because $R$ is a tableau such that the letters of $\mathcal{A}$ inside it form the tableau $T$, while the letters of $\mathcal{B}$ make a skew tableau $M$ of shape $\nu/\lambda$. Hence $z\in T\shuffle T'$. Thus $c\in \{ u \: : \: P_{\SW}(u) \in  T\shuffle T'\}$.   
\end{proof}

\begin{lemma}\label{lem:TheGreatShuffleLemma}
    Let $\mathcal{A} < \mathcal{B}$ be two alphabets with $\mathcal{A},\mathcal{B}\subseteq \cN$. Suppose $T$ and $T'$ are shifted standard Young tableaux in the alphabets $\mathcal{A}$ and $\mathcal{B}$, respectively. 
    Then
    \begin{equation}\label{eq:shuffle}
         \{ w : P_{\SW}(w)=T \}  \shuffle \{ v :  P_{\SW}(v)=T' \}   = \{ u :  P_{\SW}(u) \in T \shuffle T' \}. 
    \end{equation}
\end{lemma}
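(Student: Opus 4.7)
The plan is to recognize Lemma~\ref{lem:TheGreatShuffleLemma} as essentially a repackaging of Lemma~\ref{lem:shuffleDescription}, once one unfolds the definition of the shuffle operation applied to two \emph{sets} of words. The substantive work is already contained in Lemma~\ref{lem:shuffleDescription}; the role of this lemma is to restate that content in a form convenient for later algebraic manipulation of free $P$-Schur functions.

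First, I would spell out what it means for a word to lie in the left-hand side of \eqref{eq:shuffle}. By the definition of $C \shuffle D$ for sets $C \subseteq \mathcal{A}^*$ and $D \subseteq \mathcal{B}^*$, a word $u$ lies in $C \shuffle D$ precisely when $u \in (\mathcal{A} \cup \mathcal{B})^*$, $u|_{\mathcal{A}} \in C$, and $u|_{\mathcal{B}} \in D$. Specializing $C = \{w : P_{\SW}(w) = T\}$ and $D = \{v : P_{\SW}(v) = T'\}$, the left-hand side of \eqref{eq:shuffle} becomes
\[
\{u \in (\mathcal{A} \cup \mathcal{B})^* : P_{\SW}(u|_{\mathcal{A}}) = T \text{ and } P_{\SW}(u|_{\mathcal{B}}) = T'\}.
\]

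Next, I would observe that this is exactly the description appearing on the right-hand side of \eqref{eq:shuffleDescription}. Applying Lemma~\ref{lem:shuffleDescription} directly, this set coincides with $\{u : P_{\SW}(u) \in T \shuffle T'\}$, which is the right-hand side of \eqref{eq:shuffle}. Since both containments are immediate from this identification, the lemma follows.

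There is no real obstacle here: all the heavy lifting was done in Lemma~\ref{lem:shuffleDescription}, where one had to reconcile the indirect definition of the tableau shuffle $T \shuffle T'$ (via Definition~\ref{def:tableauShuffle}) with the direct restriction-based characterization. The only thing to be careful about is not confusing the two notions of shuffle (on words versus on tableaux); once one consistently works with the word-level definition on the left and invokes Lemma~\ref{lem:shuffleDescription} for the tableau-level statement on the right, the proof reduces to chaining two characterizations of the same set.
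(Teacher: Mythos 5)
Your proof is correct and matches the paper's own argument exactly: the paper likewise observes that, by the definition of the shuffle of two sets of words, the left side of \eqref{eq:shuffle} is precisely $\{u : P_{\SW}(u|_{\mathcal{A}})=T \text{ and } P_{\SW}(u|_{\mathcal{B}})=T'\}$, and then invokes Lemma~\ref{lem:shuffleDescription}. No issues.
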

\begin{proof}
  Note that the elements of the left side of \eqref{eq:shuffle} are exactly the words $m$ such that $P_{\SW}(m|_\mathcal{A})=T$ and $P_{\SW}(m|_\mathcal{B})=T'$.
   The result now follows from Lemma~\ref{lem:shuffleDescription}.
\end{proof}

Recall that we identify the word $w$ of length $n$ with the biword $\binom{[n]}{w}$. Given two such biwords $\binom{[n]}{w}$ and $\binom{[n']}{w'}$, we define a product 
\[
\binom{[n]}{w} \ast \binom{[n']}{w'} \coloneqq \binom{[n]}{w} \cdot \binom{[n+1, n + n']}{w'},
\]
where the $\cdot$ denotes concatenation of biwords.

Recall also that the monomials in Equation~\eqref{eq:freePSchurDef} are indexed by words. To avoid exponents, we write $\monomial(w) \coloneqq x^w$.  Moreover, if $b$ is a biword with $\overrightarrow{b} = \binom{u}{v}$, then we write $\monomial(b) = x^v$. Note that $\monomial(b) = \monomial(b')$ whenever $b \approx b'$. 

For two shifted standard tableaux $T$ and $U$, the monomials of $\sFreeSchur_T \cdot \sFreeSchur_U $ are indexed by concatenations of words $w\cdot w'$, where $Q_\mix(w)=T$ and $Q_\mix(w')=U$. 
Accordingly, we have that
\begin{align*}
    \sFreeSchur_T \cdot \sFreeSchur_U &= \sum_{Q_\mix(w)=T} \monomial(w) \cdot \sum_{Q_\mix(w')=U} \monomial(w') = \sum_{\substack{Q_\mix(w)=T \\ Q_\mix(w')=U}} \monomial \binom{[\ell+m]}{w\cdot w'} \\ &= \sum_{\substack{ Q_\mix\binom{[\ell]}{w}=T  \\ Q_\mix\binom{[m]}{w'}=U  }} \monomial \left( \binom{[\ell]}{w} \ast \binom{[m]}{w'} \right) \\ 
    &= \sum_{\substack{ Q_\mix\binom{[\ell]}{w}=T  \\ Q_\mix\binom{[m]}{w'}=U  }} \monomial \left( \binom{[\ell]}{w} \cdot  \begin{pmatrix}
        \ell+1 & \ell+2 & \dots & \ell+m \\
        w'_1 & w'_2 & \dots & w'_m
    \end{pmatrix} \right).
\end{align*} 
That is,
\begin{equation}\label{eq:conclusionFreeDiscussion}
    \sFreeSchur_T\cdot \sFreeSchur_U = \sum_{Q_\mix(w)=T} \monomial\binom{[\ell]}{w} \sum_{Q_\mix(w')=U} \monomial\binom{[\ell+1,\ell+m]}{w'}.
\end{equation}

\begin{theorem}\label{thm:placticShiftedLR}
    The plactic $P$-Schur functions satisfy
    \[
    \sPlacticSchur_\lambda \sPlacticSchur_\mu = \sum_{\nu} b_{\lambda,\mu}^\nu \sPlacticSchur_\nu,
    \]
    where $b_{\lambda,\mu}^\nu$ are the structure coefficients of the (ordinary) $P$-Schur functions.
\end{theorem}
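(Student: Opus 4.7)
The plan is to mimic the classical Lascoux--Schützenberger approach in three steps: (1) establish a shuffle decomposition $\sFreeSchur_T \cdot \sFreeSchur_U = \sum_S \sFreeSchur_S$ at the level of free $P$-Schur functions, (2) push this through the shifted plactic quotient to obtain a decomposition of $\sPlacticSchur_\lambda \cdot \sPlacticSchur_\mu$ in terms of the $\sPlacticSchur_\nu$, and (3) identify the coefficients via abelianization.

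Fix $T \in \shSYT(\lambda)$ and $U \in \shSYT(\mu)$ of sizes $\ell$ and $m$, respectively, so that $\sFreeSchur_T$ and $\sFreeSchur_U$ map to $\sPlacticSchur_\lambda$ and $\sPlacticSchur_\mu$ in the plactic quotient. By equation~\eqref{eq:conclusionFreeDiscussion}, the product $\sFreeSchur_T \cdot \sFreeSchur_U$ is a sum of monomials $x^v$ indexed by words $v = w \cdot w'$ of length $\ell+m$ with $Q_\mix\binom{[\ell]}{w} = T$ and $Q_\mix\binom{[m]}{w'} = U$. Taking $\cA \coloneqq [\ell]$ and $\cB \coloneqq [\ell+1,\ell+m]$, and writing $S \coloneqq Q_\mix\binom{[\ell+m]}{v}$, I claim the pair of conditions on $v$ is equivalent to the single condition $S \in T \shuffle U^{+\ell}$, where $U^{+\ell}$ denotes $U$ with all entries shifted up by $\ell$ into the alphabet $\cB$.

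For this equivalence, I argue as follows. First, $S|_\cA = Q_\mix\binom{[\ell]}{w}$ because the first $\ell$ insertion steps of $v$ coincide with those of $w$ alone; thus $S|_\cA = T$ iff $Q_\mix(w)=T$. Second, the remaining restriction $S|_\cB$ is a skew shifted standard tableau recording the boxes added when inserting $w'$ into the (nonempty) tableau $P_\mix(w)$; by the shifted analogue of Schützenberger's theorem relating such skew-recording to jeu-de-taquin rectification (see \cite{Worley, Sagan, Haiman}), $S|_\cB$ rectifies under shifted jeu de taquin to $U^{+\ell}$ if and only if $Q_\mix(w') = U$. These two conditions taken together are precisely the characterization of $T \shuffle U^{+\ell}$ extracted from Lemma~\ref{lem:algLemma}, applied in its $Q_\mix$-form via the duality \eqref{eq:duality}. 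We therefore obtain the shuffle decomposition
\[
\sFreeSchur_T \cdot \sFreeSchur_U \;=\; \sum_{S \in T \shuffle U^{+\ell}} \sFreeSchur_S.
\]

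Passing to the shifted plactic quotient, each $\sFreeSchur_S$ descends to $\sPlacticSchur_{\Shape(S)}$, which yields $\sPlacticSchur_\lambda \cdot \sPlacticSchur_\mu = \sum_\nu N^\nu_{\lambda,\mu}\, \sPlacticSchur_\nu$ with $N^\nu_{\lambda,\mu} \coloneqq |\{S \in T \shuffle U^{+\ell} : \Shape(S) = \nu\}|$. Abelianizing, which is a ring homomorphism sending $\sPlacticSchur_\nu \mapsto P_\nu$, produces $\sum_\nu N^\nu_{\lambda,\mu} P_\nu = P_\lambda \cdot P_\mu = \sum_\nu b_{\lambda,\mu}^\nu P_\nu$ by the definition of $b_{\lambda,\mu}^\nu$; linear independence of the $\{P_\nu\}$ then forces $N^\nu_{\lambda,\mu} = b_{\lambda,\mu}^\nu$. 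The main obstacle I expect is the shifted ``skew-insertion equals rectification'' step for $Q_\mix$: the interaction between the primed/unprimed rules on the main diagonal for mixed and Sagan--Worley insertions requires careful bookkeeping under the duality \eqref{eq:duality}, and may require re-proving or carefully invoking the relevant results of \cite{Worley, Sagan, Haiman} in the precise form needed for the alphabet split $\cA < \cB$.
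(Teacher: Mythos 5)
Your proposal is correct and follows essentially the same route as the paper: both reduce the theorem to the free shuffle decomposition $\sFreeSchur_{T} \cdot \sFreeSchur_{U} = \sum_{S \in T \shuffle U^{+\ell}} \sFreeSchur_S$ and then finish identically by projecting to $\Z\npowerser{\sPlactic}$, abelianizing, and invoking linear independence of the $P_\nu$. The one step you defer as the ``main obstacle''---that $S|_{\cA}=T$ together with $\rectify(S|_{\cB})=U^{+\ell}$ characterizes $S \in T\shuffle U^{+\ell}$ and matches the pair of conditions $Q_\mix(w)=T$, $Q_\mix(w')=U$---is not available off the shelf in the form you need; it is precisely what the paper's Lemmas~\ref{lem:algLemma}, \ref{lem:shuffleDescription}, and~\ref{lem:TheGreatShuffleLemma} establish, by transporting the whole computation to the Sagan--Worley side via the duality~\eqref{eq:duality}, where restriction to the alphabets $\cA<\cB$ interacts transparently with the insertion.
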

\begin{proof}
Let $T_\lambda$ and $T_\mu$ be shifted standard tableaux of shapes $\lambda$ and $\mu$, respectively. Set $\ell = |\lambda|$ and $m = |\mu|$. We also let $T_\mu'$ be the tableau obtained after applying the unique strictly increasing map \[
\iota:[m]\longrightarrow [\ell+1,\ell+m]
\]
to the entries of $T_\mu.$ 
Then we have
\[ \sFreeSchur_{T_\mu'}= \sum_{Q_\mix \binom{[\ell+1,\ell+m]}{w'} = T_\mu'} \binom{[\ell+1,\ell+m]}{w'} = \sum_{Q_\mix \binom{[\ell+1,\ell+m]}{w'} = T_\mu'} \begin{pmatrix}
        \ell+1 & \ell + 2 & \dots & \ell+m \\
        w'_{1} & w'_{2} & \dots & w'_{m}
    \end{pmatrix}.\]

 Therefore by \eqref{eq:duality} and \eqref{eq:conclusionFreeDiscussion}, we have
\begin{align}
    \sFreeSchur_{T_\lambda} \sFreeSchur_{T_\mu} &= \left( \sum_{Q_\mix \binom{[\ell]}{w} = T_\lambda} \monomial(w) \right) \left(  \sum_{Q_\mix \binom{[\ell+1,\ell+m]}{w'} = T_\mu'} \monomial(w') \right) \nonumber \\ 
    &= \left( \sum_{P_\SW \overrightarrow{\binom{w}{[\ell]}} = T_\lambda} \monomial \binom{[\ell]}{w} \right)  \left(  \sum_{P_\SW \overrightarrow{\binom{w'}{[\ell+1,\ell+m]}} = T_\mu'} \monomial \binom{[\ell+1,\ell+m]}{w'} \right) \nonumber\\
    &= \left( \sum_{P_\SW \overrightarrow{\binom{w}{[\ell]}} = T_\lambda} \monomial \left( \underrightarrow{\binom{[\ell]}{w} } \right) \right) \left(  \sum_{P_\SW \overrightarrow{\binom{w'}{[\ell+1,\ell+m]}} = T_\mu'} \monomial \left( \underrightarrow{\binom{[\ell+1,\ell+m]}{w'}} \right) \right) \nonumber\\
    &\eqqcolon \left( \sum_{ \substack{ \rightarrowtriangle  \\P_\SW (r) = T_\lambda  }} \monomial \binom{r}{p} \right) \left(  \sum_{ \substack{ \rightarrowtriangle \\P_\SW (r') = T_\mu' }} \monomial \binom{r'}{p'}\right),  \label{eq:biwordEquation}
\end{align}
where a triangle-headed right arrow $\rightarrowtriangle$ on the summation indicates that all of its summands are weakly increasing as biwords with respect to the bottom row. That is, $r,p \in \cN^*$ are such that 
\[\binom{r}{p}= \underrightarrow{\begin{pmatrix}
        1 & 2  & \dots & \ell \\
        w_1 & w_2  & \dots & w_{\ell}
    \end{pmatrix} }, \]
    and $r',p' \in \cN^*$ are such that
\[\binom{r'}{p'} = \underrightarrow{\begin{pmatrix}
        \ell+1 & \ell + 2 & \dots & \ell+m \\
        w'_{1} & w'_{2} & \dots & w'_{m}
    \end{pmatrix}}.\]
Note that $r$ and $r'$ are determined by $w$ and $w'$, respectively. 

Remark that all the biword monomials in the product~\eqref{eq:biwordEquation} are such that their upper coordinate belongs to the shuffle $r\shuffle r'$, while their lower coordinate is free, provided the biword is weakly increasing with respect to the bottom row. 

Conversely, since Sagan--Worley insertion respects restriction to intervals, every biword monomial in the set
\[ \left\{\binom{r''}{p''} \: : \: r''\in r\shuffle r' \text{ and the biword is weakly increasing}   \right\} \]
comes in a unique way from the product of two monomials, one obtained by restricting to those biletters with upper coordinate in $[1,\ell]$ and the other by restricting to those in $[\ell+1, m+\ell].$ Hence,
\[\sFreeSchur_{T_\lambda} \sFreeSchur_{T_\mu} = \sum_{\substack{r \: : \:  P_\SW (r)=T_\lambda \\  r' \: : \:   P_\SW(r')=T_\mu' }} \sum_{\substack{ \rightarrowtriangle  \\r''\in r\shuffle r'}} \monomial
\binom{r''}{p''}\]

Note that for every possible $r''\in r \shuffle r'$ there exists at least one word $p''$ with 
\[ \underrightarrow{\binom{r''}{p''}} = \binom{r''}{p''},  \]
so that all such $r''$ contribute nontrivial summands therein.

But, using Equation~\eqref{eq:duality} and Lemma~\ref{lem:TheGreatShuffleLemma}, we can also cast the last expression in terms of its mixed insertion to obtain that
\begin{align}\label{eq:yay}
\begin{split}
  \sFreeSchur_{T_\lambda} \sFreeSchur_{T_\mu}  &=  \sum_{S\in T_\lambda\shuffle T_\mu'} \; \sum_{Q_\mix(w'')=S} \monomial \begin{pmatrix}
        1 & 2  & \dots & \ell &\ell+1 & \ell + 2 & \dots & \ell+m \\
         &   &  & &w''
         &  &  & 
    \end{pmatrix}  \\
    &= \sum_{S\in T_\lambda\shuffle T_\mu'}\sFreeSchur_S.
    \end{split}
\end{align}
Projecting Equation~\eqref{eq:yay} from $\Z \npowerser{x_1, x_2, \dots}$ to $\Z\npowerser{\sPlactic}$, we then obtain that
\begin{equation}\label{eq:projectedEq}
    \sPlacticSchur_{\lambda}\sPlacticSchur_{\mu} = \sum_{\nu}f_{\lambda, \mu}^\nu \, \sPlacticSchur_\nu,    
\end{equation}
where $f_{\lambda, \mu}^\nu$ is the number of tableaux $S\in T_\lambda \shuffle T_\mu$ whose shape is $\nu$. By further projecting Equation~\eqref{eq:projectedEq} to $\Z \llbracket x_1, x_2, \dots \rrbracket$, we also obtain
\begin{equation}
    P_{\lambda}P_{\mu} = \sum_{\nu}f_{\lambda, \mu}^\nu \, P_\nu.    
\end{equation}
But the set $\{P_\nu\}_\nu$ is linearly independent. Hence, $f_{\lambda, \mu}^\nu = b_{\lambda, \mu}^\nu$, as desired. 
\end{proof}

\begin{proof}[Proof of Theorem~\ref{thm:placticLRrule}]
    This is immediate from Theorem~\ref{thm:placticShiftedLR} and the definition of $\sPlacticSchur_\lambda$ and $\sPlacticSchur_\mu$ as the sums over all shifted plactic classes of shapes $\lambda$ and $\mu$, respectively. 
\end{proof}

Using Theorem~\ref{thm:placticShiftedLR}, we now extract a shifted Littlewood--Richardson rule that is essentially equivalent to that of Stembridge \cite{Stembridge}, and give it a new algebraic proof.

\begin{corollary}\label{cor:stembridge}
    Let $\lambda$, $\mu$, and $\nu$ be strict partitions, and $T_\lambda$ a fixed shifted standard tableau of shape $\lambda$. Then $b_{\lambda,\mu}^\nu$ is equal to the number of shifted tableau $U$ of shape $\nu/\mu$ such that $\rectify(U) = T_\lambda.$
\end{corollary}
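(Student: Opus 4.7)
The plan is to combine Theorem~\ref{thm:placticShiftedLR} with the characterization of the tableau shuffle from Lemma~\ref{lem:algLemma} and the symmetry $b_{\lambda,\mu}^\nu = b_{\mu,\lambda}^\nu$. The symmetry is free from Theorem~\ref{thm:placticShiftedLR} itself, since it shows that the $P$-Schur structure constants arise from a product of plactic $P$-Schur functions whose commutative image, the $P$-Schur functions, are known to commute.

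Concretely, I would re-run the proof of Theorem~\ref{thm:placticShiftedLR} with the roles of $\lambda$ and $\mu$ interchanged. Setting $\ell \coloneqq |\lambda|$, $m \coloneqq |\mu|$, and letting $\iota \colon [\ell] \to [m+1, m+\ell]$ be the unique order-preserving bijection, the same argument yields
\[
\sFreeSchur_{T_\mu}\, \sFreeSchur_{\iota(T_\lambda)} \; = \; \sum_{S \in T_\mu \shuffle \iota(T_\lambda)} \sFreeSchur_S.
\]
Projecting first to $\Z\npowerser{\sPlactic}$ and then to commutative power series, and invoking the linear independence of the $P_\nu$, we conclude that $b_{\mu,\lambda}^\nu$ equals the number of shifted standard tableaux $S \in T_\mu \shuffle \iota(T_\lambda)$ of shape $\nu$.

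Next, I would apply Lemma~\ref{lem:algLemma} to identify these tableaux. Because $[1,m] < [m+1, m+\ell]$ and shifted Sagan--Worley insertion respects restriction to an initial segment of the alphabet, any $S \in T_\mu \shuffle \iota(T_\lambda)$ satisfies $S|_{[1,m]} = P_\SW(\reading(T_\mu)) = T_\mu$ as a genuine subtableau (not merely up to shifted Knuth equivalence), so $S|_{[m+1, m+\ell]}$ is a skew tableau of shape $\nu/\mu$. Lemma~\ref{lem:algLemma} then says this skew restriction rectifies under shifted jeu de taquin to $\iota(T_\lambda)$.

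Finally, I would observe that shifted jeu de taquin depends only on the total order of entries, so the map $S \mapsto U \coloneqq \iota^{-1}(S|_{[m+1, m+\ell]})$ is a bijection between the tableaux counted above and shifted skew tableaux $U$ of shape $\nu/\mu$ in the alphabet $[\ell]$ with $\rectify(U) = T_\lambda$. Combined with $b_{\lambda,\mu}^\nu = b_{\mu,\lambda}^\nu$, this proves the corollary. The only subtle point is confirming that $S|_{[1,m]}$ genuinely realizes $T_\mu$ as a straight-shape subtableau inside $\nu$ (so that the complement is a skew shape $\nu/\mu$), but this is immediate from the compatibility of Sagan--Worley insertion with alphabet restriction noted above; beyond this, the proof is a direct application of the two results already in hand.
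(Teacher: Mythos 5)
Your proposal is correct and follows essentially the same route as the paper: both deduce from the proof of Theorem~\ref{thm:placticShiftedLR} that $b_{\lambda,\mu}^\nu$ counts shape-$\nu$ tableaux in a tableau shuffle, then use Lemma~\ref{lem:algLemma} and the compatibility of Sagan--Worley insertion with alphabet restriction to identify these with skew tableaux rectifying to a fixed standard tableau. The only cosmetic difference is that the paper proves the statement with the roles of $\lambda$ and $\mu$ interchanged (counting shape-$\nu/\lambda$ tableaux rectifying to a $\mu$-shaped tableau) and leaves the symmetry $b_{\lambda,\mu}^\nu = b_{\mu,\lambda}^\nu$ implicit, whereas you make that symmetry explicit and work directly toward the statement as written.
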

\begin{proof}
    For convenience, we show instead that $b_{\lambda,\mu}^\nu$ is the number of tableaux of shape $\nu/\lambda$ rectifying to a fixed standard shifted tableau $D$ of shape $\mu.$ 

    By the reasoning in the proof of Theorem~\ref{thm:placticShiftedLR}, the coefficient $b_{\lambda,\mu}^\nu$ is equal to the number of tableaux $S\in T_\lambda \shuffle T_\mu$ with shape $\nu$. As this holds for arbitrary standard shifted tableaux $T_\lambda$ and $T_\mu$ of the appropriate shapes, let us set $T_\mu:= D$ and choose any $T_\lambda$ of shape $\lambda$. We assume that the entries of $T_\lambda$ and $T_\mu$ belong to alphabets $\mathcal{A}$ and $\mathcal{B}$, respectively, such that $\mathcal{A} < \mathcal{B}$.

    By Definition~\ref{def:tableauShuffle}, all such tableaux $S\in T_\lambda \shuffle T_\mu$ will have $S|_{\mathcal{A}}=T_\lambda$, so that deleting all letters of $\mathcal{A}$ from $S$ leaves us with a tableau of skew shape $\nu/\lambda$. It is a consequence of Lemma~\ref{lem:algLemma}, that $S$ must also satisfy $\rectify(S)=T_\mu=D$. 

    Noting that we can also produce a tableau in $T_\lambda\shuffle T_\mu$ from a skew tableau $R$ of shape $\nu/\lambda$ with $\rectify(R)=T_\mu$; namely, by placing $T_\lambda$ so as to make $R$ a (non-skewed) tableau; we obtain that $b_{\lambda,\mu}^\nu$ is equal to the number of skew tableaux of shape $\nu/\lambda$ rectifying to $D$. 
\end{proof}

\section{Barely Yamanouchi words}\label{sec:barelyYamanouchi}

\subsection{Definition and motivation}
We will now consider a generalization of Yamanouchi tableaux to the shifted context, that is, a family of shifted tableaux with analogous properties to classically Yamanouchi tabelaux and from which a new Littlewood--Richardson rule will be established for $P$-Schur functions. 

\begin{definition}\label{def:barelyYamanouchi}
    Let $\nu$ be a strict partition of $n$. The \emph{barely Yamanouchi sequence associated to $\nu$} is the sequence $\hat{y}_\nu\in \cN^*$ given by:
    \[ \nu_\ell \quad \nu_{\ell} -1 \quad \ldots \quad 1 \quad \nu_{\ell-1} \quad \nu_{\ell -1} -1 \quad \ldots \quad 1 \quad \ldots \quad \nu_{1} \quad \nu_{1}-1 \quad \ldots \quad 1 \]
    where $\ell$ is the length of $\nu.$
\end{definition}

\begin{example}
    For the strict partition $\nu=(6,4,3)$, we have that its barely Yamanouchi sequence corresponds to:
    \[ \hat{y}_\nu = 3 \quad 2 \quad 1 \quad 4 \quad 3 \quad 2 \quad 1 \quad 6 \quad 5 \quad 4 \quad 3 \quad 2 \quad 1. \qedhere \]
\end{example}

\begin{definition}
    Given a strict partition $\nu$ of $n$, we define the \newword{barely Yamanouchi tableau} of shape $\nu$, denoted by $\hat{Y}_\nu$, to be the shifted tableau resulting from the mixed insertion of the barely Yamanouchi sequence from the same shape $\hat{Y}_\nu = P_\mix (\hat{y}_\nu)$. Any word $w\in \cN^*$ satisfying $P_\mix(w)= \hat{Y}_\nu$ for some strict partition $\nu$ will be called a \newword{barely Yamanouchi word}. 
\end{definition}

We now compare and contrast the definition of $\hat{Y}_\lambda$ to the definition of shifted Yamanouchi tableaux $\yama_\lambda$ from Section~\ref{sec:isotropic}. Visually, $\yama_\lambda$ is more obviously an analogue of the ordinary Yamanouchi tableau $Y_\mu$, for $\mu$ a nonshifted shape, than $\hat{Y}_\lambda$ is. For example, if $\lambda = (6,3)$ we have that
\[ \hat{Y}_\lambda = 
\begin{ytableau}
     1 & 1 & 2' & 3' & 5' & 6' \\
    \none & 2 & 3' & 4' &\none &\none 
\end{ytableau}, \quad \text{while} \quad 
\yama_\lambda = 
\ytableaushort{111111, \none 222}.
\]
On the other hand, the class of words that mixed insert to $\yama_\lambda$ (that is to say, the shifted plactic class of $\yama_\lambda$ or the \newword{shifted Yamanouchi words}) is difficult to characterize. In particular, this class of words does not satisfy an obvious analogue of the classical Yamanouchi condition. Moreover, the class of shifted Yamanouchi words is not closed under taking right factors.
For example, 
\[
P_\mix(111222111) = \yama_{(6,3)},
\]
while the right factor $222111$ satisfies
\[
P_\mix(222111)=
\begin{ytableau}
    1&1&1&2'\\
    \none&2&2&\none 
\end{ytableau},
\]
which is not $\yama_\theta$ for any strict partition $\theta$. On the other hand, we will show in Corollary~\ref{cor:rightisbarelyYam} that the class of barely Yamanouchi words, like the class of classically Yamanouchi words, is closed under taking right factors. Moreover, we will show in Lemma~\ref{lem:barelyYamanouchiareShiftedLattice} that the class of barely Yamanouchi words has an elegant combinatorial description. In Corollary~\ref{cor:barelyYamisYam}, we will show that barely Yamanouchi words sit inside the class of classically Yamanouchi words.

The motivation for the definition of barely Yamanouchi words and tableaux is combinatorial in nature and proceeds from a characterization of Yamanouchi words. To wit, by Greene's theorem we know that the shape of an RSK insertion tableaux is determined by its longest decreasing (increasing) subwords. From this perspective, it is immediate that for an arbitrary partition $\nu$, any Yamanouchi word $y_\nu$ has an insertion tableau $P(y_\nu)$ of shape $\nu$; after all, the word $y_\nu$ is constructed to have a transparent decreasing (increasing) structure. Moreover, we can regard the Yamanouchi words corresponding to the partition $\nu$ as those words $w$ of content $\nu$ such that the longest decreasing (increasing) subwords of $w$ have the same length as the longest decreasing (increasing) subwords of a fixed Yamanouchi word $y_\nu.$

Thinking in this way of the Yamanouchi property as stemming from combinatorial properties of a word, it is natural to consider the shifted analogue of the same concept. The appropriate concept in the shifted setting is that of the longest hook subwords as made clear in Theorem~\ref{thm:greeneGeneralization} below. 

\begin{definition}[Serrano~\cite{Serrano}]
    Let $w= w_1 w_2 \ldots w_n \in \cN^*$ be a word. We say that it is a \emph{hook word} if there exists $1\leq k \leq n$ such that 
    \[ w_1 > w_2 > \ldots > w_k \leq w_{k+1} \leq w_{k+2} \leq \ldots \leq w_n \]
    where $k$ is possibly $1$ or $n.$
\end{definition}

\begin{definition}[Serrano~\cite{SerranoThesis}]
    Let $w\in \cN^*$ and $(w_{i_1})_{i_1}, (w_{i_2})_{i_2}, \ldots, (w_{i_k})_{i_k}$ be subwords of $w$ such that they share pairwise at most one letter. We say that those subwords form a \emph{$k$-hook subword} if every subword is a hook subword and if every letter on $w$ appears in at most two subwords.
\end{definition}

\begin{remark}
    A subword of a word $w$ differs from a subsequence of the same word, by virtue of the order of its elements being identical to the relative order they had in the word $w$. That is, if a letter $u$ appears before $v$ in $w$, it also shows up before $v$ in a subword of $w$ containing both letters. This need not be the case for a subsequence thereof.
\end{remark}

The length of a $k$-hook subword is the sum of the sizes of all the subwords comprising it. We let $I_k(w)$ stand for the length of the longest $k$-hook subword of $w$. We now restate Serrano's \cite[Theorem~3.7]{SerranoThesis} (correcting a typo contained in the original version). 

\begin{theorem}[Serrano~{\cite[Theorem~3.7]{SerranoThesis}}]\label{thm:greeneGeneralization}
    Let $w\in \cN^*$ and $T=P_\mix(w)$ be a shifted tableau. Then, the shape $\lambda$ of $T$ is completely determined by the tuple \[(I_1(w), I_2(w), \ldots, I_{\ell(\lambda)}(w)).\] Furthermore,
    \[I_k(w) = \lambda_1 + \ldots + \lambda_k + \binom{k}{2}. \]
\end{theorem}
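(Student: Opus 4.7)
My plan is to follow the classical strategy for proving Greene's theorem in the RSK setting, adapted to mixed insertion. The proof has two main components.

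First, I would show that $I_k(w)$ depends only on the shifted plactic class of $w$, equivalently, only on $P_\mix(w)$. Since the shifted plactic monoid is generated by the eight degree-$4$ relations (SP.1)--(SP.8) of Proposition~\ref{def:shifted_plactic}, this reduces to verifying invariance under each of these relations. For every such relation and every $k$-hook subword realizing $I_k$ on one side, I would produce, by a local modification of at most four positions, a $k$-hook subword of the same length on the other side. Since the relations are involutive, this yields $I_k(w) = I_k(w')$ whenever $w$ and $w'$ differ by one of the shifted plactic moves, and hence whenever $P_\mix(w) = P_\mix(w')$.

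Second, for each strict partition $\lambda$, I would exhibit a canonical word $w_\lambda$ such that the shape of $P_\mix(w_\lambda)$ is $\lambda$ and verify the formula $I_k(w_\lambda) = \lambda_1 + \cdots + \lambda_k + \binom{k}{2}$ directly. A natural candidate is a word obtained by reading the shifted Yamanouchi tableau $\yama_\lambda$ hook-by-hook: for each $i$, the shifted hook at the diagonal cell $(i,i)$ comprises the $i$-th row together with the diagonal entries $(i+1,i+1), \ldots, (\ell, \ell)$. Reading the outermost $k$ such hooks produces a $k$-hook subword of length $\sum_{i=1}^k \lambda_i + \binom{k}{2}$, where the correction $\binom{k}{2}$ arises because the diagonal entries below $(i,i)$ are shared between the strictly decreasing portions of the $i$-th and subsequent hook subwords. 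The matching upper bound would follow from a charging argument: the decreasing portions of the $k$ hook subwords must lie in strictly decreasing subsequences of $w_\lambda$, which together can cover at most $\binom{k}{2}$ diagonal cells beyond the $\lambda_1 + \cdots + \lambda_k$ cells available in the first $k$ rows.

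I expect the invariance step to be the main technical obstacle. Although each shifted plactic relation is local, a $k$-hook subword is a global object, and its interaction with the four affected letters admits many configurations --- for example, one of the hook subwords may have its turning point among the four positions, or two distinct hook subwords may share a letter located at one of those positions. Handling each such configuration for each of the eight relations is a finite but delicate case analysis. An alternative approach would be to use the duality in Equation~\eqref{eq:duality} to transfer the result from an analogous statement for Sagan--Worley insertion, but this would first require establishing a Greene-type theorem for $P_\SW$, which appears to be of comparable difficulty.
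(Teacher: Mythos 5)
First, a point of comparison: the paper does not prove this statement at all --- it is imported verbatim from Serrano's thesis (with a typo corrected), so there is no ``paper's proof'' to match your approach against. Your outline follows the standard Greene-theorem template (plactic invariance of the statistic, plus a direct computation on canonical words), which is indeed the natural strategy; but as written it has two genuine gaps.

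The more serious one is logical. In your second step you compute $I_k$ for a single canonical word $w_\lambda$ \emph{per shape}. Combined with the invariance of step one, this only establishes the formula for words lying in the shifted plactic class of that one word $w_\lambda$ --- not for an arbitrary $w$ with $P_\mix(w)$ of shape $\lambda$, since distinct tableaux of the same shape determine distinct shifted plactic classes and there is no bridge between them. What is actually needed (as in the classical proof of Greene's theorem) is a computation of $I_k$ for a representative of \emph{every} class, e.g.\ for the reading word of an arbitrary shifted tableau $T$ of shape $\lambda$, showing both the lower bound (exhibiting $k$ hook subwords from the rows/hooks of $T$) and the matching upper bound. Your ``charging argument'' for the upper bound is only sketched for the special word read off from $\yama_\lambda$, and even there the claim that the decreasing portions ``can cover at most $\binom{k}{2}$ diagonal cells beyond'' the first $k$ rows is asserted rather than argued; the analogous bound for the single word $\hat{y}_\nu$ already occupies the paper's Lemma~\ref{lem:hookSequencesLemma} and requires a genuine case analysis. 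The second gap is that the invariance step --- checking that $I_k$ is preserved by each of \eqref{eq:SP1}--\eqref{eq:SP8} --- is the entire technical content of the theorem and is deferred as ``a finite but delicate case analysis.'' This is not a routine verification: a $k$-hook subword can have the turning point of one of its constituent hooks among the four permuted letters, or two constituent hooks sharing a letter there, and one must exhibit a replacement $k$-hook subword of equal length in every such configuration for every relation. Until both of these are carried out, the proposal is a plan rather than a proof.
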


\begin{example}
    Let $w= 3 \quad 1 \quad 2 \quad 1 \quad 4 \quad 3 \quad 2 \quad 1 \quad 1 \quad 1 $. The longest hook subword of $w$ is $w_1 = 4 \quad 3 \quad 2 \quad 1 \quad 1 \quad 1$. A hook subword is a $1$-hook subword of maximum length, yielding that $I_1(w)=6.$
    
    To form the longest $2$-hook subword we extend the longest hook subword of the remainder, the word $u= 3 \quad 2 \quad 1$, to include the element $2$ of $w_1$, resulting in the word $w_2= 3 \quad 2 \quad 1 \quad 2$ and the $2$-hook subword $\{w_1, \: w_2\}.$ Since $w_1$ and $w_2$ are allowed to share at most one element by definition, this is a $2$-hook subword of maximum length. 
    
    Finally, appending the two letters $2$ from $w_1$ and $3$ from $w_2$ to the only remaining element we get $w_3=1 \quad 2 \quad 3$, thus arriving at the $3$-hook subword $\{w_1, \: w_2, \: w_3\}$ which is of maximal length amongst $3$-hook subwords as it uses all the letters from $w$. 

    Accordingly, we have that $I_1(w)=6$, $I_2(w)=10$, and $I_3(w)=13$. This in turn implies that 
    \[ \lambda_1 = I_1(w) = 6 \]
    \[ \lambda_2  = I_2(w) - \binom{2}{2} - \lambda_1 = 3 \]
    \[ \lambda_3 = I_3(w) - \binom{3}{2} - \lambda_2 - \lambda_1 = 1 .\]
    Comparing with the mixed insertion of $w$, we see that:
    \[ P_\mix(w) = 
    \begin{ytableau}
            1&1&1&1&1&3' \\
            \none&2&2&4'\\
            \none&\none&3
    \end{ytableau} \]
    so that the shape agrees with our calculations.
\end{example}

Following in the logical path set by our understanding of the Yamanouchi class $[y_\nu]$ as consisting of those words with content $\nu$ and having longest decreasing subwords of length $\nu_1,$ $\nu_2$, $\ldots$, and $\nu_{\ell(\nu)}$ respectively; it is only natural in light of Serrano's theorem to define Yamanouchi words in the shifted setting analogously, letting the notion of a hook subword replace that of a decreasing subword. Then, the shifted class of the word $\hat{y}_\nu$ from Definition~\ref{def:barelyYamanouchi} is a natural generalization of Yamanouchi words as shown in the lemma that follows. The word $\hat{y}_\nu$ is specially designed to have a trivial hook--structure, that is, for it to be easy to ascertain its hook subwords. The length of the largest, second largest, and so on, hook lengths will also essentially be $\nu_1,$ $\nu_2$, $\ldots$, and $\nu_{\ell(\nu)}$, respectively. 

\begin{remark}
    Given that in the definition of $k$-hook subwords we are allowing subwords with non-trivial intersections, the resulting length of the maximal subwords constructed in the proof of the lemma will be strictly longer than the desired respective lengths $\nu_1,$ $\nu_2$, $\ldots$, and $\nu_{\ell(\nu)}$. However, if the points in the intersection are accounted for, the lengths correspond without difference to the parts of the partition $\nu.$
\end{remark}

\begin{lemma}\label{lem:hookSequencesLemma}
    Let $\nu$ be a strict partition and $d_n \in \cN^*$ stand for the sequence $d_n = n\quad n-1 \quad n-2 \quad \ldots \quad 1.$ Then, the collection of subwords
    \[ \{  d_{\nu_j} \cup \{\min( u\in d_{\nu_{r}} \: | \: u \notin d_{\nu_j})     \}_{1\leq r < j} \}_{1\leq j \leq \ell(\nu)}\] 
    is a $j$-hook subword of $\hat{y}_\nu$ of maximal length.
\end{lemma}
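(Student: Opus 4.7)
My plan is to prove the lemma in three stages: write out the subwords explicitly and check each is a hook, verify the $j$-hook overlap conditions and compute the length, and finally establish maximality. Because $\nu$ is strict, we have $\nu_r > \nu_i$ for every $r < i$, so $\nu_i + 1$ lies in $d_{\nu_r}$ but not $d_{\nu_i}$ and is in fact the minimum such value. Reading positions in the order they appear in $\hat{y}_\nu$ (the entire block $d_{\nu_i}$ first, then one position from each later block $d_{\nu_r}$ with $r < i$), the $i$-th subword $H_i$ reads
\[
\nu_i,\; \nu_i - 1,\; \ldots,\; 1,\; \underbrace{\nu_i + 1,\; \ldots,\; \nu_i + 1}_{i - 1 \text{ copies}},
\]
which is a hook word with pivot $1$ and length $\nu_i + (i - 1)$; summing gives the claimed total $\sum_{i=1}^{j} \nu_i + \binom{j}{2}$.

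Next, I would verify the overlap conditions by direct case analysis. For $i < i'$, the intersection $H_i \cap H_{i'}$ consists only of the position in block $d_{\nu_i}$ with value $\nu_{i'} + 1$: it lies in $H_i$ as part of the block, and in $H_{i'}$ as its extra element contributed from $d_{\nu_i}$. Elsewhere, either only one subword uses the block, or their extras $\nu_i + 1 \ne \nu_{i'} + 1$ sit at distinct positions by strictness of $\nu$. Moreover, any given position in block $d_{\nu_r}$ with value $v$ lies in $H_r$ and, by strictness of $\nu$, in at most one further $H_i$ (the unique one with $\nu_i = v - 1$, if such $i \le j$ exists), so each position of $\hat{y}_\nu$ lies in at most two of the subwords.

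For maximality the plan uses three ingredients. (i) Any single hook subword $H$ of $\hat{y}_\nu$ with pivot in block $d_{\nu_r}$ satisfies $|H| \le \nu_r + r - 1$: its strictly decreasing part uses at most $\nu_r$ distinct values (bounded by the maximum value available in blocks at or to the left of $d_{\nu_r}$), and its weakly increasing part contributes at most one position per block to the right of $d_{\nu_r}$, of which there are $r - 1$. (ii) In a $j$-hook subword of maximum total length, the pivot blocks can be taken distinct: two pivots inside a common block $d_{\nu_r}$ force a pairwise overlap within that block's strictly decreasing positions that prevents both hooks from achieving their individual upper bounds. (iii) Among all assignments of $j$ distinct pivot blocks $d_{\nu_{r_1}}, \ldots, d_{\nu_{r_j}}$, the sum $\sum_k (\nu_{r_k} + r_k - 1)$ is maximized by $\{r_1, \ldots, r_j\} = \{1, \ldots, j\}$, since $\nu_r + r$ is weakly decreasing in $r$ as a consequence of $\nu_r - \nu_{r+1} \ge 1$ from strictness. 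Combining (i)--(iii) yields the upper bound $\sum_{i=1}^{j} \nu_i + \binom{j}{2}$ matching the construction.

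The main obstacle is ingredient (ii), which must rigorously rule out a maximum $j$-hook having two pivots in the same block rather than merely giving the weaker bound $j\nu_1$. I expect to handle this via an exchange argument: starting from a hypothetical maximum configuration with two pivots in a common block, migrate one pivot to an unused block (such a block is available precisely because $\nu$ is strict and $j \le \ell(\nu)$) and rebuild the corresponding hook, invoking ingredient (iii) to check that total length does not decrease. Iterating this exchange eventually produces an equally long configuration with all pivot blocks distinct, to which (i) and (iii) apply directly.
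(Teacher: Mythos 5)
Your construction of the three subword families, the verification that the $i$-th one is a hook word of length $\nu_i+i-1$, and the check of the pairwise-overlap conditions are all correct, and your per-hook bound (i) together with the rearrangement inequality (iii) closely parallels the paper's own argument, which likewise splits each competing hook into a strictly decreasing part (bounded by $\nu_m$) and a weakly increasing tail (bounded by $m-1$) and sums. The difference is organizational: the paper indexes the competing hooks by their maxima and asserts a re-indexing with $b_m\le\nu_m$, whereas you index by the block containing the pivot, which is arguably cleaner bookkeeping.

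The genuine gap is exactly where you flag it: ingredient (ii), and it is not a formality. Because $\nu_r+r$ is weakly \emph{decreasing} in $r$, assigning two hooks to the same low-index block gives a \emph{larger} naive bound than the distinct-block assignment: for $\nu=(5,3,1)$ and $j=2$, two pivots in the block $d_5$ give $2\cdot(\nu_1+1-1)=10$, which exceeds the target $I_2=\nu_1+\nu_2+1=9$. So (i) and (iii) alone cannot close the argument, and the entire burden falls on (ii). The exchange argument you sketch does not obviously discharge it: migrating a pivot to an unused block $r'>r$ lowers that hook's ceiling from $\nu_r+r-1$ to $\nu_{r'}+r'-1$, so verifying that ``total length does not decrease'' presupposes that the original hook was already short enough --- which is claim (ii) restated. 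What is actually needed is a quantitative use of the overlap constraints: if two decreasing parts both terminate in block $d_{\nu_r}$ and both attain the maximal size $\nu_r$, then both must contain the unique occurrence of the value $\nu_r$ (and, whenever $\nu_{r+1}<\nu_r-1$, also the unique occurrence of $\nu_r-1$) among positions weakly left of that block, forcing two shared letters and violating the definition of a $k$-hook subword. Making that forced loss explicit, and propagating it through the sum over all duplicated blocks, is the missing step; until it is supplied, the upper bound on $I_j(\hat{y}_\nu)$ is not established.
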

\begin{proof}
    Suppose by contradiction that this is not the case for a partition $\nu$. Let $k$ be minimal with the property that $\mathcal{D}:=\{ d_{\nu_j} \}_{1\leq j \leq k}$ is not a $k$-hook subword in $\hat{y}_\nu$. Then there exists a $k$-hook subword $\mathcal{B}:=\{(w_{i_j})_{i_j}\}_{1\leq j \leq k}$ of $w$ with length strictly greater. 

    By construction of $\mathcal{D}$ and the hypothesis that $\nu$ is a strict partition, we have that after a possible re-indexing of the sequences in $\mathcal{B}$ the biggest number in each sequence therein, which we denote by $b_m:=\max((w_{i_m})_{i_m})$, satisfies the inequality $b_m\leq \nu_m.$ Moreover, either $b_m$ is the first element of the sequence $(w_{i_m})_{i_m}$ or it is not.
    
    Let $inc_m$ and $dec_m$ denote the weakly increasing and strictly decreasing parts of the hook sequence $(w_{i_m})_{i_m}$ considered as a word. We take the strictly decreasing part to be of maximum length for the choice of $inc_m$ and $dec_m$ to be unique and well defined. 
    
    If $b_m$ is the first element of the sequence $(w_{i_m})_{i_m}$, then $dec_m$ has at most $b_m$ elements as the minimum letter in the word $y_\nu$ is $1$. Moreover, as $(w_{i_m})_{i_m}$ is a subword of $y_\nu$ and considering that $y_\nu$ is a union of decreasing blocks (decreasing subwords of the form $a \quad a-1 \quad a-2 \quad \ldots \quad 1$), $inc_m$ has at most $m-1$ elements. Thus,    
    \[|(w_{i_m})_{i_m}| \leq b_m + m-1 \leq \nu_m + m -1.\]
    
    But the number of letters in $  d_{\nu_m} \cup \{\min( u\in d_{\nu_{r}} \: | \: u \notin d_{\nu_m})     \}_{1\leq r < m}$ is $\nu_m + m -1.$ Hence, 
    \[|(w_{i_m})_{i_m}| \leq | d_{\nu_j} \cup \{\min( u\in d_{\nu_{r}} \: | \: u \notin d_{\nu_j})     \}_{1\leq r < j} |.\]
    This contradicts that $\mathcal{B}$ is a $k$-hook subword of length greater than $\mathcal{D}.$

    Now suppose that $b_m$ is the last element of the subword $\mathcal{B}.$ There are two cases for the leftmost element $e$ of $\mathcal{B}$: either it satisfies $e<b_m$ or $e=b_m$. If the latter occurs, then $b_m$ is also the first element of $\mathcal{B}$ and the previous case applies again, for which the statement follows.

    If $e<m$, then $e$ is contained in a decreasing block possibly not being the greatest element in the block, and to bound $|\mathcal{B}|$ it is necessary to consider how to optimize $inc_m$ and $dec_m$ simultaneously so as to maximize their sum. For $inc_m$ to be optimal, $e$ should be eventually followed in $\mathcal{B}$ by an instance of $1$ belonging to the same decreasing block in $y_\nu$ so that the increasing sequence $inc_m$ can be as long as possible. But notice that this strategy also yields the optimal sequence $dec_m$: indeed, the subword $e \quad e-1 \quad \ldots \quad 1$ is optimal with the constraint of $e$ being its greatest element.

    Let now $\nu_t$ be the maximum element in $y_\nu$ in the decreasing block of $e$. From the analysis of optimality, we have that $|dec_m| \leq e$ and $| inc_m| \leq t-1$. If $t\leq m$, then 
    \[ | \mathcal{B} | \leq e + t-1 \leq b_m + m - 1 \leq \nu_m + m - 1. \]
    If on the other hand $t>m$, then
    \begin{align*}
        | \mathcal{B} | &\leq e + t-1 = (t-m)  + e + m - 1 \\
        &\leq (t-m) + (\nu_m - ( t-m )) + m - 1 \\
        &= \nu_m + m -1.
    \end{align*}
    where the inequality $e \leq \nu_m - ( t-m )$ follows from the fact that $e<\nu_m$ and for each block of separation between the decreasing block of $\nu_m$ and the decreasing block of $e$ the minimum distance between $e$ and $\nu_m$ increases by 1.
    
    In either case the inequality $|\mathcal{B}| \leq |\mathcal{D}|$ holds, because again \[
    |d_{\nu_m} \cup \{\min( u\in d_{\nu_{r}} \: | \: u \notin d_{\nu_m})     \}_{1\leq r < m}| = \nu_m + m -1.
    \]
    This is a contradiction.

    As all cases were covered, the desired result follows.
\end{proof}

The next lemma shows that the words in the shifted plactic class of $\hat{y}_\nu$ also have this desired property. For this reason, we consider hereafter the words in $[\hat{y}_\nu]$, the barely Yamanouchi words, to be the most suitable generalization of Yamanouchi words to $\sPlactic$.

\begin{lemma}
    Let $w$ be such that $w\sim \hat{y}_\nu$ in $\sPlactic.$ Then, the longest hook subwords of $w$ have the same length as those of $\hat{y}_\nu$. That is,
    \[I_k(w) = I_{k}(\hat{ y}_\nu) \qquad \forall k \: : \: 1\leq k \leq \ell(\nu).  \]
\end{lemma}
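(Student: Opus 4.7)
The plan is to observe that this lemma is an immediate consequence of Serrano's shifted analogue of Greene's theorem (Theorem~\ref{thm:greeneGeneralization}), together with the definition of the shifted plactic monoid. Since by definition $w \sim \hat{y}_\nu$ in $\sPlactic$ is equivalent to $P_\mix(w) = P_\mix(\hat{y}_\nu) = \hat{Y}_\nu$, both words have the same mixed insertion shape. Write $\lambda$ for this common shape. Then applying Theorem~\ref{thm:greeneGeneralization} to each word separately gives
\[
I_k(w) = \lambda_1 + \cdots + \lambda_k + \binom{k}{2} = I_k(\hat{y}_\nu)
\]
for every $k$ in the range $1 \le k \le \ell(\lambda)$.

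It remains only to verify that this range covers $1 \le k \le \ell(\nu)$, i.e., that $\ell(\lambda) \ge \ell(\nu)$; in fact $\lambda = \nu$. This follows by combining Lemma~\ref{lem:hookSequencesLemma} with the formula from Theorem~\ref{thm:greeneGeneralization}: on the one hand, Lemma~\ref{lem:hookSequencesLemma} (together with the length count $\sum_{j=1}^{k}(\nu_j+(j-1)) = \nu_1 + \cdots + \nu_k + \binom{k}{2}$ for the constructed and maximal $k$-hook subword) gives $I_k(\hat{y}_\nu) = \nu_1 + \cdots + \nu_k + \binom{k}{2}$ for every $k \le \ell(\nu)$; on the other hand, Theorem~\ref{thm:greeneGeneralization} gives $I_k(\hat{y}_\nu) = \lambda_1 + \cdots + \lambda_k + \binom{k}{2}$. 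Comparing termwise forces $\lambda_k = \nu_k$ for $1 \le k \le \ell(\nu)$, so $\lambda = \nu$ and the stated range for $k$ is correct.

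There is no real obstacle here, since all the substantive combinatorics was already carried out in Lemma~\ref{lem:hookSequencesLemma} (computing $I_k(\hat{y}_\nu)$ explicitly) and in Serrano's theorem (identifying $I_k$ as a shape invariant of the mixed insertion tableau). The only point to be careful about is the bookkeeping that ensures the hook-subword invariants really are computed on $w$ and not on $\hat{y}_\nu$ after reinsertion: this is handled by the fact that Theorem~\ref{thm:greeneGeneralization} is stated directly in terms of $I_k(w)$ of an arbitrary word $w$, together with the shape of $P_\mix(w)$, so no further work is needed.
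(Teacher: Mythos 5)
Your proof is correct and follows essentially the same route as the paper's: shifted plactic equivalence forces $P_\mix(w) = P_\mix(\hat{y}_\nu)$, so the two words share a common mixed-insertion shape, and Theorem~\ref{thm:greeneGeneralization} then identifies $I_k(w)$ with $I_k(\hat{y}_\nu)$. Your extra verification that this common shape equals $\nu$ (so that the stated range $1 \le k \le \ell(\nu)$ is covered), derived from Lemma~\ref{lem:hookSequencesLemma} together with the formula in Theorem~\ref{thm:greeneGeneralization}, is sound and merely makes explicit a point the paper's one-line proof leaves implicit.
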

\begin{proof}
    Since $w\sim \hat{y}_\nu$, it follows that $P_\mix(\hat{y}_\nu) = P_\mix(w)$. In particular, the shape of both tableaux is the same. The desired result follows from Theorem~\ref{thm:greeneGeneralization}.
\end{proof}

It could seem reasonable to expect that the converse condition of having the same content as $\hat{y}_\nu$ and with maximal hook subwords of lengths $\nu_1,$ $\nu_2$, $\ldots$, and $\nu_{\ell(\nu)}$, would be enough to characterize the shifted plactic class of $y_\nu$. However, in contrast to the plactic setting, this is not enough to determine a unique insertion tableau as illustrated in the example below.

\begin{example}\label{ex:notionExample}
    Let $\nu:=(4,2)$ and $w:=121324\in \cN^*$. The insertion tableau of $w$ is
    \[ P_\mix(w) = 
    \begin{ytableau}
            1&1&2&4 \\
            \none&2&3
    \end{ytableau} .\]
    Moreover, the hook subwords $1124$ and $234$ of $w$ form a maximal $2$-hook subword of $w$. In particular, $I_1(w)=4$ and $I_2(w)=3$. 

    Consider the barely Yamanouchi word $\hat{y}_\nu = 214321.$ Its insertion tableau has the same shape as $P_\mix(w)$, but is clearly a different tableau:
    \[ P_\mix(\hat{y}_\nu) = 
    \begin{ytableau}
            1&1&2'&4' \\
            \none&2&3'
    \end{ytableau} .\]
    Furthermore, using the hook subwords $4321$ and $213$, one sees that $I_1(\hat{y}_\nu)=I_1(w)$ and $I_2(\hat{y}_\nu)=I_2(w)$. 
    Thus, the condition of having hook subwords of the same length and using the same letters with the same multiplicity is not sufficient to uniquely determine the shifted plactic class of $\hat{y}_\nu.$
\end{example}

Thus far, our considerations have left us with a strong candidate for the Yamanouchi words in the shifted context; namely, those words in the shifted plactic class $[\hat{y}_\nu].$ We saw in Example~\ref{ex:notionExample}, however, that the increasing and decreasing structure of barely Yamanouchi words is shared by other shifted classes as well. 

It is not {\it a priori} obvious that $[\hat{y}_\nu]$ is preferable to such other shifted classes. However, in its favour, we will show that the class of barely Yamanouchi words is closed under right factors (Corollary~\ref{cor:rightisbarelyYam}), has a clean combinatorial characterization (Theorem~\ref{thm:inBarelyYamanouchiClassiffshiftedYamanouchi}), and is a subclass of classically Yamanouchi words (Corollary~\ref{cor:barelyYamisYam}). Analogous properties are critical to the approach of Lascoux and Sch\"utzenberger in the classical setting and these properties will similarly be critical to the proof of our shifted Littlewood--Richardson rule, Theorem~\ref{thm:main}.

\subsection{Shapes of barely Yamanouchi tableaux}

For the sake of readability, it is convenient to denote the decreasing sequence of letters
\[ n \qquad n-1 \qquad n-2 \qquad \ldots \qquad m  \]
by $\seq(n,m)$. For $m=1$ we also use the notation $\seq(n):= \seq(n,1)$.

The shrinking decomposition of the canonical word $\hat{y}_\nu$ can then be decomposed into $\ell(\nu)$ blocks, as
\[\hat{y}_\nu = \seq(\nu_{\ell(\nu)}) \cdot \seq(\nu_{\ell(\nu) - 1 }) \cdot \ldots \cdot \seq(\nu_1) \]

\begin{lemma}\label{lem:charScarcelyYamanouchiTableaux}
    Let $\nu$ be a strict partition. Then, for all $i$, we have that at any point of the insertion process after all the elements of $\seq(\nu_{\ell(\nu) - i})$ in $\hat{Y}_\nu \coloneqq P_\mix(\hat{y}_\nu)$ have been inserted:
    \begin{itemize}
        \item The element in $\seq(1,1) \cap \seq(\nu_{\ell(\nu)})$ is unprimed and in the first diagonal position.
        \item For all $i>0$, the elements in $\seq(i+1, 1) \cap \seq(\nu_{\ell(\nu) - i})$ form a vertical column of unprimed entries starting in the first row and ending in the $(i+1)$-st diagonal position; and this vertical column is adjacent to the vertical column formed by the entries of $\seq(i, 1) \cap \seq(\nu_{\ell(\nu)- i+1} )$.
        \item The elements in $\seq(\nu_{\ell(\nu)},2) \cap \seq(\nu_{\ell(\nu)})$ form a horizontal strip of primed entries in the first row. 
        \item For all $i>0$, the elements in $\seq(\nu_{\ell(\nu) - i}, i+2) \cap \seq(\nu_{\ell(\nu) - i})$ form a horizontal strip of primed entries, starting in a column after the greatest unprimed letter, in such a manner that the $j$-th element of the horizontal strip, from smallest to greatest, is strictly below the $j$-th element of $\seq(\nu_{\ell(\nu) - i-1}, i+1) \cap \seq(\nu_{\ell(\nu) - i+1})$, or to the right of the last element in $\seq(\nu_{\ell(\nu) - i-1}, i+1) \cap \seq(\nu_{\ell(\nu) - i+1})$ if it has no $j$-th element. 
        \item Before the first element of the block $\seq(\nu_{\ell(\nu) -i -1  })$ is inserted, the horizontal strip formed by the primed entries of $\seq(\nu_{\ell(\nu) -i  })$ starts in the column after the greatest unprimed entry of that block. 
    \end{itemize}
\end{lemma}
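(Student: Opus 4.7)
The proof will proceed by induction on $i$, tracing the mixed insertion of $\hat{y}_\nu$ one block at a time.

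For the base case $i = 0$, I analyze the insertion of $\seq(\nu_\ell) = \nu_\ell, \nu_\ell - 1, \ldots, 1$ into the empty tableau by a sub-induction on the number $k$ of letters inserted. The claim is that after $k$ steps the partial tableau has $\nu_\ell - k + 1$ at position $(1,1)$ together with primed entries $(\nu_\ell - k + 2)', \ldots, \nu_\ell'$ at $(1,2), \ldots, (1,k)$. The step from $k$ to $k+1$ is immediate: the new letter $\nu_\ell - k$ bumps the diagonal entry at $(1,1)$, which is then primed and shunted into column $2$ by the diagonal-unprimed-bump rule, and this triggers a chain of primed column-insertions that propagates along row $1$. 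Setting $k = \nu_\ell$ produces the stated base case: the first and third bullets hold for $i=0$, while the second and fourth are vacuous, and the fifth is immediate from the placement of $1$ at $(1,1)$.

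For the inductive step, I assume the stated description after inserting $\seq(\nu_\ell), \ldots, \seq(\nu_{\ell-i+1})$ and insert the next block $\seq(\nu_{\ell-i}) = \nu_{\ell-i}, \nu_{\ell-i}-1, \ldots, 1$. Since $\nu$ is strict, the leading letter $\nu_{\ell-i}$ exceeds every entry currently in the tableau and is placed at the right end of row $1$. A second (nested) induction on the position within the block tracks the subsequent insertions. Two phases emerge naturally: the large letters $\nu_{\ell-i}-1, \ldots, i+2$ each bump their predecessor in row $1$ and generate a chain of row bumps that carries entries downward, ultimately placing the primed entries $(i+2)', \ldots, \nu_{\ell-i}'$ in the new horizontal strip for block $i$; the small letters $i+1, i, \ldots, 1$ each bump through the existing staircase of unprimed entries, extending the new vertical column in column $i+1$ with one entry per letter and terminating with $i+1$ at position $(i+1, i+1)$. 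At each insertion I invoke the three rules of mixed insertion---primed bump, diagonal-unprimed bump, off-diagonal-unprimed bump---to check the partial state against the predicted structure.

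The main obstacle is tracking how primed entries from earlier blocks migrate during block $i$'s insertion. For instance, in the mixed insertion of $\hat{y}_{(5,3,1)}$ the entry $3'$ placed by block $1$ moves from $(1,3)$ to $(1,4)$ during block $2$'s insertion. I need to verify that such displaced primes always land in positions consistent with the interlacing in the fourth bullet: the $j$-th primed entry of block $i$'s strip lies strictly below the $j$-th primed entry of the previous block's strip, or to its right when the latter is shorter. This requires a case analysis keyed to the relative magnitudes of the letter being inserted and the entries already present, but in each case it reduces to a single application of the mixed insertion rules together with the positional data carried by the inductive hypothesis. Once these invariants are shown to persist through every insertion of block $i$, the first four bullets of the lemma follow by inspection of the final partial tableau, and the fifth bullet is precisely the hypothesis required for the next inductive step.
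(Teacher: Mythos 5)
Your overall strategy --- an outer induction on the block index $i$ together with a letter-by-letter trace of the mixed insertion --- is the same as the paper's, and your base case is handled correctly and completely. The gap is in the inductive step, where you explicitly name ``the main obstacle'' (tracking how primed entries from earlier blocks migrate, and verifying the interlacing clause of the fourth bullet) and then dispose of it with the assertion that ``in each case it reduces to a single application of the mixed insertion rules.'' Essentially all of the content of the proof lives in that deferred case analysis, and none of it is carried out. Concretely, you would need to verify: (i) that the unprimed entries of earlier blocks, which by induction form adjacent columns of consecutive values $1,2,\dots$ running from row $1$ to the first $i$ diagonal positions, can never be bumped by anything inserted during block $i$ --- this is what forces the new vertical chain to reach a \emph{fresh} diagonal position rather than colliding with an old one; (ii) that when the letter $j$ of the new block is row-inserted it bumps either the letter $j+1$ of the same block or an intervening $(j+1)'$ from an earlier block, and that in the latter case that $(j+1)'$ in turn column-bumps the new block's $j+1$ (since no value lies strictly between $(j+1)'$ and $j+1$), so the bumping chain stays confined to the new block's unprimed entries; and (iii) that each newly created $j'$ of block $i$, upon column insertion, displaces precisely the $(j+1)'$ of the same block, which is what forces the new horizontal strip into consecutive columns and yields the ``strictly below the $j$-th element of the previous strip'' conclusion. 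These are the claims the paper proves in its inductive step, and your write-up contains none of them.

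A secondary but real problem is that your two-phase description of the block's dynamics is inaccurate, so completing the deferred analysis along the lines you sketch would go wrong. You split the letters at value $i+2$ versus $i+1$; that is the split between letters that \emph{end up} primed and letters that end up in the vertical column, but it does not describe the phases of the insertion. In fact the first $i+1$ letters $\nu_{\ell(\nu)-i},\dots,\nu_{\ell(\nu)-i}-i$ only extend the bumping chain until the leading entry reaches the new diagonal in row $i+1$ (no primes are created), and each of the remaining $\nu_{\ell(\nu)-i}-i-1$ letters --- whether ``large'' or ``small'' in your sense --- triggers exactly one diagonal bump and hence creates exactly one primed entry. For $\nu=(6,1)$, inserting the ``large'' letter $4$ is what creates $6'$; for $\nu=(5,3,1)$, the primed entries $5'$ and $4'$ of the last block are created only during the insertions of the ``small'' letters $2$ and $1$. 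So the horizontal strip is not produced by the large letters' chains, and the small letters do not simply append one box each to the vertical column.
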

\begin{proof}
    We prove the statement by induction on $i$. First the elements in $\seq(\nu_{\ell(\nu)})$ are inserted from greatest to smallest. As the letters in $\seq(\nu_{\ell(\nu)})$ are strictly decreasing, each letter bumps the previous one from the diagonal position in the first row, resulting in the tableau
    
    \ytableausetup{boxsize=2em}
    \[\begin{ytableau}
        1 & 2' & 3' & \ldots & \nu_{\ell(\nu)}'
    \end{ytableau}\]
    The entry $1$ is not going to be displaced by posterior entries. Also, since all the primed entries of $\seq(\nu_{\ell(\nu)})$ lie in the first row, the insertion of primed or unprimed entries from a later block will only displace them horizontally without changing their relative order. 
    
    That is, for $i=0$ the lemma follows. Suppose that the lemma holds for all $i<k$ and consider $i=k.$ 

    Throughout this insertion, we have from the inductive hypothesis that the unprimed entries from a previous block form a connected column from the first row to a diagonal position, whose entries start in $1$ and increase by $1$ until they reach the diagonal. Accordingly, no unprimed entries coming from an earlier block are displaced by unprimed entries of $\seq(\nu_{\ell(\nu) -k  })$, and any primed entries formed while inserting  $\seq(\nu_{\ell(\nu) -k  })$ are bumped out of a diagonal position that has not been used before. We also have that all primed entries before the insertion of the first letter in $\seq(\nu_{\ell(\nu) -k  })$, are to the right of all unprimed entries, because the latter are to be found in the first columns going from the first row to a diagonal position, and nowhere else by the inductive hypothesis. 

    Consequently, as the elements of  $\seq(\nu_{\ell(\nu) -k  })$ are inserted, even where they displace primed elements, they do so bumping them to the next column, that is to the right, and so in such a manner that they never bump unprimed elements from previous blocks. As primed elements created out of letters of  $\seq(\nu_{\ell(\nu) -k  })$ during its insertion are created in a new diagonal, these primed entries too are created and possibly bumped to the right of all unprimed entries from earlier blocks. 

    Hence the vertical strips from previous blocks $\seq(\nu_{\ell(\nu) -k +i })$ for $i>0$ are unaltered. Then, as the letters of $\seq(\nu_{\ell(\nu) -k  })$ are inserted in the first row, it also follows that for $j$ arbitrary either $j_{\ell(\nu)-k}$ bumps $(j+1)_{\ell(\nu)-k}$, or $j_{\ell(\nu)-k}$ bumps an instance of $(j+1)'$. But in the second case $(j+1)'$ must displace $(j+1)_{\ell(\nu)-k}$ by columns, since they were both in the first row.
    
    Similarly, for arbitrary $(j+1)_{\ell(\nu)-k}$ in an ensuing row, either it is bumped by $j_{\ell(\nu)-k}$ or $j_{\ell(\nu)-k}$ bumps an instance of $(j+1)'$; and, as there is no entry such that $(j+1)' < x < j+1$ in $(\cN \cup \cN)^*$, we obtain again that $(j+1)'$ column-bumps $(j+1)_{\ell(\nu)-k}$. Thus, the entries of $\seq(\nu_{\ell(\nu) -k  })$ are inserted displacing the previous entries, forming all the while a connected vertical strip, which reaches a new diagonal position because no entries in this strip interact with unprimed entries from previous blocks. In particular, this diagonal position corresponds to the $k+1$-st diagonal, and the vertical strip of $\seq(\nu_{\ell(\nu) -k  })$  is adjacent to the one formed by the unprimed entries of $\seq(\nu_{\ell(\nu) -k +1  })$ because each forms a connected column starting from $1$ and whose entries increase by $1$ from top to bottom, and no primed letter $x'$ can be interposed between two such columns and respect the order of $(\cN\cup \cNprime)^*$.

    Moreover, if a letter $j_{\ell(\nu) - k}$ displaces a primed entry $(j+1)_{\ell(\nu)-k+i}'$, then, by virtue of the latter bumping $(j+1)_{\ell(\nu)-k}$ to the next row and the inductive hypothesis that $(j+2)_{\ell(\nu)-k+i-1}'$ was below $(j+1)_{\ell(\nu)-k+i}'$ before being displaced, it follows that when $(j+1)_{\ell(\nu)-k}$ is inserted to the next row, it bumps $(j+2)_{\ell(\nu)-k+i-1}'$, so that it is again below $(j+1)_{\ell(\nu)-k+i}'$, albeit in a different column. 

   Fix $i>0$. By the inductive hypothesis, before the insertion of elements in $\seq(\nu_{\ell(\nu) -k  })$, the primed entries of the block $\seq(\nu_{\ell(\nu) -k + i })$ are all in $\seq(\nu_{\ell(\nu) - k +i}, k-i+2) \cap \seq(\nu_{\ell(\nu) - k+i})$, and form a horizontal strip beginning in the column after the $k-i+1$-th diagonal. But then, for every $i>0$ and column $c$ after the $k+1$-th diagonal, the primed entries of $\seq(\nu_{\ell(\nu) -k + i })$ are all bounded above by $c-1$ at that point of the insertion. Since primed entries can only be bumped towards the right, this statement still holds after letters in $\seq(\nu_{\ell(\nu) -k  })$ are inserted.  

    Consider now the primed entries created in the insertion of $\seq(\nu_{\ell(\nu) -k  })$. Every time $j_{\ell(\nu)-k}'$ is inserted to a column, we have that $(j+1)_{\ell(\nu)-k}'$ is the only instance of $(j+1)'$: indeed, all other primed entries are less than or equal to $j$ in $(\cN\cup\cNprime)^*$ as we have shown. Hence, $j_{\ell(\nu)-k}'$ must bump $(j+1)_{\ell(\nu)-k}'$ for arbitrary $j.$ Thus, the primed entries of  $\seq(\nu_{\ell(\nu) -k  })$ form a horizontal strip that may not be connected but lies nonetheless on successive columns; and they do not interact with the primed entries of an earlier block. That is, in the insertion of  $\seq(\nu_{\ell(\nu) -k  })$ all the inductive properties to be established for earlier blocks still obtain. 

    Finally, note that as the primed entries of $\seq(\nu_{\ell(\nu) -k  })$ were all bumped from the $k+1$-st diagonal and lie in successive columns, they start in the column after the last unprimed entry of $\seq(\nu_{\ell(\nu) -k  })$, while the primed entries of $\seq(\nu_{\ell(\nu) -k +1 })$, having all been displaced one column to the right by the unprimed entries of $\seq(\nu_{\ell(\nu) -k  })$ and previously in the column after the $k$-th diagonal, form now a horizontal strip starting in the $(k+2)$-nd column. That is, both horizontal strips begin in the $k+2$-nd column, and as the unprimed entries of $\seq(\nu_{\ell(\nu) -k  })$ contain exactly one more letter than the unprimed entries of $\seq(\nu_{\ell(\nu) -k +1 })$, it follows that the $j$-th element of the horizontal strip of $\seq(\nu_{\ell(\nu) -k  })$, from smallest to greatest, is strictly below the $j$-th element of $\seq(\nu_{\ell(\nu) - k-1}, k+1) \cap \seq(\nu_{\ell(\nu) - k+1})$, or to the right of the last element in $\seq(\nu_{\ell(\nu) - k-1}, k+1) \cap \seq(\nu_{\ell(\nu) - k+1})$ if it has no $j$-th element.     
\end{proof}

\begin{corollary}\label{cor:shapeLemma}
    Let $w$ be an arbitrary word in the shifted plactic class of $\hat{y}_\nu$. Then, the shape of its mixed insertion $P_\mix(w)$ is also $\nu.$ In particular, the shape of the tableau $\hat{Y}_\nu \coloneqq P_\mix(\hat{y}_\nu)$ is $\nu.$
\end{corollary}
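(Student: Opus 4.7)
The plan is to deduce the shape of $\hat{Y}_\nu := P_\mix(\hat{y}_\nu)$ from Lemma~\ref{lem:charScarcelyYamanouchiTableaux} by counting the boxes contributed to each row, and then to extend the conclusion to arbitrary $w \sim \hat{y}_\nu$ by appealing to the definition of shifted plactic equivalence (namely $w \sim \hat{y}_\nu$ iff $P_\mix(w) = P_\mix(\hat{y}_\nu)$).

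Set $\ell := \ell(\nu)$ and $p_i := \nu_{\ell - i} - (i + 1)$, the number of primed entries produced by the $i$-th block $\seq(\nu_{\ell - i})$ as described in Lemma~\ref{lem:charScarcelyYamanouchiTableaux}. The key numerical input from strictness of $\nu$ is that $p_{i+1} - p_i = \nu_{\ell - i - 1} - \nu_{\ell - i} - 1 \geq 0$, so the sequence $p_0 \leq p_1 \leq \cdots \leq p_{\ell - 1}$ is weakly increasing. This monotonicity lets one iterate the cascading rule of Lemma~\ref{lem:charScarcelyYamanouchiTableaux} uniformly: for each fixed $j \geq 1$, the $j$-th primed entries (from smallest to largest) of the blocks that possess one stack into a single column of $\hat{Y}_\nu$, occupying rows $1$ through $\ell_j := |\{i : p_i \geq j\}|$.

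Counting row $k$ is then direct. Its unprimed entries come from the vertical strips of blocks $i \geq k - 1$, contributing $\ell - k + 1$ boxes (one per block, placed at column $i + 1$). Its primed entries number $|\{j : \ell_j \geq k\}|$, which by monotonicity of $p_i$ equals $|\{j : p_{\ell - k} \geq j\}| = p_{\ell - k} = \nu_k - (\ell - k + 1)$. The two counts sum to $\nu_k$, so $\hat{Y}_\nu$ has shape $\nu$; for $k > \ell$ both counts vanish, as needed. For any other $w \sim \hat{y}_\nu$, the definition of $\sPlactic$ forces $P_\mix(w) = P_\mix(\hat{y}_\nu) = \hat{Y}_\nu$, which has shape $\nu$, completing the argument.

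The main obstacle will be the translation of Lemma~\ref{lem:charScarcelyYamanouchiTableaux}'s somewhat intricate cascading description into the clean column-stacking picture above, with mild care needed when $p_0 = 0$ (where block $0$ has no primed strip and the cascading must be initialized at a later block via the lemma's ``column after the greatest unprimed letter'' clause). Once this is handled, the row-length arithmetic is immediate.
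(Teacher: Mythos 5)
Your proposal follows the paper's proof exactly: reduce to the case $w = \hat{y}_\nu$ via the definition of shifted plactic equivalence ($P_\mix(w) = P_\mix(\hat{y}_\nu)$), and then read off the shape from Lemma~\ref{lem:charScarcelyYamanouchiTableaux}. The only difference is that you spell out the row-count arithmetic ($(\ell-k+1)$ unprimed plus $\nu_k - (\ell-k+1)$ primed entries in row $k$) that the paper leaves implicit, and your computation is correct.
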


\begin{proof}
    First, we have by definition that $w$ is equivalent in $\sPlactic$ to $\hat{y}_\nu.$ Thus $P_\mix(\hat{y}_\nu) = P_\mix(w)$. In particular, these insertion tableaux have the same shape. 
     By Lemma~\ref{lem:charScarcelyYamanouchiTableaux}, this common shape is $\nu$.  
\end{proof}

\begin{remark}
    As a consequence of Lemma~\ref{cor:shapeLemma}, there is a unique shifted class with insertion of shape $\lambda$ for $\lambda$ a shifted partition.
\end{remark}

\section{The shifted lattice condition and shrinking decomposition of a word}\label{sec:shifted_lattice}

\subsection{Shifted lattice condition}
Now we provide a combinatorial characterization for the words in $[\hat{y}_\nu]$, having as a consequence that the class of such words is closed under right factors. The resulting similarity of the combinatorial description of these words with the classical case arms us with a further reason to regard barely Yamanouchi words as the right generalization of Yamanouchi words. 

\begin{definition}\label{def:shiftedLatticeCondition}
    Let $w\in \cN^*$. We say that $w$ is \newword{shifted lattice}, if on reading the word $w$ from right to left, the number of occurrences of $i$ in $w$ is always either equal to the number of instances of $i+1$ or is greater by exactly $1$ more occurrence.
\end{definition}

Note that in a shifted lattice word $w$, the maximum value appearing in $w$ appears exactly once.

\begin{lemma}\label{lem:consecutiveIs}
    Suppose that $w\in \cN^*$ is shifted lattice. Then, it is also the case that,
    \begin{itemize}
        \item for $1<i$, there is an instance of $i-1$ between any two instances of $i$ in $w$; and 
        \item for all $i$, there is an instance of $i+1$ between any two instances of $i$. 
    \end{itemize}
\end{lemma}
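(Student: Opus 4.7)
The plan is to reformulate the shifted lattice condition in terms of running counts and then track those counts across individual letters. For each $j \geq 1$ and $0 \leq k \leq |w|$, let $c_j(k)$ denote the number of occurrences of $j$ among the rightmost $k$ letters of $w$. Definition~\ref{def:shiftedLatticeCondition} is equivalent to the inequality $c_j(k) - c_{j+1}(k) \in \{0, 1\}$ holding for every such $j$ and $k$. Both bullets will follow by reading this two-sided bound at adjacent values of $k$ straddling a single occurrence of $i$.

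For the second bullet, I would fix two consecutive occurrences of $i$ at positions $p_1 < p_2$ (counted from the right), with no $i$ strictly between them. Passing from $k = p_1 - 1$ to $k = p_1$ increases $c_i$ by $1$ while leaving $c_{i+1}$ unchanged, so keeping the difference in $\{0,1\}$ at both values of $k$ forces $c_i(p_1) - c_{i+1}(p_1) = 1$. Between $p_1$ and $p_2$ no further $i$ appears, so $c_i$ again jumps by $1$ at $p_2$; in order for the difference to remain within $\{0, 1\}$ at $p_2$, the count $c_{i+1}$ must strictly increase somewhere strictly between $p_1$ and $p_2$, which is precisely an occurrence of $i+1$ in the required gap.

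For the first bullet the argument is symmetric, now using the bound $c_{i-1}(k) - c_i(k) \in \{0, 1\}$, which is where the hypothesis $1 < i$ enters. Applying it at $k = p_1 - 1$ and $k = p_1$ pins down $c_{i-1}(p_1) - c_i(p_1) = 0$, and then requiring the inequality to still hold at $k = p_2 - 1$ (where $c_i(p_2 - 1) = c_i(p_1) + 1$) forces $c_{i-1}$ to strictly increase between $p_1$ and $p_2$, giving the desired instance of $i - 1$ strictly between them.

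I do not anticipate a real obstacle: once the shifted lattice condition is rephrased as a two-sided bound on the differences $c_j - c_{j+1}$, everything reduces to one-step bookkeeping. The only point that needs care is that both the upper and lower bounds on these differences get used at the step where a letter equal to $i$ is added, which is what pins the difference to a unique value at $k = p_1$ and thereby forces the required letter to appear in each gap.
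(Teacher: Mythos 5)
Your argument is essentially the same as the paper's: both reduce the claim to the two-sided bound on the running difference of counts of $i$ and $i\pm 1$ over right factors, the paper phrasing it as a contradiction ($i$ would lead $i+1$ by at least $2$) and you phrasing it directly by forcing the difference to a unique value at $p_1$ and then demanding a compensating letter before $p_2$. The second bullet is correct as written; in the first bullet there is a small indexing slip: since there is no $i$ strictly between $p_1$ and $p_2$, you have $c_i(p_2-1)=c_i(p_1)$, not $c_i(p_1)+1$, and the inequality you need to invoke is at $k=p_2$ (where $c_i(p_2)=c_i(p_1)+1$ because the letter at $p_2$ is an $i$), which then forces $c_{i-1}$ to increase at some position in $(p_1,p_2)$ exactly as intended. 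This is a one-line repair and does not affect the validity of the approach.
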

\begin{proof}
    We prove the second bullet point; the proof of the first is analogous.
    Let $i$ be a letter of $w$, and suppose towards a contradiction that there are two consecutive instances of $i$ in $w$ without an $i+1$ in between. If $r_w$ is the subword obtained by reading from right to left $w$ until before the two consecutive instances of $i$, and $j(w)$ the number of instances of $j$ in $w$ for an arbitrary $j\in \cN$, then 
    \[i(r_w) \geq (i+1)(r_w) \]
    because $r_w$ satisfies the shifted lattice condition. But then, on finding the two occurrences of $i$, we see that
    \[ i(i\cdot i \cdot r_w) \geq (i+1)(i\cdot i \cdot r_w) + 2,  \]
     contradicting that $w$ is shifted lattice.
\end{proof}

\begin{corollary}\label{cor:shiftedLatticeisYamanouchi}
 If $w$ is a shifted lattice word, then $w$ is a Yamanouchi word.
\end{corollary}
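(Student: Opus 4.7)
The proof will be essentially immediate by comparing the two combinatorial conditions. My plan is to simply unpack the definitions in parallel and observe that the shifted lattice condition is a strict strengthening of the Yamanouchi condition.

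More precisely, recall from the introduction that a word $w$ is Yamanouchi exactly when, in every right factor (final segment) $r$ of $w$ and for every positive integer $i$, the number of occurrences of $i$ in $r$ is at least the number of occurrences of $i+1$ in $r$. Meanwhile, reading $w$ from right to left and stopping at any point produces precisely a right factor $r$ of $w$, so Definition~\ref{def:shiftedLatticeCondition} says exactly that for every right factor $r$ of $w$ and every $i \geq 1$,
\[
\#_{i}(r) - \#_{i+1}(r) \in \{0, 1\},
\]
where $\#_j(r)$ denotes the number of occurrences of the letter $j$ in $r$.

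The plan, then, is to observe that the containment $\{0,1\} \subseteq \mathbb{Z}_{\geq 0}$ immediately yields $\#_i(r) \geq \#_{i+1}(r)$ for every right factor $r$ and every $i \geq 1$, which is exactly the Yamanouchi condition. There is no real obstacle to overcome — the entire content of the corollary is the observation that the shifted lattice condition forces the difference $\#_i(r) - \#_{i+1}(r)$ to lie in a proper subset of the nonnegative integers required by the Yamanouchi condition, so the implication is automatic. One could note in passing that the reverse implication fails, which motivates the terminology ``barely'' Yamanouchi later in the paper, but this is not needed for the corollary itself.
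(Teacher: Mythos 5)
Your proof is correct and matches the paper's own argument: the paper likewise observes that the shifted lattice condition forces $\#_i(r) \geq \#_{i+1}(r)$ for every right factor $r$, which is precisely the Yamanouchi condition. Nothing further is needed.
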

\begin{proof}
    This is clear from the condition $i(r_w)\geq (i+1)(r_w)$, discussed in the proof of Lemma~\ref{lem:consecutiveIs}.
\end{proof}

\begin{lemma}\label{lem:barelyYamanouchiareShiftedLattice}
    Let $\nu$ be a strict partition and $w\sim \hat{y}_\nu$. Then,
    \begin{itemize}
        \item the word $w$ satisfies $\ct(w)=\ct(\hat{y}_\nu)$ and
        \item the word $w$ is shifted lattice.
    \end{itemize}
\end{lemma}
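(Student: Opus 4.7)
The first bullet is immediate: by Proposition~\ref{def:shifted_plactic}, each of the eight shifted plactic relations SP.1--SP.8 merely reshuffles the four letters in a four-letter window, so the multiset of letters (and hence the content) is invariant under shifted plactic equivalence. In particular $\ct(w) = \ct(\hat{y}_\nu)$.

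For the second bullet, my plan is to establish (a) that $\hat{y}_\nu$ itself is shifted lattice, and (b) that each shifted plactic relation SP.1--SP.8 preserves the shifted lattice condition on words. Induction on the length of a rewriting chain from $\hat{y}_\nu$ to $w$ then finishes the argument.

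For (a), I would scan $\hat{y}_\nu$ from right to left, which amounts to reading in succession the increasing runs $1, 2, \ldots, \nu_1$, then $1, 2, \ldots, \nu_2$, and so on up through $1, 2, \ldots, \nu_\ell$. After $j$ full runs plus a partial prefix $1, 2, \ldots, m$ of the $(j{+}1)$-st run, the count of the letter $i$ equals $\mathbf{1}[i \le m] + |\{r \le j : \nu_r \ge i\}|$. Because $\nu$ is strict, the second term drops by at most one as $i$ increases by one, so $c_i - c_{i+1} \in \{0, 1\}$ at every intermediate stage, which is exactly the shifted lattice condition on every suffix.

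For (b), write $w = u \cdot X \cdot v$ and $w' = u \cdot X' \cdot v$, where $(X, X')$ is one of the eight paired four-letter patterns. Since $X$ and $X'$ have the same multiset of letters, the cumulative right-to-left counts coincide for $w$ and $w'$ on every suffix that begins in $v$, or at or to the left of the four-letter window. Of the three suffixes starting strictly inside the window, a direct inspection of each relation shows that in fact only \emph{one} has a multiset differing between $w$ and $w'$; for example, in SP.1 ($abdc$ vs.\ $adbc$) the one-letter and three-letter inner suffixes use identical multisets, so only the two-letter suffix ($dc$ versus $bc$) differs, while in SP.2 only the one-letter suffix differs. The verification of (b) therefore reduces, for each of the eight relations, to a single arithmetic check that the shifted lattice inequalities $c_i - c_{i+1} \in \{0,1\}$ survive the swap of one letter for another at that single suffix. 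Using the inequalities among $a, b, c, d$ encoded in each relation together with the (shared) shifted lattice property at the adjacent suffixes of $w$, each case is short. I expect this case-by-case bookkeeping over the eight relations to be the main obstacle, though each individual case is elementary.
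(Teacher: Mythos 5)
Your overall architecture coincides with the paper's: content-preservation of the relations gives the first bullet, one observes that $\hat{y}_\nu$ itself is shifted lattice, and one then shows that each of \eqref{eq:SP1}--\eqref{eq:SP8} preserves the shifted lattice condition and inducts along a rewriting chain. (Two small remarks on your part (a): your difference $c_i-c_{i+1}$ is a priori the sum of \emph{two} indicator-type terms, $\mathbf{1}[i=m]$ and $\#\{r\le j:\nu_r=i\}$, each in $\{0,1\}$; you still need the observation that they cannot both equal $1$, which follows since $m\le\nu_{j+1}<\nu_r$ for all $r\le j$. The paper leaves this whole verification to the reader, so this is fine.)

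The genuine problem is in part (b), in the claim that each relation reduces to ``a single arithmetic check that the shifted lattice inequalities survive the swap.'' In the critical sub-case of most of the relations --- namely when the two letters whose relative order changes take the values $i$ and $i+1$ --- the inequalities do \emph{not} survive. Concretely, for \eqref{eq:SP1} the constraint $a\le b\le c<d$ with $b=i$, $d=i+1$ forces $c=i$; writing $m_j$ for the number of $j$'s in the shared suffix $cv$, the (shared) conditions at the suffixes $cv$ and $dcv$ of $w$ force $m_i-m_{i+1}=1$, while the condition at the new suffix $bcv$ of $w'$ demands $m_i-m_{i+1}\in\{-1,0\}$. So a mechanical execution of your check produces an apparent counterexample. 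The correct resolution --- and the content of the paper's case analysis --- is that these configurations can never occur inside a shifted lattice word at all, so the implication holds vacuously; the paper proves this uniformly via Lemma~\ref{lem:consecutiveIs} (between any two instances of $i$ in a shifted lattice word there is an instance of $i+1$, and of $i-1$ when $i>1$), which rules out both sides of each relation at once. Your computation can be repaired: the needed contradiction is derivable from additional suffix conditions of $w$ beyond the ones you cite (e.g.\ for \eqref{eq:SP1} one uses the suffix $v$ just outside the window when $i>1$, and the suffix through $a$ when $i=1$, since $a\le b$ then forces $a=1$). But as written, the key step fails in exactly the sub-cases that carry the content of the lemma, and you would need to recognize that the conclusion there is vacuity rather than survival of the inequalities.
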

\begin{proof}
    The first condition follows from the fact that the shifted equivalence does not change the content of a word. 

    Observe that the word $\hat{y}_\nu$ satisfies the shifted lattice condition and that $w$ can be obtained from $\hat{y}_\nu$ by a finite sequence of shifted plactic relations. To prove the second condition, we show by cases that shifted plactic equivalence preserves the shifted lattice condition. 

    \eqref{eq:SP1}: As $b$ and $d$ are changed in this relation, the only case that could alter the condition of being shifted lattice is when $b=i$ and $d=i+1.$ Moreover, as $b\leq c <d$, this case forces $c=i.$ In particular, the word $adbc$ has the form $a \quad i+1 \quad i \quad i$ while the word $abdc$ has the form $a \quad i \quad i+1 \quad i$. 
    
    By Lemma~\ref{lem:consecutiveIs}, a word that has $a \quad i+1 \quad i \quad i$ as a subword is not shifted lattice. We must show that the same is true for $a \quad i \quad i+1 \quad i$. If $a \quad i \quad i+1 \quad i$ were shifted lattice, then by Lemma~\ref{lem:consecutiveIs} there can only be two instances of $i$ without $i-1$ in between if $i=1$. 
This in turn forces $a=1$ as $a\leq b$. But then, $ab=11$ and there are two consecutive instances of $1$ in $abdc$ without an instance of $2$ in between, violating Lemma~\ref{lem:consecutiveIs}. Hence, $a \quad i \quad i+1 \quad i$ is not shifted lattice either and the relation \eqref{eq:SP1} is never used for $b=i$ and $d=i+1$ from a word that is shifted lattice.

    \eqref{eq:SP2}: In this relation, $b$ and $d$ are exchanged, so that the interesting case happens when $b=i$ and $d=i+1$.

    Now, the inequality $b<c\leq d$ under which the relation can be applied, implies that $c=i+1$. Accordingly, $acdb= a \quad i+1 \quad i+1 \quad i$ and $acbd = a \quad i+1 \quad i \quad i+1$. By Lemma~\ref{lem:consecutiveIs}, no word containing either of these can be shifted lattice. 

    \eqref{eq:SP3}: For this relation, the only case that we need to consider is when $a=i$ and $d=i+1.$ However, since this relation can be applied solely where the inequality $a\leq b <c < d$ is satisfied, this never takes place.

    \eqref{eq:SP4}: Similarly to the previous case, here the only relevant scenario happens for $a=i$ and $d=i+1$. But the inequality $a<b\leq c < d$ for which this relation can be applied does not allow for this scenario.

    \eqref{eq:SP5}: For this relation, we only need to worry about the case where $b=i$ and $d=i+1$. In this case, considering that the inequality $a<b<c\leq d$ needs to be satisfied, we see that $c=i+1$ must hold. But then by Lemma~\ref{lem:consecutiveIs} neither $cdba = i+1 \quad i+1 \quad i \quad a$ nor $cbda= i+1 \quad i \quad i+1 \quad a$ can be part of a shifted lattice word. 

    \eqref{eq:SP6}: We can suppose that $b=i$ and $d=i+1.$ Lemma~\ref{lem:consecutiveIs}, $dbca = i+1 \quad i \quad i \quad a$ cannot be part of a shifted lattice word, while $bdca= i \quad i+1 \quad i \quad a$ can only be in a shifted lattice word if $i=1$. However, $i=1$ is impossible because the inequality $a<b\leq c <d$ needs to be satisfied. 

    \eqref{eq:SP7}: We can assume $a=i$ and $d=i+1$, but the inequality $a<b\leq c \leq d$ then forces $bcda= i+1 \quad i+1 \quad i+1 \quad u$ and $bcad = i+1 \quad i+1 \quad i \quad i+1$, neither of which is possible in a shifted lattice word by Lemma~\ref{lem:consecutiveIs}.

    \eqref{eq:SP8}: We can suppose $a=i$ and $d=i+1.$ The inequality $a\leq b < c \leq d$ further implies that $b=i$ and $c=i+1$ so that for this case, $cdab = i+1 \quad i+1 \quad i \quad i$ and $cadb= i+1 \quad i \quad i+1 \quad i$ cannot be part of a shifted lattice word by Lemma~\ref{lem:consecutiveIs}. 
\end{proof}

\begin{corollary}\label{cor:barelyYamisYam}
    If $w$ is a barely Yamanouchi word, then $w$ is a Yamanouchi word. 
\end{corollary}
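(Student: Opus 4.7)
The statement is immediate from the two preceding results, and I would present it as a one-line deduction. By definition, a barely Yamanouchi word is a word $w$ that lies in the shifted plactic class $[\hat{y}_\nu]$ for some strict partition $\nu$, i.e., $w \sim \hat{y}_\nu$. Lemma~\ref{lem:barelyYamanouchiareShiftedLattice} then tells us that $w$ is shifted lattice, and Corollary~\ref{cor:shiftedLatticeisYamanouchi} tells us that any shifted lattice word is Yamanouchi. Chaining these two implications gives exactly the claim, so the proof is literally a two-step citation and there is no real obstacle.

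The only thing worth double-checking is bookkeeping: that the ``Yamanouchi'' in the statement of the corollary refers to the classical condition defined in Section~\ref{sec:background} (namely, that in every right factor the number of $i$s is at least the number of $(i+1)$s), and that this is precisely the condition that Corollary~\ref{cor:shiftedLatticeisYamanouchi} produces from the shifted lattice condition. Both facts are by construction, so no further argument is required. Hence the proof is simply: ``By Lemma~\ref{lem:barelyYamanouchiareShiftedLattice}, $w$ is shifted lattice, and by Corollary~\ref{cor:shiftedLatticeisYamanouchi}, every shifted lattice word is Yamanouchi.''
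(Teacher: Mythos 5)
Your proposal is correct and matches the paper's own proof exactly: the paper also deduces the corollary by combining Lemma~\ref{lem:barelyYamanouchiareShiftedLattice} with Corollary~\ref{cor:shiftedLatticeisYamanouchi}. Nothing further is needed.
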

\begin{proof}
    By combining Lemma~\ref{lem:barelyYamanouchiareShiftedLattice} with Corollary~\ref{cor:shiftedLatticeisYamanouchi}.
\end{proof}

\subsection{Shrinking decompositions}
Every word $w\in \cN^*$ can be decomposed canonically into a sequence of decreasing blocks. Consider the rightmost maximum element $m$ in $w$, continue with the $m-1$ to the right of $m$ closest to $m$, on obtaining its position add it to the subword and choose in the same manner $m-2$ in relation to $m-1$, and so on, until a decreasing subword of maximal length is obtained. If $w$ is Yamanouchi or barely Yamanouchi, the last element of such a subword is an instance of $1$.

Remove now the letters of $w$ that were used in the construction of the decreasing subword, and construct a second decreasing subword by starting at the new maximum element and obeying the same procedure; after a finite number of repetitions, this process ends with a list $\{ (d_{\nu_j}^{(i)})_i \}_{1\leq i \leq j}$ of decreasing sequences that partition $w$. We call this list the \newword{shrinking decomposition} of $w$. 

The shifted class of a barely Yamanouchi word (indeed, of a shifted lattice word) can be characterized by its shrinking sequences as described in Lemma~\ref{lem:combinatorialCharShiftedYamanouchi}, and results in a similar vein will be sufficient for our purposes. For a new description of the shifted Littlewood--Richardson coefficients that exploit the notion of shrinking sequences in relation to semistandard increasing decomposition tableaux we recommend \cite{Shigechi}.

Before illustrating this construction for a particular word, we adopt the convention that in the shrinking decomposition $\{ (d_{\nu_j}^{(i)})_i \}_{1\leq j \leq \ell}$ of $w$, each of the decreasing subwords $(d_{\nu_j}^{(i)})_i$ are listed in the order that they are formed. That is, $d_{\nu_j}^{(i+1)}$ is to the left of $d_{\nu_j}^{(i)}$ in $w.$ 

\begin{example}
    For the word $8 \quad 3 \quad 2 \quad 1 \quad 5 \quad 4 \quad 7 \quad 6 \quad 3 \quad 2 \quad 1 \quad 5 \quad 4 \quad 3 \quad 2 \quad 1$, the associated shrinking decomposition is readily identified as being the list $\{ d_8, d_5, d_3\}$ where $d_j= j \quad j-1 \quad j-2 \quad \ldots \quad 1$ for $j=3,5,8.$
\end{example}

 We employ the notation $i(w)$ for the number of instances of $i$ in $w$ for all $i\in \cN$; and for a letter $u$ of $w$ the notation $w_u$ to signify the smallest subword of $w$ containing $u$ such that $w = w_1 \cdot w_u$ for some $w_1\in \cN^*.$

\begin{lemma}\label{lem:removingYamanouchi}
    Let $w$ be a shifted lattice word and $\{ (d_{\nu_j}^{(i)})_i \}_{1\leq j \leq \ell}$ its shrinking decomposition. Then the subword $\hat{w}$ of $w$ whose letters do not belong to $(d_{\nu_1}^{(i)})_i$ is also shifted lattice. 
\end{lemma}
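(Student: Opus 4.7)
My plan is to verify the shifted lattice condition directly on each suffix of $\hat{w}$ by comparing it to the corresponding suffix of $w$. Let $s$ be a suffix of $\hat{w}$, let $p$ be the position in $w$ of the leftmost letter of $s$, and let $s'$ be the suffix of $w$ starting at position $p$. Since $w$ is shifted lattice, its maximum $\nu_1$ appears exactly once, so there is a single block $d_{\nu_1}^{(1)}$; label its positions $q_1 < q_2 < \cdots < q_{\nu_1}$ so that $w$ has value $\nu_1 - k + 1$ at $q_k$. The multiset $s' \cap d_{\nu_1}^{(1)}$ then equals $\{1, 2, \ldots, L\}$ for the largest $L$ such that the position of $L$ in the block lies in $s'$ (or $L = 0$ if no block position is in $s'$), and we write $q_{j_0}$ for that position; in particular $q_{j_0 - 1} < p \leq q_{j_0}$ whenever $0 < L < \nu_1$.

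I will show $i(s) - (i+1)(s) \in \{0, 1\}$ for each $i \geq 1$ by splitting on how $i$ compares to $L$. For $i < L$ (resp.\ $i > L$) both $i$ and $i+1$ (resp.\ neither) occur in $s' \cap d_{\nu_1}^{(1)}$, so $i(s) - (i+1)(s) = i(s') - (i+1)(s')$, which lies in $\{0, 1\}$ by shifted lattice applied to $s'$. The cases $L = 0$ and $L = \nu_1$ at $i = L$ are immediate: in the former $s = s'$, and in the latter $L + 1$ does not occur in $w$ at all while the unique $L$ of $w$ lies in $s'$, so $L(s') - (L+1)(s') = 1 - 0 = 1$ and hence $L(s) - (L+1)(s) = 0$.

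The main obstacle is the remaining case $i = L$ with $0 < L < \nu_1$, where I must show $L(s') - (L+1)(s') = 1$. The greedy rule used to form $d_{\nu_1}^{(1)}$ selected $q_{j_0}$ as the closest position of an $L$ strictly to the right of $q_{j_0 - 1}$, so there are no $L$'s in the interval $(q_{j_0 - 1}, q_{j_0})$. Let $s''$ be the suffix of $w$ starting at $q_{j_0}$, let $\sigma$ be the suffix starting at $q_{j_0 - 1}$, set $D \coloneqq L(s'') - (L+1)(s'')$, and let $M$ and $N$ count the $L+1$'s in $w$ in the intervals $(q_{j_0 - 1}, p)$ and $[p, q_{j_0})$, respectively. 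Accounting for the extra $L+1$ at $q_{j_0 - 1}$ and the absence of $L$'s in $(q_{j_0 - 1}, q_{j_0})$ yields
\[
L(\sigma) - (L+1)(\sigma) \;=\; D - 1 - M - N.
\]
Now $D \in \{0, 1\}$ by shifted lattice on $s''$, and $L(\sigma) - (L+1)(\sigma) \in \{0, 1\}$ by shifted lattice on $\sigma$. Since $M, N \geq 0$, these two constraints force $D = 1$ and $M = N = 0$. In particular, no $L+1$ occurs in $[p, q_{j_0})$, so $(L+1)(s') = (L+1)(s'')$ and $L(s') = L(s'')$, whence $L(s') - (L+1)(s') = D = 1$, as required.

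The whole argument reduces to the counting identity in the core case, which exploits the greedy ``closest next letter'' rule of the shrinking decomposition together with two applications of the shifted lattice condition on distinct suffixes of $w$ that start strictly earlier than the suffix $s'$ we are analyzing.
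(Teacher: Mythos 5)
Your proof is correct. The overall skeleton---comparing letter counts in a suffix $s$ of $\hat w$ with those in the corresponding suffix $s'$ of $w$, and splitting according to whether the value $i$ lies below, above, or exactly at the threshold $L$ of block letters removed from $s'$---matches the paper's three-case analysis of where the suffix start sits relative to the block's $i$ and $i+1$; the cases $i<L$ and $i>L$ are handled the same way in both. Where you genuinely diverge is the critical boundary case $i=L$: the paper argues by contradiction, choosing a rightmost violating position, invoking Lemma~\ref{lem:consecutiveIs} to manufacture an intermediate letter, and contradicting the greedy choice in the shrinking decomposition; you instead prove directly that $L(s')-(L+1)(s')=1$ via the counting identity $L(\sigma)-(L+1)(\sigma)=D-1-M-N$ relating the two suffixes anchored at the block's $L+1$ and $L$, and then squeeze $D=1$, $M=N=0$ out of two applications of the shifted lattice bound. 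Your treatment of this case is self-contained (it does not rely on Lemma~\ref{lem:consecutiveIs}) and yields the extra information that no $L+1$ occurs strictly between consecutive block entries, which the paper only obtains for the value $L$ itself from greediness. One small point you leave implicit: labelling the block positions as $q_1<\dots<q_{\nu_1}$ with value $\nu_1-k+1$ at $q_k$ assumes the first shrinking sequence descends all the way to $1$; this holds because shifted lattice words are Yamanouchi (Corollary~\ref{cor:shiftedLatticeisYamanouchi}), as the paper remarks, but it deserves a sentence.
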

\begin{proof}
    Suppose this is not the case. Then there is a letter $u$ of $\hat{w}$ and an integer $r$ such that
    \[ r(\hat{w}_u) > (r+1)(\hat{w}_u) + 1 \quad \text{or} \quad r(\hat{w}_u) < (r+1)(\hat{w}_u).  \] 
    Without loss of generality, take $u$ rightmost with respect to this property. 

    There are three possible cases for $u$. First, $u$ could be among the letters of $\hat{w}$ that happen after $r\in (d_{\nu_1}^{(i)})_i$ in $w.$ Since $(d_{\nu_1}^{(i)})_i$ is a decreasing subword, we would then have that 
    \[ r(\hat{w}_u) = r(w_u)  \quad \text{and} \quad  (r+1)(\hat{w}_u) = (r+1)(w_u).\]
    Then, the hypothesis that $w$ is shifted lattice implies that $\hat{w}$ is also shifted lattice at $u$, a contradiction. 

    Second, $u$ could be among the letters in $\hat{w}$ that occur between the letters $r,r+1\in (d_{\nu_1}^{(i)})_i$ in $w$. In that case, 
    \[ r(\hat{w}_u) = r(w_u) - 1 \quad \text{and} \quad  (r+1)(\hat{w}_u) = (r+1)(w_u).\]
    Note that the case $r(\hat{w}_u) > (r+1)(\hat{w}_u) + 1$ is therefore not possible, and it can only be that $r(\hat{w}_u) < (r+1)(\hat{w}_u).$ Since we took $u$ rightmost with the property of not being shifted lattice, it follows that $u=r+1$. But then, between $u$ and $r+1\in (d_{\nu_1}^{(i)})_i$ there is at least one $r$ in $w$ by Lemma~\ref{lem:consecutiveIs}. Moreover, since $(d_{\nu_1}^{(i)})_i$ is the first decreasing sequence to be removed in the process of construction of the shrinking decomposition of $w$, this contradicts that $(d_{\nu_1}^{(i)})_i$ is constructed greedily so that $r\in (d_{\nu_1}^{(i)})_i$ is the first $r$ to the right of $r+1\in (d_{\nu_1}^{(i)})_i.$

    The only remaining alternative for $u$ is to be before $r+1\in (d_{\nu_1}^{(i)})_i$ in $w$ so that 
    \[ r(\hat{w}_u) = r(w_u) - 1  \quad \text{and} \quad  (r+1)(\hat{w}_u) = (r+1)(w_u) - 1.\]
    and again that $w$ is shifted lattice by assumption contradicts that $\hat{w}$ is not shifted lattice at $u.$
\end{proof}

\begin{remark}\label{remark:removingYamanouchi}
    Notice that the previous lemma can be applied iteratively to yield that if $w$ is a shifted lattice word and $\{ (d_{\nu_j}^{(i)})_i \}_{1\leq j \leq \ell}$ its shrinking decomposition, then the subword $\hat{w}$ consisting of those letters of $w$ that are not letters of $\{ (d_{\nu_j}^{(i)})_i \}_{k\leq j \leq \ell}$ is also shifted lattice for all $k$ with $1\leq k \leq \ell.$
\end{remark}

\begin{lemma}\label{lem:combinatorialCharShiftedYamanouchi}
    Let $w$ be a shifted lattice word with content $\ct(w)=\ct(\hat{y}_\nu)$ for $\nu$ a strict partition, and $\{ (d_{\nu_j}^{(i)})_i \}_{1\leq j \leq \ell(\nu)}$ its shrinking decomposition. Then the letter $m \in (d_{\nu_j}^{(i)})_i$ is to the left of the letter $m + 1 \in (d_{\nu_{j-1}}^{(i)})_i$ in $w$. In particular, the letter $m \in (d_{\nu_j}^{(i)})_i$ is also to the left of the letter $m \in (d_{\nu_{j-1}}^{(i)})_i$ in $w$.
\end{lemma}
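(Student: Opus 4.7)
My plan is to reduce to the case $j = 2$ using Remark~\ref{remark:removingYamanouchi}, and then induct on $m$, decreasing from $\nu_2$, by combining the greedy character of the shrinking decomposition with an alternation property of positions that I extract from the shifted lattice condition. Concretely, I would let $\tilde w$ denote the subword of $w$ obtained by deleting $d_{\nu_1}, \dots, d_{\nu_{j-2}}$. By Remark~\ref{remark:removingYamanouchi}, $\tilde w$ is shifted lattice; its content is visibly $\ct(\hat y_{(\nu_{j-1},\dots,\nu_\ell)})$ (again the content of a strict partition); and the greedy construction forces its shrinking decomposition to be $d_{\nu_{j-1}}, d_{\nu_j}, \dots, d_{\nu_\ell}$. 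Since the inclusion $\tilde w \hookrightarrow w$ preserves positional order, it suffices to prove the claim for the first two blocks of an arbitrary shifted lattice word of the assumed content. I will write $x_r$ and $y_r$ for the positions in $w$ of the letter $r$ inside $d_{\nu_1}$ and $d_{\nu_2}$, respectively.

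The key tool is the following consequence of Definition~\ref{def:shiftedLatticeCondition}: reading $w$ from right to left, the subsequence of letters belonging to $\{r, r+1\}$ alternates strictly in type, with the rightmost such letter being an $r$. Equivalently, scanning leftward from any occurrence of $r+1$, the next letter in $\{r, r+1\}$ one meets must be an $r$ (provided such a letter exists). For the base case $m = \nu_2$, the content forces exactly one $(\nu_2+1)$ and exactly two $\nu_2$'s in $w$. The greedy rule places $x_{\nu_2}$ strictly to the right of $x_{\nu_2+1}$, and the alternation then forces the other $\nu_2$, namely $y_{\nu_2}$, to lie strictly to the left of $x_{\nu_2+1}$. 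For the inductive step, I would assume $y_{m+1} < x_{m+1}$ (which follows from the inductive hypothesis since $x_{m+2} < x_{m+1}$), and suppose for contradiction that $y_m > x_{m+1}$. The greedy definition of $d_{\nu_2}$, together with the fact that $d_{\nu_1}$ contains only one $m$ (at $x_m > x_{m+1}$), rules out any $m$ of $w$ lying in $(y_{m+1}, x_{m+1})$. Scanning leftward from $x_{m+1}$, the first letter of $\{m, m+1\}$ encountered is then necessarily $(m+1)$, contradicting the alternation.

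The ``in particular'' clause then drops out because $m+1$ lies to the left of $m$ within the decreasing block $d_{\nu_{j-1}}$. The main obstacle is cleanly extracting the alternation property from Definition~\ref{def:shiftedLatticeCondition} (it is hinted at in the proof of Lemma~\ref{lem:consecutiveIs}) and verifying that no stray $(m+1)$ from a later block $d_{\nu_k}$ with $k \geq 3$ can slip into the critical interval $(y_{m+1}, x_{m+1})$ and thereby rescue the scenario $y_m > x_{m+1}$; once both are in place, the contradiction is immediate.
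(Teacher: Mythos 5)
Your argument is correct, and it opens with the same reduction as the paper---delete the blocks $d_{\nu_1},\dots,d_{\nu_{j-2}}$ and invoke Remark~\ref{remark:removingYamanouchi} to pass to the first two blocks of a shifted lattice word---but the engine of the main step is genuinely different. The paper argues by a single upward bootstrap: assuming $m_j$ lies right of $(m+1)_{j-1}$, it locates $m_{j-1}$ between them by greediness and then repeatedly applies Lemma~\ref{lem:consecutiveIs} to manufacture copies of $m+1$, $m+2$, and so on, each to the right of the corresponding letter of block $j-1$, until it produces a letter exceeding the maximum $\nu_{j-1}$ of the reduced word, which is impossible. You instead run a downward induction on $m$ starting from the top of the second block, powered by the strict right-to-left alternation of the $\{m,m+1\}$-subsequence. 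That alternation is a clean and essentially immediate consequence of Definition~\ref{def:shiftedLatticeCondition} (reading right to left, the running counts force the pattern $m,\,m{+}1,\,m,\,m{+}1,\dots$), and it is in fact a sharper statement than Lemma~\ref{lem:consecutiveIs}, so extracting it is not an obstacle. Your residual worry about a stray $m+1$ from a block $d_{\nu_k}$, $k\geq 3$, landing in the interval between $y_{m+1}$ and $x_{m+1}$ is unfounded: such a letter is still an $m+1$ and only makes the alternation violation more immediate; the only thing that must be excluded from that interval is an $m$, and your appeal to the greediness of $d_{\nu_2}$ (together with the unique $m$ of $d_{\nu_1}$ sitting to the right of $x_{m+1}$) already does that. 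The trade-off between the two routes is mild: your induction establishes the conclusion value by value and keeps the contradiction entirely local to one pair of consecutive values, while the paper's bootstrap dispenses with the induction at the cost of a chain of applications of Lemma~\ref{lem:consecutiveIs} that must be tracked all the way up to the block maximum.
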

\begin{proof}
    Suppose by contradiction that the opposite is the case. Then there exist $j$, and letters $u=m_j$ and $v_1=(m+1)_{j-1}$, such that $u$ is to the right of $v_1.$ As $(d_{\nu_j}^{(i)})_i$ is a decreasing subword whose elements decrease by one until reaching $1$, we have that there also exists a letter $v_2=m_{j-1}$ and that it is to the right of $v_1$.
    
    Remove all the letters of the word $w$ that belong to any of the decreasing subsequences $\{ (d_{\nu_k}^{(i)})_i \}_{1\leq k \leq j-2}$. By Remark~\ref{remark:removingYamanouchi}, the resulting word $\hat{w}$ is still shifted Yamanouchi. We identify the subwords in $\{ (d_{\nu_k}^{(i)})_i \}_{j-1\leq k \leq \ell(\nu)}$ of $w$, as well as their elements, with those of the shrinking decomposition of $\hat{w}$, as none of the letters therein were removed. 
    
    By Lemma~\ref{lem:consecutiveIs}, there is another letter $r_1=m+1$ between the letters $u$ and $v_2$ in $\hat{w}$. Moreover, it is to the right of $v_1.$ Using again Lemma~\ref{lem:consecutiveIs}, this implies the existence of a letter $m+2$ to the right of $(m+2)_{j-1}$; and so on, until a letter $M+1$ is obtained, to the right of the greatest letter $M_{j-1}$ defined by $M:= \max(r \in (d_{\nu_{j-1}}^{(i)})_i).$ This contradicts that the subword $(d_{\nu_{j-1}}^{(i)})_i$ is the largest decreasing subword in the shrinking decomposition of $\hat{w}$; as should have followed from the hypothesis of $\{ (d_{\nu_j}^{(i)})_i \}_{1\leq j \leq \ell(\nu)}$ being the shrinking decomposition of $w.$
\end{proof}

\begin{remark}
    In fact, Lemma~\ref{lem:combinatorialCharShiftedYamanouchi} can be upgraded to an `if and only if' statement characterizing shifted lattice words in terms of shrinking sequences. However, we omit the proof of the reverse direction as it is complicated and superfluous to our purposes.
\end{remark}

\section{Interlacing tableaux}\label{sec:interlacingTableaux}

The properties displayed by shifted lattice words in the previous results are shared by a broader family of words, and as they will be responsible for most of the insertion characteristics relevant to our purposes, we choose to prove the next theorems in greater generality than necessary for shifted lattice words; unifying the proofs for both classes. 

\begin{definition}\label{def:interlacing}
    A word $w\in \cN^{*}$ with shrinking decomposition $\left\{ \left(d_{\nu_j}^{(k)} \setminus d_{\mu_j}^{(k)} \right)_k  \right\}_{1\leq j \leq \ell}$ for some strict partitions $\mu \subseteq \nu$ is \newword{interlacing} if
    \begin{itemize}
        \item between any two letters with integer value $i>1$, there is an instance of $i-1$ and $i+1$; 
        \item between any two instances of $1$, there is an instance of $2$.
    \end{itemize}
    We say a tableau $T$ is \newword{interlacing} if it is the mixed insertion of an interlacing word.
\end{definition}

We will show in Corollary~\ref{cor:bYam_is_interlacing} that all barely Yamanouchi words are interlacing.

\begin{lemma}\label{lem:Imalemma}
    Suppose $w$ is interlacing. Then entry $k_j$ is right of $k_h$ for all $h > j$.
\end{lemma}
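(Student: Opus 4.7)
The plan is to prove the lemma by induction on the number $\ell$ of blocks in the shrinking decomposition of $w$. The case $\ell = 1$ is vacuous. For the inductive step, I reduce to two auxiliary facts: \textbf{(A)} if the block $B_1 = d_{\nu_1} \setminus d_{\mu_1}$ contains the value $k$, then $k_1$ is the rightmost occurrence of $k$ in $w$; and \textbf{(B)} the word $w^{(1)}$ obtained by deleting from $w$ the letters of $B_1$ is itself interlacing, with shrinking decomposition $B_2, \ldots, B_\ell$. Granted these, pairs $(j, h)$ with $2 \le j < h$ are handled by the inductive hypothesis applied to $w^{(1)}$, while pairs $(1, h)$ with $h > 1$ are handled by (A): $k_h$ is an occurrence of $k$ in $w$ distinct from $k_1$, hence strictly left of the rightmost.

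For (A), I would argue by a chain construction. Suppose some $k$ occurs at position $q > k_1$. Interlacing on the pair of $k$'s at $k_1, q$ produces a $k+1$ at some $r_1 \in (k_1, q)$, and since entries of $B_1$ increase in position, $(k+1)_1 < k_1 < r_1$; interlacing on this new pair of $k+1$'s then yields a $k+2$ at $r_2 \in ((k+1)_1, r_1)$. Iterating produces positions $r_i$ of value $k+i$ with $(k+i)_1 < r_i < r_{i-1}$ for every $i \le \nu_1 - k$; taking $i = \nu_1 - k$ gives an occurrence of $\nu_1$ strictly right of $(\nu_1)_1$, contradicting the uniqueness of $\nu_1$ in $w$ (forced by interlacing, since any two $\nu_1$'s would demand a $\nu_1 + 1$ between them).

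For (B), given two occurrences of some $k$ at positions $a < b$ in $w^{(1)}$, I must find a $k+1$ and, when $k > 1$, a $k-1$ strictly between $a$ and $b$ in $w^{(1)}$. Such elements exist in $w$ by interlacing; the only concern is that the intermediate element lies in $B_1$ and is thus deleted. I would split into cases by $\mu_1$ relative to $k$. If $\mu_1 \ge k + 1$, then $B_1$ contains none of $k, k \pm 1$ and nothing is deleted. If $\mu_1 = k$, the greedy stopping of $B_1$ at value $k+1$ places every $k$ of $w$ strictly left of $(k+1)_1$, ruling out $(k+1)_1 \in (a, b)$. If $\mu_1 < k$, then (A) gives that both $(k)_1$ and $(k+1)_1$ are rightmost; were $(k+1)_1$ to fall strictly between $a$ and $b$, then $b < (k)_1$ and interlacing applied to the pair of $k$'s at $b$ and $(k)_1$ would produce a further $k+1$ strictly right of $(k+1)_1$, violating rightmostness. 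The analysis for $k - 1$ is analogous but easier: in the only non-trivial case $\mu_1 \le k - 2$, (A) gives $(k-1)_1 > (k)_1 > b$, placing $(k-1)_1$ outside $(a, b)$. Once interlacing of $w^{(1)}$ is established, its shrinking decomposition being $B_2, \ldots, B_\ell$ is immediate from the greedy definition. The main obstacle I anticipate is this last subcase of (B), which requires a second application of interlacing to rule out the configuration where $(k+1)_1$ would separate the two surviving $k$'s.
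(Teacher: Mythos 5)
Your proof is correct, but it takes a genuinely different route from the paper's. The paper argues by a double-extremal (minimal counterexample) argument: it fixes $j$ least such that some $k_j$ lies left of a $k_h$ with $h>j$, then $k$ greatest for that $j$; one application of the interlacing condition produces a $(k+1)_r$ between $k_j$ and $k_{j+1}$, maximality of $k$ forces $r<j$, and the greedy construction of the $r$th shrinking sequence then yields a $k_r$ strictly left of $k_{j+1}$, contradicting minimality of $j$. You instead induct on the number of blocks and peel off the first one. Your fact (A) is the $j=1$ case of the lemma, proved by a chain argument climbing from $k$ up to the (necessarily unique) maximum $\nu_1$; your fact (B) --- that deleting the first block preserves the interlacing property and leaves the remaining shrinking decomposition intact --- is an analogue, for interlacing words, of the paper's Lemma~\ref{lem:removingYamanouchi}, which the paper proves only for shifted lattice words. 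The paper's argument is much shorter; your peeling argument is longer (the case analysis in (B), keyed on $\mu_1$ versus $k$, carries most of the weight and is the delicate part, but it checks out), and in exchange it is self-contained, makes the role of the greedy construction explicit, and records the reusable closure statement (B) that the paper never states for the interlacing class.
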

\begin{proof}
    Suppose not. Fix $j$ least such that $k_j$ is left of $k_{h}$ for some $k$ and some $h>j$. Choose $k$ greatest so that this occurs. 

    By the interlacing property, there is a $(k+1)_r$ between $k_j$ and $k_{j+1}$ in $w$. By the strict partition content of the shrinking decomposition $(k+1)_j$ exists. By construction, it is left of $k_j$. Hence, by maximality of $k$, we have $j > r$.

    By construction of shrinking sequences $k_r$ exists and must be strictly between $(k+1)_r$ and $k_{j+1}$. But $k_r$ left of $k_{j+1}$ contradicts the choice of $j$ least.
\end{proof}

\begin{lemma}\label{lem:redundant_condition}
    Suppose $w$ is interlacing. Then for all $i$ and $j$, the letter $i_j$ is to the left of $(i+1)_{j-1}$ if $(i+1)_{j-1}$ exists.
\end{lemma}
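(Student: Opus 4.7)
The plan is to argue by contradiction from a careful reading of the greedy construction of the shrinking decomposition. Suppose, toward contradiction, that $(i+1)_{j-1}$ exists but $i_j$ lies strictly to the right of $(i+1)_{j-1}$ in $w$. I will examine the state of the construction at the exact stage when the $(j{-}1)$-st decreasing sequence, which by hypothesis equals $d_{\nu_{j-1}} \setminus d_{\mu_{j-1}}$, is being extracted. At this moment, the sequences indexed $1, \dots, j-2$ have been removed from $w$, but all later sequences, in particular the one containing $i_j$, are still intact in the residual word; so $i_j$ is still present there and still lies strictly to the right of $(i+1)_{j-1}$.

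By greediness, once the extraction has selected $(i+1)_{j-1}$, the next selected letter (if the run is to continue) is the \emph{leftmost} occurrence of $i$ strictly to the right of $(i+1)_{j-1}$ in the residual word. I then split on whether $i \ge \mu_{j-1}+1$, i.e.\ on whether $i_{j-1}$ exists. In the principal case $i \ge \mu_{j-1}+1$, the letter $i_{j-1}$ exists and is precisely this leftmost $i$. Since $i_j$ is a different occurrence of $i$ strictly to the right of $(i+1)_{j-1}$, and the shrinking sequences are disjoint, the leftmost-first property forces $i_{j-1}$ to lie strictly left of $i_j$. This contradicts Lemma~\ref{lem:Imalemma} applied with $k=i$ and with the roles $h=j > j-1$, which asserts that $i_{j-1}$ lies strictly to the right of $i_j$.

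In the remaining boundary case $i = \mu_{j-1}$ (the only way $(i+1)_{j-1}$ can exist while $i_{j-1}$ does not), the sequence $d_{\nu_{j-1}} \setminus d_{\mu_{j-1}}$ terminates at $\mu_{j-1}+1$; by definition of the greedy rule, this termination is equivalent to there being no occurrence of $\mu_{j-1}$ strictly to the right of $(\mu_{j-1}+1)_{j-1}$ in the residual word. But $i_j$ is precisely such an occurrence, a contradiction. The only real obstacle is the bookkeeping of which letters of $w$ remain at each extraction stage; once one sees that the sequences indexed $j, j+1, \dots, \ell$ are all still intact when sequence $j-1$ is being formed, both cases follow directly from greedy maximality together with Lemma~\ref{lem:Imalemma}.
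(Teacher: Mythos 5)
Your proof is correct and follows essentially the same route as the paper's: negate the claim, use the greedy (leftmost-next) rule in the construction of the shrinking decomposition to locate $i_{j-1}$ strictly between $(i+1)_{j-1}$ and $i_j$, and contradict Lemma~\ref{lem:Imalemma}. The paper folds your boundary case $i=\mu_{j-1}$ into the single assertion that $i_{j-1}$ exists and lies strictly between the two letters; your explicit case split just makes that step more careful.
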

\begin{proof}
    Suppose $w$ were a counterexample. Then there is $(i+1)_{j-1}$ left of $i_j$ for some $i$ and $j$. By the definition of the shrinking decomposition, there is $i_{j-1}$ strictly between $(i+1)_{j-1}$ and $i_j$. However, $i_{j-1}$ left of $i_j$ contradicts Lemma~\ref{lem:Imalemma}.
\end{proof}

\subsection{Rows of interlacing tableaux}

\begin{lemma}\label{lem:rowandColBoundedLemma}
    Let $T$ be a semistandard shifted Young tableau. Then, for all $i$ and $j$ the letter $i_j$ belongs to a row $r\leq i$, whereas if primed, the letter $i_j'$ lies in a row $f<i.$
\end{lemma}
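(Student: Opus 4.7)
The plan is to reduce both inequalities to a single diagonal bound: for every row $r$ appearing in the shape of $T$, the diagonal entry satisfies $T_{r,r} \geq r$. Once this bound is available, both halves of the lemma follow essentially by walking leftward along row $r$ from the entry in question to the diagonal and applying row-nondecreasingness, together with the observation that the diagonal entry is unprimed.

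To establish the diagonal bound I would induct on $r$. The base case $T_{1,1} \geq 1$ is immediate because entries are drawn from $\cD$ and $(1,1)$ cannot be primed. For the inductive step, the crucial input is the strictness of the partition shape: whenever row $r+1$ is nonempty we have $\lambda_r > \lambda_{r+1} \geq 1$, so the entry $T_{r,r+1}$ exists. I would then chain
\[
T_{r,r} \;\leq\; T_{r,r+1} \;<\; T_{r+1,r+1},
\]
where the first inequality is row-nondecreasingness and the second requires case-splitting on whether $T_{r,r+1}$ is primed or unprimed. If $T_{r,r+1}$ is primed, it is automatically strictly less than the unprimed value $T_{r+1,r+1}$ in the order on $\cD$; if it is unprimed, equality with $T_{r+1,r+1}$ would place two copies of the same unprimed value in column $r+1$, violating the axiom that unprimed letters appear at most once per column. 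Either way, $T_{r+1,r+1} > T_{r,r} \geq r$, so $T_{r+1,r+1} \geq r+1$.

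With the diagonal bound in hand, the unprimed case is handled directly: an unprimed entry $i$ at position $(r,c)$ has $c \geq r$, so row-nondecreasingness gives $i = T_{r,c} \geq T_{r,r} \geq r$. For the primed case, a primed entry $i'$ at $(r,c)$ is forbidden from the diagonal, hence $c > r$ and $T_{r,r}$ lies strictly to the left of $i'$ in row $r$. Row-nondecreasingness yields $T_{r,r} \leq i'$, and since $T_{r,r}$ is unprimed, the order on $\cD$ upgrades this to the strict numerical inequality $T_{r,r} < i$. Combined with $T_{r,r} \geq r$ this gives $r < i$, as required.

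The only real subtlety is the strict inequality in the inductive step of the diagonal bound: neither row- nor column-nondecreasingness alone is strict, and the strictness genuinely comes from combining the no-primed-on-diagonal axiom with the unprimed-per-column axiom. Everything else in the proof is routine chaining of monotonicity with the convention $i' < i$ on $\cD$.
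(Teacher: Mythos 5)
Your proposal is correct and follows essentially the same route as the paper: both arguments reduce the lemma to the fact that the diagonal entry of row $r$ is an unprimed value at least $r$ (so the minimum possible entry of row $r$ is an unprimed $r$), and then conclude by row-monotonicity together with the convention $i' < i$. The only difference is that you spell out, via the chain $T_{r,r} \leq T_{r,r+1} < T_{r+1,r+1}$ and the column axiom, why consecutive diagonal entries strictly increase, a step the paper asserts without detail.
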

\begin{proof}
Let $\sf{d_1}$ and $\sf{d_2}$ denote diagonal positions of different rows in $T$, and $x$ and $y$ two entries located in $\sf{d_1}$ and $\sf{d_2}$, respectively. Without loss of generality, let $\sf{d_1}$ be North of $\sf{d_2}$. The order in rows and columns imposed on a semistandard Young tableau forces $x<y$, and the fact that they occupy diagonal cells implies that $x,y\in \cN.$ Thus, the minimum possible entry in row $i$ under the ordering of $\cN \cup \cNprime$ is an unprimed instance of $i$.

Consequently, as $i'<i$, a primed instance of $i$ may only be placed in a row $f< i. $ \qedhere
\end{proof}

We say that a word $w$ has \newword{strict content} if $\ct(w)=\lambda$ for a strict partition $\lambda$.

\begin{lemma}\label{lem:orderSameSeq}
    Let $w$ be an interlacing word with strict content. If, in the mixed insertion process of $w$, two unprimed letters with the same value $q$ never appear in the same row $r$ for any $r<q$, then for each row $r$ the letter $i_j$ is inserted into row $r$ before $(i-1)_j$ is.

    Even more, for any $m$, if two unprimed instances of $q$ never appear in the same row $r$ for any $r < m$, then for each row $r\leq m$ the letter $i_j$ is inserted into row $r$ before $(i-1)_j$ is. 
\end{lemma}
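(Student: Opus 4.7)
I will prove the more general second statement by induction on $r$. The base case $r=1$ is immediate: since $i_j$ and $(i-1)_j$ lie in the same (strictly decreasing) $j$-th shrinking sequence of $w$, the letter $i_j$ occurs strictly to the left of $(i-1)_j$ in $w$, so its initial row-insertion into row $1$ precedes $(i-1)_j$'s. For the inductive step I fix $1<r\leq m$, assume the conclusion for all smaller rows, and examine the step $t_0$ at which $(i-1)_j$ is row-inserted into row $r$. By the definition of mixed insertion, this happens only because some value $y<i-1$ is row-inserted into row $r-1$ and bumps $(i-1)_j$ off-diagonally into row $r$. It therefore suffices to exhibit an earlier step at which $i_j$ is itself row-inserted into row $r$.

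The heart of the argument is to identify the step $s_2<t_0$ at which $(i-1)_j$ first enters row $r-1$ and to show that at this very step the value $i_j$ is bumped from row $r-1$ into row $r$. By the inductive hypothesis applied to row $r-1$, the letter $i_j$ entered row $r-1$ at some earlier step $s_1<s_2$. I will then verify that (i) $i_j$ is still unprimed in row $r-1$ just before $s_2$, and (ii) $i_j$ is the leftmost entry of row $r-1$ strictly greater than $i-1$ at that instant. Granted (i) and (ii), the row-insertion of $(i-1)_j$ into row $r-1$ at step $s_2$ bumps $i_j$; since $i_j$ must lie strictly to the right of $(i-1)_j$ in row $r-1$ (rows are nondecreasing and $(i-1)_j<i_j$, and row $r-1$ begins at column $r-1$), the entry $i_j$ is forced off-diagonal, so the bump sends $i_j$ into row $r$ at step $s_2<t_0$, as desired.

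To verify (i), I will combine the hypothesis (which restricts row $r-1$, of index $<m$, to at most one unprimed copy of each value at any moment) with the shrinking-decomposition ordering results of Lemmas~\ref{lem:Imalemma} and~\ref{lem:redundant_condition}: these sharply limit which unprimed values can row-insert into row $r-1$ during the interval $(s_1,s_2)$, and a short case analysis on the value inserted rules out the premature bumping of $i_j$. The main obstacle is (ii): ruling out a primed $i'$ sitting in row $r-1$ strictly to the left of $i_j$ at step $s_2^-$, since such an entry would intercept the bump intended for $i_j$. To dispose of this possibility I will trace any such $i'$ back to its creation at a diagonal bump of some $i_{j'}$ with $j'\neq j$ from an earlier row and use the interlacing order between $i_{j'}$ and $i_j$ in $w$ (through Lemma~\ref{lem:Imalemma}), together with the uniqueness assertion on unprimed copies in rows of index $<m$, to derive a contradiction. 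Once (i) and (ii) are established, the induction closes.
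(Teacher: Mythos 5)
Your overall strategy (induction on $r$; the no-repeat hypothesis making $i_j$ the unique unprimed $i$ in row $r-1$; showing that the arrival of $(i-1)_j$ in row $r-1$ forces $i_j$ into row $r$) matches the paper's. The gap is in your step (ii). You propose to show that no primed $i'$ can sit in row $r-1$ strictly to the left of $i_j$ at the moment $(i-1)_j$ is row-inserted there, and to derive a contradiction from such a configuration. But this configuration genuinely occurs, even for the canonical word $\hat{y}_\nu$ itself: when mixed-inserting $\hat{y}_{(6,3)} = 321654321$, after the seventh letter row $1$ reads $1,\,2',\,3',\,3$, with the primed $3'$ (coming from the shorter shrinking sequence) immediately to the left of the unprimed $3$ (from the longer one), and the hypothesis of the lemma is satisfied throughout. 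So no contradiction is available and your plan for (ii) cannot be completed.

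The correct handling --- and what the paper does --- is not to exclude the $i'$ but to follow the bump through it: when $(i-1)_j$ row-bumps that $i'$, the $i'$ is column-inserted into the next column, and since no letter of $\cD$ lies strictly between $i'$ and $i$, the topmost entry it can displace there is exactly the unprimed $i_j$ (which is forced to be adjacent to the $i'$ for the same reason); hence $i_j$ is row-inserted into row $r$ at that very step, still before $(i-1)_j$ can reach row $r$. With this replacement for (ii), your induction closes; the rest of your outline (the base case from the order of letters within a shrinking sequence, and the diagonal case being vacuous because $(i-1)_j$ then never reaches row $r$ unprimed) is consistent with the paper's proof.
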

\begin{proof}
    The first statement is a special case of the second since we may assume $m \leq q$. Hence we prove the second.
    
    We induct on $r$. The base case $r=1$ is a consequence of the way that shrinking sequences are defined. Fix $m$. Towards a contradiction, let $r=k<m \leq q$ be the smallest indexed row in which the lemma fails in the order of insertion, and without loss of generality, further let $i$ be the smallest integer with $i_j$ being inserted after $(i-1)_j$ to row $k$ and let $j$ be maximal in relation to $i.$
    
    Since in row $k-1$ the property held, $i_j$ was its member before $(i-1)_j$. But we then have that $i_j$ was the only unprimed entry of value $i$ in row $k-1$; and when $(i-1)_j$ was inserted, either it bumped $i_j$, or there was an entry $i'$ before $i_j$ in row $k-1$. In the former situation, $i_j$ was either bumped from a diagonal position, in which case $(i-1)_j$ now occupies this position and will not be inserted to posterior rows; or $i_j$ was bumped to the next row, so that in both cases the lemma holds. In the latter case, $i'$ is displaced to the next column by $(i-1)_j$, and there it bumps $i_j$ because there are no entries $x$ such that $i'<x < i.$ Irrespective of the cause, $i_j$ is then again bumped before $(i-1)_j$, a contradiction.
\end{proof}

\begin{lemma}\label{lem:precedingEntry}
    Let $w$ be an interlacing word with strict content. If, in the mixed insertion process of $w$, two unprimed letters with the same value $q$ never belong to the same row $r$ for any $r<q$, then for each $i$ and each row $r\leq i-1$ the letter $(i-1)_{j}$ is inserted to row $r$ before $i_{j-1}$ is, or $(i-1)_j$ has been bumped from a diagonal position and is a primed entry. 

    Even more, if $m$ is chosen arbitrarily and for all $i>1$ and all $r < \min(i, m)$ the entry $i_j$ never appears in the same row $r$ as another unprimed instance of $i$, then $(i-1)_j$ is inserted before $i_{j-1}$ to all rows $s \leq \min(i-1, m)$, or $(i-1)_j$ is already primed when $i_{j-1}$ is inserted to $s$.
\end{lemma}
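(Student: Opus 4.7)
My plan is to follow the inductive framework used in the proof of Lemma~\ref{lem:orderSameSeq}, performing induction on the row index $s$ for the stronger ``even more'' statement; the first statement is simply the case $m \geq i$ of the second. The base case $s=1$ is immediate from Lemma~\ref{lem:redundant_condition} (after the substitution $i\mapsto i-1$), which tells us that $(i-1)_j$ lies to the left of $i_{j-1}$ in $w$, so $(i-1)_j$ enters row $1$ first.

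For the inductive step I will argue by contradiction, supposing that for some $i,j$ the conclusion fails at row $s \leq \min(i-1,m)$, i.e., $i_{j-1}$ reaches row $s$ before $(i-1)_j$ does and $(i-1)_j$ is still unprimed at that moment; I take $i$ minimal and $j$ maximal subject to this. By the inductive hypothesis at row $s-1$, combined with the observation that once a letter is primed it remains so, $(i-1)_j$ must sit exactly in row $s-1$ at the instant $i_{j-1}$ arrives there (otherwise $(i-1)_j$ would already be in row $s$, contradicting the assumption). Cascading Lemma~\ref{lem:orderSameSeq} (as in the proof of that lemma) forces $i_j$ to have been displaced from row $s-1$ at the moment $(i-1)_j$ entered, either by a direct bump or through an intermediary $i'$ lying left of $i_j$. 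Consequently, when $i_{j-1}$ arrives, it does not bump $(i-1)_j$ and so settles at some column $c_i$ strictly right of $(i-1)_j$'s column $c_{i-1}$.

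The heart of the argument is then a positional analysis inside row $s-1$. The hypothesis forbids two unprimed copies of $i-1$ from coexisting in row $s-1$ (since $s-1 < i-1 < \min(i,m)$ after the obvious book-keeping), and by Lemma~\ref{lem:orderSameSeq} the next unprimed $i-1$ to enter row $s-1$ (the first among $(i-1)_{j-1},(i-1)_{j-2},\dots$ that has not been primed beforehand) arrives after $i_{j-1}$. Were $(i-1)_j$ still in row $s-1$ at that arrival, either the new letter would produce a second unprimed $i-1$ directly, or it would have to bump $i_{j-1}$ and then itself become a second unprimed $i-1$ in the row; both options violate the hypothesis. Hence some letter $z$ of value at most $i-2$ must be inserted into row $s-1$ between $i_{j-1}$ and that next unprimed $i-1$, and its bumping must eject $(i-1)_j$. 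Since $(i-1)_j$ is strictly left of $i_{j-1}$ and $z$ bumps the leftmost entry exceeding $z$ (possibly via a column-chain initiated by a primed $(i-1)'$ sitting left of $(i-1)_j$), the chain terminates at $(i-1)_j$, dispatching it either to row $s$ or, if $(i-1)_j$ was on the diagonal, to a primed state. In either case $i_{j-1}$ is untouched and remains in row $s-1$, producing the desired contradiction.

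The main obstacle I anticipate is the careful tracking of column-insertion chains initiated by a primed entry $(i-1)'$ positioned left of $(i-1)_j$, since such a chain can propagate through several columns before having any effect on $(i-1)_j$; verifying that the chain really does terminate at $(i-1)_j$ (rather than dissipating or detouring through $i_{j-1}$) is where the interlacing hypothesis on $w$, together with the shape constraints imposed by the ``no two unprimed $i-1$'s in one row'' assumption, must be invoked most delicately.
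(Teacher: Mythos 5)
Your setup (induction on rows, base case via Lemma~\ref{lem:redundant_condition}, and the contradiction hypothesis placing $(i-1)_j$ in row $s-1$ strictly left of $i_{j-1}$) matches the paper's, and your treatment of row-bumping is essentially right: an unprimed $z\leq i-2$ inserted into row $s-1$ reaches $(i-1)_j$ or something to its left before it can touch $i_{j-1}$, while $z=i-1$ would create two unprimed copies of $i-1$ in that row. But there is a genuine gap exactly where you anticipate one: you never rule out the case in which $i_{j-1}$ is dislodged to row $s$ by a \emph{column} insertion of a primed entry, which bypasses $(i-1)_j$ entirely. In mixed insertion, an off-diagonal unprimed entry is sent to the next row whenever it is replaced, whether by a row bump or by a primed letter entering its column from the left; an instance of $i'$ arriving in the column of $i_{j-1}$ does exactly this while leaving $(i-1)_j$ untouched, and nothing in your ``next unprimed $i-1$'' discussion excludes it. That discussion moreover only shows that \emph{if} another unprimed $i-1$ later reaches row $s-1$ then $(i-1)_j$ must already have left; it does not force $(i-1)_j$ to leave before $i_{j-1}$ does, so the ordering ``$i_{j-1}$ ejected by an $i'$, then $(i-1)_j$ ejected by some $z\leq i-2$, then the next $i-1$ arrives'' is never excluded, and it is precisely the scenario you must kill.

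The paper closes this case by pinning down the offending primed entry. Any entry lying between $(i-1)_j$ and $i_{j-1}$ in row $s-1$ must be an $i'$ (the values $i-1$ and $i$ being excluded by the no-repetition hypotheses), so either the two letters are adjacent and the bumping $i'$ must sit immediately below $(i-1)_j$, or there is a single $i'$ between them in the row. In each configuration, dislodging that $i'$ would require either a row insertion of another $i-1$ (forbidden by hypothesis, and by the prohibition on two primed entries of equal value in one row) or a letter $x$ with $i-1<x<i'$ in the doubled alphabet --- and no such letter exists. This order-theoretic observation is the missing idea; without it, your assertion that the ``column-chain terminates at $(i-1)_j$'' rather than detouring through $i_{j-1}$ is unsupported.
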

\begin{proof}
Again, the first statement follows from the second, since we may assume $m \leq q$. We prove the second statement by induction on $r$.

    The base case $r=1$ follows from Lemma~\ref{lem:redundant_condition}. Suppose towards a contradiction that the lemma holds for all rows $r<k$, but that in row $k\leq i-1$ the letter $i_{j-1}$ is inserted before $(i-1)_{j}$.

    Then $(i-1)_j$ was inserted to row $k-1$ before $i_{j-1}$, and when the latter was displaced to row $k$, the letter $(i-1)_j$ remained in row $k-1$. If $i_{j-1}$ was row-bumped to row $k$, it must have been bumped by an instance of $i-1$, as otherwise $(i-1)_j$ would have been bumped as well. However, in that case two instances of $i-1$ must have belonged to the same row, a contradiction. 
    
    Therefore, $i_{j-1}$ must be bumped to the next row by a primed entry. There are two cases according to whether there is an entry between $(i-1)_j$ and $i_{j-1}$ in row $k-1$. Note that if such as entry exists, it must be an $i'$.
    
    If there were no entries between $(i-1)_j$ and $i_{j-1}$ in row $k-1$, then $i_{j-1}$ is bumped by an instance of $i'$, coming from the previous column, to wit, in the position immediately below $(i-1)_j$. But then it is impossible for $i'$ to have been bumped, by rows or columns, because there is no entry $x$ with $(i-1)_j < x <i'$. 
    
    On the other hand, if there was an $i'$ between $(i-1)_j$ and $i_{j-1}$ in row $k-1$, we have that either $i'$ bumped $i_{j-1}$ or a different entry did so. In the former situation, $i'$ had to be row-bumped by an instance of $i-1$, which contradicts again that there are no repeated entries in row $k$. In the latter situation, a different entry $x$ in the same column as $i'$ had to bump $i_{j-1}$, taking its position to the right of $i'.$ Accordingly, we must have that $i'\leq x < i$. However, since primed entries of the same integer value cannot belong to the same row, this also is impossible.
\end{proof}

\begin{lemma}\label{lem:rowOrderForSameElementDiffSeq}
    Fix $m>0$. Let $w$ be an interlacing word with strict content, and suppose that during the mixed insertion of $T=P_\mix(w)$ no two letters of the same value $q$ are repeated in a row $f<\min(m,q)$, and the letter $i_j$ is inserted to row $r \leq \min(m,i)$. Then, 
    \begin{enumerate}
        \item for all $k>0$, either $i_{j+k}$ was primed when $i_j$ was inserted to row $r$, or $i_{j+k}$ is inserted to row $r$ before $i_j;$ and
        \item if $i_j$ is primed, then $i_{j+k}$ is already primed at that point.
    \end{enumerate} 
\end{lemma}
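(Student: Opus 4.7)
The plan is to prove both parts by simultaneous induction on $r$, following the template of Lemmas~\ref{lem:orderSameSeq} and~\ref{lem:precedingEntry}. The base case $r=1$ is almost automatic: since $j+k>j$, Lemma~\ref{lem:Imalemma} places $i_{j+k}$ strictly to the left of $i_j$ in $w$, so $i_{j+k}$ is inserted into row $1$ (as an unprimed letter of $w$) before $i_j$ is. This proves Part (1), and Part (2) is vacuous because letters of $w$ enter row $1$ unprimed.

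For the inductive step of Part (1), assume both parts hold for all rows below $r$. Let $t$ be the moment $i_j$ is row-inserted into row $r$; necessarily $i_j$ is then unprimed, and just before $t$ it was bumped out of row $r-1$, where it had arrived at some earlier time $t_{r-1}<t$. Apply Part (1) of the inductive hypothesis at row $r-1$: either $i_{j+k}$ was already primed at $t_{r-1}$ (hence primed at $t$), or $i_{j+k}$ had been row-inserted into row $r-1$ as unprimed before $t_{r-1}$. In the second alternative, the no-repetition hypothesis—applicable since $r-1<r\leq\min(m,i)$—forbids two unprimed $i$'s from coexisting in row $r-1$, so $i_{j+k}$ must have exited row $r-1$ before $t_{r-1}$. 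The only two exit mechanisms in mixed insertion are an off-diagonal row bump (in which case $i_{j+k}$ is row-inserted into row $r$ as unprimed before $i_j$) or a diagonal bump (in which case $i_{j+k}$ is primed before $t_{r-1}<t$). Both outcomes match the conclusion of Part (1).

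For Part (2), suppose $i_j$ enters row $r$ already primed. Then $i_j$ was diagonally bumped at some earlier time $t^*<t$ from a cell $(d,d)$ with $d\leq i$ by Lemma~\ref{lem:rowandColBoundedLemma}. Because primed entries cannot sit on the main diagonal, every subsequent column-insertion step of $i_j'$ must land in a row $\leq d$, forcing $r\leq d$. It then suffices to show $i_{j+k}$ was already primed by $t^*$. Tracing back, $i_j$ was row-inserted into row $d$ as unprimed at some time $t_d\leq t^*$; I would then invoke Part (1) at row $d$ together with the same two-exit analysis: either $i_{j+k}$ was primed by $t_d$ (hence by $t^*$), or it was row-inserted into row $d$ as unprimed earlier, in which case the no-repetition hypothesis and the structure of $i_{j+k}$'s possible exits from row $d$ force it to have become primed via a diagonal bump before $t_d$.

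The main obstacle is the Part (2) step: the diagonal row $d$ may exceed $r$, so invoking Part (1) at row $d$ is not automatic from the induction on $r$. Controlling this requires verifying that under the interlacing and no-repetition hypotheses (and using Lemmas~\ref{lem:Imalemma} and~\ref{lem:redundant_condition}) the relevant $d$ satisfies $d\leq\min(m,i)$, and, crucially, ruling out the scenario in which $i_{j+k}$ leaves row $d$ via an off-diagonal row bump rather than via a diagonal one, which would leave $i_{j+k}$ unprimed and break the claim. This exclusion is the delicate core of the proof and is where the structure of interlacing words (and the way shrinking sequences are aligned in $w$) must be used in an essential way.
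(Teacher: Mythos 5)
Your Part (1) is essentially sound, and its route (induction on rows, using the no-repetition hypothesis to force $i_{j+k}$ out of row $r-1$ before $i_j$ arrives, then case-splitting on off-diagonal versus diagonal exit) is a legitimate alternative to what the paper does. But Part (2) contains a genuine gap, which you yourself flag and do not close: you need to know that $i_{j+k}$ becomes primed \emph{before} $i_j$ does, and your induction on $r$ gives you no control over the diagonal row $d$ where $i_j$ is primed, nor any way to rule out $i_{j+k}$ escaping row $d$ by an off-diagonal row bump and staying unprimed. Since the statement requires both parts, the proof as written is incomplete.

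The missing idea is the one the paper uses: interpolate through the letter $(i+1)_{j+k-1}$ of the adjacent shrinking sequence. Lemma~\ref{lem:precedingEntry} gives that $i_{j+k}$ is inserted to each relevant row before $(i+1)_{j+k-1}$ (or is already primed), and Lemma~\ref{lem:orderSameSeq} gives that $(i+1)_{j+k-1}$ is inserted to each row before $i_{j+k-1}$; chaining these for consecutive indices $e=k-1,\dots,0$ yields the insertion order
$i_{j+k}\rightarrow(i+1)_{j+k-1}\rightarrow i_{j+k-1}\rightarrow\cdots\rightarrow i_j$ in every row. The same interpolation transfers the \emph{priming} order: within one shrinking sequence the larger letter $(i+1)_{j+e}$ reaches the diagonal, hence is primed, before $i_{j+e}$; and $i_{j+e+1}$ is primed before $(i+1)_{j+e}$ because it precedes it into every row and (by Lemma~\ref{lem:rowandColBoundedLemma}) cannot be primed as late as row $i$. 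This is exactly the exclusion you identify as ``the delicate core'' but leave unproved; without routing through the $(i+1)$-letters, the interlacing structure is never actually brought to bear on Part (2).
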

\begin{proof}
    Suppose that $i_{j+k}$ was not primed when $i_{j+k-1}$ is inserted to $r<i.$ Then, by Lemma~\ref{lem:orderSameSeq} $(i+1)_{j+k-1}$ is inserted to $r$ before $i_{j+k-1}$, so that nor was $i_{j+k}$ primed when $(i+1)_{j+k-1}$ was inserted to $r$. Hence, from Lemma~\ref{lem:precedingEntry}  the letter $i_{j+k}$ was likewise inserted to $r$ before $(i+1)_{j+k-1}$ was. Thus, $i_{j+k}$ is inserted to $r$ before $i_{j+k-1}$ is. 

    If we now let $r=i$, and give thought to the order of insertion in $r-1$, it follows from our considerations that if repeated, $i_{j+k}$ is left of $i_{j+k-1}$ in $r-1$, while if not $i_{j+k}$ is displaced to $r$ before $i_{j+k-1}$ as desired. In the first case, 
$i_{j+k-1}$  cannot be displaced, row or column-wise without causing the bumping of $i_{j+k}$. But then $i_{j+k}$ is also bumped to $r$ before $i_{j+k-1}$. In either case the desired order holds for $r=i$ too, and this exhausts all cases by Lemma~\ref{lem:rowandColBoundedLemma}.
    
    The same argument shows that for all $e>0$, if $i_{j+e}$ was not primed when $i_{j+e-1}$ was inserted to row $r$, then $i_{j+e}$ is inserted to row $r$ before $i_{j+e-1}.$ Moreover, for all $e>0$, the letter $(i+1)_{j+e}$ is primed before $i_{j+e}$ because of Lemma~\ref{lem:orderSameSeq}, and the letter $i_{j+e+1}$ is primed before $(i+1)_{j+e}$ because of Lemma~\ref{lem:precedingEntry} and the fact that $i_{j+e+1}$ does not become primed in row $i$. This last fact can be seen as a consequence of there being no entries $x<i$ in row $i$ from Lemma~\ref{lem:rowandColBoundedLemma}.  Thus, we obtain that if $i_{j+k}$ was unprimed when $i_j$ was inserted, then so were $i_{j+e}$ for $1\leq e \leq k-1$. Then, iterating for $e$ over $1\leq e \leq k-1$, it follows that the order of insertion
    \[i_{j+k} \rightarrow  (i+1)_{j+k-1}  \rightarrow  i_{j+k-1}  \rightarrow  (i+1)_{j+k-2} \rightarrow i_{j+k-1} \rightarrow \cdots \rightarrow  i_j  \]
    is observed throughout the insertion process in row $r.$ In particular, $i_{j+k}$ is inserted before $i_j$ is. This proves (1). Noting that this order also corresponds to the order in which entries are primed, this establishes $(2)$ as well. 
\end{proof}

\begin{lemma}\label{lem:repeatingLemma}
    Let $w$ be an interlacing word with strict content. Then in the mixed insertion process of $w$, for each $i$ and for each row $r < i$, two distinct letters $i_j$ and $i_k$ never both appear in the row $r$.
\end{lemma}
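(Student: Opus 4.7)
The plan is to proceed by strong induction on the row index $r$, showing that for every $i > r$ no two unprimed instances of $i$ ever coexist in row $r$. Under the inductive hypothesis that the lemma holds for all rows less than $r$, the hypotheses of Lemmas~\ref{lem:orderSameSeq}, \ref{lem:precedingEntry}, and~\ref{lem:rowOrderForSameElementDiffSeq} are all satisfied with parameter $m = r$: the required assertion, that no two unprimed instances of $q$ occupy a row $f < \min(r, q)$, follows directly from the inductive hypothesis applied to row $f$. The base case $r = 1$ is handled uniformly, since these hypotheses become vacuous.

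Fix $i > r$ and assume for contradiction that $i_j$ and $i_k$ with $j < k$ both appear as unprimed entries in row $r$ at some point in the insertion process. Lemma~\ref{lem:rowOrderForSameElementDiffSeq}(1) then forces one of two alternatives: either (a) $i_k$ is already primed when $i_j$ is inserted to row $r$, or (b) $i_k$ arrives in row $r$ unprimed strictly before $i_j$ does. In case (a), primed entries can only be shifted to adjacent columns by column insertions and always remain primed; hence $i_k$ stays primed for the remainder of the process and is never unprimed in row $r$, contradicting the assumption.

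Case (b) requires the bulk of the work. The plan is to leverage the interlacing structure: by Lemmas~\ref{lem:Imalemma} and~\ref{lem:redundant_condition}, the entry $(i-1)_k$ lies strictly between $i_k$ and $i_j$ in $w$, since $(i-1)_k$ is immediately to the right of $i_k$ in its shrinking sequence and, by Lemma~\ref{lem:redundant_condition}, precedes $i_{k-1}$, which (by Lemma~\ref{lem:Imalemma}) is weakly left of $i_j$. Consequently $(i-1)_k$ is inserted strictly between the two events. A careful tracking of $(i-1)_k$'s trajectory — using Lemma~\ref{lem:orderSameSeq} to control the order in which $(i-1)_k$ and $i_k$ enter each row they share — will show that $(i-1)_k$ ultimately reaches row $r$ and bumps $i_k$ from it (or, if $i_k$ sits on the main diagonal, converts it to $i_k'$). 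Either outcome dislodges $i_k$ from its unprimed position in row $r$ before $i_j$ arrives, yielding the desired contradiction.

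The main obstacle lies in this last step of case (b): rigorously verifying that the cascade triggered by $(i-1)_k$ actually reaches row $r$ and interacts with $i_k$ as claimed. The needed local analysis must account for a possible intervening $i'$ to the left of $i_k$ in row $r$, and for the diagonal scenario in which the mixed-insertion rules create a new primed entry rather than performing a straight row-bump. Because the order on $\cN \cup \cN'$ places only the single value $i'$ strictly between $i-1$ and $i$, and because primed entries are subject to the at-most-once-per-row rule, the possible configurations can be enumerated and handled in turn.
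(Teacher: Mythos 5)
Your setup matches the paper's: induction on the row index, with the inductive hypothesis feeding the hypotheses of Lemmas~\ref{lem:orderSameSeq}, \ref{lem:precedingEntry}, and~\ref{lem:rowOrderForSameElementDiffSeq} at $m=r$, and the displacing entry $(i-1)_k$ (the copy of $i-1$ in the \emph{higher}-indexed shrinking sequence) playing the decisive role. Case (a) is fine. But case (b) — which you correctly identify as the heart of the matter — is left unproven, and that is a genuine gap: you describe a ``careful tracking of $(i-1)_k$'s trajectory'' down to row $r$ as the main obstacle and then only gesture at an enumeration of configurations without carrying it out. Two specific things are missing. First, you work with arbitrary $j<k$, but Lemma~\ref{lem:precedingEntry} only controls the pair $(i-1)_k$ versus $i_{k-1}$; you need to first reduce to the case of adjacent indices, i.e.\ argue that the \emph{first} repeat in row $r$ must involve $i_k$ and $i_{k-1}$ (this follows from Lemma~\ref{lem:rowOrderForSameElementDiffSeq}, which forces all the intermediate $i_h$ to enter row $r$ in decreasing order of index). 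Second, and more importantly, no trajectory-tracking is actually needed: once the inductive hypothesis is in place, Lemma~\ref{lem:orderSameSeq} gives that $i_k$ enters row $r$ before $(i-1)_k$, and Lemma~\ref{lem:precedingEntry} (with $m=r$) gives that $(i-1)_k$ enters row $r$ before $i_{k-1}$ does, unless $(i-1)_k$ is already primed — and the latter is excluded because $i_k$ primes before $(i-1)_k$ and $i_k$ is assumed unprimed in row $r$. So $(i-1)_k$ is guaranteed to arrive in row $r$ strictly between the two $i$'s; the only remaining work is the local analysis of what it bumps on arrival (either $i_k$ directly, or an intervening $i'$, which then column-inserts and must displace $i_k$ since no value lies strictly between $i'$ and $i$). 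Without this reduction and without recognizing that Lemma~\ref{lem:precedingEntry} already delivers the arrival of $(i-1)_k$ in row $r$, the proof is not complete.

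Separately, your derivation that $(i-1)_k$ sits strictly between $i_k$ and $i_j$ in the word $w$ silently assumes $(i-1)_k$ exists, i.e.\ that $i-1$ still lies in the interval of the $k$th shrinking sequence; this boundary case should at least be acknowledged (the interlacing condition supplies \emph{some} $i-1$ between the two $i$'s, but identifying its shrinking index takes an extra step).
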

\begin{proof}
    Fix $i$ and, without loss of generality, assume $k < j$.
    Suppose by contradiction that the lemma fails and without loss of generality, that it fails for the first time after the last letter of $w$ is inserted. Let the failure occur in row $r <i.$
    
    If $r=1$, then the last letter of $w$ is $i_k$, and then we have that between $i_j$ and $i_k$ in $w$ there is an $i-1$ because $i>1$ and $w$ is interlacing. Since $i_k$ is the first letter to repeat an integer value in a row, this hence implies that on being inserted, $i-1$ row-bumps $i_j$. Otherwise, there would be an $i'$ before $i_j$, in which case $i-1$ row-bumps $i'$, and $i'$ displaces $i_j$ because they are both in row $1$. In either case, we arrive at a contradiction.
    Thus we may assume that the property is not broken in row $1$. Inductively suppose that the lemma also holds for all rows $r<m$ where $m<i.$ 

    Suppose for the sake of contradiction that the property is broken for the first time on row $m.$ We can assume that $(i-1)_j$ is not a primed entry when $i_j$ and $i_{j-1}$ are repeated for the first time, because as $i_j$ is inserted before $(i-1)_j$ to every row by Lemma~\ref{lem:orderSameSeq}, it must also become primed before. 
    
    Hence, in row $m$, $i_j$ had to be inserted before $(i-1)_j$, which in turn is inserted before $i_{j-1}$ by Lemma~\ref{lem:orderSameSeq}, Lemma~\ref{lem:precedingEntry}, and Lemma~\ref{lem:rowOrderForSameElementDiffSeq}. That is, if two entries $i_j$ and $i_k$ are  repeated in that row for the first time, then $i_j$ is inserted before $i_k$ and $k=j-1$. 

    Moreover, if they are to be repeated for the first time in row $m$, then the facts that $i_j$ is inserted before $(i-1)_j$ and that $(i-1)_j$ is inserted before $i_{j-1}$ together imply that $(i-1)_j$ is inserted to row $m$ displacing $i_j$, a contradiction.
    \end{proof}

\begin{lemma}\label{lem:bigRowLemma}
    Let $w$ be an interlacing word with strict content. Then,
    \begin{enumerate}
        \item  in the mixed insertion process of $w$, for all $i$ and all rows $r < i$, two distinct letters $i_j$ and $i_k$ never occupy the same row $r$;  
        \item for all $i$ and all $r\leq i-1$, the letter $(i-1)_{j}$ is inserted to row $r$ before $i_{j-1}$ is, or is already a primed entry; and
        \item for all $r$, the letter $i_j$ is inserted to row $r$ before $(i-1)_j$ is, or $i_j$ is primed.
    \end{enumerate}
\end{lemma}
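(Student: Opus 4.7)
The plan is to recognize Lemma~\ref{lem:bigRowLemma} as a packaging of results already in hand, now that the delicate non-repetition statement has been proved unconditionally in Lemma~\ref{lem:repeatingLemma}. All three parts should come out essentially as bookkeeping; the substantive combinatorial work has already been done.

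First I would dispose of (1) by simply invoking Lemma~\ref{lem:repeatingLemma}: its conclusion is word-for-word part (1). So there is nothing to prove here.

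For (2) and (3), I would observe that the common conditional hypothesis of Lemmas~\ref{lem:orderSameSeq} and \ref{lem:precedingEntry}, namely ``no two unprimed letters with the same value $q$ ever share a row $r<q$ during the mixed insertion of $w$,'' is now unconditionally satisfied for any interlacing word $w$ of strict content. Indeed, this follows directly from (1), augmented by the semistandard-tableau fact that two primed copies of the same integer cannot appear in a common row (so the two potential occurrences of integer value $q$ in row $r<q$ would have to consist of at least one unprimed copy, which (1) forbids in the repeated form). Taking the parameter $m$ in the ``even more'' halves of those lemmas to exceed the maximum integer appearing in $w$, we obtain the order-of-insertion statements for every row $r$. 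The caveats ``or $(i-1)_j$ is already primed'' in (2) and ``or $i_j$ is primed'' in (3) are precisely the escape clauses inherited from those lemmas: if the relevant letter has been bumped from a diagonal position and primed before it would otherwise have reached row $r$, then the phrase ``is inserted to row $r$'' becomes vacuous.

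The only step that really requires thought is matching the roles of the parameters: (3) is the unconditional form of Lemma~\ref{lem:orderSameSeq} applied to each $(i,j)$ and each row $r$, while (2) is the unconditional form of Lemma~\ref{lem:precedingEntry} applied in the same way. The hard part of the argument was the inductive analysis verifying that no two instances of a common integer value get stuffed into a row too small to hold them — this is exactly the content of Lemma~\ref{lem:repeatingLemma} — and once that has been overcome, the present lemma demands no further insertion-level combinatorics, only the observation that we may now discharge the hypotheses of the earlier conditional lemmas.
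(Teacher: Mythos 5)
Your proof is correct and follows essentially the same route as the paper: part (1) is exactly Lemma~\ref{lem:repeatingLemma}, and parts (2) and (3) are obtained by noting that (1) discharges the shared hypothesis of Lemmas~\ref{lem:precedingEntry} and~\ref{lem:orderSameSeq} (with $m$ taken large), whose conclusions are the remaining two items together with their primed-entry escape clauses. If anything, your write-up is slightly more careful than the paper's one-line proof, which appears to cite Lemma~\ref{lem:repeatingLemma} where Lemma~\ref{lem:precedingEntry} is meant for item (2).
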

\begin{proof}
   The first item is the content of Lemma~\ref{lem:repeatingLemma}. Now, since from there we have that for all rows $r$, two letters $i_j$ and $i_k$ never occupy the same row as long as $r<i$. The second and third items then follow from Lemmas~\ref{lem:repeatingLemma} and~\ref{lem:orderSameSeq}, respectively. 
\end{proof}

\subsection{Columns of interlacing tableaux}

Now we provide the key properties satisfied by interlacing words as regards column insertion, morally, that the order of insertion is also transferred to the primed entries.

\begin{lemma}
    Let $w$ be an interlacing word with strict content. Then, for every column $c$, and all $i$ and $j$, we have that if $i_{j-1}'$ and $i_j'$ are inserted to $c$, then $i_j'$ is inserted before $i_{j-1}'.$ 
\end{lemma}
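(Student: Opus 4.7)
The approach is to leverage Lemma~\ref{lem:rowOrderForSameElementDiffSeq}(2), applied with $k=1$, which tells us that the letter $i_j$ becomes primed strictly before $i_{j-1}$ does; in particular, at the moment $i_{j-1}$ is bumped from the diagonal of its row $d_{j-1}$ and column-inserted as $i'_{j-1}$ into column $d_{j-1}+1$, the primed entry $i'_j$ already exists somewhere in the tableau. My plan is to prove, by induction on time, the following invariant: whenever both $i'_j$ and $i'_{j-1}$ are present, the column occupied by $i'_j$ is weakly greater than the column occupied by $i'_{j-1}$, and if they share a column then $i'_j$ lies strictly above $i'_{j-1}$. Granting this invariant, the conclusion follows immediately: whenever $i'_{j-1}$ is inserted into a column $c$, the entry $i'_j$ is at column $\geq c$, so $i'_j$ must have been inserted into column $c$ at some strictly earlier time.

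For the inductive step, if $i'_j$ and $i'_{j-1}$ occupy distinct columns, any column-insertion event trivially preserves the inequality since each entry moves rightward by at most one column per step. If they share column $c$ with $i'_j$ strictly above $i'_{j-1}$, and some primed letter $x' < i'$ is column-inserted into $c$, then the topmost entry of $c$ strictly greater than $x'$ is either in the portion of $c$ above $i'_j$ (in which case that entry alone is bumped and both $i'_j$ and $i'_{j-1}$ remain where they are), or it is $i'_j$ itself, since $i'_j$ and $i'_{j-1}$ have the same value $i'$ but $i'_j$ sits higher. In the latter case $i'_j$ is bumped to column $c+1$ strictly before $i'_{j-1}$ ever moves, and both clauses of the invariant are preserved.

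The main obstacle is the base case: verifying the invariant at the moment $i'_{j-1}$ is first created, i.e., showing that $i'_j$ is already in a column $\geq d_{j-1}+1$ at that moment. By Lemma~\ref{lem:bigRowLemma}(2)(3), in each row $r \leq d_{j-1}$ visited by $i_{j-1}$ during its descent to the diagonal, either $i_j$ was already primed or $i_j$ was row-inserted into row $r$ before $i_{j-1}$ was. By following the column-insertion cascades produced by the smaller bumping elements that drive $i_{j-1}$ from the diagonal of row $r$ down to row $r+1$, one can argue iteratively that each such cascade propels $i'_j$ at least one column to the right whenever $i'_j$ would otherwise lie in a column blocking $i_{j-1}$'s progress (using the above-below analysis of the inductive step to place $i'_j$ ahead of $i'_{j-1}$'s trajectory). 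Threading this argument through all rows $r = 1, 2, \ldots, d_{j-1}$ forces $i'_j$ to have been pushed to column $\geq d_{j-1}+1$ by the time $i'_{j-1}$ is created. This iterative bookkeeping across rows, respecting the priming order from Lemma~\ref{lem:rowOrderForSameElementDiffSeq}, is the delicate technical core of the proof.
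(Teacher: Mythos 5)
Your overall architecture---show that $i_j$ acquires its prime first, pin down where $i_j'$ sits relative to $i_{j-1}$ at the instant $i_{j-1}'$ is created, and then propagate that relative position forward in time---is the same as the paper's. Even setting aside the base case, your maintenance step has two omissions: you only treat eviction of a shared-column pair by a \emph{column} insertion, whereas you must also rule out $i_{j-1}'$ being displaced first by a \emph{row} insertion (the bumping entry $e$ would have to satisfy $e<i'$ and would then sit in the same column below $i_j'$, an invalid configuration, so in fact $i_j'$ must leave first); and when $i_{j-1}'$ catches up into the column of $i_j'$ you must check it lands strictly below $i_j'$ (it replaces the topmost entry \emph{strictly} greater than $i'$, which $i_j'$ is not). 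These are fixable.

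The genuine gap is the base case, which you correctly identify as the technical core but do not prove. Your proposed mechanism---that each bumping cascade accompanying $i_{j-1}$'s descent ``propels $i_j'$ at least one column to the right,'' threaded over rows $1,\dots,d_{j-1}$---is unjustified and is not what makes the statement true: nothing forces $i_j'$ to advance once per row of $i_{j-1}$'s trajectory, and your sketch does not exclude $i_j'$ simply remaining in some column $\le d_{j-1}$ while $i_{j-1}$ descends past it. The paper's argument is positional rather than kinetic: when $i_j$ becomes primed, the still-unprimed $i_{j-1}$ cannot lie NorthWest-adjacent configurations aside, strictly east of $i_j'$ (order and non-repetition considerations), so it is weakly west; if $i_{j-1}$'s subsequent bumping path to its diagonal box stayed strictly north of $i_j'$, that diagonal box would be NorthWest of $i_j'$, which is impossible, so the path must pass through the row of $i_j'$, where it is forced weakly west of it; and in the boundary case where $i_j'$ occupies the very column of $i_{j-1}$'s diagonal box, the bumping entry $x<i$ cannot legally sit beneath $i_j'$ (there is no value strictly between $i'$ and $i$), so $i_j'$ must be pushed to the next column before $i_{j-1}$ can be evicted from the diagonal and become $i_{j-1}'$. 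Without an argument of this shape your induction has no starting point, so the proof as written does not go through.
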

\begin{proof}
    For the sake of convenience we work with $(i+1)_j'$ and $(i+1)_{j-1}'$ instead of $i_{j-1}'$ and $i_{j}'$. 
    
    Suppose that both $(i+1)_j$ and $(i+1)_{j-1}$ are eventually primed in the insertion of $w$.  By Lemma~\ref{lem:rowOrderForSameElementDiffSeq} and Lemma~\ref{lem:rowandColBoundedLemma}, while unprimed, $(i+1)_j$ is inserted to every row before $(i+1)_{j-1}$. Accordingly, $(i+1)_j$ also becomes primed before $(i+1)_{j-1}$ and there are three cases for the relative positions of $(i+1)_j'$ and $(i+1)_{j-1}$.
    
    {\sf Case 0 (On becoming primed, $(i+1)_j$ is inserted to the column of $(i+1)_{j-1}$
    ):}
    In that case, $(i+1)'_j$ displaces $(i+1)_{j-1}$ so that $(i+1)_{j-1}$ ends in a position weakly west of $(i+1)_j'$.

    {\sf Case 1 (The letter $(i+1)_{j}'$ is NorthWest of $(i+1)_{j-1}$ ):}
    This scenario is impossible by the order imposed on the rows and columns of the tableau. Note that as a result of this, not only is such a situation absurd when $(i+1)_j$ becomes primed, but also for further steps of the insertion as long as $(i+1)_j$ is primed and $(i+1)_{j-1}$ unprimed.

    {\sf Case 2 (The letter $(i+1)_{j}'$ is weakly south of $(i+1)_{j-1}$ ):}
    For the letter $(i+1)_{j-1}$ to become primed, it must eventually occupy a diagonal position $\sf{d}$. Moreover, in its trajectory to $\sf{d}$, either $(i+1)_{j-1}$ passes by a row that $(i+1)_j'$ belongs to or it does not.

    {\sf Case 2.1 (The letter $(i+1)_{j-1}$ is inserted to a row $r$ containing $(i+1)_j'$ before reaching $\sf{d}$):}
    Then $(i+1)_{j-1}$ is inserted to $r$ so that it is adjacent to $(i+1)_j'$. Note that for this to take place we must have that $\sf{d}$ is not in $r$, i.e., that $(i+1)_{j-1}$ reacher $\sf{d}$ in a later row. Thus, for $(i+1)_{j-1}$ to continue in its bumping path it is displaced from $r.$ Furthermore, whether it is row-bumped or column-bumped, it is inevitable that $(i+1)_j'$ is bumped too as a result. Hence, $(i+1)_{j-1}$ is West of $(i+1)_{j}'$ and remains so after being displaced from row $r.$

    {\sf Case 2.2 (All of the bumping trajectory of $(i+1)_{j-1}$ from the moment $(i+1)_{j}$ becomes primed, until $(i+1)_{j-1}$ arrives to $\sf{d}$ is North of $(i+1)_j'$ ):}
    In particular, it then follows that $\sf{d}$ is North of $(i+1)_j'$. But then, $(i+1)_{j-1}$ is NorthWest of $(i+1)_j'$ when in $\sf{d}$, a contradiction by order considerations.

    Accordingly, in all cases $(i+1)_{j-1}$ is weakly west of $(i+1)_j'$. If West of $(i+1)_j'$, then $(i+1)_{j-1}$ is primed weakly west of $(i+1)_j'.$ Let us then consider the case where $(i+1)_{j-1}$ becomes primed in the same column $c$ as $(i+1)_j'$. For $(i+1)_{j-1}$ to be bumped from $\sf{d}$, it needs to be row-bumped, and as there are no entries $(i+1)'<x<i+1$, it follows that for that to happen all instances of $(i+1)'$, with $(i+1)_j'$ included, must first be displaced to the next column. But then, $(i+1)_{j-1}'$ is again column-inserted for the first time weakly left of $(i+1)_j'.$

    With this order of insertion in place, it must also hold that if $(i+1)_j'$ and $(i+1)_{j-1}'$ appear in the same column $c$, and an entry $n'$ with $n<i+1$ is inserted to $c$, then $(i+1)_{j-1}'$ is not the element displaced to the next column. Similarly,  $(i+1)_{j-1}'$ cannot be row-bumped, while in the same column as $(i+1)_j'$ because it would have to be bumped by some $e < (i+1)'$ which would then create the invalidly ordered column configuration
     \ytableausetup{boxsize=3.5em}
    $\scalebox{0.7}{\ytableaushort{{(i+1)_j'}, e}}$.

    In either case, $(i+1)_{j-1}'$ cannot be bumped to the next column until after $(i+1)_j'$ has been; in other words, $(i+1)_{j-1}'$ is inserted to every column before $(i+1)_j'$ as desired.
\end{proof}

\begin{lemma}\label{lem:GodzillaLemma}
    Let $w$ be an interlacing word with strict content. Suppose that throughout the mixed insertion of $w$, no two primed entries of the same value occur in the same column at the same time.  Then, for all $i$ and $j$, $i_j$ becomes primed before $(i+1)_{j-1}$. Moreover, if the latter becomes primed, it does so in a column weakly to the left of $i_j'.$
\end{lemma}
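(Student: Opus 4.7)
The plan is to argue by contradiction: suppose that at some moment $t$ during the mixed insertion of $w$, the letter $(i+1)_{j-1}$ becomes primed while $i_j$ is still unprimed. Priming happens when an entry is displaced from a diagonal cell, so at time $t$ the letter $(i+1)_{j-1}$ sits in cell $(r,r)$ for some $r \le i+1$ (by Lemma~\ref{lem:rowandColBoundedLemma}). First I will locate $i_j$ at time $t$: since $i_j$ is still unprimed, Lemma~\ref{lem:rowandColBoundedLemma} places it in some row $s \le i$, and Lemma~\ref{lem:bigRowLemma}(2), applied with $i$ replaced by $i+1$, forces $i_j$ to have been inserted into every row $\rho \le i$ through which $(i+1)_{j-1}$ passed, before $(i+1)_{j-1}$ itself did. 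Combined with Lemma~\ref{lem:bigRowLemma}(1), this pins down the possible positions of $i_j$ relative to $(r,r)$.

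The heart of the argument will then be a case analysis of the bump that removes $(i+1)_{j-1}$ from $(r,r)$. Such a bump is either a row insertion of some unprimed $x < i+1$ into row $r$, or a column insertion into column $r$ of some primed $y'$ with $y \le i$. In the row-insertion case, every entry of row $r$ strictly west of column $r$ is $\le x < i+1$, so $i_j$, if already present in row $r$, sits weakly west of $(i+1)_{j-1}$; combined with Lemma~\ref{lem:bigRowLemma}(3) and the interlacing condition of Definition~\ref{def:interlacing}, this forces the bumping agent $x$ to displace $i_j$ before reaching $(i+1)_{j-1}$, contradicting that $i_j$ is still in the tableau and unprimed at time $t$. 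In the column-insertion case, I will trace the chain producing $y'$ back to its origin on the diagonal: the hypothesis that no two primed entries of the same value ever coexist in one column rules out $y = i$, since $i_j'$ would then already be in column $r$, and the case $y < i$ reduces via the shrinking-decomposition structure and Lemma~\ref{lem:redundant_condition} to a configuration violating interlacing. The boundary case $r = i+1$ is handled separately: $(i+1)_{j-1}$ must have reached row $i$ before climbing to cell $(i+1,i+1)$, and the same row-comparison forces $i_j$ to have been bumped off its diagonal first.

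For the second statement, I will compare the priming cells of the two letters, say $(r_i,r_i)$ for $i_j$ and $(r,r)$ for $(i+1)_{j-1}$. The inequalities above yield $r \le r_i$, so $(i+1)_{j-1}'$ is first created in a column weakly west of the one where $i_j'$ was first created. A parallel tracking of the subsequent column-insertion trajectories of the two primed letters, modelled on the unnumbered lemma immediately preceding about $i_{j-1}'$ and $i_j'$, will show that any step that pushes $i_j'$ one column east also pushes $(i+1)_{j-1}'$ at least one column east or leaves both in place, so $(i+1)_{j-1}'$ remains weakly west of $i_j'$ throughout the remainder of the insertion.

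The main obstacle, I expect, is the column-insertion sub-case of the contradiction: identifying the origin of the primed letter $y'$ that bumps $(i+1)_{j-1}$ off its diagonal requires carefully backward-tracking the mixed-insertion chain, and the hypothesis that no two primed entries of the same value ever share a column must be leveraged precisely at that step. Once that sub-case is resolved, the rest is parallel to the reasoning in Lemmas~\ref{lem:rowOrderForSameElementDiffSeq} and~\ref{lem:bigRowLemma}.
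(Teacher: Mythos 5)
Your strategy for the first assertion (contradiction at the moment $(i+1)_{j-1}$ is bumped off a diagonal cell, using Lemma~\ref{lem:bigRowLemma}(2) with $i$ replaced by $i+1$ to show $i_j$ reached every relevant row first) is workable and is close in spirit to what the paper does, although your row-insertion sub-case as written is garbled: the diagonal cell $(r,r)$ is the \emph{leftmost} cell of row $r$, so there are no entries of row $r$ strictly west of it, and $i_j$ cannot sit ``weakly west of $(i+1)_{j-1}$'' there. The clean version of this step is a monotonicity observation: once $i_j$ has been row-inserted into row $\rho$, the diagonal entry of row $\rho$ is $\leq i$ forever after, so $(i+1)_{j-1}$, arriving later, can never land on (hence never be primed at) the diagonal of row $\rho$.

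The genuine gap is in the second assertion. You claim ``the inequalities above yield $r \le r_i$,'' i.e.\ that $(i+1)_{j-1}$ is primed in a row weakly above the row $r_i$ where $i_j$ is primed, and you deduce that $(i+1)_{j-1}'$ is born in a column weakly west of where $i_j'$ was born. The inequality goes the \emph{wrong way}: precisely because $i_j$ reaches rows $1,\dots,r_i$ before $(i+1)_{j-1}$ does and occupies the diagonal of row $r_i$, the letter $(i+1)_{j-1}$ can only become primed in a row $r \ge r_i+1$ (the paper proves exactly this in its Case~2.1). Consequently $(i+1)_{j-1}'$ is first created in column $r+1 \ge r_i+2$, \emph{strictly east} of the column $r_i+1$ where $i_j'$ was born. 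The ``weakly west'' conclusion is therefore not an immediate consequence of comparing birth columns; it requires showing that by the time $(i+1)_{j-1}$ reaches its diagonal cell, the primed entry $i_j'$ has already been column-bumped eastward past column $r$. That is the substantive content of the paper's proof (its Cases~1.1, 1.2 and 2.0--2.3, which track the local configurations of $i_j'$, intervening entries of value $i$ and $(i+1)'$, and $(i+1)_{j-1}$ around the relevant diagonal cell, using the no-repeated-primes-in-a-column hypothesis at several points). Your sketch supplies no argument for this, and the one inequality you do offer is false, so the proposal as it stands does not prove the ``moreover'' clause. The subsequent ``parallel tracking'' of the two primed letters once both exist is fine in outline, but it only preserves a relation you have not yet established.
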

\begin{proof}
    Let $f$ be the row in which $i_j$ becomes primed or, if $i_j$ is never primed, let $f$ be the empty row immediately below the tableau. From Lemma~\ref{lem:precedingEntry} we have that $i_j$ is inserted to every row $r\leq f$ before $(i+1)_{j-1}$ unless $r\geq i$. Furthermore, Lemma~\ref{lem:rowandColBoundedLemma} asserts that, for all $i$, $i_j$ is never inserted below row $i$. Therefore, $i_j$ never becomes primed in row $i$, since if that were to happen $i_j$ would need to be row-bumped by an unprimed entry $x<i$, which is impossible, as $x$ is by Lemma~\ref{lem:rowandColBoundedLemma} never inserted below row $x.$ Hence, if $i_j$ is to become primed, it must do so in a row $f\leq i-1$, and moreover $i_j$ must be inserted to row $f$ before $(i+1)_{j-1}$ is.

    \medskip
    \noindent
    {\sf Case 1 ($(i+1)_{j-1}$ is inserted to row $f$ before $i_j$ becomes primed):}    

    There are two subcases: either $i_j$ and $(i+1)_{j-1}$ are adjacent immediately after insertion of $(i+1)_{j-1}$ to row $f$, or else there is an instance of $(i+1)'$ in between. 

    \medskip
    \noindent
    {\sf Case 1.1 ($i_j$ and $(i+1)_{j-1}$ are adjacent):}  
    For $i_j$ to become primed, it must be bumped from a diagonal position and either displaces $(i+1)_{j-1}$ or an instance of $(i+1)'$ in the same column as $(i+1)_{j-1}.$ If the former, then $(i+1)_{j-1}$ is bumped to the diagonal position immediately below $i_j'$. Suppose towards a contradiction that $i_j'$ remains in the same column while $(i+1)_{j-1}$ is displaced to the next column acquiring a prime. As $(i+1)_{j-1}$ can only be bumped from its diagonal position by rows, it must be so bumped by an entry $i'<x<i+1$. That is, we have that $x=i$. Moreover, this $x$ must be on the same row as $i_j'$ before occupying the diagonal position below it. If $x$ is displaced by rows to the next row, it has to be bumped by an instance of $y\in \cN$ where $y<x$, but then also $y<i'_j$ holds and $i_j'$ instead of $x$ is displaced. If on the other hand $x$ is displaced by columns by an entry $y'$, then we must have $i'<y'<i$ in $\cN\cup \cNprime$. But this is impossible, so we reach a contradiction and $i_j'$ is inserted to the next column before $(i+1)_{j-1}'$ is.    
    
    If instead $i_j$ is bumped from a diagonal position and displaces an instance of $(i+1)'$ in the same column as $(i+1)_{j-1}$, then regardless of whether $(i+1)_{j-1}$ is row-bumped or column-bumped later in the insertion process, and it must be if it is to become primed, it will reach the diagonal position immediately below it. Consider the configuration before $(i+1)_{j-1}$ becomes a primed entry (if it does not become a primed entry the statement is vacuously true).

      \ytableausetup{boxsize=4.3em}
    \[\scalebox{0.7}{\begin{ytableau}
        ~     & ~     & i_j' \\
        \none & ~     & x    \\
        \none & \none & (i+1)_{j-1} 
    \end{ytableau}}\]

    To become primed, $(i+1)_{j-1}$ is bumped by rows by an entry $y<i+1$. Hence if $(i_j)'$ is not displaced by columns, we have that $i_j' < x < y < (i+1)_{j-1}$ in $\cN \cup \cNprime$ where the first and second inequalities are strict because there are no primed repeated entries in the same column by hypothesis. This is a contradiction, and so $(i_j)'$ must be displaced to the column after $(i+1)_{j-1}$ before $(i+1)_{j-1}$ becomes primed. The desired result follows. 

    \medskip
    \noindent
    {\sf Case 1.2 (there is an instance of $(i+1)'$ between $i_j$ and $(i+1)_{j-1}$):} 
    Let $i_j$ appear in box $\bbb$, so that $(i+1)_{j-1}$ appears in box $\bbb^{\rightarrow \rightarrow}$ while there is an instance of $(i+1)' \in \bbb^\rightarrow$. Note that $\bbb$ is a diagonal box.
    
    When $i_j$ becomes primed it column-bumps $(i+1)'$ because of the hypothesis that no two primed entries are repeated in the same column. The letter $(i+1)'$ then displaces $(i+1)_{j-1}$ to the diagonal position $\bbb^{\rightarrow \downarrow}$, and this situation has already been analyzed in {\sf Case 1.1}. 

     \medskip
    \noindent
    {\sf Case 2 ($i_j$ becomes primed before $(i+1)_{j-1}$ is inserted to row $f$):} 
      In this case, note that $(i+1)_{j-1}$ must be inserted to row $f$ if it is to be primed later, or else the lemma statement is vacuously true.

    Just after $i_{j}$ becomes primed, we have that $i_j'$ is weakly southwest of $(i+1)_{j-1}$, or $i_j'$ is NorthWest of $(i+1)_{j-1}$, or $(i+1)_{j-1}$ has not been inserted yet to the tableau. To prove this, note that $i_j'$ cannot be East of $(i+1)_{j-1}$ because before being bumped from a diagonal position, the letter $i_j$ is west of $(i+1)_{j-1}$; this is the only way for the increasing order in rows and columns of the tableau not to contradict the order between both letters. 
    Hence, taking into account that in being displaced from its diagonal position $i_j$ is inserted into the next column, we have that either $i_j'$ bumps $(i+1)_{j-1}$ or it displaces a different entry, arriving at a configuration for which $(i+1)_{j-1}$ is still weakly east of $i_j'$. In the former case, $(i+1)_{j-1}$ must occupy the position immediately below $i_j'$, while in the latter it is weakly east of $i_j'.$ Both cases yield that $i_j'$ is weakly west of $(i+1)_{j-1}$.

    \medskip
    \noindent
    {\sf Case 2.0 ($i_j'$ is in the same column as $(i+1)_{j-1}$):} 
    Suppose that after $i_j$ is primed, $(i+1)_{j-1}$ lies in the same column $c$ as $i_j'.$ It must then remain there until bumped from the diagonal box $\bbb$ of column $c$. Say $\bbb$ is in row $r$. We then either have one of four configurations, which we proceed to analyze in turn.

First, suppose we have the local configuration
     \ytableausetup{boxsize=4.3em}
    \[\scalebox{0.7}{\begin{ytableau}
        \ldots  & ~     &    \vdots \\
        \none   & \ldots & i_j'  \\
        \none   & \none & (i+1)_{j-1}     
    \end{ytableau}}\]
    with $i_j' \in \bbb^\uparrow$.
Here, $(i+1)_{j-1}$ is only bumped from $\bbb$ without displacing $i_j'$ if $(i+1)_{j-1}$ is displaced by an instance of $i.$ For that to happen, that instance of $i$ has to be bumped from the row $r-1$ to row $r$. But whenever we have \ytableausetup{boxsize=2.0em,centertableaux} $\ytableaushort{{i_j'}{i}}$, $i$ cannot be bumped, column- or row-wise, to the next row without displacing $i_j'.$ Thus in this case the lemma follows.
\ytableausetup{nocentertableaux}

        Secondly, suppose we have the configuration
        \ytableausetup{boxsize=4.3em}
        \[\scalebox{0.7}{\begin{ytableau}
            \ldots  & ~     & ~ &    \vdots \\
            \none & \ldots  & ~ &    i_j'   \\
            \none & \none   & \ldots & i  \\
            \none & \none   & \none & (i+1)_{j-1}     
        \end{ytableau}}\]
        with $i_j' \in \bbb^{\uparrow \uparrow}$. Here, for $(i+1)_{j-1}$ to be displaced from $\bbb$, we have that $i$ also has to be bumped from its current cell $\bbb^\uparrow$. But $i$ is only bumped from $\bbb^\uparrow$ by an instance of $i'$, if $i_j'$ is to remain unmoved. This scenario is ruled out by the hypothesis that no two primed entries with the same value occur in the same column. Hence, in this case the lemma statement also holds. 

        Thirdly, suppose we have the configuration 
        \ytableausetup{boxsize=4.3em}
        \[\scalebox{0.7}{\begin{ytableau}
            \ldots  & ~     & ~ &    \vdots \\
            \none & \ldots  & ~ &    i_j'   \\
            \none & \none   & \ldots & (i+1)'  \\
            \none & \none   & \none & (i+1)_{j-1}     
        \end{ytableau}},\]
        again with $i_j' \in \bbb^{\uparrow \uparrow}$.
        Here, if $(i+1)_{j-1}$ is to be bumped from $\bbb$, the instance of $(i+1)' \in \bbb^\uparrow$ must be bumped first because of the strict ordering of the columns. For this to happen without displacing $i_j'$ as well, the $(i+1)' \in \bbb^\uparrow$ must be bumped by an instance of $i$. Then we are done by the case of the second local configuration above, where $i$ is the only entry between $i_j'$ and $(i+1)_{j-1}$. 

        Fourthly and finally, suppose we have the configuration   
        \ytableausetup{boxsize=4.3em}
        \[\scalebox{0.7}{\begin{ytableau}
            \ldots & ~     & ~     & ~ &   \vdots  \\
            \none &\ldots  & ~     & ~ &    i_j' \\
            \none &\none & \ldots  & ~ &    i   \\
            \none &\none & \none   & \ldots & (i+1)'  \\
            \none &\none & \none   & \none & (i+1)_{j-1}     
        \end{ytableau}},\]
        with $i_j' \in \bbb^{\uparrow\uparrow\uparrow}$.
        Here, for $(i+1)_{j-1}$ to be displaced from $\bbb$, the $(i+1)' \in \bbb^\uparrow$ has to be displaced by an entry $x< (i+1)'$. But then $i_j'$ would also be column-displaced.

     \medskip
    \noindent
    {\sf Case 2.1 ($i_j'$ is NorthWest of $(i+1)_{j-1}$):} 

     \ytableausetup{boxsize=4.3em}
    \[\scalebox{0.7}{\begin{ytableau}
        \ldots  & ~     &    ~   &   ~     & i_j' & ~ \\
        \none   & \ldots     & ~ &    ~    & \vdots      & ~      \\
        \none   & \none & \ldots &      ~     & \ldots & ( i+1)_{j-1} \\
        \none   & \none & \none  & \bbb     & ~      &   ~
    \end{ytableau}}.\]

    As $i_j$ was inserted to all rows before row $f$, it follows that $(i+1)_{j-1}$ cannot become primed in a row $r\leq f-1.$
     
    Also, if on being column inserted $i_j'$ had bumped $(i+1)_{j-1}$ then $i_j'$ would be southwest of it, a contradiction. Then, since $i_j$ was just displaced from the diagonal box $\bbb$ in row $f$, it follows that $(i+1)_{j-1}$ is never row-displaced to $\bbb$. That is, if $(i+1)_{j-1}$, is to become primed, it must do so in a row $r$ with $r \geq f+1$. 
    
    Note also that by assumption both $i_j'$ is NorthWest of $(i+1)_{j-1}$, and $i_j$ has just become primed so that $(i+1)_{j-1}$ is at most in row $f-1$. Hence, the row index of $i_j'$ is at most $f-2$. Accordingly, either $(i+1)_{j-1}$ is at a later point of the insertion process in the same column as $i_j'$ and we are done by {\sf Case 2.0}, or else we eventually have the configuration
    \ytableausetup{boxsize=4.3em}
    \[\scalebox{0.7}{\begin{ytableau}
        \ldots  & ~     &    ~   &   i_j'   & ~         & ~ \\
        \none   & \ldots & ~     &    i     & ~ & ~      \\
        \none   & \none & \bbb      &   (i+1)' & (i+1)_{j-1} & ~ \\
        \none   & \none & \none  &   (i+1)      & ~      &   ~
    \end{ytableau}}.\]

    However, note that in this case either $(i+1)_{j-1}$ never becomes primed or it is bumped to the next row, where, taking into account that there cannot be repeated unprimed entries, $(i+1)$ must be displaced prior to the insertion of $(i+1)_{j-1}$. This is absurd unless $i_j'$ is bumped by columns first. Furthermore, as we showed that $(i+1)_{j-1}$ can never be strictly southwest of $i_j'$, this means that after the column displacement of $i_j'$, the letter $(i+1)_{j-1}$ is in the same column as $i_j'$, as so again we are done by {\sf Case 2.0}.     

 \medskip
    \noindent
    {\sf Case 2.3 ($(i+1)_{j-1}$ has not been inserted into the tableau):}
    If the letter $(i+1)_{j-1}$ is not yet in the tableau, it follows that for $(i+1)_{j-1}$ to become primed, it must be row-bumped from the first row, and eventually be inserted to the same row as $i_j'$. Indeed, as all the elements $y$ above $i_j'$ in its column satisfy $y<i'$, they also satisfy $y<i+1$, and then $(i+1)_{j-1}$ is always inserted to the right of $y$ in its row, until it is bumped to the row of $i_j'.$

    Once in that row, either $(i+1)_{j-1}$ is row bumped to a position SouthWest of $i_j'$ (contradicting the analysis of {\sf Case 2}), or $(i+1)_{j-1}$ is bumped to the position immediately below $i_j'$ as considered in {\sf{Case 2.0}}, or we have a configuration of the form
    \ytableausetup{boxsize=4.3em}
        \[\scalebox{0.7}{\begin{ytableau}
            \ldots  & ~     & ~ & ~ &  \vdots \\
            \none & \ldots & ~ & i_j' &  (i+1)_{j-1} \\
            \none & \none  & \ldots & i & ~   
        \end{ytableau}}.\]
Here, when $(i+1)_{j-1}$ is inserted to the next row, $i$ is not displaced in the process. But then we are done by {\sf Case 2.1} 
     Note that the existence of an entry $(i+1)'$ between $i_j'$ and $(i+1)_{j-1}$ would not affect this argument. If there had been an instance of $(i+1)$ in place of $i$ below $i_j'$ the condition of no repetition in the rows would guarantee that $(i+1)_{j-1}$ is again bumped to the position immediately below $i_j'.$

        \medskip
    \noindent
    {\sf Case 2.2 ($i_j'$ is SouthWest of $(i+1)_{j-1}$):} 
    Since $(i+1)_{j-1}$ must be bumped until it becomes primed, lest it never does and then the lemma holds trivially, it must be bumped from a diagonal position $r>f$. Indeed, as $i_j$ was bumped from the diagonal in row $f$, all the diagonal entries from rows $d\leq f$ are occupied by entries strictly smaller than $i$ in $\cN\cup\cNprime$.

    But then, either $(i+1)_{j-1}$ eventually lands in the same column as $i_j'$ as analysed in {\sf{Case 2.0}}; or else $i_j'$ is so bumped, possibly more than once, so that it is now NorthWest of $(i+1)_{j-1}$ as seen in {\sf{Case 2.1}}.
\end{proof}

\begin{lemma}\label{lem:smallSeqWestofBigSeqPrimed}
     Let $w$ be an interlacing word with strict content. Suppose that throughout the mixed insertion of $w$, no two primed entries of the same value occur in the same column at the same time. Then, $(i+1)_{j-1}'$ is always weakly west of $i_j'$ if both exist.
\end{lemma}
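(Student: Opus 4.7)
The proof will proceed by induction on the letters of $w$ as they are successively mixed inserted. The base case follows from Lemma~\ref{lem:GodzillaLemma}: before $i_j$ is primed, $(i+1)_{j-1}$ is unprimed as well, so the claim is vacuous; and when $(i+1)_{j-1}$ first becomes primed, the same lemma places it in a column weakly west of $i_j'$. The inductive step requires checking that the inequality $c_1 \leq c_2$, where $c_1$ and $c_2$ denote the columns of $(i+1)_{j-1}'$ and $i_j'$ respectively, is preserved through the insertion of the next letter. The key observation driving this is that every primed entry can move only eastward during a single letter insertion, and by at most one column, because primed entries are only relocated by column insertions, which bump them one column to the right.

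The case $c_1 < c_2$ is handled immediately by the key observation: $c_1' \leq c_1 + 1 \leq c_2 \leq c_2'$. The substantive case is $c_1 = c_2 = c$, in which $i_j'$ sits strictly above $(i+1)_{j-1}'$ in column $c$; here, the inequality can only fail if $(i+1)_{j-1}'$ is bumped while $i_j'$ is untouched. Suppose such a bump arises from a column insertion of some $x'$ into column $c$. The entries of column $c$ strictly above $(i+1)_{j-1}'$ (which include $i_j' = i'$) must all satisfy $\leq x'$, for otherwise some such entry, rather than $(i+1)_{j-1}'$, would be the topmost entry strictly greater than $x'$; combined with $x' < (i+1)'$ and $x'$ primed, this forces $x' = i'$, producing two copies of $i'$ in column $c$ and contradicting the hypothesis.

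The remaining and hardest case is that $(i+1)_{j-1}'$ is bumped by a row insertion into its row $r$ by some unprimed letter $q_r \leq i$. I will establish a \emph{monotonicity lemma} for the row-insertion portion of a bumping path: if the path bumps an unprimed entry at column $c_k$ in row $k$, then the bump in row $k+1$ occurs at some column $c_{k+1} \leq c_k$. This is because column strictness for unprimed entries makes the entry at $(\text{row } k+1, \text{col } c_k)$ exceed the letter $q_{k+1}$ being row-inserted into row $k+1$, so it is a leftmost-greater candidate at column $c_k$. Since the letters $q_k$ along the row-insertion phase are strictly increasing positive integers, the letter $q_{r'}$ reaching $i_j'$'s row $r'$ satisfies $q_{r'} \leq q_r - (r - r') \leq i - 1 < i'$, so $i_j' = i'$ is itself a candidate for the bump in row $r'$. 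The assumption that $i_j'$ is not bumped then forces the actual bump column $c_{r'}$ to be strictly less than $c$, and iterating the monotonicity lemma from row $r'$ down to row $r$ yields $c_r \leq c_{r'} < c$, contradicting that $(i+1)_{j-1}'$ at column $c$ is the bumped entry. The principal obstacle will be cleanly stating and verifying this monotonicity, in particular ensuring that the bumping path indeed remains in row-insertion mode (so that all intermediate bumped entries are unprimed and off-diagonal) throughout rows $r'$ to $r$, a condition that is implicit whenever $(i+1)_{j-1}'$ is the entry ultimately row-bumped.
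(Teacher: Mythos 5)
Your overall skeleton matches the paper's: both start from Lemma~\ref{lem:GodzillaLemma} to place $(i+1)_{j-1}'$ weakly west of $i_j'$ at the moment it is first primed, both reduce the preservation step to the critical case where the two primed letters share a column $c$, and your column-insertion subcase (forcing the bumping letter to be a second $i'$ in column $c$) is essentially the paper's argument verbatim. The divergence, and the problem, is in the row-bump subcase.

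Your monotonicity argument presupposes that the bumping path is in row-insertion mode from the row $r'$ of $i_j'$ all the way down to the row $r$ of $(i+1)_{j-1}'$, so that some letter $q_{r'}<i'$ is actually \emph{row-inserted into row $r'$} and therefore sees $i_j'$ as a bumping candidate. You flag this yourself but then assert it is ``implicit whenever $(i+1)_{j-1}'$ is the entry ultimately row-bumped.'' It is not. In mixed insertion, a \emph{column} insertion of some $x'$ into a column $d$ can replace an unprimed, off-diagonal entry $y$ sitting in row $r-1$, and that $y$ is then row-inserted into row $r$. In such a scenario the row-insertion phase of the path begins at row $r$ itself: no letter is ever row-inserted into row $r'$, your inequality $q_{r'}\leq q_r-(r-r')$ has no referent, and the monotonicity chain from $r'$ to $r$ never gets started. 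So the step ``$i_j'$ is a candidate for the bump in row $r'$'' fails exactly in the case that needs the most care.

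The paper's proof avoids this by using only one row of history rather than the whole path: by the definition of mixed insertion, \emph{any} unprimed letter row-inserted into row $r$ was, immediately beforehand, an entry of row $r-1$ --- regardless of whether it was ejected from row $r-1$ by a row insertion or by a column insertion. Order considerations (using that the entry of column $c$ directly above $(i+1)_{j-1}'$ is at least $i'$) pin the bumping letter down to an unprimed $i$, so just before the bump row $r-1$ contains both that $i$ and the letter $i'$ (namely $i_j'$), and the paper then argues that in such a row the $i$ cannot be dislodged without $i_j'$ being dislodged as well. If you want to salvage your path-based approach, you must either handle the column-entry-into-row-insertion-mode case separately, or replace the global monotonicity claim with a local analysis of row $r-1$ as the paper does.
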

\begin{proof}
    From Lemma~\ref{lem:GodzillaLemma}, we have that when first primed, the letter $(i+1)_{j-1}'$ is weakly west of $i_j'$. For the sake of contradiction, suppose that $(i+1)_{j-1}'$ and $i_j'$ are both in column $c$, but that the letter $(i+1)_{j-1}'$ is bumped out while $i_j'$ is not. Then, $(i+1)_{j-1}'$ is either bumped by columns or by rows. 

    If $(i+1)_{j-1}'$ is bumped by columns, then the strictness of the order in $c$ ensures that $(i+1)_{j-1}'$ is displaced by an instance of $i'.$ This would result in two instances of $i'$ appearing in row $c$, a contradiction.
    
    Otherwise $(i+1)_{j-1}'$ is bumped by rows. Say that $(i+1)_{j-1}'$ is in row $r$. Since $(i+1)_{j-1}'$ and $i_j'$ are both in column $c$, we have $r \geq 2$. Order considerations force $i$ to be the bumping entry. Moreover, if $i$ is to displace $(i+1)_{j-1}'$, this $i$ must have been bumped in turn from row $r-1$. Hence, before this bumping, row $i-1$ contains both an $i$ and an $i'$ (namely, $i_j'$). But in such a row, the only way that the $i$ can be bumped is if it is bumped by that $i'$.  So, if $(i+1)_{j-1}'$ is bumped by rows, then $i_j'$ is also bumped into column $c+1$.
\end{proof}

\begin{lemma}\label{lem:sameEntryPrimedDiffSeq}
    Let $w$ be an interlacing word with strict content and suppose that $i_j'$ is inserted to column $c$ at some point during the mixed insertion of $w$. Then, for all $k>j$, the letter $i_k'$ is also inserted to $c$ and before $i_j'$ is.
\end{lemma}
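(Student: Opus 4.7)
The plan is to prove the lemma by induction on $k - j$, reducing everything to the base case $k = j + 1$; iterating the base case with $j$ replaced by $j + 1, j + 2, \ldots$ then yields the result for arbitrary $k > j$. For the base case, the preceding lemma---stating that if both $i_{j-1}'$ and $i_j'$ are inserted into a column $c$, then the one with larger sequence index is inserted first---reduces the task to establishing that $i_{j+1}'$ is inserted into column $c$ at some moment of the mixed insertion of $w$.

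The strategy will be to maintain the following invariant throughout the insertion: whenever $i_j'$ occupies a column $c'$, $i_{j+1}'$ exists and occupies a column strictly east of $c'$. Since primed entries in mixed insertion advance rightward one column at a time without skipping (moving from column $c'$ to $c'+1$ whenever displaced), this invariant immediately implies that $i_{j+1}'$ has visited every column that $i_j'$ eventually occupies, each time strictly earlier. Preservation of the invariant under rightward moves of $i_j'$ is essentially automatic: each such step either leaves $i_{j+1}'$ undisturbed in its already-east position, or displaces it precisely one column further east, again maintaining strict east placement.

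The crux is the verification of the invariant at the moment $i_j'$ is born. Suppose $i_j$ is bumped from the diagonal cell $(r_j, r_j)$, so that $i_j'$ is column-inserted into column $c_0 := r_j + 1$. By Lemma~\ref{lem:rowOrderForSameElementDiffSeq}(2), $i_{j+1}$ has already been primed, say from the diagonal row $r_{j+1}$; combining Lemma~\ref{lem:bigRowLemma}(1) with the row-ordering parts of Lemma~\ref{lem:bigRowLemma} shows $r_{j+1} < r_j$, so $i_{j+1}'$'s starting column $r_{j+1} + 1$ is strictly west of $c_0$. Column-strictness of the shifted tableau (at most one $i'$ per column) forbids $i_{j+1}'$ from residing in $c_0$ when $i_j'$ enters it. The main obstacle will be ruling out the remaining possibility that $i_{j+1}'$ still sits strictly west of $c_0$ at the instant $i_j'$ is born; this requires a detailed bumping-path case analysis in the style of Lemma~\ref{lem:GodzillaLemma} and Lemma~\ref{lem:smallSeqWestofBigSeqPrimed}, showing that the sequence of bumps culminating in the priming of $i_j$ simultaneously pushes $i_{j+1}'$ at least as far east as $c_0 + 1$. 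Neither of those lemmas can be invoked directly, since their standing hypothesis (no two primed entries of the same value coexisting in a column) is precisely what the present lemma will enable; the corresponding case analyses must instead be rerun here, tailored to two primed entries $i_j'$ and $i_{j+1}'$ of the same integer value.
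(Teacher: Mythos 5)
There is a genuine gap, and it is located exactly where you defer the work. Your key invariant --- that $i_{j+1}'$ occupies a column \emph{strictly} east of $i_j'$ at every moment --- is stronger than what actually holds. The paper's proof establishes only that $i_j'$ is first inserted \emph{weakly} west of $i_k'$, and it explicitly allows the two equal-valued primed entries to occupy the same column simultaneously (with $i_k'$ north of $i_j'$), dealing with that coexistence by showing $i_k'$ is always displaced out of a shared column first. Concretely, in the configuration where $i_k'$ sits in column $d$ above the diagonal box holding the unprimed $i_j$: before any entry $x<i$ can land on that diagonal box, column-strictness forces $i_k'$ to be bumped out of column $d$ --- but only into column $d+1$, which is precisely the column where $i_j'$ is then born. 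So at birth the two can share a column, and the ``main obstacle'' you identify (pushing $i_{j+1}'$ all the way to $c_0+1$) cannot in general be overcome. Your preservation step is also not ``essentially automatic'': if $i_{j+1}'$ sits in the column immediately east of $i_j'$ and $i_j'$ is column-inserted there, it bumps the topmost entry \emph{strictly} greater than $i'$; since $i_{j+1}'$ has equal value it is not displaced, and the strict invariant breaks. (You rightly note that you may not assume distinct equal primed entries never share a column --- that is Lemma~\ref{lem:noTwoRepeatedCols}, proved later using the present lemma --- but your proposed substitute claim is false rather than merely unproved.)

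The repair is essentially the paper's argument: prove only the weak statement that $i_j'$ acquires its prime weakly west of $i_k'$ (the case analysis on whether the diagonal column $d$ of $i_j$ is less than or equal to the column $e$ of $i_k'$, with $d>e$ impossible by order considerations), and then argue separately that whenever $i_j'$ and $i_k'$ occupy the same column, $i_k'$ lies north of $i_j'$ and any row- or column-bump that would dislodge $i_j'$ must dislodge $i_k'$ first. That yields ``$i_k'$ reaches every column before $i_j'$'' without ever needing strict separation. Your reduction to the case $k=j+1$ via the preceding (unnumbered) lemma and induction on $k-j$ is sound and matches the spirit of the paper; only the core invariant needs to be weakened and the shared-column case handled head-on.
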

\begin{proof}
    From Lemma~\ref{lem:rowOrderForSameElementDiffSeq}, if $i_j$ becomes primed, then $i_k$ also becomes primed and at an earlier time. We will first show that $i_j$ becomes primed weakly west of $i_k'$.
    
    Let $e$ denote the column of $i_k'$ when $i_j$ is placed in a diagonal position ${\sf d}$ in column $d$. If $d > e$, we have a configuration of the form
    \ytableausetup{boxsize=2.5em}
        \[\begin{ytableau}
            \ldots  & ~     & \vdots &   ~ & ~ \\
            \none & \ldots & i_k' & \ldots      & x  \\
            \none & \none  & \vdots & ~ & \vdots \\
            \none & \none  & \none  & ~ & \vdots \\
            \none & \none  & \none  & \none&  i_j  
        \end{ytableau}.\]
    and because of the inequalities $i_k' < x < i_j$, we arrive at a contradiction. 

    Hence $d \leq e$. It suffices to consider the case where $d=e$, as otherwise the claim is immediate. 

    For this scenario, once $i_j$ is displaced by an entry $x<i$ to become primed, either $i_k'$ was bumped to column $d+1$ prior to the insertion of $x$, or else $i_k'<x$ in $\cN \cup \cNprime.$ The latter is absurd since then $i_k'<x<i$, and in the former $i_j$ again acquires its prime weakly west of $i_k'.$

    Thus, for all cases $i_j'$ is first inserted weakly west of $i_k'.$ Therefore, it also follows that for an arbitrary column $c$ with both $i_j'$ and $i_k'$, the entry $i_k'$ is displaced first. Indeed, by induction $i_k'$ is North of $i_j'$ in $c$, and then $i_j'$ is never row-bumped without having that $i_k'$ was displaced first; whereas if $i_j'$ is column-bumped, more northerly entries with the same value are displaced first, so that again $i_k'$ is bumped to column $c+1$ before $i_j'$ is.
\end{proof}

\begin{lemma}\label{lem:eastSameSeqLemma}
    Let $w$ be an interlacing word with strict content. Without restriction throughout the insertion of $T=P_\mix(w)$, the letter $(i+1)_j'$ is always East of $i_j'$.
\end{lemma}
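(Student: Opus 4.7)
The overall strategy exploits that, within the single shrinking sequence $j$, the larger-valued letter leads the smaller-valued one row by row. By Lemma~\ref{lem:orderSameSeq}, $(i+1)_j$ is inserted into every row before $i_j$ is, so $(i+1)_j$ also becomes primed first: let $f\le i$ be the row in which $(i+1)_j$ first acquires its prime, and let $g\le i-1$ be the row in which $i_j$ first does. At the instant $(i+1)_j$ is on the diagonal $(f,f)$ and about to be bumped, $i_j$ is either not yet inserted or sits in some row $<f$; and at the instant $i_j$ sits on $(g,g)$, the letter $(i+1)_j'$ has already been created. The plan is to maintain a strict east-ness invariant between the evolving positions of $(i+1)_j$ (or $(i+1)_j'$) and $i_j$ (or $i_j'$) throughout the insertion of $w$.

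First I would establish the base case: at the moment $(i+1)_j$ becomes $(i+1)_j'$ in column $f+1$, the letter $i_j$ is strictly west of $(i+1)_j$, or is not yet in the tableau. This uses Lemma~\ref{lem:bigRowLemma}(1), which forbids two unprimed $i$s from coexisting in any row $<i$, together with Lemma~\ref{lem:precedingEntry} to track the order in which $i_j$ and $(i+1)_j$ land in each row. The subcase to be careful about is when $i_j$ itself is the letter that bumps $(i+1)_j$ from $(f,f)$, for then the column gap between the two letters is initially only one, and the remainder of the proof must ensure that this gap never collapses.

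The inductive step then tracks both letters through further row bumps of $i_j$ and through the column bumps of $(i+1)_j'$, by the same flavor of local tableau case analysis used in Lemma~\ref{lem:GodzillaLemma} and Lemma~\ref{lem:smallSeqWestofBigSeqPrimed}. The main obstacle I anticipate is controlling what happens when $i_j$ passes through a row that already contains $(i+1)_j'$: a row insertion by $i_j$ in that row cannot displace $(i+1)_j'$ without being displaced itself (by Lemmas~\ref{lem:repeatingLemma} and~\ref{lem:bigRowLemma}, aided by the column-strict constraint), so $i_j$ is either placed strictly west of $(i+1)_j'$, or it triggers a column bump that moves $(i+1)_j'$ strictly further east. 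By the time $i_j$ reaches its own diagonal $(g,g)$ and is primed into column $g+1$, the invariant guarantees that $(i+1)_j'$ is already in some column $>g+1$, giving the desired strict east-ness; preservation after both are primed follows from the monotone rightward motion of primed entries already exploited in Lemma~\ref{lem:smallSeqWestofBigSeqPrimed}, combined with the impossibility of an entry of value strictly between $i'$ and $(i+1)'$.
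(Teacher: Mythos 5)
Your plan follows essentially the same route as the paper's proof: both start from the row-insertion precedence of $(i+1)_j$ over $i_j$ (Lemma~\ref{lem:orderSameSeq}/Lemma~\ref{lem:bigRowLemma}), analyze the moment each letter is bumped off the diagonal via a case analysis on the bumping entry, conclude that $i_j'$ first appears (strictly) west of $(i+1)_j'$, and then argue the relative order is preserved using the rightward-only motion of primed entries together with Lemma~\ref{lem:sameEntryPrimedDiffSeq}-type column arguments. One small caution: your closing appeal to "no entry of value strictly between $i'$ and $(i+1)'$" is only valid when restricted to primed (column-inserted) letters, since the unprimed $i$ does lie strictly between them in $\cD$; this is how the paper's argument actually uses it, so the plan is sound.
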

\begin{proof}
    From Lemma~\ref{lem:bigRowLemma} we know that for all rows $r$, the letter $(i+1)_j$ is inserted before $i_j$ is.

    Consider the moment of the insertion process when $(i+1)_j$ becomes primed in row $f.$ It is displaced by an element $x<i+1$ and for $i_j$ to acquire a prime as well, it is bumped eventually to row $f$. Furthermore, either $x=i$ or $x<i.$ In the former case, Lemma~\ref{lem:bigRowLemma} implies that the letter $i_{j+1}$ is either primed or was inserted to $f$ before $(i+1)_j$ was, and then Lemma~\ref{lem:rowOrderForSameElementDiffSeq} implies that $x=i_j$. Consequently, in being bumped, $x$ is either inserted by columns to the same column $c$ as $(i+1)_j'$, or is inserted to a column left of $c$; in both scenarios it is primed West of $(i+1)_j'$ as shown hereafter. If $x$ is inserted to $c$ and there are no repeated instances of $(i+1)'$ in $c$, then $x$ displaces $(i+1)_j'$; otherwise, it is a consequence of Lemma~\ref{lem:sameEntryPrimedDiffSeq} that $x=i_j'$ is not inserted to $c$ until after $i_{j+k}'$ is for all $k>0$. That is, after the letter $(i+1)_{j+k}'$ has been displaced from $c$, so that $i_j'$ bumps $(i+1)_j'$ when inserted to $c$.

On the other hand, if $x<i$, then before $i_j$ is inserted to $f$ we have a configuration of the form 

    \ytableausetup{boxsize=2.5em}
        \[\begin{ytableau}
            \ldots  & ~     & ~ & \vdots \\
            \none & x & y & z  \\
            \none & \none  & \ldots & ~   
        \end{ytableau}.\]

    so that if $(i+1)_j'$ is on the column of $y$, then $y\geq i+1$ and it is bumped to the next column before the insertion of $i_j.$ In general, if $(i+1)_j'$ is on column $c$, then either $(i+1)_j'$ is displaced by $i_j$, or $i_j$ is inserted to a column $k<c$. 

    Since further row-bumpings can only move $i_j$ west of column $k$, or until it becomes primed, that is; it follows that when $i_j$ acquires a prime it does so either displacing $(i+1)_j'$ due to Lemma~\ref{lem:sameEntryPrimedDiffSeq}, or in a column West of $(i+1)_j'.$ 

    Regardless of the identity of $x$, and the manner in which $(i+1)_j$ and $i_j$ become primed, our analysis yields that $i_j$ is first inserted to the tableau west of $(i+1)_j'.$ If further insertions were then to alter the relative positions of $i_j'$ and $(i+1)_j'$, it would be necessary for the latter to be in the same column as the former at one point of the insertion. But this is absurd by Lemma~\ref{lem:sameEntryPrimedDiffSeq} as $i_j'$ and $(i+1)_j'$ can only be both in the same column if $i_j'$ is inserted to the column of $(i+1)_j'$, thereby displacing it to the next column, a contradiction.     
\end{proof}

\begin{lemma}\label{lem:noTwoRepeatedCols}
    Let $w$ be an interlacing word with strict content. Then, throughout the mixed-insertion of $w$, no two primed letters of the same value are repeated in the same column.
\end{lemma}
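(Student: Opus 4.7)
The plan is to proceed by strong induction on the number of elementary bumping steps of the mixed insertion. The key point is that the structural lemmas established earlier in this section---most importantly Lemmas~\ref{lem:GodzillaLemma}, \ref{lem:smallSeqWestofBigSeqPrimed}, \ref{lem:sameEntryPrimedDiffSeq}, and \ref{lem:eastSameSeqLemma}---are all stated under the hypothesis that no column ever contains two primed entries of the same value at the same time. I will apply them freely to the state of the tableau at any point strictly before the first alleged violation.

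Suppose for contradiction that step $n$ is the first step at which some column, say column $c$, comes to contain two primed letters of the same integer value $i$; call them $i_r'$ and $i_s'$ with $r \neq s$. Since primed entries move only eastward and each step affects at most one column, exactly one of these two, say $i_r'$, is newly placed in column $c$ at step $n$, while $i_s'$ was already sitting in $c$ from an earlier step. First I rule out the case $r > s$: by Lemma~\ref{lem:sameEntryPrimedDiffSeq} applied to the state just before step $n$, the letter $i_r'$ is always weakly east of $i_s'$, so $i_r'$ in column $c-1$ just before step $n$ would force $i_s'$ to lie in a column at most $c-1$, contradicting its presence in column $c$. So it must be that $r < s$, and the larger-index $i_s'$ is the one that has been resting in column $c$ while the smaller-index $i_r'$ is now being column-inserted into $c$ from column $c-1$.

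The next step is to trace the bumping chain at step $n$ backward to its origin. Either $i_r'$ was already in column $c-1$ before step $n$ and is being column-bumped by some letter $y$ with $y < i'$, or $i_r$ was sitting in the main-diagonal cell of column $c-1$ and has just been bumped off the diagonal by an unprimed row insertion, producing the primed $i_r'$ that then starts its column-insertion sweep. In either subcase, I will use Lemmas~\ref{lem:GodzillaLemma}, \ref{lem:smallSeqWestofBigSeqPrimed}, and \ref{lem:eastSameSeqLemma} to pin down where the related primed entries $(i-1)_r', (i-1)_s', (i+1)_r', (i+1)_s'$ must lie---exploiting the zigzag weakly-west/east ordering between consecutive indices and consecutive sequence labels---and then locally examine column $c$ to show that the purported configuration either is already invalid as a shifted tableau or has already been broken at an earlier step (contradicting the choice of $n$ as the first violation). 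In particular, the presence of $i_s'$ in column $c$ together with the structural control provided by the preceding lemmas forces either $i_s'$ to have been column-bumped out of $c$ before $i_r'$ can arrive, or else the bumping chain that delivers $i_r'$ must have already interacted with $i_s'$ and pushed it eastward.

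The main obstacle I foresee is the bookkeeping when the bumping chain at step $n$ is long and winds through several rows and columns before reaching column $c-1$: the originating letter from the word $w$ may be inserted into row~$1$ far from column~$c$, and multiple diagonal crossings may occur along the way. The argument will need a careful invariant maintained along the chain---most naturally phrased in terms of the weakly-west/east positions guaranteed by Lemma~\ref{lem:smallSeqWestofBigSeqPrimed} and Lemma~\ref{lem:eastSameSeqLemma}---that forces some earlier bumping to evict $i_s'$ from column $c$, thereby contradicting either the minimality of $n$ or the structure of the current chain.
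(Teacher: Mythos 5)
Your opening moves are correct and coincide with the paper's: take the first violation, occurring in some column $c$; use Lemma~\ref{lem:sameEntryPrimedDiffSeq} to conclude that the lower-indexed entry $i_r'$ is the one newly arriving in $c$ and the higher-indexed $i_s'$ (one may in fact take $s=r+1$) is the one already resident there. From that point on, however, your proposal is a plan rather than a proof: the sentence ``I will use Lemmas~\ref{lem:GodzillaLemma}, \ref{lem:smallSeqWestofBigSeqPrimed}, and \ref{lem:eastSameSeqLemma} to pin down where the related primed entries must lie \dots and then locally examine column $c$'' is precisely the part of the argument that has to be carried out, and it is not. The decisive step you are missing is a sandwich argument: by Lemma~\ref{lem:smallSeqWestofBigSeqPrimed}, $i_r'$ is weakly west of $(i-1)_{r+1}'$, and by Lemma~\ref{lem:eastSameSeqLemma}, $(i-1)_{r+1}'$ is strictly west of $i_{r+1}'$; since $i_r'$ sits in column $c-1$ just before the offending bump while $i_{r+1}'$ sits in column $c$, this forces $(i-1)_{r+1}'$ to occupy column $c-1$ \emph{together with} $i_r'$. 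That co-residence is what produces the contradiction: a column-bump of $i_r'$ out of $c-1$ that does not also evict $(i-1)_{r+1}'$ must be performed by an entry $x'$ with $(i-1)'\leq x'<i'$, i.e.\ $x'=(i-1)'$, which creates an earlier repeated primed pair in column $c-1$ and contradicts minimality; a row-bump must be performed by an unprimed $i-1$, which is then excluded by a short local analysis of the rows containing $(i-1)_{r+1}'$ and $i_{r+1}'$, split on whether those two cells are adjacent.

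Relatedly, the ``main obstacle'' you flag --- bookkeeping a long bumping chain that winds through many rows and columns before reaching column $c-1$ --- is not actually an obstacle once the sandwich fact is in place: one only needs to analyze the single elementary step that moves $i_r'$ from column $c-1$ into column $c$ (or lands it there by a row bump), because the forced presence of $(i-1)_{r+1}'$ in column $c-1$ constrains that one step completely, with no reference to the rest of the chain. Without identifying this localization and the role of $(i-1)_{r+1}'$, the proposal has no concrete mechanism for deriving a contradiction, so as written the proof is incomplete.
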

\begin{proof}
    Towards a contradiction, suppose that at a point in the insertion of $w$ two primed entries are repeated and say that the column where this first occurs is column $c$. By Lemma~\ref{lem:sameEntryPrimedDiffSeq}, the repeated entries have consecutive subscripts; hence, denote them by $i_j'$ and $i_{j+1}'$. Again by Lemma~\ref{lem:sameEntryPrimedDiffSeq}, the letter $i_j'$ is bumped to $c$ after $i_{j+1}'$ is. 

    Consider the bumping of $i'_j$ from column $c-1$ to $c.$ Note $i_j$ must be primed immediately before this bump by Lemma~\ref{lem:GodzillaLemma} since $i_{j+1}'$ is already primed at that point. When $i'_j$ was in $c-1$, it is a consequence of Lemma~\ref{lem:smallSeqWestofBigSeqPrimed} that it was west of $(i-1)_{j+1}'$. Hence, Lemma~\ref{lem:eastSameSeqLemma} implies that $(i-1)_{j+1}'$ was in column $c-1$ with $i_j'.$ Accordingly, for $i_j'$ to be column-bumped without displacing $i_{j+1}'$, it must be column-bumped by an entry $x'$ with $(i-1)'\leq x' < i'.$ Note that $x'\neq (i-1)'$, lest there would be two instances of $(i-1)'$ in the same column, contradicting that $i_j'$ and $i_{j+1}'$ were the first primed pair to be repeated. As that was the only possible value for $x'$, we conclude $i_j'$ was not column-bumped.   

     Accordingly, $i_j'$ had to be row-bumped by an entry $y$ to column $c.$ This is only possible without also bumping $(i-1)_{j+1}'$ when $y=i-1.$ Let $r$ be the row $i_{j+1}'$ is located in. There are two cases to consider relative to whether $(i-1)_{j+1}'$ and $i_{j+1}'$ are adjacent.

     If they are adjacent, an instance of $y$ had to be bumped by an entry $z$, possibly primed, from row $r$ to $r+1.$ Such a $z$ would hence satisfy $z<y=i-1$ and be in row $r.$ But then, order considerations and the fact that a row-bumped entry must occupy a position weakly southwest of its original position, contradict that $(i-1)_{j+1}'$ and $i_{j+1}'$ are adjacent.

     If on the other hand $(i-1)_{j+1}'$ and $i_{j+1}'$ are not adjacent, then $i_{j+1}'$ is Northeast of $(i-1)_{j+1}'$ by order considerations and the fact that there cannot be two instances of $i'$ in the same row. Again, for $i_j'$ to be row-bumped, an entry $z<y$ must exist, this time in row $r+1$. But this contradicts the order in the column of $i_{j+1}'$.    
\end{proof}

\section{Characterization of constructed tableaux}\label{sec:construct}
A crucial step in many of the arguments of Section~\ref{sec:interlacingTableaux} consisted in noticing first that the order of insertion in the first row of $i_j$ and $(i-1)_j$ on one side, and $(i-1)_j$ and $i_{j-1}$ on the other, comes from the word being interlacing; and then that the same order is preserved when these letters are first inserted to subsequent rows. In fact, the corresponding order for these letters as they become primed, or inserted column-wise, still follows the same behaviour as seen in the second part of Section~\ref{sec:interlacingTableaux}. These properties were used for example to establish the fundamental result that no two unprimed entries repeat in a row, or two primed entries in a column.

But what if we had a tableau whose elements can be grouped in sequences in such a manner that they satisfy the properties of the past sections, and we continue inserting letters in a manner consistent with the grouping? Our hope is that throughout the insertion process the tableau will still satisfy the properties of Section~\ref{sec:interlacingTableaux}, e.g., that $i_j$ is bumped to any row before $(i-1)_j$ is. 

Tableaux whose elements can be grouped in that way, or to be more precise, that appear like they were inserted from an interlacing word, were defined in Definition~\ref{def:constructedDef}; we will explore these tableaux in detail throughout the next couple of sections. 

Recall from Section~\ref{sec:constructed_intro} the definitions of Serrano--Pieri strips, constructible, and constructed tableaux. A tableau $T$ constructible from $\alpha<\beta$ has a decomposition into Serrano--Pieri strips $\gamma^{(s)}=(\xi/\pi) \olessthan (\theta/\eta)$, one for each of the intervals $(\alpha_j,\beta_j]$. The obvious indexing of these Serrano--Pieri strips is by the integer $j$; however, it turns out to be more convenient to index them by $\ell(\beta) + 1 - j$ and so we do the latter. We write $\gamma^{(s)}=(\xi^{(s)}/\pi^{(s)}) \olessthan (\theta^{(s)}/\eta^{(s)})$ for the Serrano--Pieri strip using the interval $(\alpha_{\ell(\beta) + 1 - s},\beta_{\ell(\beta) + 1 - s}]$. 



We now develop some combinatorial properties of constructed tableaux, similar to properties that we have previously established for interlacing tableaux.

We first observe that if one of the Serrano--Pieri strips in a constructible tableau can be extended (so that the tableau is not constructed), then it can be extended using added boxes from greater indexed Serrano--Pieri strips. We now prove some results about the positions of entries in constructed tableaux.

\begin{lemma}\label{lem:primePlacementLemma}
    Let $\mu<\nu$ be strict partitions and $T$ constructed from $\mu$ and $\nu.$ Then, for any column $c$ and for all $j<i$, we have that the primed entries of $(\mu_j,\nu_j]$ in $c$ occur below the unprimed entries of $(\mu_i,\nu_i]$ in $c$.
\end{lemma}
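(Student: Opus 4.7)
The plan is to argue by contradiction. Suppose $p = k'$ at $(r_p, c)$ is a primed entry of segment $j$ and $u = k''$ at $(r_u, c)$ is an unprimed entry of segment $i$, both in column $c$, with $j < i$ and $r_p < r_u$. I would first record the basic value constraints: column strictness in $\cD$ gives $k \leq k''$; strictness of $\mu$ combined with $j < i$ yields $\mu_j > \mu_i$, and together with $k > \mu_j$ this places $k$ inside $(\mu_i, \nu_i]$. By the Serrano--Pieri condition for segment $i$ (every vertical-strip value is less than every horizontal-strip value), the fact that $k''$ is unprimed in the vertical strip together with $k \leq k''$ forces $k$ itself to appear unprimed in segment $i$'s vertical strip. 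Symmetrically, the value $k''$ must appear primed in segment $j$'s horizontal strip, and I would denote this primed entry by $w = (k'')'$ at a position $(r_w, c_w)$.

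Next I would analyze column $c$ and row $r_u$. Condition (3) of constructibility forces any primed segment-$i$ entry in column $c$ to lie above $p$; yet such an entry has integer value exceeding $k''$, so its $\cD$-value exceeds $u$, preventing it from lying above $p$ in the strictly $\cD$-increasing column. Hence segment $i$ contributes only vertical-strip boxes to column $c$. The Serrano--Pieri condition $\xi^{(j)} \subseteq \eta^{(j)}$ together with the strictness of shifted Young diagram row lengths then implies that no segment-$j$ box occupies column $c$ strictly below $p$, so $p$ is the lowest segment-$j$ occupant of column $c$. Condition (2) applied in row $r_u$ rules out any segment-$j$ unprimed entry in that row: such an entry would have integer value strictly less than $k \leq k''$ yet would be forced right of $u$ by condition (2), contradicting row monotonicity.

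Finally I would derive the contradiction by examining the position of $w$. The left-to-right ordering in segment $j$'s horizontal strip forces $c_w > c$ (in the generic case $k < k''$), and a $\cD$-comparison against the entry of $(r_u, c_w)$ forces $r_w < r_u$. I would then examine the box $(r_u, c_w)$ and the chain of horizontal-strip boxes of segment $j$ between $(r_p, c)$ and $(r_w, c_w)$: the constructibility conditions (1)--(3) combined with the Serrano--Pieri structure of segment $j$ must produce an inconsistency, either in the column order of $c_w$, the row order of $r_u$, or in the Serrano--Pieri structure of segment $j$ itself. The degenerate case $k = k''$ collapses to $w = p$ and follows immediately from the previous paragraph. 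The main obstacle is verifying the contradiction in the generic case $k < k''$, which requires tracking the segment assignments and primed/unprimed statuses of the intermediate boxes linking $p$, $u$, the value-$k$ box of segment $i$, and $w$, and showing that no assignment satisfies the three constructibility conditions simultaneously.
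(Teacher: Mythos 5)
There is a genuine gap, and it sits exactly where you flag ``the main obstacle'': the final contradiction is never derived, and the route you sketch cannot be closed as stated. Throughout, you invoke only the three conditions of constructibility (Definition~\ref{def:constructibleDef}) and the internal Serrano--Pieri structure of each segment. But none of those conditions relates a \emph{primed} entry of segment $j$ to an \emph{unprimed} entry of segment $i$: condition (2) compares unprimed with unprimed in a row, and condition (3) compares primed with primed in a column. The offending configuration is in fact consistent with constructibility alone. For instance, with $\mu=(2,1)<\nu=(4,3)$ one can place segment $i=2$ as unprimed $2$ at $(1,1)$ and $3$ at $(2,2)$, and segment $j=1$ as $3'$ at $(1,2)$ and $4'$ at $(1,3)$; this is constructible, yet the primed $3'$ of segment $1$ sits above the unprimed $3$ of segment $2$ in column $2$. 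The hypothesis you must use --- and never do --- is the non-extendability clause of Definition~\ref{def:constructedDef}: in the example above, segment $2$'s strip extends by absorbing the $4'$, so the tableau is not constructed.

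The paper's proof is a short extension argument along these lines. Taking $x=k''$ minimal among unprimed entries of segment $i$ having a primed entry $y'=k'$ of segment $j$ above it in some column, one checks (using $\mu_j>\mu_i$ and $k\le k''$, as you do) that $k-1$ lies in the unprimed part of segment $i$ and, being smaller, sits strictly north of $x$ in that vertical strip. One then forms a new Serrano--Pieri strip whose vertical part is segment $i$'s vertical strip truncated at $k-1$ and whose horizontal part is the tail of segment $j$'s horizontal strip from $k'$ upward; the labels form an interval since they meet at $k-1$ and $k$, and minimality of $x$ gives the required containment $\xi'\subseteq\eta'$. This extends segment $i$'s strip, contradicting that $T$ is constructed. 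Your first paragraph (the value constraints placing $k$ in both intervals and unprimed in segment $i$) is correct and is essentially the setup the paper uses; the auxiliary entry $w=(k'')'$ and the bookkeeping around $(r_u,c_w)$ are not needed once the extension is exhibited.
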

\begin{proof}
    Suppose that $T$ is constructible from $\mu < \nu$ and that $y'\in \gamma^{(n)}$ is a primed entry above an unprimed letter $x\in \gamma^{(m)}$ for some $m<n$. Without loss of generality, we may assume that $x$ is the smallest element of $\xi^{(m)}/ \pi^{(m)}$ such that a primed letter in $\theta^{(n)}/\eta^{(n)}$ is above it. As $\mu<\nu$ and both are strict partitions, there is an element $w\in \xi^{(m)}/ \pi^{(m)}$ with $w<y'$ or else $y' = 1'$. But $y' = 1'$ is impossible since there cannot be $1' \in T$. Hence, there is such a $w$ and since $x > y'$, we may take $w = y-1$.

    We now analyze the position of $w$. First, observe that $w$ is in a row North of $x$ because $\xi^{(m)}/ \pi^{(m)}$ is a vertical strip. Then, $\gamma^{(m)}$ can be extended to $\gamma'^{(m)}$ by taking $\xi'^{(m)}/ \pi'^{(m)}$ to be the subset of the vertical strip $\xi^{(m)}/ \pi^{(m)}$ of elements at most $w$, and letting $\theta'^{(m)}/\eta'^{(m)}$ consist of the letters in $\theta^{(n)}/\eta^{(n)}$ that are at least $y'.$ Since $y = w+1$, then entries of $\gamma'^{(m)}$ form an interval. Because of our choice of $x$ and $y'$, we have $\xi' \subseteq \eta'$. Thus, $T$ is not constructed from $\mu$ and $\nu$. 
\end{proof}

\begin{lemma}\label{lem:rowCharConstructedTableaux}
    Let $T$ be constructed from strict partitions $\mu<\nu.$ Then in $T$:
    \begin{enumerate}
        \item no two distinct unprimed entries $i_j$ and $i_k$ occur in the same row $r$ with $r<i$;
        \item there are no repeated entries in any column; and
        \item assuming the relevant entries exist, 
        \begin{itemize}
            \item either the letter $(i-1)_{j+1}$ is weakly south of $i_j$, or
            \item $(i-1)_{j+1}$ is in row $i-1$ and $i_j$ in row $i$, or 
            \item $(i-1)_{j+1}$ is a primed entry.
        \end{itemize}
    \end{enumerate}
\end{lemma}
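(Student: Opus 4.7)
The plan is to prove each of the three parts by contradiction: we assume a violation of the stated property and then exhibit an extension of some Serrano--Pieri strip in the decomposition of $T$, contradicting constructedness per Definition~\ref{def:constructedDef}. The three parts are interrelated, and I would prove them in order, since part (1) feeds into part (3).

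For part (1), suppose $i_j$ and $i_k$ are distinct unprimed entries of value $i$ in the same row $r<i$. By the second bullet of Definition~\ref{def:constructibleDef}, we may assume $j<k$ so that $\gamma^{(j)}$ is placed first and $i_j$ lies west of $i_k$. The existence of $i_k$ at row $r<i$ forces the vertical strip $\xi^{(k)}/\pi^{(k)}$ to extend above row $r$, populating the rows above with entries $(i-1)_k, (i-2)_k, \ldots$ down to the top of that strip. Taking a suitable prefix of $\xi^{(k)}/\pi^{(k)}$ through $(i-1)_k$ and attaching it above $i_j$ builds a new vertical strip with values $\alpha_k+1, \ldots, i$; combined with the primed part $\theta^{(j)}/\eta^{(j)}$ of $\gamma^{(j)}$, this forms a Serrano--Pieri strip whose labels span the interval $(\alpha_k, \beta_j] \supsetneq (\alpha_j, \beta_j]$, thus extending $\gamma^{(j)}$.

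For part (2), two unprimed entries in a single column are already forbidden by the shifted semistandard tableau definition. If primed entries $i'_j$ and $i'_k$ both appear in a column $c$, each lies in the horizontal part of its respective Serrano--Pieri strip. The third bullet of Definition~\ref{def:constructibleDef} together with Lemma~\ref{lem:primePlacementLemma} forces the configurations of $\theta^{(j)}/\eta^{(j)}$ and $\theta^{(k)}/\eta^{(k)}$ to be compatible with a horizontal-strip union; this union, together with appropriate pieces of the corresponding vertical strips, forms a larger Serrano--Pieri strip extending one of the two, the desired contradiction.

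For part (3), suppose $i_j$ and $(i-1)_{j+1}$ both exist with $(i-1)_{j+1}$ unprimed and strictly north of $i_j$, and that the exceptional pair ($(i-1)_{j+1}$ in row $i-1$ and $i_j$ in row $i$) does not hold. I would extend $\gamma^{(j)}$ by prepending $(i-1)_{j+1}$ to its vertical strip: part (1), already established, ensures no two unprimed $(i-1)$'s share a row strictly below row $i-1$, so the enlarged vertical strip remains valid; Lemma~\ref{lem:primePlacementLemma} guarantees the primed part $\theta^{(j)}/\eta^{(j)}$ stays well-placed; and the resulting interval strictly contains $(\alpha_j, \beta_j]$. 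The main obstacle throughout is the case analysis of column alignments needed to ensure the resulting set of boxes is a generalized rimhook (i.e., decomposes as a vertical strip together with a horizontal strip with $\xi \subseteq \eta$); verifying that in the excluded exceptional configuration Lemma~\ref{lem:rowandColBoundedLemma} blocks any such extension precisely explains why that case must be permitted rather than contradicted.
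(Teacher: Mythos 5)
Your high-level strategy --- assume a violation and exhibit an extension of some Serrano--Pieri strip --- is indeed the paper's strategy for all three parts, but the execution has genuine gaps, and in part (1) a concrete step is backwards. Under the paper's conventions the western of the two $i$'s belongs to the earlier-placed strip, whose value interval \emph{starts lower}; so the object you build, with values $\alpha_k+1,\dots,i$ glued to $\theta^{(j)}/\eta^{(j)}$, has interval $(\alpha_k,\beta_j]\subsetneq(\alpha_j,\beta_j]$ --- a contraction, not an extension (and it also omits the lower boxes of $\xi^{(j)}/\pi^{(j)}$, so it does not contain $\gamma^{(j)}$). The correct move is the opposite one: keep the bottom of the \emph{western} strip and borrow the \emph{top} of the eastern strip (its vertical continuation and its primed horizontal part), which pushes the upper endpoint past $\beta$. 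Even then the argument is not a single stroke: whether the borrowed boxes assemble into a generalized rimhook depends on whether $(i-1)$ of the western strip lies east or west of the eastern $i$; in the west subcase one must identify the entry in the eastern $i$'s column and that row as a primed $i'$, which requires first excluding $i-1$ via a minimal-counterexample induction on the value $i$ (a ``least early repeat''), and then invoking Lemma~\ref{lem:primePlacementLemma}. You set up none of this. Finally, the case in which both vertical strips begin at $1$ is not an extension argument at all --- there both $i$'s are simply forced weakly south of row $i$, so no contradiction is needed.

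Parts (2) and (3) are asserted rather than proved. For (2), saying the two horizontal parts are ``compatible with a horizontal-strip union'' that ``forms a larger Serrano--Pieri strip'' skips the actual mechanism: one must use strictness of $\mu<\nu$ to produce $(i+1)_j'$, iterate to find the least $e$ with $(i+e)_j'$ sitting directly below an \emph{unprimed} entry, and only then extend the strip containing that unprimed entry. For (3), ``prepending $(i-1)_{j+1}$ to the vertical strip of $\gamma^{(j)}$'' is not obviously legal --- the whole difficulty is the position analysis (east vs.\ NorthWest, diagonal vs.\ off-diagonal, the comparison of the primed entries $i_k'$ and $i_\ell'$, and the iteration producing $(i+e)_\ell'$), which occupies most of the paper's proof and which you explicitly defer as ``the main obstacle.'' That obstacle \emph{is} the proof; also note that part (1) gives no two unprimed $(i-1)$'s in a row $r<i-1$, not ``strictly below row $i-1$'' as you state. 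Your intuition for why the row-$(i-1)$/row-$i$ configuration must be exempted is correct in spirit, but as written the proposal would not compile into a proof.
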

\begin{proof}
(1): 
Say that value $i$ has an \newword{early repeat} if there are two distinct unprimed entries $i_k, i_k$ in some row $r$ with $r < i$. We want to show that $T$ has no early repeats. Hence, we suppose that $i$ is the least early repeat in $T$ and derive a contradiction to $T$ constructed from $\mu < \nu$.

Suppose that entries $i_j$ and $i_k$ are an early repeat in row $r < i$ such that $i_k$ is immediately left of $i_j$. Define $s,t$ by $i_j\in\xi^{(t)}/\pi^{(t)}$ and $i_k\in\xi^{(s)}/\pi^{(s)}$ with $s<t$.
By the definition of constructible tableaux (Definition~\ref{def:constructibleDef}) and the fact that $\mu$ is strict, either the vertical strip $\xi^{(t)}/\pi^{(t)}$ starts with an entry greater than $\xi^{(s)}/\pi^{(s)}$ does or else both start with $1$. 
    
    In the first case, we claim that the $s$th Serrano--Pieri strip in $T$ can be extended, contradicting $T$ constructed from $\mu < \nu$.

Since $\xi^{(s)}/\pi^{(s)}$ is a vertical strip, the letter $(i-1)_k$ is Northeast of $i_k$. There are two cases, either $(i-1)_k$ is east of $i_j$ or $(i-1)_k$ is West of $i_j$. If $(i-1)_k$ is east of $i_j$, define $\gamma'^{(s)}$ by letting $\xi'^{(s)}/\pi'^{(s)}$ consist of all the entries of $\xi^{(s)}/\pi^{(s)}$ at most $(i-1)$ and all letters of $\xi^{(t)}/\pi^{(t)}$; and $\theta'^{(s)}/\eta'^{(s)}$ be $\theta^{(t)}/\eta^{(t)}$. But then $\gamma'^{(s)}$ provides an extension of $\gamma^{(s)}$, a contradiction. 

Otherwise, $(i-1)_k$ is West of $i_j$. Consider the entry $x$ in the same column as $i_j$ and the same row as $(i-1)_k$. The order in $\cN \cup \cNprime$ forces $x = i-1$ or $x= i'.$ However, $x=i-1$ is impossible by the choice of $i$ as the least early repeat, so $x = i'$. Moreover, by Lemma~\ref{lem:primePlacementLemma} we can further say that $x=i_h \in \gamma^{(m)}$ for some $h<j.$ However, it then follows that the strip $\gamma'^{(s)}$ defined by $\xi'^{(s)}/\pi'^{(s)}$ being the subset of all entries in $\xi^{(s)}/\pi^{(s)}$ less than or equal to $i-1$, and $\theta'^{(s)}/\eta'^{(s)}$ as the horizontal strip consisting of all entries at least $i'$ in $\theta^{(m)}/\eta^{(m)}$, extends $\gamma^{(s)}$, a contradiction.

    If instead $\xi^{(s)}/\pi^{(s)}$ and $\xi^{(t)}/\pi^{(t)}$ both have $1$ as their smallest entry, it follows from both being vertical strips that the entries $i_j\in\xi^{(t)}/\pi^{(t)}$ and $i_k\in\xi^{(s)}/\pi^{(s)}$ can only appear south of row $i$. Accordingly, (1) holds in either case.

       \medskip
\noindent
   (2): It suffices to note that the order imposed on the columns of a shifted semistandard Young tableau already forces there not to be repeated unprimed elements. Thus, it is enough to see that the lemma holds also for primed letters.
    
    Let $i_j'\in \theta^{(s)}/\eta^{(s)}$, $i_k'\in \theta^{(r)}/\eta^{(r)}$ be two such repeated entries with $j<k$ in column $c.$ Without loss of generality, we may assume that $i_j'$ and $i_k'$ are in consecutive rows. Since $\mu<\nu$ are strict partitions, we have that $(i+1)_j'$ exists and so $(i+1)_j'\in \theta^{(s)}/\eta^{(s)}$. If $(i+1)_j'$ is north of $i_k'$, then it, together with the greater elements of $\theta^{(s)}/\eta^{(s)}$, can be used to extend the horizontal strip $\theta^{(r)}/\eta^{(r)}$ and hence the Serrano--Pieri strip $\gamma^{(r)}$. Thus, without loss of generality, entries that exceed $i'$ in $ \theta^{(s)}/\eta^{(s)}$ are in the same row as $i_j'$.

    The letter $(i+1)_j'$ is directly below either a primed entry or an unprimed one. If the former, order considerations force the primed entry to be $(i+1)'_\ell \in \theta^{(t)}/\eta^{(t)}$ for some $t$ and $\ell.$ If $\ell<k$, then $\theta^{(t)}/\eta^{(t)}$ extends $\theta^{(r)}/\eta^{(r)}$ and $\gamma^{(r)}$ too as a consequence. Hence, $k< \ell$ and $(i+1)'_\ell \in \theta^{(t)}/\eta^{(t)}$ implies that $(i+1)'_k \in \theta^{(r)}/\eta^{(r)}$ exists. Then, it follows from $k>j$ that $(i+2)'_j\in \theta^{(s)}/\eta^{(s)}$ exists. The same argument shows that if $(i+2)'_j$ is then also directly below a primed entry, then that entry is an instance of $(i+2)'$ and $(i+3)'_j$ exists. Applying iteratively this argument, we obtain that there is a least $e>0$ such that $(i+e)_j'$ is immediately below an unprimed entry.

    Consider the unprimed entry directly above $(i+e)_j'$. By order considerations, this entry is an instance of $i+e-1$. Moreover, if this entry belongs to $\xi^{(u)}/\pi^{(u)}$, then $u<s$ because primed entries are never placed before unprimed entries of a previous sequence by Lemma~\ref{lem:primePlacementLemma}. But then, $\gamma^{(u)}$ can be extended to $\gamma'^{(u)}$ by removing all letters in the vertical strip $\xi^{(u)}/\pi^{(u)}$ that are greater than $i+e-1$ and then choose $\theta'^{(u)}/\eta'^{(u)}$ as the part of the horizontal strip $\theta^{(s)}/\eta^{(s)}$ consisting of all entries that are at least $(i+e)'.$ 

    Accordingly, we always obtain that one of the Serrano--Pieri strips comprising $T$ can be extended, a contradiction.

    \medskip
    \noindent
(3):
If $(i-1)_{j+1}$ or $i_j$ is primed, we are done, so we assume that $(i-1)_{j+1}\in\xi^{(s)}/\pi^{(s)}$ and $i_j\in\xi^{(s+1)}/\pi^{(s+1)}$.
  Suppose now $(i-1)_{j+1}$ is North of $i_j$. We must show that $(i-1)_{j+1}$ is in row $i-1$ and $i_j$ in row $i$.
  
  If $(i-1)_{j+1}$ is east of $i_j$, then we can extend the $s$-th Serrano--Pieri strip by letting $\xi'^{(s)}/\pi^{(s)}$ consist of the entries in $\xi^{(s+1)}/\pi^{(s+1)}$ that are at least $i$ together with those in $\xi^{(s)}/\pi^{(s)}$ that are at most $(i-1)$, and $\theta'^{(s)}/\eta'^{(s)}= \theta^{(s+1)}/\eta^{(s+1)}$, contradicting $T$ constructed. 

   It remains to consider the case where $(i-1)_{j+1}$ is NorthWest of $i_j$. By Lemma~\ref{lem:rowandColBoundedLemma} combined with part (1) of the current lemma, either $(i-1)_{j+1}$ is in row $i-1$, or else it is in a row $g<i-1$ and no additional instance of $i-1$ lies in row $g$. In the former case, we must also have $i_j$ in row $i$, and so we are done. We will show that the second case cannot in fact occur by deriving a contradiction. 
   
   In the second case, by the previously proved parts (1) and (2) of this lemma, we would have a situation of the form 
   \ytableausetup{boxsize=4.35em}
        \[ \scalebox{0.7}{\begin{ytableau}
             (i-1)_{j+1} &  i_k'  \\
              ~      &  i_j
        \end{ytableau}},\]
        where the lower left box may or may not exist. Either way, from Lemma~\ref{lem:primePlacementLemma}, the $i'_k$ immediately above $i_j$ satisfies $k>j.$ Say that $i_k'\in \gamma^{(n)}$. 
      
      We now analyze the scenario where the box does not exist and so $(i-1)_{j+1}$ and $i_j$ are on the main diagonal. In this scenario, $i_k'$ is the smallest entry in $\theta^{(n)}/\eta^{(n)}$ because $\theta^{(n)}/\eta^{(n)}$ is a horizontal strip. 

    Since $(i-1)_{j+1} \in \gamma^{(s)}$, $n\leq s$, and $\mu<\nu$ are strict partitions, the entry $(i-1)_k$ exists and so $(i-1)_k\in \gamma^{(n)}$. Then, $(i-1)_k$ must be weakly east of $i_j$ for its place in $T$ and the order imposed therein to be consistent with the location of $(i-1)_{j+1}$. Accordingly, $\gamma^{(n)}$ can be extended by defining $\xi'^{(n)}/\pi'^{(n)}$ to be $\xi^{(n)}/\pi^{(n)}$ together with all elements $\xi^{(s+1)}/\pi^{(s+1)}$ that are at least $i$ and defining $\theta'^{(n)}/\eta'^{(n)}$ to be $\theta^{(s+1)}/\eta^{(s+1)}$. This contradicts $T$ constructed.

   In the remaining scenario, $(i-1)_{j+1}$ and $i_j$ lie strictly above the main diagonal and so by part (1), we have the local configuration 
   \ytableausetup{boxsize=4.35em}
        \[\scalebox{0.7}{\begin{ytableau}
            \ldots  & ~     & (i-1)_{j+1} &   i_k'  \\
            \none  & \ldots &  i_\ell'         &  i_j
        \end{ytableau}}.\]
        Say that $i'_\ell \in \gamma^{(m)}$.

    There are now two possibilities: either $\ell<k$ or $k<\ell.$ If $\ell<k$, then by construction the horizontal strip $\theta^{(n)}/\eta^{(n)}$ contains primed entries greater than $i_k'$ and they can be used to extend the Serrano--Pieri strip $\gamma^{(m)}$, contradicting $T$ constructed.

    If instead $k<\ell$, then since $\mu < \nu$ are strict partitions, we obtain that $(i+1)_\ell'$ exists and so $(i+1)_\ell' \in \theta^{(m)}/\eta^{(m)}$. Note that $(i+1)_\ell'$ cannot be in the same column as $i_j$ or else $\theta^{(m)}/\eta^{(m)}$ would not be a horizontal strip. Also, if $(i+1)_\ell'$ is weakly north of $i_k'$, then it could be used to extend the horizontal strip $\theta^{(n)}/\eta^{(n)}$, a contradiction. Hence $(i+1)_\ell'$ is in the same row as $i_\ell'.$
    Moreover, the elements in $\xi^{(s+1)}/\pi^{(s+1)}$ less than $i_j$ are North of $(i-1)_{j+1}$ because they form a vertical strip and there are no repeated unprimed elements in a row by part (1). 
    
    If there was an unprimed letter $x_r$ East of $(i-1)_{j+1}$ in its row, then $r<j$ by Definition~\ref{def:constructibleDef}. Hence, if $(i+1)_\ell'$ is directly below $x_r$, and $x=i+1$ as a consequence, we can extend $\gamma^{(s+1)}$ by letting $\xi'^{(s+1)}/\pi'^{(s+1)}$ consist of all entries in $\xi^{(s+1)}/\pi'^{(s+1)}$ at most $(i-1)_{j}$ together with all unprimed letters indexed by $r$ and of value at least $x$; and letting $\theta'^{(s+1)}/\eta'^{(s+1)}$ consist of all primed entries indexed by $r.$
    
   Therefore $(i+1)_\ell'$ is directly below a primed entry $y'.$ The inequality $i_k' < y' \leq (i+1)_\ell'$ also determines $y' = (i+1)'$, so that because of its position either $y'$ extends the Serrano--Pieri strip $\gamma^{(n)}$ or $y'=(i+1)_k'$. In the former case, we are done; the latter case, coupled with $\mu < \nu$ being strict partitions, implies that $(i+2)_\ell'\in \theta^{(m)}/\eta^{(m)}.$ Applying this argument iteratively, we conclude that for some $e>0$, the letter $(i+e)_\ell'$ is the greatest element of $\theta^{(m)}/\eta^{(m)}$ and the primed element immediately above $(i+e)_\ell'$, another instance of $(i+e)'$ extends $\gamma^{(n)}$, a contradiction.  
\end{proof}

\section{Insertions into constructed tableaux}\label{sec:insertions}

After developing a robust understanding of constructed tableaux, we proceed to consider whether these properties can be conserved when new letters are inserted from the right in an appropriate order. 

\begin{lemma}\label{lem:transitionLemmaConstructed}
    Let $T$ be constructed from strict partitions $\mu < \nu$ and let $m \leq \ell(\nu)$.
    Suppose that, for all $i$ and all rows $r< m$, no two unprimed entries of value $i$ are repeated before row $i$ at any point during the insertion process $T\leftarrow \mu_{\ell(\mu)} \coloneqq T\cdot \mu_{\ell(\mu)}$. Then, for all $i_j$ occurring in a row $r \leq \min\{m, i-1\}$ and throughout the insertion process, either $(i-1)_{j+1}$ is West of $i_j$ in the same row, or it is in a subsequent row, or it is a primed entry. 
\end{lemma}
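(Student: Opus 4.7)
The plan is to proceed by induction on $r$, for $1 \leq r \leq \min(m, i-1)$, mirroring the strategy of Lemma~\ref{lem:precedingEntry} but in the constructed-tableau setting. The base case $r = 1$ is handled by Lemma~\ref{lem:rowCharConstructedTableaux}(3) applied to the initial tableau $T$ (observing that the second bullet there never triggers once $r \leq i-1$), together with a direct verification that the placement of $\mu_{\ell(\mu)}$ into row $1$ cannot create a violating configuration.

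For the inductive step, I would assume the invariant has held throughout the insertion in all rows strictly less than $r$, and consider the first moment at which it fails in row $r$. Since entries only travel weakly southeast during mixed insertion and primes are never removed, the only event that can newly break the invariant is the arrival of some $i_j$ into row $r$ via a row-bump from row $r-1$. Just before this bump, $i_j$ sits in row $r-1$, and by the inductive hypothesis one of the three alternatives holds for $(i-1)_{j+1}$ there. If $(i-1)_{j+1}$ is primed or already lies in a row south of $r-1$, I would verify directly that the invariant is preserved in row $r$ once $i_j$ lands, invoking Lemma~\ref{lem:rowCharConstructedTableaux}(1) to rule out repeated unprimed $(i-1)$'s conflicting with the landing column.

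The critical case is that $(i-1)_{j+1}$ is West of $i_j$ in row $r-1$. The bumping entry $x$ that dislodges $i_j$ must satisfy $(i-1)_{j+1} \leq x < i$ in the order on $\mathcal{D}$, otherwise mixed insertion would bump $(i-1)_{j+1}$ first. This forces $x \in \{i-1, i'\}$. The possibility $x = i-1$ is ruled out by the lemma's standing hypothesis: it would place two unprimed entries of value $i-1$ in row $r-1 \leq i-2 < i-1$. The remaining possibility is $x = i'$. Here I would trace the provenance of $i'$: either it was part of the original Serrano--Pieri structure of $T$, or it was produced during insertion by some unprimed $i_k$ (with $k \neq j$) being bumped off a main-diagonal cell, in which case $i_k'$ was then column-inserted and eventually arrived at its position in row $r-1$. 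In each subcase I would apply the inductive hypothesis to the pair $(i_k, (i-1)_{k+1})$ together with Lemma~\ref{lem:rowCharConstructedTableaux} to conclude that the primed entry $i'$ could not have reached row $r-1$ and bumped $i_j$ without the entry $(i-1)_{j+1}$ having been previously displaced south or primed, contradicting the assumption that $(i-1)_{j+1}$ was still an unprimed inhabitant of row $r-1$ West of $i_j$.

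The main obstacle is the provenance analysis in the case $x = i'$: I expect significant casework to control whether $i'$ originated from the initial Serrano--Pieri decomposition or from an interior diagonal bump, and to rule out pathological interleavings of the column-insertion path of $i'$ with the horizontal strip of primes lying above $(i-1)_{j+1}$. This is where the no-repetition hypothesis and the absence of extensions of Serrano--Pieri strips (Definition~\ref{def:extend}) do the heavy lifting, much as they did in the proofs of parts~(1)--(3) of Lemma~\ref{lem:rowCharConstructedTableaux}.
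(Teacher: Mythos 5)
Your plan is structurally the same as the paper's proof, which establishes the initial configuration via Lemma~\ref{lem:rowCharConstructedTableaux}, inducts on rows, and then declares the inductive step ``identical to the inductive part of the proof of Lemma~\ref{lem:precedingEntry}.'' That inductive step is exactly your analysis: the entry $x$ that dislodges $i_j$ from row $r-1$ must satisfy $i-1 \leq x < i$ in $\cD$, so $x\in\{i-1,i'\}$, and $x=i-1$ dies on the no-repetition hypothesis. (The paper's base case is even simpler than yours: only the single letter $\mu_{\ell(\mu)}$ reaches row $1$, and its relevant partner $(\mu_{\ell(\mu)}-1)_{\ell(\mu)+1}$ either already lay in $T$ or does not exist.) Where you diverge is the case $x=i'$: you propose tracing the provenance of the $i'$, whereas the paper settles it with a purely local order argument, splitting on whether $(i-1)_{j+1}$ and $i_j$ are adjacent in row $r-1$ or separated by an $i'$. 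If adjacent, the bumping $i'$ must arrive from the column of $(i-1)_{j+1}$, sitting directly below it, and it can never be dislodged from there because no letter of $\cD$ lies strictly between $i-1$ and $i'$. If separated, either the intervening $i'$ does the bumping---but then it was itself row-bumped by an $i-1$, repeating $i-1$ in row $r-1$---or some $x$ with $i'\le x<i$ in its column does, which would place two primed entries of value $i$ in one row. I recommend substituting this local argument for the provenance analysis; the latter is exactly where your plan is least developed, and the ``significant casework'' you anticipate there is avoidable.
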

\begin{proof}
    By Lemma~\ref{lem:rowCharConstructedTableaux}, the vertical strips in $T$ are organized so that $(i-1)_{j+1}$ is in the same row or in a row below $i_j$, unless the letter $(i-1)_{j+1}$ is primed, or the letter $i_j$ is in row $i$ and $(i-1)_{j+1}$ in row $i-1$. Hence, if the lemma statement fails, it is because the insertion of $\mu_{\ell(\mu)}$ changes the relative order between two such entries $i_j$ and $(i-1)_{j+1}.$ We prove by induction on rows that such order remains unchanged.

    Consider row $1$. Either $\mu_{\ell(\mu)}-1 \in (0, \nu_{\ell(\mu)+1}]$, in which case $(\mu_{\ell(\mu)}-1)_{\ell(\mu)+1}$ was in $T$ prior to the insertion of the letter $\mu_{\ell(\mu)}$; or else $(\mu_{\ell(\mu)}-1)_{\ell(\mu)+1}$ does not exist and the lemma holds vacuously. In either case, the lemma holds for $r=1.$

    Suppose then that the lemma is likewise true of all rows $r< k < m$, and consider the case $r=k.$ Suppose towards a contradiction that in row $k\leq i-1$ the letter $i_{j-1}$ is inserted before $(i-1)_j$. The remainder of the inductive argument is now identical to the inductive part of the proof of Lemma~\ref{lem:precedingEntry}. \qedhere
\end{proof}

\begin{lemma}\label{lem:sameSeqConstructed}
    Let $T$ be a tableau constructed from strict partitions $\mu < \nu$, let $m \leq \ell(\nu)$, and let $2 \leq i \in \mathbb{Z}$. If for all rows $r < \min\{m,i\}$, no two unprimed entries of integer value $i$ are repeated north of row $r$  during the insertion process $T\leftarrow \mu_{\ell(\mu)} \coloneqq T\cdot \mu_{\ell(\mu)}$, then for each row $f< m $, for all $(i-1)_j$ occurring in $f$, and throughout the insertion process, the letter $i_j$ is in a row below $(i-1)_j$; unless $i_j$ is primed, in which case this condition may fail.
\end{lemma}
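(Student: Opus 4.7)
The plan is to mirror the proof of Lemma~\ref{lem:orderSameSeq} from Section~\ref{sec:interlacingTableaux}, translating its insertion-time statement into a spatial statement about constructed tableaux and substituting the Serrano--Pieri structure of $T$ for the interlacing property. I would induct on the row index $f$.

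For the base case $f = 1$, I invoke Definition~\ref{def:constructibleDef}: when both $(i-1)_j$ and $i_j$ are unprimed they lie in the vertical strip $\xi^{(s)}/\pi^{(s)}$ of the same Serrano--Pieri strip, and unprimed entries of such a vertical strip strictly increase from top to bottom. Thus $i_j$ starts strictly south of $(i-1)_j$ in $T$; since mixed insertion moves letters only to strictly greater rows or strictly greater columns, $(i-1)_j$ cannot be brought anew into row $1$ and $i_j$ cannot rise into row $1$ from below, so the property persists throughout the insertion.

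For the inductive step, fix $1 < k < m$ and assume the conclusion for all rows $f < k$. Toward a contradiction, suppose there is an earliest moment at which $(i-1)_j$ occupies row $k$ while $i_j$ is unprimed and weakly north of it. Monotonicity of mixed insertion rules out $i_j$ having just risen, so $(i-1)_j$ must have been freshly bumped from row $k-1$ into row $k$. The inductive hypothesis at row $k-1$ immediately before the bump forces $i_j$ either to be primed or to lie in some row $\geq k$, and combined with the assumed violation it pins $i_j$ as an unprimed entry in row $k$ exactly. Now I analyze the insertion of $(i-1)_j$ into row $k$: the leftmost entry of row $k$ strictly greater than $i-1$ is either $i_j$ itself (in which case $i_j$ is bumped to row $k+1$, eliminating the violation) or some primed entry $i'$ standing west of $i_j$. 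In the latter case, following the pattern of Lemma~\ref{lem:orderSameSeq}, the bumped $i'$ column-inserts into $i_j$'s column and, since there is no value strictly between $i'$ and $i$, must bump $i_j$ out. Either branch evicts $i_j$ from row $k$ at the same step at which $(i-1)_j$ arrives, contradicting the choice of the earliest moment.

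The hardest part I foresee is dispensing cleanly with the ``propagation'' sub-case, where the bumped $i'$ enters $i_j$'s column but some larger primed letter (for instance a stray $(i+1)'$ or a second $i'$) sits above $i_j$ and would be bumped instead, leaving $i_j$ in place. Excluding this requires combining the no-two-primed-entries-in-a-column structure from Lemma~\ref{lem:rowCharConstructedTableaux}(2), the placement restrictions from Lemma~\ref{lem:primePlacementLemma}, the hypothesis that no two unprimed $i$'s coexist in any row $r < \min\{m,i\}$, and Lemma~\ref{lem:transitionLemmaConstructed} to show that no such obstructing entry can be present at the chosen moment without itself having already displaced $i_j$ strictly southward at an earlier step, contradicting minimality.
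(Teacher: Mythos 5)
Your overall strategy --- induction on rows, with the inductive step recycled from Lemma~\ref{lem:orderSameSeq} --- is exactly the paper's; the paper explicitly states that the inductive step ``is interchangeable with the reasoning for the inductive argument of Lemma~\ref{lem:orderSameSeq},'' and your eviction analysis (either $(i-1)_j$ bumps $i_j$ directly, or it bumps an $i'$ which then column-bumps $i_j$, there being no value strictly between $i'$ and $i$) is that reasoning. The obstruction you flag at the end is not a real one: every entry strictly above $i_j$ in its column is at most $i'$ in the order on $\cD$ (columns weakly increase and an unprimed $i$ cannot repeat in a column), hence is not strictly greater than the incoming $i'$, so $i_j$ is necessarily the entry that the column insertion displaces; no appeal to Lemma~\ref{lem:transitionLemmaConstructed} or Lemma~\ref{lem:primePlacementLemma} is needed there.

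The genuine gap is in your base case. You treat only pairs $\bigl((i-1)_j, i_j\bigr)$ both of whose members already sit in $T$, for which monotonicity of bumping paths indeed settles everything. But the insertion $T \leftarrow \mu_{\ell(\mu)}$ places a genuinely new letter $(\mu_{\ell(\mu)})_j$, with $j=\ell(\nu)-\ell(\mu)+1$, into row $1$, creating a new pair whose $i_j$ is $(\mu_{\ell(\mu)}+1)_j$ --- the smallest, hence topmost and possibly row-$1$, entry of its vertical strip in $T$. Your claim that ``$(i-1)_j$ cannot be brought anew into row $1$'' fails for exactly this letter, and this pair is the only non-automatic content of the base case. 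The paper's base case is devoted to it: inserting $\mu_{\ell(\mu)}$ into row $1$ bumps the leftmost entry strictly greater than it, which (when $(\mu_{\ell(\mu)}+1)_j$ lies in row $1$ at all) is either $(\mu_{\ell(\mu)}+1)'$ or $\mu_{\ell(\mu)}+1$ itself, there being no repeated entries in rows or columns by Lemma~\ref{lem:rowCharConstructedTableaux}; in the primed case the displaced $(\mu_{\ell(\mu)}+1)'$ must in turn column-bump $\mu_{\ell(\mu)}+1$, since no value lies strictly between them. Either way $(\mu_{\ell(\mu)}+1)_j$ leaves row $1$, and it is this fact that seeds the induction for the new pair; without it the inductive hypothesis has nothing to stand on for the pair created by the inserted letter.
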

\begin{proof}
    We may assume that $i_j$ is unprimed. 
    
    All tableaux, such as $T$, constructed from strict partitions are partitioned into a collection of Serrano--Pieri strips $\gamma^{(s)}= (\xi^{(s)}/\pi^{(s)}) \olessthan (\theta^{(s)}/\eta^{(s)})$. By Definition~\ref{def:constructedDef}, the vertical strips $\xi^{(s)}/\pi^{(s)}$ are positioned so that $i_{j}\in \xi^{(s)}/\pi^{(s)}$ is in a row below $(i-1)_j \in \xi^{(s)}/\pi^{(s)}$. Hence, if the conclusion of the lemma does not hold, it is because the insertion of $\mu_{\ell(\mu)}$ changes the relative order between the entries $i_j$ and $(i-1)_{j}.$ We proceed by induction on the rows.
    
    When $\mu_{\ell(\mu)}$ is inserted to row $1$, it either displaces an instance of $(\mu_{\ell(\mu)}+1)'$ or not. If the latter, it displaces $\mu_{\ell(\mu)}+1$ because there are no repeated entries by rows by Lemma~\ref{lem:rowCharConstructedTableaux}, or it was South of row $1$; both satisfy the lemma. If the former, we have that there are no repeated entries in the columns by Lemma~\ref{lem:rowCharConstructedTableaux}, so that when $(\mu_{\ell(\mu)}+1)'$ is bumped to the next column it must displace $\mu_{\ell(\mu)}+1$ as there are no entries $x$ such that $(\mu_{\ell(\mu)}+1)'<x<\mu_{\ell(\mu)}+1.$ Irrespective of the cause, $\mu_{\ell(\mu)}+1$ is bumped to row $2.$ Thus $\mu_{\ell(\mu)}$ and $\mu_{\ell(\mu)}+1$ are in the expected order and the lemma holds for $T\leftarrow \mu_{\ell(\mu)}$ with $i_j = \mu_{\ell(\mu)}.$ As no other letters are inserted to the first row, for unrestricted $i$ and $j$, if the letter $i_j$ is in row $1$, then the lemma holds for $(i+1)_j$ and $i_j$.
    
    Suppose now that the lemma is satisfied on all rows $f<k<m$ and take now $f=k$. The inductive step is interchangeable with the reasoning for the inductive argument of Lemma~\ref{lem:orderSameSeq} and we omit the details.
\end{proof}

\begin{lemma}\label{lem:sameSeqPrimedConstructed}
    Let $T$ be constructed from strict partitions $\mu<\nu$ and fix $m \in \mathbb{Z}_{>0}$. Suppose that, throughout the mixed insertion $T\leftarrow \mu_{\ell(\mu)}$, no two unprimed entries $k$ are repeated in a row $r<\min(m,k)$. Then, for any $i \geq 2$ and $j \geq 1$, if the letter $i_j$ becomes primed north of row $m$, then it does so before $(i-1)_j$ becomes primed. In particular, if $(i-1)_j$ is primed in $T$, then $i_j$ is also primed in $T$.
\end{lemma}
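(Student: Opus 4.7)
The plan is to argue by contradiction, assuming that $(i-1)_j$ is primed strictly before $i_j$ during the insertion $T \leftarrow \mu_{\ell(\mu)}$, with $i_j$ itself primed in some row $f < m$. By Lemma~\ref{lem:rowandColBoundedLemma}, $f \leq i-1$, and by Lemma~\ref{lem:sameSeqConstructed} (which applies under the current non-repetition hypothesis), while $i_j$ remains unprimed it sits strictly south of $(i-1)_j$. Hence when $(i-1)_j$ is bumped from its diagonal cell $(r,r)$ to become $(i-1)'_j$, we have $r < f$ and $i_j$ still occupies a cell in some row $r' > r$.

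Next I would examine the cascade initiated by $\mu_{\ell(\mu)}$ after it primes $(i-1)_j$: the letter $(i-1)'_j$ immediately column-inserts into column $r+1$, and for $i_j$ to eventually become primed within this same cascade, the cascade must alternate between column- and row-insertions until it reaches $i_j$ on its own diagonal cell. I would track this cascade using Lemma~\ref{lem:rowCharConstructedTableaux}(3) and Lemma~\ref{lem:transitionLemmaConstructed} to pin down the relative positions of $(i-1)_j$, $i_j$, $(i-2)_{j+1}$, and any primed copies of $i$ near $(r,r)$; together with Lemma~\ref{lem:primePlacementLemma} these constrain the local layout of $T$ around the diagonal.

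The main step, and main obstacle, is a case analysis modeled on the proof of Lemma~\ref{lem:GodzillaLemma}, organized by whether $i_j$ lies in column $r+1$, strictly east of column $r+1$, or in neither. In each case I expect to derive a contradiction either by showing that some Serrano--Pieri strip in the decomposition of $T$ admits an extension (violating that $T$ is constructed), or by showing that the cascade is forced to attempt placing a primed entry at a main-diagonal cell, which is incompatible with the shifted tableau rules. The closing ``in particular'' statement is then purely structural: the convention that every unprimed entry of a Serrano--Pieri strip is strictly less than every primed entry of the same strip forces that if $(i-1)_j$ lies in the horizontal strip $\theta^{(s)}/\eta^{(s)}$ of some $\gamma^{(s)}$, then $i_j$, belonging to the same integer interval $(\mu_k,\nu_k]$ but strictly larger than $i-1$, must also lie in $\theta^{(s)}/\eta^{(s)}$ and hence be primed in $T$.
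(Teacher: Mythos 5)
Your reduction via Lemmas~\ref{lem:rowandColBoundedLemma} and~\ref{lem:sameSeqConstructed}, and your argument for the ``in particular'' clause via the Serrano--Pieri structure (unprimed $i_j$ forces $i_j\in\xi^{(s)}/\pi^{(s)}$, hence $(i-1)_j\in\xi^{(s)}/\pi^{(s)}$ as well), both match the paper. But the core of your argument has two genuine problems. First, by fixing the row $f$ where $i_j$ becomes primed and assuming $f<m$, you exclude the case $f=m$, which is where essentially all of the difficulty lives: the hypothesis ``north of row $m$'' is weak (the paper's own proof explicitly treats $i_j$ becoming primed \emph{in} row $m$), and the paper shows that $(i-1)_j$ can only become primed in row $m$ exactly --- priming in a row $e<m$ is killed immediately because an unprimed $i_j$ strictly south of a diagonal cell occupied by $(i-1)_j$ is incompatible with the order/diagonal constraints (no bumping cascade needs to be followed), and rows $e>m$ are impossible. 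The decisive step, entirely absent from your proposal, is the race for the diagonal box $\mathsf{d}$ of row $m$: one first notes $(i-1)_j$ cannot already sit in $\mathsf{d}$ in $T$ (else the vertical strip $\xi^{(s)}/\pi^{(s)}$ could not also contain $i_j$), so $(i-1)_j$ is inserted into row $m$ and bumps some $y\geq i$ from $\mathsf{d}$; if $y\neq i_j$ then $i_j$ was earlier bumped by some $x<i$ with $y\leq x$, giving $i\leq y\leq x<i$, a contradiction, so $y=i_j$ and $i_j$ is primed first.

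Second, even for the case you do consider, the proposed cascade-tracking case analysis ``modeled on Lemma~\ref{lem:GodzillaLemma}'' is only announced (``I expect to derive a contradiction''), not carried out, and it is aimed at the wrong target: no analysis of how the bump initiated by priming $(i-1)_j$ propagates to $i_j$ is needed, because the contradiction in that regime is already present in the static configuration before any cascade occurs. As written, the proposal therefore neither executes its central step nor covers the case that actually requires an argument.
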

\begin{proof}
    Let $\gamma^{(s)}$ be the Serrano--Pieri strip containing the possibly primed letters $(i-1)_j$ and $i_j.$ By Definition~\ref{def:constructibleDef} if $i_j$ is unprimed, then $i_j\in \xi^{(s)}/\pi^{(s)}$ and also $(i-1)_j \in \xi^{(s)}/\pi^{(s)}$ so that {\it a fortiori} $(i-1)_j$ is unprimed as well. Hence, there are three starting configurations that can occur in $T$: both $i_j$ and $(i-1)_j$ are unprimed in $T$, or $(i-1)_j$ is unprimed but $i_j$ is not, or both are primed. In particular, the lemma statement holds before the insertion of $\mu_{\ell(\mu)}$.
    
    By contradiction, suppose that $(i-1)_j$ becomes primed in row $e<m$. Then, it occupies a diagonal position $\sf{d}$ therein and from Lemma~\ref{lem:sameSeqConstructed}, the relative position of $i_j$ from $(i-1)_j$ in $T$ is preserved, i.e., $i_j$ remains South of $(i-1)_j$ when $(i-1)_j$ is in $\sf{d}$. However, order considerations forbid this scenario, unless $i_j$ is already primed. Therefore, if $(i-1)_j$ becomes primed before $i_j$, it must do so south row $m.$ But by the hypothesis, combined with Lemma~\ref{lem:sameSeqConstructed}, it cannot be South of row $m$. Hence $(i-1)_j$ must become primed in row $m$.
    
    Consider the case where $(i-1)_j$ was in the diagonal position $\sf{d}$ of row $m$ prior to the insertion of $\mu_{\ell(\mu).}$ Note that $(i-1)_j \in  \xi^{(s)}/\pi^{(s)} $; then $\xi^{(s)}/\pi^{(s)}$ would have been hindered from being a vertical strip by the fact that $i_j$ must belong to it as well. We thus may assume that $(i-1)_j$ is inserted to row $m$ but not there originally. 
    
    Recalling that from Lemma~\ref{lem:sameSeqConstructed}, whenever $(i-1)_j$ occupies a row $r<m$, the letter $i_j$ is in a row below it or is primed, the letter $i_j$ must necessarily have been in row $m$ before $(i-1)_j$. As it is primed north of row $m$ by assumption, it must therefore become primed in row $m.$ Additionally, again letting $\mathsf{d}$ be the diagonal box of row $m$, $(i-1)_j$ must have been inserted to that row thereby bumping an entry $y$ of value $i$ or greater from $\sf{d}$. If $y \neq i_j$, then $i_j$ in turn would have been bumped by an entry $x<i_j$. But then, the entry $y$ in $\sf{d}$ satisfies $y\leq x < i_j$. Furthermore, $y=i$ if $(i-1)_j$ is to be in $\sf{d}$ later. Hence $i\leq x <i$, a contradiction. Thus $i_j$ becomes primed before $(i-1)_j.$
\end{proof}

Before continuing with our study of right multiplication, we need another lemma on the structure of constructed tableaux.
\begin{lemma}\label{lem:constructedAncillaLemma}
    Let $T$ be constructed from strict partitions $\mu<\nu.$ We have that, for all $i$ and $j$, in $T$:
    \begin{enumerate}
        \item if $(i+1)_{j-1}$ is primed, then so is $i_j;$
        \item for all $0<s<t$, if $\theta^{(s)}/\eta^{(s)} \neq \emptyset$ and $\theta^{(t)}/\eta^{(t)} \neq \emptyset$, then $\min \theta^{(s)}/\eta^{(s)} < \min \theta^{(t)}/\eta^{(t)}$; 
        \item let $i'\in \gamma^{(m)}$ and $(i+k)'\in \gamma^{(m-d)}$ for some $m,d>0,$ we have that $i'$ and $(i+k)'$ are not in the same row; and
        \item if $i_j$ and $(i+1)_{j-1}$ are both primed in $T$, then $(i+1)_{j-1}'$ is west of $i_j'$. 
    \end{enumerate}
\end{lemma}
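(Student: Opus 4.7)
I would prove all four parts uniformly by contradiction, in each case producing an explicit extension of one of the Serrano-Pieri strips $\gamma^{(s)}$ in the decomposition of $T$ and thereby contradicting the hypothesis that $T$ is constructed. The ingredients are Definition~\ref{def:constructibleDef}, Definition~\ref{def:extend}, the positional restrictions from Lemma~\ref{lem:primePlacementLemma}, and the structural consequences of Lemma~\ref{lem:rowCharConstructedTableaux}. The key observation in each case is to identify a range of values straddling the boundary of the interval $(\mu_j, \nu_j]$ of some strip, and then use the failure of the claim to package the corresponding boxes of $T$ into an enlargement of that strip's vertical or horizontal part.

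For part (1), the assumption that $i_j$ is unprimed while $(i+1)_{j-1}$ is primed implies that the threshold of the strip containing $(i+1)_{j-1}$, indexed by some $s+1$ with $i_j \in \xi^{(s)}/\pi^{(s)}$, is at most $i$. I would prepend to the vertical strip $\xi^{(s+1)}/\pi^{(s+1)}$ the unprimed entries $i_j, (i-1)_j, \ldots$ coming from $\gamma^{(s)}$, producing an extended Serrano-Pieri strip whose underlying interval strictly contains $(\mu_{j-1}, \nu_{j-1}]$ on the small end. The non-trivial check is the vertical-strip condition on the enlarged region, which I expect to verify using Lemma~\ref{lem:rowCharConstructedTableaux}(1) and (3) to place the added entries in rows distinct from those of the existing $\xi^{(s+1)}/\pi^{(s+1)}$.

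For part (2), writing $k_s$ and $k_t$ for the thresholds of strips $s$ and $t$, the claim is equivalent to $k_s < k_t$. Assuming $k_s \geq k_t$, the integer $k_t$ appears unprimed in strip $s$ and primed in strip $t$, and I would extend $\gamma^{(t)}$ on the small end by absorbing unprimed entries of strip $s$ of values in some interval $(k, \mu]$, where $\mu$ is the lower endpoint of the interval for $\gamma^{(t)}$. For part (3), if $i'$ and $(i+k)'$ share a row, then depending on whether $(i+k)'$ is east or west of $i'$, either the horizontal strip of $\gamma^{(m-d)}$ can be partially absorbed into $\theta^{(m)}/\eta^{(m)}$ or conversely, yielding an extension of one of the two strips; here the strict-partition hypothesis forces the intermediate primed values $i+1, \ldots, i+k-1$ to inhabit strips whose entries are positioned compatibly with such an absorption. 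For part (4), the assumption that $(i+1)_{j-1}'$ is weakly east of $i_j'$ with both primed leads, via Lemma~\ref{lem:primePlacementLemma} and the just-established part (3), to an extension of either $\gamma^{(s)}$ in the horizontal direction or $\gamma^{(s+1)}$ in the vertical direction.

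The main obstacle throughout is verifying the vertical- and horizontal-strip conditions for the enlarged regions: specifically, that the union of the existing strip with the newly added boxes is still a legitimate skew shape with the required property. This requires a careful positional case analysis, but in each instance the previously established Lemmas~\ref{lem:primePlacementLemma} and~\ref{lem:rowCharConstructedTableaux} give the control needed to conclude.
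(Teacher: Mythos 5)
Your overall strategy---contradiction by exhibiting an extension of some Serrano--Pieri strip---is the right one and matches the paper's. But in parts (1) and (2) you extend in the wrong direction, and the specific enlargement you propose cannot be a Serrano--Pieri strip. In (1) you keep the later strip $\gamma^{(s+1)}$ (the one containing $(i+1)_{j-1}'$, whose entries have shrinking index $j-1$) and \emph{prepend} unprimed entries $i_j,(i-1)_j,\dots$ of the earlier strip to its vertical part $\xi^{(s+1)}/\pi^{(s+1)}$. For the union to be a vertical strip with entries increasing from top to bottom, the prepended boxes would have to sit strictly north of the top entry $(\mu_{j-1}+1)_{j-1}$ of $\xi^{(s+1)}/\pi^{(s+1)}$. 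But Definition~\ref{def:constructibleDef} and Lemma~\ref{lem:rowCharConstructedTableaux}(3) say exactly the opposite: in a constructed tableau the unprimed entries of the larger shrinking index $j$ occur before (same row) or weakly south of the corresponding entries of index $j-1$. So the region you assemble is never a vertical strip, and you cite Lemma~\ref{lem:rowCharConstructedTableaux}(3) as the tool that would save you when it is precisely the obstruction. The same directional error occurs in your part (2), where you again absorb unprimed entries of the earlier strip into the later strip's vertical part. The paper's extensions all go the other way, consistent with the remark at the start of Section~\ref{sec:construct}: one keeps an initial segment of the \emph{earlier} strip's vertical (unprimed) part, ending at $i_j$, and grafts on as the new horizontal part the tail of the \emph{later} strip's primed entries $\geq (i+1)'$. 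This is legitimate because Lemma~\ref{lem:primePlacementLemma} places those primed entries below the unprimed ones in every column, giving the generalized-rimhook containment $\xi'\subseteq\eta'$, and the resulting value set $(\mu_j,\nu_{j-1}]$ properly contains $(\mu_j,\nu_j]$ since $\nu$ is strict.

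Parts (3) and (4) are also under-specified. For (3) the missing idea is to use the just-proved part (2) together with strictness of $\mu<\nu$ to produce a \emph{second} instance $i'_r$ in the earlier strip $\gamma^{(m-d)}$, forced south of $(i+k)'$, after which $\gamma^{(m-d)}$ extends via the primed entries of $\gamma^{(m)}$. For (4), ``Lemma~\ref{lem:primePlacementLemma} and part (3)'' do not by themselves yield the extension: the paper needs a genuine case analysis pinning $(i+1)_{j-1}'$ to the row directly below $i_j'$ and the adjacent column, identifying the two flanking entries as instances of $i$, and then running a separate argument on their shrinking indices (invoking Lemma~\ref{lem:sameSeqPrimedConstructed} and part (1)) before an extension materializes. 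As written, your proposal would need to be substantially reworked before it constitutes a proof.
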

\begin{proof}
(1):
    Suppose that $(i+1)_{j-1}$ is primed but $i_j$ is not. By Lemma~\ref{lem:primePlacementLemma}, if $i_j\in \gamma^{(s)}$ and $(i+1)_{j-1}'\in \gamma^{(t)}$, then modifying $\xi^{(s)}/\pi^{(s)}$ by letting it end with $i_j$ and replacing $\theta^{(s)}/\eta^{(s)}$ with the elements of $\theta^{(t)}/\eta^{(t)}$ that are greater than or equal to $(i+1)_{j-1}'$, we obtain an extension of $\gamma^{(s)}.$ This proves (1).

    \medskip
    \noindent
    (2): Fix $0<s<t.$  Let $i_j' = \min \theta^{(s)}/\eta^{(s)}$ and $u_r' = \min \theta^{(t)}/\eta^{(t)}$. Towards a contradiction, assume $i\geq u$. Since $\mu<\nu$ are strict partitions and $i_j\geq u_r$, we have $u_r-1 \in \xi^{(s)}/\pi^{(s)}$. But then, it is a consequence of Lemma~\ref{lem:primePlacementLemma} and Definition~\ref{def:constructibleDef} that the primed entries of $\theta^{(t)}/\eta^{(t)}$ are all placed below primed or unprimed entries of Serrano--Pieri strips of smaller indices; a fortiori, below all entries of $\gamma^{(s)}.$ Thus, modifying $\theta^{(s)}/\eta^{(s)}$ to contain all those entries of $\theta^{(t)}/\eta^{(t)}$ that are greater than or equal to $u_r'$ yields an extension of $\gamma^{(s)}$, a contradiction.

    \medskip
    \noindent
    (3): If  $i'\in \gamma^{(m)}$ and $(i+k)'\in \gamma^{(m-d)}$ are in the same row, then the hypothesis that $\mu<\nu$ are strict, together with (2), implies that there must exist a different instance of $i'$ such that $i'_r\in \gamma^{(m-d)}$. Furthermore, $i_r'$ is South of $(i+k)'$ because of Definition~\ref{def:constructibleDef} and the fact that there are no repeated primed entries in the same row. Hence $\gamma^{(m-d)}$ can be extended using primed entries of $\gamma^{(m)},$ a contradiction.

    \medskip
    \noindent
    (4): Suppose that $(i+1)_{j-1}'\in \gamma^{(m)}$ is East of $i_j'\in \gamma^{(k)}$ and let $i_j'$ be in row $r$. When $(i+1)_{j-1}'$ is weakly north of $i_j'$, it is clear that $\gamma^{(k)}$ can be extended, so we restrict ourselves to the case where $(i+1)_{j-1}'$ is South of $i_j'.$ Order considerations force that $(i+1)_{j-1}'$ is in either row $r+1$ or $r+2$. The latter is not possible because there can be no repeated primed entries in the columns (Lemma~\ref{lem:rowCharConstructedTableaux}) and configurations that avoid using another instance of $(i+1)'$ are absurd. 

    Hence taking $(i+1)_{j-1}'$ to be in row $r+1$, it is also the case that $i_j'$ and $(i+1)_{j-1}'$ are in successive columns. Indeed, for their distance to be greater, there would need to be repetitions of unprimed entries with value $i$ in both rows $r$ and $r+1$, a contradiction by Lemma~\ref{lem:rowCharConstructedTableaux}. That is, $i_j'$ and $(i+1)_{j-1}'$ are as
    \ytableausetup{boxsize=4em}
        \[\scalebox{0.7}{\begin{ytableau}
            \ldots & ~      & ~     & \vdots  & \vdots   \\
            \none  & \dots  &  ~    & i_j'    &    x  \\
            \none  & \none  & \dots &  y   &  (i+1)_{j-1}'
        \end{ytableau}}.\]

    Once again the order imposed on a semistandard shifted tableau restricts the number of potential scenarios, forcing $y$ to be an instance of $i$, denoted $y=i_f$, and imposing the value of the entry $x$ to be $i$ as well, denoted by $i_m.$ 

    If $f<m$, it follows that $(i-1)_f$ exists and then by Lemma~\ref{lem:sameSeqPrimedConstructed} as $i_m$ is unprimed in $T$, the letter $(i-1)_m$ is also unprimed therein. Similarly, by (1), it is immediate that $i_{m-1}$ is likewise unprimed. Coupled with Lemma~\ref{lem:rowCharConstructedTableaux} we conclude that $i_{m-1}$ is weakly north of $i_m$. Furthermore, for every $e>0$ it follows from the same argument that $i_{m-e}$ is weakly north of $i_m$. This contradicts that $f<m$. 
    Thus, $f>m.$  Say $i_f\in \gamma^a$.

    If $(i+1)_m$ is primed, then $\gamma^a$ can be extended using $i_f$ and $(i+1)_m'$ because of Lemma~\ref{lem:primePlacementLemma}. Thus $(i+1)_m$ must be unprimed.

    Let us consider then the position of the unprimed letter $(i+1)_m.$ For it to form a vertical strip along with $i_m$ and possibly other unprimed entries, it must be Southwest of $i_m.$ Comparing now the positions of $i_{m+v}$, $(i+1)_{m+v}$ and $i_{m+e}$ for $0\leq v < e\leq r$ using Lemma~\ref{lem:rowCharConstructedTableaux} and the fact that all are unprimed in that range because of Lemma~\ref{lem:sameSeqPrimedConstructed} and part (1), so that the comparison is legitimate, it is seen that $(i+1)_m$ is weakly north of $i_f$ so that by order considerations it cannot be in the same column as $(i+1)_{j-1}'$.
    
    Accordingly, $(i+1)_m$ is SouthWest of $i_m.$ Furthermore, its relative location to $i_f$ forbids it from being weakly northwest of $i_f.$ Hence $(i+1)_m$ is Southwest of $i_f$, and thus can be used to extend $\gamma^a$, a contradiction. We conclude that $(i+1)_{j-1}'$ cannot be East of $i_j'.$
\end{proof}

\begin{lemma}\label{lem:diffSeqPrimedConstructed}
    Let $T$ be constructed from strict partitions $\mu<\nu.$ If for a fixed $m$, no two unprimed entries of value $v$ are repeated in a row $r< \min(m,v)$, then throughout the insertion $T\leftarrow \mu_{\ell(\mu)}$ if a letter $i_{j+1}$ becomes primed in row $f\leq m$, it does so before $i_{j}$ becomes primed. 
    In particular, we have that if the letter $i_j$ is primed in $T$, then $i_{j+1}$ is also primed in $T$. 
\end{lemma}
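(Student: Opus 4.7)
The plan is to adapt the argument used for Lemma~\ref{lem:sameEntryPrimedDiffSeq} in the interlacing word setting to the constructed tableau setting. I would first separate the \emph{static} claim (the ``in particular'' statement, valid in $T$ alone) from the \emph{dynamic} claim tracking the mixed insertion of $\mu_{\ell(\mu)}$, and then build the dynamic argument on top of Lemmas~\ref{lem:transitionLemmaConstructed}, \ref{lem:sameSeqConstructed}, and \ref{lem:sameSeqPrimedConstructed}, which give exactly the structural control analogous to what Lemmas~\ref{lem:orderSameSeq}--\ref{lem:bigRowLemma} gave for interlacing words.

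For the static part, suppose $i_j$ is primed in $T$ and $i_{j+1}$ exists but is unprimed. Then $i_j' \in \theta^{(s)}/\eta^{(s)}$ and $i_{j+1} \in \xi^{(s-1)}/\pi^{(s-1)}$ for the appropriate $s$. Using Lemma~\ref{lem:primePlacementLemma}, Lemma~\ref{lem:constructedAncillaLemma}(2), and the fact that $\mu < \nu$ are strict, I would construct an explicit extension $\gamma'^{(s-1)}$ by appending to $\xi^{(s-1)}/\pi^{(s-1)}$ the unprimed letters of $\xi^{(s)}/\pi^{(s)}$ below $i$ and replacing $\theta^{(s-1)}/\eta^{(s-1)}$ with the portion of $\theta^{(s)}/\eta^{(s)}$ from $i_j'$ upward. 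Verifying $\xi' \subseteq \eta'$ is then a routine check from Definition~\ref{def:constructibleDef} and Lemma~\ref{lem:constructedAncillaLemma}(4), contradicting that $T$ is constructed.

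For the dynamic claim I would induct on rows, parallel to the template of Lemma~\ref{lem:rowOrderForSameElementDiffSeq}. By Lemma~\ref{lem:sameSeqPrimedConstructed} the letter $(i{+}1)_j$ is primed before $i_j$ is; by Lemma~\ref{lem:sameSeqConstructed} applied to $(i{+}1)_j$ and $i_{j+1}$, combined with Lemma~\ref{lem:constructedAncillaLemma}(1), the letter $i_{j+1}$ cannot still be unprimed when $(i{+}1)_j$ is primed unless $i_{j+1}$ is inserted into every row weakly before $(i{+}1)_j$. Chaining these with Lemma~\ref{lem:transitionLemmaConstructed} yields the insertion order
\[ i_{j+1} \to (i{+}1)_j \to i_j \]
in every row $r \leq m$, and moreover that each of these three letters enters a given row weakly west of its predecessor in the chain. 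From this west-of-west-of structure, together with Lemma~\ref{lem:constructedAncillaLemma}(4), I can argue that at the moment $i_j$ is about to acquire its prime, the letter $i_{j+1}$ either is already primed or lies weakly west of $i_j$.

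The main obstacle, as in Case 2.0 of Lemma~\ref{lem:GodzillaLemma}, is the scenario where $i_{j+1}$ and $i_j$ end up in the same column $c$ just before $i_j$ would be displaced from a diagonal box. Here I would have to rule out the possibility that $i_j$ is bumped out of the diagonal without $i_{j+1}$ being column-bumped first; this requires a careful local case analysis of the entries between them and of the primed entries of value $i{+}1$ in the intermediate columns, using Lemma~\ref{lem:rowCharConstructedTableaux}(2) to forbid column repetitions and Lemma~\ref{lem:constructedAncillaLemma}(3) to forbid two primed copies of $i$ coexisting in a row. A small auxiliary induction on the insertion steps, analogous to Lemma~\ref{lem:noTwoRepeatedCols} but for the constructed setting, is likely needed to exclude two primed entries of value $i$ from occupying the same column at any time during the insertion; once that is in hand, the column-coincidence case collapses by the same order reasoning that closes Case 2.0, and the lemma follows.
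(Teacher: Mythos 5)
Your strategy for the dynamic claim diverges substantially from the paper's, and the divergence creates a real gap. The paper never needs any analysis of primed entries or of column coincidences at the diagonal: it argues entirely about the two \emph{unprimed} letters $i_j$ and $i_{j+1}$. Concretely, it deduces from Lemma~\ref{lem:sameSeqPrimedConstructed} that $(i-1)_{j+1}$ is still unprimed when $i_j$ reaches a diagonal box, hence (via Lemma~\ref{lem:rowCharConstructedTableaux} and Lemma~\ref{lem:rowandColBoundedLemma}) that $i_{j+1}$ was South of $i_j$ in $T$; it then shows this relative order can never flip, because any row-bump by an entry $x<i$ displaces the leftmost unprimed $i$ in the row first (namely $i_{j+1}$), while a column-bump of $i_j$ would require a letter $y'$ with $i_{j+1}\le y'<i_j$, which does not exist. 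At the moment $i_j$ sits on the diagonal box $\mathsf{d}$ of row $e$, order considerations force $i_{j+1}$ into the same row and to the west of $i_j$, so it is $i_{j+1}$, not $i_j$, that occupies $\mathsf{d}$ --- contradiction. Your route instead funnels everything into the column-coincidence scenario of {\sf Case 2.0} of Lemma~\ref{lem:GodzillaLemma} and then appeals to an auxiliary claim that no two primed entries of value $i$ ever share a column during the insertion. That claim is Lemma~\ref{lem:noTwoRepeatedColsCompletionConstructed}, which the paper proves only \emph{after} the present lemma, by the same chain of arguments that in the interlacing setting runs through Lemma~\ref{lem:sameEntryPrimedDiffSeq} --- and the constructed-tableau analogue of that lemma is itself derived from the very statement you are trying to prove. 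As sketched, your argument is therefore circular, or at best leaves its hardest step unsupplied; you would need an independent proof of the no-repeated-primed-column fact at this stage, and you have not indicated one.

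Two smaller points. For the ``in particular'' statement your extension argument is plausible but unnecessary: the paper obtains it in two lines by chaining the contrapositives of Lemma~\ref{lem:sameSeqPrimedConstructed} ($i_{j+1}$ unprimed implies $(i-1)_{j+1}$ unprimed) and Lemma~\ref{lem:constructedAncillaLemma}(1) ($(i-1)_{j+1}$ unprimed implies $i_j$ unprimed). Also, Lemma~\ref{lem:sameSeqConstructed} compares two letters of the \emph{same} shrinking sequence; the comparison you want between $(i+1)_j$ and $i_{j+1}$ is the one governed by Lemma~\ref{lem:transitionLemmaConstructed}, so that citation should be adjusted.
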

\begin{proof}

 If the entry $i_{j+1}$ is unprimed in $T$, then by Lemma~\ref{lem:sameSeqPrimedConstructed} so is $(i-1)_{j+1}$ in $T$. Moreover, if $(i-1)_{j+1}$ is unprimed, then so is $i_j$, this time by Lemma~\ref{lem:constructedAncillaLemma}(1).
Consequently, if $i_{j+1}$ is unprimed in $T$, then so is $i_j$. This proves the ``in particular'' statement.

Suppose by contradiction that $i_{j}$ becomes primed in a diagonal position $\sf{d}$ on row $e$ while $i_{j+1}$ remains unprimed. Since $i_{j+1}$ becomes primed in row $f$, we know that it is in a row $w<f \leq m$ when $i_{j}$ is in row $e$, as otherwise both $i_j$ and $i_{j+1}$ are in diagonal positions of different rows, an absurd situation by order considerations. Moreover, as $i_{j+1}$ remains unprimed, we have by Lemma~\ref{lem:sameSeqPrimedConstructed} that $(i-1)_{j+1}$ is likewise unprimed when $i_j$ becomes primed. But then, $(i-1)_{j+1}$ was also unprimed in $T$ before the insertion of $\mu_{\ell(\mu)}.$ Therefore, $(i-1)_{j+1}$ was weakly south of $i_j$ or else $i_j$ was in row $i$ and $(i-1)_{j+1}$ in row $i-1.$ The second possibility is ruled out by the fact that $i_j$ is becoming primed at a later point of the insertion and Lemma~\ref{lem:rowCharConstructedTableaux}. Hence, in $T$ the letter $i_{j+1}$ was south of $i_j$ in $T$. But then by Lemma~\ref{lem:rowandColBoundedLemma},  $i_{j+1}$ was South of $i_j$ in $T$.

Now, for the relative order between $i_{j+1}$ and $i_j$ to change before $i_j$ arrives to ${\sf{d}}$, both need to first repeat in a row $u\leq w$, and then $i_j$ needs to be bumped out of $u$ before $i_{j+1}$ is. Note that such a bumping cannot be a row-bumping, since every entry $x<i$ inserted to $u$, will displace $i_{j+1}$ first, it being the leftmost $i$ in row $u.$ An entry $y'$ bumping $i_{j}$ by columns would however also be contradictory because no $y'$ can satisfy $i_{j+1}\leq y' < i_j.$ Thus we reach an absurdity and the order between $i_{j+1}$ and $i_j$ cannot change.

Accordingly, when $i_j$ is in $\sf{d}$ before becoming primed, order considerations compel $i_{j+1}$ to be in the same row $e$. That is, $e=w.$ But then, since $i_{j+1}$ is left of $i_j$ when in the same row by the argument above, it is this entry that lies in the diagonal, not $i_j.$
\end{proof}

\begin{lemma}\label{lem:noRepetConstructedTableaux}
   Let $T$ be constructed from strict partitions $\mu<\nu$.  Then, throughout the insertion $T\leftarrow \mu_{\ell(\mu)}$ and for all $i>1$, no two letters $i_j$ and $i_k$ occur simultaneously in the same row $r<i$.
\end{lemma}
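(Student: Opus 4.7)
The plan is to prove this by induction on rows, following essentially the same template used for Lemma~\ref{lem:repeatingLemma} in the interlacing-word setting, but substituting the constructed-tableau analogues Lemma~\ref{lem:transitionLemmaConstructed}, Lemma~\ref{lem:sameSeqConstructed}, Lemma~\ref{lem:sameSeqPrimedConstructed}, and Lemma~\ref{lem:diffSeqPrimedConstructed} in place of their counterparts Lemma~\ref{lem:precedingEntry}, Lemma~\ref{lem:orderSameSeq}, and Lemma~\ref{lem:rowOrderForSameElementDiffSeq}. Crucially, each of those conditional lemmas takes as a premise the very non-repetition property we wish to establish (bounded by a parameter $m$), and an induction on rows is precisely what allows one to feed their conclusions back into the argument.

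I would start by assuming for contradiction that the conclusion fails and picking the smallest row $k < i$ and the earliest moment during the insertion $T\leftarrow \mu_{\ell(\mu)}$ at which two distinct instances $i_j$ and $i_{j'}$ occur together in row $k$. Without loss of generality one may take $j' = j-1$. The base case $k=1$ follows by observing that only the single letter $\mu_{\ell(\mu)}$ is ever inserted into row $1$ and that the starting tableau $T$ already meets the no-repetition requirement by Lemma~\ref{lem:rowCharConstructedTableaux}(1); a short inspection of the first bumping step rules out any new duplication being created in row $1$.

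For the inductive step, the inductive hypothesis supplies exactly the premise of Lemmas~\ref{lem:transitionLemmaConstructed}, \ref{lem:sameSeqConstructed}, \ref{lem:sameSeqPrimedConstructed}, and \ref{lem:diffSeqPrimedConstructed} with parameter $m = k$. Applying these, I would argue that the order of arrival in row $k$ of the letters $i_j$, $(i-1)_j$, $i_{j-1}$ is forced: $i_j$ is inserted into row $k$ before $(i-1)_j$, and $(i-1)_j$ before $i_{j-1}$, with any priming among these occurring in the same order. Therefore, if $i_{j-1}$ ever appears unprimed in row $k$, the letter $(i-1)_j$ must have passed through row $k$ earlier while $i_j$ was still present and unprimed. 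But an unprimed entry smaller than $i$ entering a row that already contains an unprimed $i_j$ must bump $i_j$ out (by the standard row-insertion analysis, using Lemma~\ref{lem:rowCharConstructedTableaux}(2) to preclude parallel column conflicts), so $i_j$ leaves row $k$ strictly before $i_{j-1}$ ever arrives. This contradicts the supposed simultaneous appearance.

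The principal obstacle I anticipate is the subcase analysis when one or more of $i_j, (i-1)_j, i_{j-1}$ becomes primed during the insertion, since different governing lemmas then apply. Here Lemma~\ref{lem:sameSeqPrimedConstructed} and Lemma~\ref{lem:diffSeqPrimedConstructed} must be invoked to confirm that priming itself proceeds in the same forced order, so that the chain of inequalities on insertion times survives intact. The remaining work is careful bookkeeping mirroring the proof of Lemma~\ref{lem:repeatingLemma}, ensuring that in every combination of primed and unprimed statuses one of the three letters is necessarily bumped out of row $k$ before the next arrives.
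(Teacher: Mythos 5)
Your proposal is correct and follows essentially the same route as the paper: induction on rows, with the inductive hypothesis supplying the premises of the conditional ordering lemmas (Lemmas~\ref{lem:transitionLemmaConstructed}, \ref{lem:sameSeqConstructed}, \ref{lem:sameSeqPrimedConstructed}, \ref{lem:diffSeqPrimedConstructed}), so that the forced arrival order $i_j \to (i-1)_j \to i_{j-1}$ in each row compels $(i-1)_j$ to displace $i_j$ before $i_{j-1}$ arrives, with the primed subcases dispatched by the priming-order lemmas. The paper's write-up differs only in bookkeeping (it first pins down that the offending pair has consecutive indices via a positional argument, and its base case explicitly uses the vertical-strip structure of $T$ to rule out a pre-existing copy of $\mu_{\ell(\mu)}$ in row $1$), but the underlying argument is the same.
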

\begin{proof}
    By Lemma~\ref{lem:rowCharConstructedTableaux}(1), the lemma holds for $T$ before the insertion.

    Let $j \coloneqq \ell(\nu)-\ell(\mu) + 1$. Since the interval $(\mu_{\ell(\mu)}, \nu_{\ell(\mu)}]$ codifies the shrinking sequence with greatest index whose Serrano--Pieri strip is missing an instance of $1$, and this Serrano--Pieri strip is also missing $\mu_{\ell(\mu)}$; we assign to the inserted entry the index $j$. That is, rigourously, it corresponds to the letter $(\mu_{\ell(\mu)})_j$, where it is important not to confuse the shrinking sequence notation with the notation indicating one of the parts of $\mu.$
    
    We prove the lemma for $T\leftarrow \mu_{\ell(\mu)}$ by induction on $r$. For the base case $r=1$, note that if there were a letter $(\mu_{\ell(\mu)})_k$ in row $1$ with $k\neq j$ then $(\mu_{\ell(\mu)}-1)_k$ would also be part of $T$ because $\mu$ is a strict partition and the indexing of letters in a shrinking sequence is done in the opposite order than that of Serrano--Pieri strips by convention. Let $s$ be such that $(\mu-1)_k \in \pi^{(s)}/\xi^{(s)}$. Since the entries of $\pi^{(s)}/\xi^{(s)}$ form a vertical strip, $(\mu_{\ell(\mu)})_k$ must be South of $(\mu_{\ell(\mu)}-1)_k$ and hence not in row $1$, a contradiction. Thus, as the insertion of an entry in row $1$ may displace further unprimed entries southward but never cause another unprimed entry to be displaced to row $1$, it follows that on being inserted $\mu_{\ell(\mu)}$ does not cause the repetition of two unprimed entries in row $1.$ As no such entries were repeated in $T$ by Lemma~\ref{lem:rowCharConstructedTableaux}(1), this establishes the lemma for row $r=1$. 

    Suppose now that the lemma holds for all rows $r<m$ and for the sake of contradiction that two letters $i_h$ and $i_k$ get repeated in row $m$ for $h\neq k$, so that this is the first repetition in the insertion of $\mu_{\ell(\mu)}.$ Without loss of generality, there must have been a point in that insertion in which $i_h$ was in row $m$, and $i_k$ in row $m-1.$

    Note that by Lemma~\ref{lem:diffSeqPrimedConstructed} if $a<b$, and both $i_a$ and $i_b$ are unprimed in $T$, then $i_x$ is also unprimed for all $a< x \leq b$ before the insertion of $\mu_{\ell(\mu)}.$ Then, for such $x$ we also have that $(i-1)_{x+1}$ is weakly south of $i_x$ or in the immediately preceding row. Combining that with the fact that $i_{x+1}$ is always South of $(i-1)_{x+1}$ in $T$ because of Lemma~\ref{lem:sameSeqPrimedConstructed}, it follows that $i_{x+1}$ is weakly south of $i_x$ for all $x$ with $a<x\leq b$. Iterating, we further have that $i_b$ is weakly south of $i_a$ and by the inductive hypothesis we can further say that $i_b$ is South of $i_a.$

    Consider now the insertion of $\mu_{\ell(\mu)}.$ Because of the inductive hypothesis, $i_b$ remains South of $i_a$ throughout the insertion whenever $i_a$ is in a row $r< m$: the relative order between both letters is only allowed to change after the two entries have repeated in a row. Therefore, that $i_h$ is in row $m$ and $i_k$ in row $m-1$ implies that $h>k$ and even more, $h=k+1.$

    Accordingly, Lemma~\ref{lem:transitionLemmaConstructed} implies that either $(i-1)_h$ is South of $i_h$, a contradiction because of Lemma~\ref{lem:sameSeqConstructed}; or else $i_h$ is in row $i$ and $(i-1)_h$ is in row $i-1.$ The latter is likewise inconsistent, as $(i-1)_h$ should still be weakly south of $i_k$ when it is displaced to row $m$ by Lemma~\ref{lem:transitionLemmaConstructed}, but in that case, as $i_h$ is the sole unprimed entry of that value, it would get displaced. 
\end{proof}

\begin{lemma}\label{lem:weaklyWestCompletionOfConstructed}
    Let $T$ be constructed from strict partitions $\mu<\nu$. Fix $i \geq 1, j\geq 2$. Suppose that $(i+1)_{j-1}$ is primed at one point of the insertion process and that throughout the process no two entries are repeated in any column. Then, at one point of the insertion $T\leftarrow \mu_{\ell(\mu)}$, the letter $i_j$ becomes primed before $(i+1)_{j-1}$. Moreover, from then on, $(i+1)_{j-1}'$ is always weakly west of $i_j'$.
\end{lemma}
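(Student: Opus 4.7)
The plan is to mirror the case analysis that proved the interlacing-word version, Lemma~\ref{lem:GodzillaLemma}, and its follow-up Lemma~\ref{lem:smallSeqWestofBigSeqPrimed}, replacing each invocation of an interlacing-word property with its constructed-tableau analogue established earlier in Section~\ref{sec:construct} and in the current section. Throughout, the supporting tools will be: Lemma~\ref{lem:rowandColBoundedLemma} for row bounds on $i_j$ and $i_j'$; Lemma~\ref{lem:rowCharConstructedTableaux} for the initial shape of $T$; Lemmas~\ref{lem:transitionLemmaConstructed} and~\ref{lem:sameSeqConstructed} for the positions of $i_j$ relative to $(i-1)_j$ and $(i-1)_{j+1}$ during the insertion; Lemma~\ref{lem:noRepetConstructedTableaux} for the absence of row-repeated unprimed values; and Lemmas~\ref{lem:sameSeqPrimedConstructed} and~\ref{lem:diffSeqPrimedConstructed} for the timing at which entries acquire primes.

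First, I will dispose of the easy case. If $(i+1)_{j-1}$ is already primed in $T$, then by Lemma~\ref{lem:constructedAncillaLemma}(1), the letter $i_j$ is primed in $T$ as well, and by Lemma~\ref{lem:constructedAncillaLemma}(4), the entry $(i+1)_{j-1}'$ is already weakly west of $i_j'$. Both conclusions then hold at the outset, and only the persistence claim remains, which reduces to the same analysis as the general case. So I may assume $(i+1)_{j-1}$ is unprimed in $T$ and becomes primed at some step $t$ during the insertion, by being bumped out of its diagonal box $\mathsf{d}$ in some row $f$ by an entry $x$ of integer value at most $i$.

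Second, to verify the timing claim, I will redo the sub-case enumeration from Lemma~\ref{lem:GodzillaLemma}, splitting on the possible positions of $i_j$ at step $t$: the cases ``$(i+1)_{j-1}$ is inserted to row $f$ before $i_j$ becomes primed'' (subdivided according to whether an $(i+1)'$ intervenes between $i_j$ and $(i+1)_{j-1}$), ``$i_j$ becomes primed first with $i_j'$ NorthWest of $(i+1)_{j-1}$'', ``$i_j'$ in the same column as $(i+1)_{j-1}$'', ``$i_j'$ SouthWest of $(i+1)_{j-1}$'', and ``$(i+1)_{j-1}$ is not yet in the tableau''. In each sub-case, the local configurations allowed by the no-column-repeat hypothesis together with the ordering lemmas above force an impossible inequality in $\cN \cup \cNprime$, typically of the form $i' < y < i$ or $(i+1)' \leq y' < i+1$, so that $i_j$ must have acquired its prime by step $t$.

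Finally, for the persistence claim I will adapt the short proof of Lemma~\ref{lem:smallSeqWestofBigSeqPrimed}. Once $(i+1)_{j-1}'$ and $i_j'$ coexist with the former weakly west, if the two ever land in the same column $c$ and $(i+1)_{j-1}'$ is bumped while $i_j'$ stays, then the bump is either a column bump (ruled out because the only admissible bumper would have value $i'$, creating two primed entries of equal value in $c$) or a row bump by a value $i$ from the row below, in which case the column of $i_j'$ forces $i_j'$ itself to be dislodged at the same step. In both scenarios the weakly-west relation is preserved. The main obstacle will be the second step: the sub-case enumeration from Lemma~\ref{lem:GodzillaLemma} is intricate and geometry-dependent, and in each local picture I need to verify that the ``no early repeat in row $<v$'' hypothesis of Lemmas~\ref{lem:sameSeqConstructed}--\ref{lem:diffSeqPrimedConstructed} remains valid up to the step under analysis, since the entire timing chain feeding into the order contradictions relies on having that hypothesis available at the relevant moment.
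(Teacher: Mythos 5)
Your overall strategy is the same as the paper's: transfer the case analysis of Lemma~\ref{lem:GodzillaLemma} to the constructed-tableau setting using the Section~\ref{sec:insertions} analogues, handle the already-primed configurations via Lemma~\ref{lem:constructedAncillaLemma}, and then invoke the argument of Lemma~\ref{lem:smallSeqWestofBigSeqPrimed} for persistence. Your disposal of the case where $(i+1)_{j-1}$ is primed in $T$ (forcing $i_j$ primed by Lemma~\ref{lem:constructedAncillaLemma}(1) and the weakly-west relation by part (4)) and your treatment of the persistence step both match the paper.

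There is, however, one concrete gap. After reducing to ``$(i+1)_{j-1}$ unprimed in $T$,'' you do not separate the sub-case in which $i_j$ is \emph{already primed in $T$} from the sub-case in which both entries start unprimed. When both start unprimed, the paper agrees with you that Lemma~\ref{lem:GodzillaLemma} applies \emph{mutatis mutandis}. But when $i_j$ starts primed, the analogue of {\sf Case 2.1} (namely $i_j'$ NorthWest of $(i+1)_{j-1}$) cannot be resolved by ``an impossible inequality in $\cN\cup\cNprime$'' as you claim for every sub-case: the interlacing-word argument for that configuration relies on the row-by-row insertion history of $i_j$ (e.g., that $i_j$ was inserted to every row above $f$ before $(i+1)_{j-1}$), and that history does not exist for an entry that was primed in $T$ from the start. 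The paper substitutes a genuinely different argument here: it examines what occupies the diagonal position of the column $c$ containing $i_j'$, and shows in each of four alternatives that either $(i+1)_{j-1}$ is eventually forced into column $c$ (reducing to the same-column case, {\sf Case 2.0}) or $(i+1)_{j-1}$ never becomes primed at all, making the claim vacuous. Your plan as written would stall at exactly this point, and the obstacle you do flag (re-verifying the no-early-repeat hypotheses feeding Lemmas~\ref{lem:sameSeqConstructed}--\ref{lem:diffSeqPrimedConstructed}) is not the one that actually requires new work.
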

\begin{proof}
    If $i_j$ and $(i+1)_{j-1}$ are unprimed in $T$, all the arguments of Lemma~\ref{lem:GodzillaLemma} apply {\it mutatis mutandis}. Otherwise, Lemma~\ref{lem:constructedAncillaLemma} implies that $i_j$ is primed and $(i+1)_{j-1}$ is not. This scenario is similar to {\sf Cases 2} and {\sf 3} of Lemma~\ref{lem:GodzillaLemma}, but we spell out the differences needed, as there are a few subtleties. Note that if $(i+1)_{j-1}$ is West of $i_j'$ then it is SouthWest of it by order consideration, and its bumping trajectory, weakly southwest of its current position, ensures that if it becomes primed, then it does so in a column weakly left of $i_j'.$ Thus the initial analysis of {\sf Case 2} holds again and the only possible situations are described by {\sf Cases 2.0--2.3}. In this scenario, all cases but {\sf Case 2.1} are {\it mutatis mutandis} the same as the proof for tableaux arising from the insertion of an interlacing word. 
    
    Finally, we consider the analogue of {\sf Case 2.1} in more detail. Here, the position of $i_j'$ is NorthWest of $(i+1)_{j-1}$ by hypothesis. Four different alternatives arise when considering whether $(i+1)_{j-1}$ will be eventually inserted to the column $c$ of $i_j'$. 

    Let $\sf{d}$ be the diagonal position of column $c$. If there is an instance of $i$ in $c$ but not in $\sf{d}$, or there is an instance of $(i+1)'$ in column $c$, then as an unprimed letter must occupy $\sf{d}$, an entry $x\geq i+1$ has to be in this position in $T$. Moreover, order considerations imply that $(i+1)_{j-1}$ is north of row $c$. It can only be in row $c$ if $c=i+1$, in which case $(i+1)_{j-1}$ is not primed at any point of the insertion and the lemma holds vacuously. Therefore, $(i+1)_{j-1}$ is North of row $c$, so even when $x=i+1$, Lemma~\ref{lem:noRepetConstructedTableaux} forces $(i+1)_{j-1}$ to be eventually inserted to $c$.

    If instead there is an instance of $i$ in column $c$ and $i$ is in $\sf{d}$, then either $i = c$ and $(i+1)_{j-1}$ is never primed, or else $(i+1)_{j-1}$ is never displaced to the next row so that it is again never primed. 

    Finally, if both $i_j$ and $(i+1)_{j-1}$ are primed, then Lemma~\ref{lem:constructedAncillaLemma}(4) establishes the desired conclusion. 

    Now the proof of Lemma~\ref{lem:smallSeqWestofBigSeqPrimed} guarantees than once $i_j$ and $(i+1)_{j-1}$ are primed with $(i+1)_{j-1}$ weakly west of $i_j'$, their relative order is preserved. 
\end{proof}

\begin{lemma}
    Let $T$ be constructed from strict partitions $\mu<\nu$. Suppose that, for all $i$ and throughout the mixed insertion $T\leftarrow \mu_{\ell(\mu)}$, two instances of $i$ never appear simultaneously in the same row $r<i$. 
    
    Let $i_j, i_k$ be letters with $j<k$. If, at some point of the mixed insertion, $i_j$ becomes primed, then $i_k$ is primed earlier. Moreover, after both are primed, $i_k'$ is weakly east of $i_j'.$ Moreover, at any point of the insertion where $i_j'$ and $i_k'$ are in the same column, we have $i_k'$ North of $i_j'$.
\end{lemma}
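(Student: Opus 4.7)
The plan is to mirror the structure of the proof of Lemma~\ref{lem:sameEntryPrimedDiffSeq}, substituting the constructed-tableau versions of the preliminary results developed in this section. First, the ordering claim—that $i_k$ becomes primed strictly before $i_j$—follows by iterating Lemma~\ref{lem:diffSeqPrimedConstructed} along the chain $j, j+1, \ldots, k$; the no-row-repetition hypothesis we are given is precisely what is needed to apply that lemma at each step.

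Next, to prove that $i_k'$ is weakly east of $i_j'$ once both are primed, I consider the moment $i_j$ enters a diagonal cell $\sf d$ in column $d$, and let $e$ denote the column of the already-primed $i_k'$ at that instant. The case $d > e$ is ruled out by order considerations: the entry $x$ sitting in the same row as $i_k'$ and the same column as $i_j$ would satisfy $i_k' < x < i$, but no such letter exists in $\cN \cup \cN'$. The case $d < e$ yields the conclusion immediately. In the borderline case $d = e$, at the moment some $y < i$ is row-inserted to displace $i_j$ from $\sf d$, either $i_k'$ has already been bumped out to column $d+1$, or else the column order forces $i_k' < y < i$, which is impossible. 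In every surviving case, $i_j'$ is created weakly west of $i_k'$.

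For the persistence of the relative east-west order and for the North-of-in-same-column claim, I argue by induction on the insertion steps occurring after both letters are primed. Inductively, assume $i_k'$ is North of $i_j'$ whenever they share a column. A row bump of $i_j'$ would require an entry $z < i'$ to reach $i_j'$'s row; but by column order, such a $z$ would first row-bump $i_k'$, so $i_k'$ is pushed east before $i_j'$. A column bump of $i_j'$ out of its column requires a primed entry inserted from the left; since $i_k'$ is the topmost instance of $i'$ in the column, that column bump evicts $i_k'$ first. Hence the east-west (and, within any shared column, the north-south) order is preserved throughout the remainder of the insertion.

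The main obstacle is the ``$d = e$'' configuration at the moment $i_j$ becomes primed: unlike the interlacing-word setting, where Lemma~\ref{lem:rowOrderForSameElementDiffSeq} furnishes tight global control on the order of insertions, in the constructed setting the analogous control is distributed across Lemmas~\ref{lem:sameSeqPrimedConstructed}, \ref{lem:diffSeqPrimedConstructed}, and \ref{lem:constructedAncillaLemma} (particularly parts (3) and (4), which regulate how primed entries of common integer value from distinct Serrano--Pieri strips can coexist). The remaining geometric bookkeeping during the persistence phase is parallel to that in Lemma~\ref{lem:sameEntryPrimedDiffSeq}, now channelled through Lemma~\ref{lem:weaklyWestCompletionOfConstructed} in place of Lemma~\ref{lem:GodzillaLemma}.
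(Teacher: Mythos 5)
Your proposal follows the same route as the paper's proof: the priming order via Lemma~\ref{lem:diffSeqPrimedConstructed} iterated over consecutive indices (with Lemma~\ref{lem:noRepetConstructedTableaux} supplying the no-repetition hypothesis), the three-way case split on the columns $d$ and $e$ at the moment $i_j$ reaches a diagonal cell, and the inductive persistence argument for row- and column-bumps, which is lifted essentially verbatim from Lemma~\ref{lem:sameEntryPrimedDiffSeq}. All of that matches and is sound.

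There is, however, one missing case. Your entire argument for establishing the initial east--west relation is anchored at ``the moment $i_j$ enters a diagonal cell $\mathsf{d}$'' during the insertion $T\leftarrow\mu_{\ell(\mu)}$. But the lemma also covers the situation where $i_j$ (and hence $i_k$) is already primed in the starting tableau $T$: the clauses asserting that $i_k'$ is weakly east of $i_j'$ and North of it in a shared column must hold at every point of the insertion in that situation too. There is then no such moment during the insertion to anchor on, and the initial relative positions must instead be read off from the static structure of the constructed tableau. The paper treats this as a separate opening case: since $\mu<\nu$ are strict, $(i+1)_j$ exists in $T$, and Lemma~\ref{lem:constructedAncillaLemma}(4) together with the ordering conditions on primed entries of distinct Serrano--Pieri strips in Definition~\ref{def:constructibleDef} forces $i_j'$ West of $i_k'$ already in $T$; only then does the persistence argument take over. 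You do name Lemma~\ref{lem:constructedAncillaLemma}(3)--(4) in your closing paragraph, but you deploy them only inside the ``$d=e$'' configuration, which presupposes that $i_j$ becomes primed during the insertion. Adding the already-primed-in-$T$ situation as a separate base case closes the gap; the rest of your argument then goes through unchanged.
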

\begin{proof}
    By Lemmas~\ref{lem:diffSeqPrimedConstructed} and~\ref{lem:noRepetConstructedTableaux}, if $i_j$ is primed in $T$, then so is $i_k$; furthermore, if $i_k$ becomes primed during the mixed insertion, it does so before $i_j$. Accordingly, it always holds that if $i_j$ is primed, then $i_k$ is primed too. 

     Suppose first that $i_k$ and $i_j$ are primed in $T$. Since $\mu < \nu$ are strict partitions, $(i+1)_j$ exists in $T$. Hence, Lemma~\ref{lem:constructedAncillaLemma}(4) coupled with the conditions imposed on the primed entries of the same Serrano--Pieri strip in Definition~\ref{def:constructibleDef} imply that $i_j'$ is West of $i_k'$ in $T$. This relation is preserved throughout the mixed insertion process by the same reasoning as in the proof of Lemma~\ref{lem:sameEntryPrimedDiffSeq}.
    
    Suppose instead that $i_j$ is unprimed in $T$. We can assume that, after some point of the mixed insertion, both $i_j$ and $i_k$ are primed, lest the lemma be vacuously true.
   In this scenario, there must exist a diagonal position $\sf{d}$ reached by $i_j$ during the insertion of $\mu_{\ell(\mu)}$. Say that the column of $i_k'$ at the point that $i_j$ reaches $\mathsf{d}$ is column $e$ and let $d$ be the column of $\sf{d}$. There are three cases for the relative positions of the columns $d$ and $e$. 

    {\sf Case 1 ($e<d$):}
In this scenario $i_k'$ is West of $i_j$. However, $i_j$ is in a diagonal position, so $i_k'$ is NorthWest of $i_j$, contradicting the order of the entries inside a shifted tableau.

{\sf Case 2 ($d=e$):}
The letter $i_j$ can only be displaced by an unprimed entry $x$. Noting that $i_k'<x<i_j$ is impossible for such an entry, it follows that $i_k'$ is displaced to the next column before $i_j$ becomes primed. In particular, if $i_j'$ is inserted to the same column as $i_k'$ it is in a position below it. 

{\sf Case 3 ($d<e$):}
Because of the location of $i_j$ and $i_k'$ in this case, it is clear that $i_j$ becomes primed weakly west of $i_k'.$ Also, we have again that if $i_j$ is inserted to the same column as $i_k'$ then it is in a position below it. 

Thus, regardless of the location of the columns $d$ and $e$, the letter $i_j$ becomes primed weakly west of $i_k'$, and in a position below it if on the same column. 
But then it also follows that for an arbitrary column $c$ with both $i_j'$ and $i_k'$, the entry $i_k'$ is displaced first. Indeed, by induction $i_k'$ is North of $i_j'$ in $c$, and then $i_j'$ is never row-bumped without having that $i_k'$ was displaced first; whereas if $i_j'$ is column-bumped, more northerly entries with the same value are displaced first, so that again $i_k'$ is bumped to column $c+1$ before $i_j'$ is. Hence, the lemma also holds for $i_j$ unprimed in $T$. 
\end{proof}

\begin{lemma}\label{lem:eastSameSeqCompletionCompletionConstructed}
    Let $T$ be constructed from strict partitions $\mu<\nu$. Then, for all $i$ and $j$ and throughout the insertion $T\leftarrow \mu_{\ell(\mu)}$, the letter $(i+1)_j'$ is East of $i_j'$.
\end{lemma}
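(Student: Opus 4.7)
The plan is to mirror Lemma~\ref{lem:eastSameSeqLemma}, which established the same statement for insertions of interlacing words, now in the constructed-tableau setting. The structural results proved earlier in Section~\ref{sec:insertions} for the insertion $T \leftarrow \mu_{\ell(\mu)}$ were designed to play exactly the roles that the corresponding results from Section~\ref{sec:interlacingTableaux} played there, so the argument transfers almost verbatim.

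First I would verify the base case in $T$ itself. By Definition~\ref{def:constructibleDef}, if $i_j$ and $(i+1)_j$ both belong to a Serrano--Pieri strip $\gamma^{(s)}$ and are both unprimed, they lie in the vertical strip $\xi^{(s)}/\pi^{(s)}$ with $i_j$ strictly North (hence NorthWest) of $(i+1)_j$; if both are primed they lie in the horizontal strip $\theta^{(s)}/\eta^{(s)}$ in increasing order by columns, so $i_j'$ is strictly West of $(i+1)_j'$; and the mixed case in which $i_j$ is the maximum unprimed value of $\gamma^{(s)}$ and $(i+1)_j'$ is the minimum primed value of $\gamma^{(s)}$ is controlled by the condition that primed entries of a Serrano--Pieri strip lie strictly East of its unprimed entries. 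By Lemma~\ref{lem:constructedAncillaLemma}(1) the case $i_j$ primed and $(i+1)_j$ unprimed does not occur.

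Next I would track the priming events during the insertion. Lemma~\ref{lem:noRepetConstructedTableaux} supplies the non-repetition hypothesis needed to invoke Lemmas~\ref{lem:sameSeqPrimedConstructed} and~\ref{lem:diffSeqPrimedConstructed}, from which it follows that $(i+1)_j$ becomes primed strictly before $i_j$ does. I would then focus on the moment when $(i+1)_j$ is displaced from its diagonal position by some $x < i+1$. The analysis splits into the subcases $x = i$ and $x < i$, exactly as in the proof of Lemma~\ref{lem:eastSameSeqLemma}: when $x = i$, the unnamed lemma immediately preceding the current statement (applied to $i_{j+1}$ and $i_j$) forces $x = i_j$ and then the column-insertion of $x$ either bumps $(i+1)_j'$ directly or arrives weakly West of it; when $x < i$, a short bumping-path chase together with Lemma~\ref{lem:weaklyWestCompletionOfConstructed} rules out intervening $(i+1)'$ obstructions and again forces $i_j$ to acquire its prime in a column weakly west of (and, if equal, South of) $(i+1)_j'$.

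Finally, I would argue that this relation cannot subsequently reverse. If $i_j'$ and $(i+1)_j'$ were ever to lie in the same column $c$, the preceding unnamed lemma would require $i_j'$ to be the later arrival, but then its insertion bumps $(i+1)_j'$ into column $c+1$, restoring the strict East relation; no column-bumping of $(i+1)_j'$ by a smaller entry is compatible with Lemma~\ref{lem:noRepetConstructedTableaux} and Lemma~\ref{lem:weaklyWestCompletionOfConstructed}. The main obstacle will be the subcase $x<i$ at the moment $(i+1)_j$ becomes primed: several configurations with $(i+1)'$ entries along the bumping path must be excluded by combining the constructed-tableau priming and non-repetition lemmas, whereas the interlacing-word proof could appeal directly to the uniform structure of $P_\mix$ on a shifted plactic class. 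Beyond this bookkeeping, the deduction is conceptually identical to Lemma~\ref{lem:eastSameSeqLemma}.
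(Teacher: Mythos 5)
Your proposal is correct and takes essentially the same approach as the paper: the paper's entire proof of this lemma is the one-line remark that it is ``completely analogous to the reasoning for Lemma~\ref{lem:eastSameSeqLemma},'' and your write-up is precisely that analogy carried out, substituting Lemmas~\ref{lem:sameSeqConstructed}, \ref{lem:sameSeqPrimedConstructed}, \ref{lem:diffSeqPrimedConstructed}, \ref{lem:weaklyWestCompletionOfConstructed}, and the lemma on $i_j,i_k$ for their Section~\ref{sec:interlacingTableaux} counterparts, together with the base-case check in $T$ itself that the constructed setting additionally requires. One trivial correction: the fact that $i_j$ primed with $(i+1)_j$ unprimed cannot occur follows from the ``in particular'' clause of Lemma~\ref{lem:sameSeqPrimedConstructed} (or directly from Definition~\ref{def:constructibleDef}, since within a Serrano--Pieri strip the primed values form a final segment), not from Lemma~\ref{lem:constructedAncillaLemma}(1), which concerns $(i+1)_{j-1}$.
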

\begin{proof}
    The proof is completely analogous to the reasoning for Lemma~\ref{lem:eastSameSeqLemma}.
\end{proof}

\begin{lemma}\label{lem:noTwoRepeatedColsCompletionConstructed}
    Let $T$ be constructed from strict partitions $\mu<\nu$. Then, throughout the insertion $T\leftarrow \mu_{\ell(\mu)}$, no  letters are repeated in any column. 
\end{lemma}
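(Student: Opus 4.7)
The plan is to mimic the proof of Lemma~\ref{lem:noTwoRepeatedCols} step-for-step, simply substituting the constructed-tableau analogues for each ingredient used there. I would argue by contradiction: assume some point of the mixed insertion $T \leftarrow \mu_{\ell(\mu)}$ first produces two primed entries of the same integer value $i$ in a common column, and let $c$ be that column. By Lemma~\ref{lem:noRepetConstructedTableaux} the hypothesis on row-repetitions used by the preceding constructed-tableau lemmas is available, so I am free to invoke Lemmas~\ref{lem:weaklyWestCompletionOfConstructed} and~\ref{lem:eastSameSeqCompletionCompletionConstructed} together with the unnumbered ordering lemma for $i_j'$ versus $i_k'$ that precedes them.

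First I would use that ordering lemma to reduce to the case that the two repeated entries have consecutive subscripts, say $i_j'$ and $i_{j+1}'$, and that $i_{j+1}'$ was bumped into $c$ strictly before $i_j'$. Now consider the instant at which $i_j'$ is bumped from column $c-1$ into $c$. Since $i_{j+1}$ is already primed at this moment, Lemma~\ref{lem:weaklyWestCompletionOfConstructed} forces $(i-1)_{j+1}$ to be primed as well and to sit weakly west of $i_j'$. Lemma~\ref{lem:eastSameSeqCompletionCompletionConstructed} places $(i-1)_{j+1}'$ strictly west of $i_{j+1}'$. Combining these, $(i-1)_{j+1}'$ is located in column $c-1$ alongside $i_j'$ just before the bump.

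Next I would split on how $i_j'$ leaves column $c-1$. If it is column-bumped, the bumping entry $x'$ must satisfy $(i-1)' \leq x' < i'$ to avoid simultaneously displacing $(i-1)_{j+1}'$, forcing $x' = (i-1)'$; but then two instances of $(i-1)'$ would share column $c-1$, contradicting the minimality of $c$. So $i_j'$ must be row-bumped, and by order considerations the bumping entry must equal $i-1$; to avoid also bumping $(i-1)_{j+1}'$, that $i-1$ must itself have been displaced into the row of $i_{j+1}'$ from the row above. But that row above then contained both an $i$ and $i_j'$, and by Lemma~\ref{lem:noRepetConstructedTableaux} combined with the column-strictness of shifted tableaux the only way to dislodge that $i$ from above is for $i_j'$ to be bumped first, contradicting the assumption that $i_j'$ persisted until the bumping chain reached it.

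The main obstacle is verifying that throughout this argument the hypotheses of the auxiliary constructed-tableau lemmas (no prior column-repetition, no prior row-repetition beyond what Lemma~\ref{lem:noRepetConstructedTableaux} rules out) are indeed in force; this is handled by the minimality of the column $c$ chosen at the start, exactly as in Lemma~\ref{lem:noTwoRepeatedCols}. Once that bookkeeping is done, the casework collapses because the only putative bumping moves that could separate a prime from its sequence-neighbour either violate column strictness, produce a \emph{second} column-repetition earlier than $c$, or require the putatively-surviving $i_j'$ to have already left $c-1$.
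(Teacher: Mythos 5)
Your proposal matches the paper's proof, which is literally the one-line remark that the argument is entirely analogous to that of Lemma~\ref{lem:noTwoRepeatedCols}; you have correctly identified the constructed-tableau analogues of each auxiliary lemma (Lemma~\ref{lem:noRepetConstructedTableaux} for row non-repetition, Lemma~\ref{lem:weaklyWestCompletionOfConstructed}, Lemma~\ref{lem:eastSameSeqCompletionCompletionConstructed}, and the unnumbered ordering lemma) and reproduced the column-bump/row-bump dichotomy faithfully. One small slip: Lemma~\ref{lem:weaklyWestCompletionOfConstructed} (with $i\mapsto i-1$, $j\mapsto j+1$) gives $i_j'$ weakly \emph{west} of $(i-1)_{j+1}'$, not the reverse as you wrote, but combined with the eastward bound from $i_{j+1}'$ this still pins $(i-1)_{j+1}'$ to column $c-1$ as needed.
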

\begin{proof}
    The proof is entirely analogous to the proof of Lemma~\ref{lem:noTwoRepeatedCols}.
\end{proof}

\begin{lemma}\label{lem:puttingAllTogetherCompletionConstructed}
    Let $T$ be a tableau constructed from segments $\bigcup_{i=1}^{\ell(\nu)}(\mu_i, \nu_i]$ for some strict partitions $\mu <\ \nu$.
    Then, the tableau
    \[ \hat{T} \coloneqq T \cdot \seq(\mu_{\ell(\mu)}, 1) \cdot \seq(\mu_{\ell(\mu)-1}, 1) \cdot \seq(\mu_1, 1 ) \]
satisfies all the properties established for $T\leftarrow \mu_{\ell(\mu)}$ in Section~\ref{sec:insertions} up to this point. 
\end{lemma}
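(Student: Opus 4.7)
The natural plan is to proceed by a double induction that reduces the long multi-letter insertion to iterated single-letter insertions already controlled by Lemmas~\ref{lem:transitionLemmaConstructed}--\ref{lem:noTwoRepeatedColsCompletionConstructed}. The outer induction is on the number of parts $\ell(\mu)$ of the strict partition $\mu$; when $\ell(\mu)=0$ the word is empty, $\hat T = T$, and every property to be verified reduces to one of the structural properties of constructed tableaux already established in Section~\ref{sec:construct}. For the inductive step, I would first analyze the insertion of $\seq(\mu_{\ell(\mu)},1)$ alone into $T$, producing an intermediate tableau $T'$, and then apply the outer inductive hypothesis to $T'$ together with the remaining suffix $\seq(\mu_{\ell(\mu)-1},1)\cdots\seq(\mu_1,1)$ to finish.

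The inner step, namely the insertion of the single decreasing block $\seq(\mu_{\ell(\mu)},1)$, I would handle by a secondary induction on $k$, proving by downward induction on $k$ that after inserting the prefix $\seq(\mu_{\ell(\mu)},k)$ the resulting tableau $T_k$ is constructed from $\mu^{(k)}<\nu$, where $\mu^{(k)}$ agrees with $\mu$ in its first $\ell(\mu)-1$ parts and has last part $k-1$ (deleted once this number hits $0$). Once this intermediate claim is in hand, Lemmas~\ref{lem:transitionLemmaConstructed}--\ref{lem:noTwoRepeatedColsCompletionConstructed} apply verbatim to $T_k$ and the next letter $k-1$, showing both that the inserting of $k-1$ preserves the desired structural properties throughout its bumping trajectory, and that the resulting tableau is again constructed. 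Iterating through $k = \mu_{\ell(\mu)}, \mu_{\ell(\mu)}-1, \dots, 1$ produces $T'$ and simultaneously verifies the structural properties throughout the insertion $T\leftarrow \seq(\mu_{\ell(\mu)},1)$.

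The hard part is the intermediate claim: that constructedness, and not merely constructibility, is preserved under each single-letter insertion. Constructibility itself is relatively direct, since the inserted letter fills exactly the missing value that the smallest Serrano--Pieri strip was lacking, while any primed letters displaced by the bumping track consistently with Definition~\ref{def:constructibleDef}. Ruling out extensions of any Serrano--Pieri strip requires a case analysis based on whether the new letter ultimately settles as an unprimed entry on a newly-created diagonal box or as a primed entry off the diagonal. In each case I would invoke the fine positional control of Section~\ref{sec:insertions}: that no two unprimed entries of the same value share a row below the diagonal (Lemma~\ref{lem:noRepetConstructedTableaux}), that no two primed entries of the same value share a column (Lemma~\ref{lem:noTwoRepeatedColsCompletionConstructed}), and that the rigid east/west ordering between $i_j'$ and $(i+1)_{j-1}'$ holds (Lemma~\ref{lem:weaklyWestCompletionOfConstructed}). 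Together these obstruct the existence of any interval $(k,e]\supsetneq(\mu_{\ell(\mu)}^{(k)},\nu_{\ell(\mu)})$ whose letters fit into a Serrano--Pieri strip, and likewise prevent extending any earlier strip using letters from the freshly enlarged one. This cases-and-positions argument is the main technical content of the proof, but each case is already precisely the configuration analyzed in one of the lemmas of Section~\ref{sec:insertions}, so no essentially new combinatorial work is required.
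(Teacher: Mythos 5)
Your plan diverges from the paper's proof at its central reduction, and the divergence introduces a genuine gap. You propose to show that after inserting each prefix $\seq(\mu_{\ell(\mu)},k)$ the intermediate tableau $T_k$ is again \emph{constructed} from modified partitions $\mu^{(k)}<\nu$, and then to apply Lemmas~\ref{lem:transitionLemmaConstructed}--\ref{lem:noTwoRepeatedColsCompletionConstructed} verbatim to $T_k$ and the next letter. But constructedness (Definition~\ref{def:constructedDef}) requires, beyond the positional conditions of Definition~\ref{def:constructibleDef}, that \emph{no Serrano--Pieri strip can be extended} in the sense of Definition~\ref{def:extend} --- and none of the lemmas you cite addresses extendability. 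They control repetitions in rows and columns and the relative east/west and north/south order of entries such as $i_j$, $(i-1)_j$, $(i-1)_{j-1}$, $i_j'$, $(i+1)_{j-1}'$; they say nothing about whether, after the bump cascade triggered by a single insertion, some primed entry of one strip has come to sit so as to extend another strip. Lemma~\ref{lem:constructedSmallChar} does characterize when \emph{placing} a single box preserves constructedness, but that is a static placement, not a mixed insertion that relocates many existing entries, so it does not close the gap either. The cleanest route to your intermediate claim --- that constructed tableaux are exactly mixed insertions of left factors of barely Yamanouchi words, so $T_k=P_\mix(w\cdot\mu_{\ell(\mu)}\cdots k)$ is automatically constructed --- is Theorem~\ref{thm:main_fullversion}/Corollary~\ref{cor:constructed=interlacing}, which are proved \emph{after} this lemma and depend on it through Lemma~\ref{lem:insertionOfShrinkingSequenceConstructedCase} and Theorem~\ref{thm:constructedIsLeft}; invoking them here would be circular.

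The paper avoids this entirely by a weaker but sufficient observation: the proofs of Lemmas~\ref{lem:transitionLemmaConstructed}--\ref{lem:noTwoRepeatedColsCompletionConstructed} never use the full Serrano--Pieri structure of the starting tableau, only the relative positions of $i_j$, $(i-1)_j$, $(i-1)_{j-1}$ and of $i_j'$, $(i-1)_j'$, $(i+1)_{j-1}'$ --- and those same lemmas show that these relative positions persist after inserting $\mu_{\ell(\mu)}$. Hence the lemmas can be iterated letter by letter without ever re-certifying constructedness of the intermediate tableaux; one only has to check additionally that the newly inserted letters, identified with shrinking sequences of appropriate indices, enter each row and column in the required order. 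If you want to keep your architecture, you must either prove the preservation of non-extendability directly (genuinely new combinatorial work, not "already precisely the configuration analyzed" in Section~\ref{sec:insertions}) or weaken your inductive invariant from "constructed" to the relative-position properties actually used in the proofs, which is exactly what the paper does.
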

\begin{proof}
    Since $T$ is constructed, Lemmas~\ref{lem:noRepetConstructedTableaux} and~\ref{lem:noTwoRepeatedColsCompletionConstructed} apply and show that the consequents of Lemmas~\ref{lem:transitionLemmaConstructed}--\ref{lem:noTwoRepeatedColsCompletionConstructed} hold for $T\leftarrow \mu_{\ell(\mu)}$. 

    Moreover, all the lemmas related to completions of constructed tableaux use a weaker starting object than it seems. Despite assuming that the initial object is a constructed tableaux, the Serrano--Pieri structure of $T$ is never used unless to establish that the relative positions of $i_j$, $(i-1)_j$, and $(i-1)_{j-1}$ are the appropriate on one side; and that of $i'_j$, $(i-1)'_{j}$, and $(i+1)_{j-1}'$ on the other side. But if $T$ is constructed from strict partitions $\mu<\nu$ and $\mu_{\ell(\mu)}$ is inserted on the right, it is the content of Lemmas~\ref{lem:transitionLemmaConstructed}--\ref{lem:noTwoRepeatedColsCompletionConstructed} that the relative positions of those letters is the expected one. Hence, iterating those lemmas for the letters $\seq(\mu_{\ell(\mu)}, 1) \cdot \seq(\mu_{\ell(\mu)-1}, 1) \cdot \seq(\mu_1, 1 )$ we obtain the desired result for all letters originally in $T.$ 

    It remains then to see that when inserting $\seq(\mu_{\ell(\mu)}, 1) \cdot \seq(\mu_{\ell(\mu)-1}, 1) \cdot \seq(\mu_1, 1 )$ the order between the inserted letters is also in accordance with that order. For that, it is enough to note that if the letters in $\seq(\mu_{\ell(\mu)}, 1 )$ are identified with the $j$-th shrinking sequence then the letters of $\seq(\mu_{\ell(\mu) - i}, 1 )$
 would belong to the $(j-i)$-th shrinking sequence; so that, for all $i>1$ and $1\leq k\leq j$ the letter $i_k$ is inserted before $(i-1)_k$, and the letter $i_k$ before $(i+1)_{k-1}.$ \qedhere
\end{proof}

The following lemma is useful for building constructed tableaux by hand; we will moreover use it in the calculations of Section~\ref{sec:examples} and in the complexity analysis of Section~\ref{sec:complexity}.
\begin{lemma}\label{lem:constructedSmallChar}
Suppose that $T_\lambda$ is constructed from strict partitions $\tau < \psi$. Let also $i$ be an entry of minimum value satisfying that the number of copies of $i$ in $\cup_i (\mu_i, \nu_i]$ is greater than the number of instances of $i$ in $T_\lambda$; and let $j$ be maximal so that $i_j\notin T_\lambda$. Place $i_j$ so that $T_\lambda \cup \{i_j\}$ is constructible from $\tau' < \psi'$ for some strict partitions $\tau'$ and $\psi'$, without relabelling of its shrinking index. If the placement of $i_j$ results in a valid shifted tableau $T_\lambda'$, we have that $T_\lambda'$ is constructed from $\tau'<\psi'$ if and only if:
\begin{enumerate}
    \item $i_j$ is unprimed and it is weakly north of the unprimed letter $(i-1)_{j+1}$, or
    \item $i_j$ is unprimed and it is in row $i$; in which case $(i-1)_{j+1}$ is likewise unprimed and in row $i-1$; or 
    \item $i_j$ is primed and it is weakly west of the primed letter $(i-1)_{j+1}$, or
    \item $i_j$ is unprimed and $(i-1)_{j+1}$ is primed.
\end{enumerate}
\end{lemma}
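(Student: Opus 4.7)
The plan is to prove the two directions of the ``if and only if'' separately, with the forward implication following quickly from the structural results of Section~\ref{sec:construct} and the harder backward implication requiring a case analysis that exploits the extremal hypotheses on $i$ and $j$.

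For necessity, I would apply to $T_\lambda'$ (which by assumption is constructed from $\tau'<\psi'$) the structural lemmas already established. Lemma~\ref{lem:rowCharConstructedTableaux}(3) handles the case where both $i_j$ and $(i-1)_{j+1}$ are unprimed, yielding cases~(1) and~(2); Lemma~\ref{lem:constructedAncillaLemma}(1) rules out the configuration in which $(i-1)_{j+1}$ is primed while $i_j$ is not, forcing case~(4); and Lemma~\ref{lem:constructedAncillaLemma}(4) forces the configuration of case~(3) when both are primed. Since $\tau'<\psi'$ are strict, the relevant $(i-1)_{j+1}$ is guaranteed to exist whenever we need to refer to it, so these four cases exhaust all valid configurations arising in a constructed tableau containing $i_j$.

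For sufficiency, assume one of (1)--(4) holds and suppose towards a contradiction that some Serrano--Pieri strip $\gamma^{(s)}$ in the decomposition of $T_\lambda'$ admits an extension $\gamma'$ in the sense of Definition~\ref{def:extend}. Because $T_\lambda$ was already constructed, the extension $\gamma'$ must make essential use of the new box containing $i_j$: either $i_j\in\gamma^{(s)}$ and the extension adjoins letters of $T_\lambda$ lying outside $\gamma^{(s)}$, or $i_j$ lies outside $\gamma^{(s)}$ and is absorbed into $\gamma'$ together with possibly other boxes. I would handle these two subcases in turn.

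The combinatorial heart of the sufficiency argument is the following. The minimality of $i$ guarantees that every value strictly less than $i$ required by the target construction is already present in $T_\lambda$, while the maximality of $j$ guarantees that every $i_{j'}$ with $j'>j$ required by the target is likewise already present. These extremal conditions pin down exactly which letters can participate in a candidate extension $\gamma'$. Under conditions~(1) or~(2), the position of $i_j$ in its vertical strip relative to $(i-1)_{j+1}$ is precisely such that the vertical strip cannot be lengthened in either direction without contradicting the vertical/horizontal strip structure of an adjacent $\gamma^{(t)}$; under~(3) the dual statement for horizontal strips follows, using Lemma~\ref{lem:constructedAncillaLemma}(2) to block eastward extensions and the position of $(i-1)_{j+1}'$ to block westward ones; and under~(4) the mixed primed/unprimed configuration is ruled out by Lemma~\ref{lem:primePlacementLemma}, which forbids placing a primed letter above an unprimed letter of a smaller-indexed strip. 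The main obstacle I expect is organizing the case work so that every configuration of $\gamma^{(s)}$ and $\gamma'$ relative to the newly placed $i_j$ is covered exhaustively, especially in case~(4), where the interplay between the primed $(i-1)_{j+1}'$ and the unprimed $i_j$ must be checked against every neighboring Serrano--Pieri strip.
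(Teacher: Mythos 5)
Your necessity direction matches the paper's: both read off the four cases from Lemma~\ref{lem:rowCharConstructedTableaux}(3) together with Lemma~\ref{lem:constructedAncillaLemma}(1) and (4), and that part is fine.

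The gap is in the sufficiency direction. Conditions (1)--(4) only compare the newly placed $i_j$ with $(i-1)_{j+1}$, i.e., with the \emph{immediately} preceding shrinking sequence, whereas a priori the addition of $i_j$ could create an extension of the $(j+k)$-th sequence for any $k\geq 1$ (for instance $i_j$ could land northeast of some unprimed $(i-1)_{j+k}$, or weakly west of some primed $(i-1)_{j+k}'$, with $k>1$). Your proposal asserts that the extremal choices of $i$ and $j$ ``pin down exactly which letters can participate'' and that the strips ``cannot be lengthened in either direction without contradicting the structure of an adjacent $\gamma^{(t)}$,'' but this never explains why excluding an extension by the $(j+1)$-st sequence excludes extensions by all higher-indexed ones; the adjacency-based argument you sketch does not reach a strip that is far from $i_j$'s own. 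The paper supplies exactly this missing reduction with a transfer argument: in case (1), if some $(i-1)_{j+k}$ with $k>1$ were northeast of $i_j$, then by maximality of $j$ the letter $i_{j+1}$ already lies in $T_\lambda$, and condition (1) plus order considerations place $i_j$ northeast of $i_{j+1}$; hence $(i-1)_{j+k}$ is northeast of $i_{j+1}$, so the $(j+k)$-th strip was already extendable by the $(j+1)$-st sequence inside $T_\lambda$, contradicting that $T_\lambda$ was constructed. Case (3) is handled by the dual transfer, case (2) requires a preliminary step showing $i_j$ cannot in fact be southwest of $(i-1)_{j+1}$ before running the same reduction, and case (4) is dispatched by observing that $(i-1)_{j+k}$ is primed for every $k\geq 1$, so an unprimed $i_j$ cannot extend any of those strips. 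Without this reduction from arbitrary $k$ to $k=1$, the backward implication is not established.
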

\begin{proof}
By Lemma~\ref{lem:rowCharConstructedTableaux} and Lemma~\ref{lem:constructedAncillaLemma} all $4$ conditions are met if $T_\lambda'$ is constructed from $\tau' < \psi'$.

Now, let us see that the 4 conditions guarantee none of the shrinking sequences can be extended when $i_j$ is added, primed or unprimed. 

(1): If $i_j$ and $(i-1)_{j+1}$ are both unprimed, clearly $i_j$ being north of $(i-1)_{j+1}$ ensures that at least the $(j+1)$-st sequence cannot be extended. Suppose then that (1) holds and that the $(j+k)$-th sequence can be extended for some $k>1.$ This means that there is an unprimed letter $(i-1)_{j+k}$ northeast of $i_j$. Recall that $i_j$ is north of $(i-1)_{j+1}$ and that $j$ was chosen so as to be maximal with respect to $i_j\notin T_\lambda$. Hence $i_{j+1}\in T_\lambda$ and $i_j$ is north of $i_{j+1}$ too. Moreover, order considerations force $i_j$ to be northeast of $i_{j+1}$, yielding that $(i-1)_{j+k}$ is northeast of $i_{j+1}$ as well. This implies that the $(j+k)$-th shrinking sequence can be extended by the $(j+1)$-st sequence, contradicting that $T_\lambda$ was a constructed tableau before the addition of $i_j.$

(2): Assume now $(2)$, and towards a contradiction that $i_j$ is southwest of $(i-1)_{j+1}$. First, notice that again the maximality of $j$ implies that $i_{j+1}\in T_\lambda$. As $(i-1)_{j+1}$ is in row $i-1$, the letter $i_{j+1}$ must be in row $i$ by Lemma~\ref{lem:rowandColBoundedLemma}. Since $i_j$ was the last letter to be added, it is East of $i_{j+1}$ in the same row. Its being southwest of $(i-1)_{j+1}$ then guarantees that there is at least one other letter West of $(i-1)_{j+1}$ in the same row in a non-diagonal position. In fact, a stronger statement holds: There exists a letter northeast of $i_{j+1}$ in the same row as $(i-1)_{j+1}$ different from $(i-1)_{j+1}$. But this letter must have a value smaller than or equal to $i-1$ and be in the $(i-1)$-st row. That is, it is another instance of $i-1$, in the $(j+r)$-th shrinking sequence for $r>1$. Hence, $i_{j+1}$ extends the $(j+r)$-th sequence contradicting that $T_\lambda$ is constructed. 

Thus $i_j$ is southEast of $(i-1)_{j+1}$. If there were an entry $(i-1)_{j+k}$ northeast of $i_j$ for some $k>1$, it would then follow that $(i-1)_{j+k}$ does not share its row with $(i-1)_{j+1}$ since $T_\lambda$ is constructed. Moreover, being North of $(i-1)_{j+1}$, it would also lie northeast of $(i-1)_{j+1}$, so that $i_{j+1}$ would extend the $(j+k)$-th sequence, a contradiction. 

(3): A similar analysis to that of (1) proves that no existing sequences are extended.

(4): In this case, $(i-1)_{j+k}$ is also primed for all $k> 1$. Thus, the addition of $i_j$ does not extend a shrinking sequence of $T_\lambda.$ 
\end{proof}

\section{Interlacing tableaux are exactly constructed tableaux}\label{sec:interlacing=constructed}
Now that most of the relevant results for completions of constructed tableaux and the characterization of interlacing tableaux are in place, we are prepared to establish that they correspond to each other. More precisely, that if strict partitions $\mu<\nu$ are fixed, the class of tableaux $T$ constructed from $\mu$ and $\nu$ is exactly the class of interlacing tableaux with that content.

\subsection{Right factors of barely Yamanouchi words are barely Yamanouchi}

\begin{lemma}\label{lem:shiftedIsInterlacing}
    Let $w$ be a shifted lattice word. Then $w$ is also interlacing.
\end{lemma}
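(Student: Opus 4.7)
My plan is to verify the two parts of Definition~\ref{def:interlacing} separately. The pairwise interlacing conditions — between two $i$'s with $i > 1$ there is an $i-1$ and an $i+1$, and between two $1$'s there is a $2$ — are precisely the content of Lemma~\ref{lem:consecutiveIs}, so only the form of the shrinking decomposition remains to be established. I expect to show that every block of the shrinking decomposition of a shifted lattice word is a full run $d_M = M, M-1, \ldots, 1$ with pairwise distinct lengths, so that grouping by length yields a strict partition $\nu = (\nu_1 > \cdots > \nu_\ell)$ with $\mu = \emptyset$ and every multiplicity index $k$ trivial, matching the form required by the definition. For the first block, the greedy procedure places $M, M-1, \ldots, k+1$ at positions $\pi_M < \pi_{M-1} < \cdots < \pi_{k+1}$ and stops only if no $k$ occurs to the right of $\pi_{k+1}$; reading $w$ right to left and comparing the shifted lattice inequality for the pair $(k, k+1)$ in the states immediately before and immediately after reading $w_{\pi_{k+1}} = k+1$ forces at least one $k$ in positions greater than $\pi_{k+1}$, so the construction continues as long as $k \geq 1$ and the first block terminates at $1$.

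Next I would show that removing this first block yields a word $w'$ that is itself shifted lattice, so that the conclusion extends inductively to every block. Let $c_v(q)$ (respectively $c'_v(q)$) denote the count of value $v$ among positions $> q$ in $w$ (respectively $w'$), and let $\delta_v(q) = 1$ if $\pi_v > q$ and $\delta_v(q) = 0$ otherwise. Since $\pi_v > \pi_{v+1}$, the difference $\delta_v(q) - \delta_{v+1}(q)$ always lies in $\{0, 1\}$, and so
\[
c'_v(q) - c'_{v+1}(q) \;=\; \bigl(c_v(q) - c_{v+1}(q)\bigr) - \bigl(\delta_v(q) - \delta_{v+1}(q)\bigr) \;\in\; \{-1, 0, 1\}.
\]
The only obstruction to $w'$ being shifted lattice is the value $-1$, which requires both $c_v(q) = c_{v+1}(q)$ and $\pi_{v+1} \leq q < \pi_v$. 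Applying the shifted lattice condition on $w$ at the two consecutive states $q = \pi_{v+1}$ and $q = \pi_{v+1} - 1$ forces $c_v(\pi_{v+1}) - c_{v+1}(\pi_{v+1}) = 1$, while the greedy choice of $\pi_v$ guarantees no instance of $v$ lies in positions $\pi_{v+1}+1, \ldots, \pi_v - 1$; so $c_v$ is constant on $q \in [\pi_{v+1}, \pi_v - 1]$, giving $c_v(q) - c_{v+1}(q) \geq 1$ throughout the range and excluding the bad case.

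Iterating, every block in the shrinking decomposition of $w$ has the form $d_{\nu_j}$. Since the number of blocks of length exactly $v$ equals $n_v - n_{v+1}$, where $n_v$ denotes the count of value $v$ in $w$, the shifted lattice condition at the terminal state of the reading forces this difference into $\{0, 1\}$; hence block lengths are pairwise distinct, and grouping by length exhibits the shrinking decomposition of $w$ in the form required by Definition~\ref{def:interlacing}. I anticipate that the state-comparison argument guaranteeing $w'$ remains shifted lattice will be the delicate step, since the indicator bookkeeping must be carried out carefully and at precisely the right pair of consecutive states.
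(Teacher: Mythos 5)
Your proof is correct. For the two bullet conditions of Definition~\ref{def:interlacing} you invoke Lemma~\ref{lem:consecutiveIs}, exactly as the paper does; the difference lies in how the shrinking decomposition is shown to have the required form. The paper disposes of this in one line by citing Lemma~\ref{lem:combinatorialCharShiftedYamanouchi}, leaving implicit the facts that each block of the shrinking decomposition of a shifted lattice word is a full run down to $1$ and that the block lengths are pairwise distinct (facts that surface again later in Lemma~\ref{lem:rightHasContent} and Proposition~\ref{prop:barelyYamhaspartitionshape}). You prove these facts directly: the suffix-count comparison at the two states straddling $\pi_{k+1}$ correctly forces an instance of $k$ strictly to the right of $\pi_{k+1}$, so each block terminates at $1$; your indicator bookkeeping is a clean reproof of Lemma~\ref{lem:removingYamanouchi}, which you could simply have cited; and $n_v - n_{v+1} \in \{0,1\}$ gives distinctness of the block lengths, hence a strict partition $\nu$ with $\mu$ empty. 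The one point worth tightening is the boundary value $v = M$ (the maximum of $w$), where $\pi_{v+1}$ does not exist and your derivation of $c_v(\pi_{v+1}) - c_{v+1}(\pi_{v+1}) = 1$ does not literally apply; there the bad case is excluded even more easily, since $q < \pi_M$ forces $c_M(q) \geq 1$ while $c_{M+1}(q) = 0$. Net effect: your argument is more self-contained and makes explicit the structural content that the paper's citation leaves to the reader, at the cost of redoing work already available in Lemma~\ref{lem:removingYamanouchi}.
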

\begin{proof}
    This follows by Lemmas~\ref{lem:consecutiveIs} and~\ref{lem:combinatorialCharShiftedYamanouchi}. 
\end{proof}

In what follows we sometimes abbreviate ${\ell(\nu)}$ by $\ell.$

\begin{lemma}\label{lem:insertionOfShrinkingSequence}
    Let $\nu$ be a strict partition, and $w$ a shifted lattice word with $\ct(w)=\ct(\hat{y}_\nu)$. Moreover, suppose that for all $i \geq r\geq 1$ and $j\geq r-1$, either the unprimed letter $i_{\ell(\nu) - j}$ is inserted to row $r$ during the mixed insertion of $w$ or $i_{\ell(\nu) - j}$ becomes primed North of row $r$. Then, for all $i \geq 1$ and $j\geq 0$ and throughout the mixed insertion of $w$:
   \begin{enumerate}
       \item the letter $i_{\ell(\nu)-j}$ is never South of row $j+1;$ and
       \item if $i>j+1$, then the letter $i_{\ell(\nu)-j}$ becomes primed in a row $b\leq j+1$, while if $i\leq j+1$, then $i_{\ell(\nu)-j}$ remains unprimed throughout the insertion.
   \end{enumerate}     
\end{lemma}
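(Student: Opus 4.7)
The plan is to induct on $j \geq 0$, paralleling the block-by-block structure of the proof of Lemma~\ref{lem:charScarcelyYamanouchiTableaux} but replacing the concrete argument for $\hat{y}_\nu$ by the hypothesis together with the general machinery of Section~\ref{sec:interlacingTableaux}, which applies because shifted lattice words are interlacing by Lemma~\ref{lem:shiftedIsInterlacing}. For the base case $j = 0$, the $\ell(\nu)$-th shrinking sequence consists of letters $1_\ell, 2_\ell, \ldots, (\nu_{\ell(\nu)})_\ell$; by Lemma~\ref{lem:combinatorialCharShiftedYamanouchi} these are the leftmost copies of their values in $w$. The letter $1_\ell$ remains in row~$1$ unprimed since $1'$ does not exist and Lemma~\ref{lem:rowandColBoundedLemma} prevents unprimed $1$ from descending below row~$1$. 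For $i > 1$, Lemma~\ref{lem:rowandColBoundedLemma} combined with the hypothesis at $r = i$ traps $i_\ell$ in row~$1$, and the successive insertions of the later copies $i_{\ell-1}, i_{\ell-2}, \ldots$ must displace $i_\ell$ from its current cell so as not to violate Lemma~\ref{lem:bigRowLemma}; the only displacement consistent with remaining in row~$1$ is priming via bumping from the diagonal cell, which establishes part~(2a) for $j = 0$.

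For the inductive step, fix $j \geq 1$ and assume (1) and (2) hold for all smaller indices. Write $k \coloneqq \ell(\nu) - j$ and consider a letter $i_k$. For part~(1), if $i \leq j + 1$ then Lemma~\ref{lem:rowandColBoundedLemma} immediately gives that $i_k$ never descends below row~$i \leq j + 1$. If $i > j + 1$, then the hypothesis at $r = j + 1$ furnishes a dichotomy: either $i_k$ reaches row~$j + 1$ unprimed, in which case descending to row~$j + 2$ would require a bump by a letter of strictly smaller value, yielding, via the inductive hypothesis that confines sequences of larger index to rows $\leq j$, a conflict with Lemma~\ref{lem:bigRowLemma}; or $i_k$ becomes primed in some row $b \leq j$, in which case we must track its trajectory through subsequent column insertions and rule out any descent below row~$j + 1$, using Lemmas~\ref{lem:smallSeqWestofBigSeqPrimed}, \ref{lem:sameEntryPrimedDiffSeq}, and~\ref{lem:eastSameSeqLemma} to control the relative positions of primed entries across sequences.

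Granting part~(1), part~(2) follows by a case analysis on $i$. If $i \leq j + 1$, then $i_k$ is unprimed on insertion and, combined with Lemma~\ref{lem:orderSameSeq} and the inductive hypothesis, never reaches a diagonal cell from which a subsequent bump would prime it; hence $i_k$ remains unprimed, giving~(2b). If $i > j + 1$, then $i_k$ lies in rows~$\leq j + 1 < i$ throughout, and the insertions of the later copies $i_{k-1}, i_{k-2}, \ldots$ together with the non-repetition statement of Lemma~\ref{lem:bigRowLemma} must eventually displace $i_k$ off a diagonal cell, priming it in a row $b \leq j + 1$ and giving~(2a).

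The principal obstacle is the primed sub-case of part~(1): once $i_k$ has become $i_k'$ in some row $b \leq j$, we must show it is never relocated south of row~$j + 1$ through later column insertions, since a column insertion can in principle place the bumped primed entry at a different row of the neighboring column. Controlling this requires carefully chaining the inductive hypothesis on sequences of index $k' > k$, which pins down where primed and unprimed copies of values in those sequences reside, with the column-positioning results of Section~\ref{sec:interlacingTableaux}, so as to ensure that the row at which $i_k'$ lands after each bump remains bounded by $j + 1$.
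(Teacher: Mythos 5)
Your overall skeleton (induct on $j$, pass to the interlacing machinery via Lemma~\ref{lem:shiftedIsInterlacing}, and lean on Lemmas~\ref{lem:rowandColBoundedLemma} and~\ref{lem:bigRowLemma}) matches the paper's, but the central mechanism is missing. The paper's proof rests on an explicit insertion-order computation: combining Lemma~\ref{lem:bigRowLemma} with Lemma~\ref{lem:GodzillaLemma}, one shows that into any fixed row the letter $i_{\ell-j}$ arrives before every smaller value of its own shrinking sequence and before every weakly smaller value of every later sequence, while the inductive hypothesis keeps all letters of earlier sequences out of row $j+1$ entirely. Consequently $i_{\ell-j}$ (for $i>j+1$) lands on the \emph{diagonal} cell of row $j+1$, and a replacement at a diagonal cell produces a primed letter rather than a southward row-bump. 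This one fact simultaneously yields (1) (no descent to row $j+2$) and (2) (priming in a row $b\leq j+1$, effected by $(i-1)_{\ell-j}$). Your substitute arguments do not close either point: ruling out ``a bump by a letter of strictly smaller value'' via ``a conflict with Lemma~\ref{lem:bigRowLemma}'' fails because the bumping letter can be $(i-1)_{\ell-j}$ from the \emph{same} sequence, or a letter of value at least $j+1$ from a \emph{later} sequence, neither of which is excluded by your inductive hypothesis or by the non-repetition statement; and ``the later copies $i_{k-1}, i_{k-2},\ldots$ must displace $i_k$'' does not produce priming unless you already know $i_k$ sits on the diagonal --- an off-diagonal displacement would send it south instead.

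Two further problems. In the base case you invoke ``the hypothesis at $r=i$,'' but for $j=0$ the hypothesis requires $r\leq j+1=1$, so it says nothing for $i>1$; the correct reason $i_\ell$ stays in row $1$ is again the ordering argument (it reaches the diagonal of row $1$ first). Finally, your self-identified ``principal obstacle'' --- primed entries drifting south under later column bumps --- is actually a non-issue: a primed entry bumped from column $c$ is reinserted into column $c+1$ at the topmost entry exceeding it, which by the strict row/column inequalities lies weakly \emph{north} of its previous row, so once primed in a row at most $j+1$ it remains there. Your proposal neither observes this nor supplies the ``careful chaining'' it promises, so as written that step is also a gap.
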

\begin{proof}
    (1): Fix $i\geq 1$ and $j \geq 0$. Since $w$ is shifted lattice, it is interlacing by Lemma~\ref{lem:shiftedIsInterlacing}. By Lemma~\ref{lem:GodzillaLemma}, $i_j$ becomes primed, if at all, before $(i+1)_{j-1}$. Similarly, we claim that $i_j$ becomes primed, if it does, before $(i-1)_j$. Indeed, as long as both are unprimed, $i_j$ is inserted to all rows before $(i-1)_j$ because of Lemma~\ref{lem:bigRowLemma}; but then, if $(i-1)_j$ is primed in a diagonal position $\sf{d}$ of row $t$, the letter $i_j$ had to be inserted in $\sf{d}$ too when it was in row $t$ by Lemma~\ref{lem:rowandColBoundedLemma}, so that $i_j$ is displaced out of $\sf{d}$ when $(i-1)_j$ is inserted to row $t.$ 

    Accordingly, we have that if $i_j$ is unprimed, so are $(i+e)_k$ for all $k<j$ and $-i+1\leq e \leq 1$. Then, as laid out in Lemma~\ref{lem:bigRowLemma}, we then have that for all $e>0$ and $k>j$, if inserted, the letters of $w$ are inserted to any fixed row in the order:
         \begin{align}
        i_{\ell-j} \rightarrow (i-1)_{\ell-j} \rightarrow \dots \rightarrow 1_{\ell-j} \label{eq:row_order}\\
        i_{\ell-j} \rightarrow (i-e)_{\ell-k} \label{eq:more_row_order}
    \end{align} 
   where \eqref{eq:more_row_order} follows from the fact that either $i_{\ell-j} \rightarrow (i+1)_{\ell-j-1}$ or $i_{\ell-j}$ is in the row preceding that of $(i+1)_{\ell-j-1}$; and both situations yield $i_{\ell-j}\rightarrow i_{\ell-j-1}$, which coupled with the repeated application of \eqref{eq:row_order} establishes the desired order of insertion. 

   By hypothesis, $i_{\ell-j}$ is inserted to row $j+1$ or it was primed in a row $f<j+1.$ If the latter, the desired conclusion follows, taking $b=f.$ Considering the former case, let us prove for arbitrary $n$ and by induction on $y$, that $n_{\ell-y}$ is never inserted to a row South of row $y+1.$ For the base case $y=0$, note that \eqref{eq:more_row_order} implies that $n_{\ell}$ is inserted to the first row before $(n-e)_{\ell-k}$ for $e\geq 0$ and $k>0$, while \eqref{eq:row_order} forces $n_{\ell}$ to be inserted to row $1$ before $(n-e)_\ell$ for all $e>0$. Accordingly, $n_\ell$ is inserted to the diagonal position of row $1$ and hence never South of row $1.$ 

   For the inductive step, suppose that the statement holds for all $y<m$ and consider $n_{\ell-m}.$ If it is inserted at some point to row $m+2$, it must first be inserted to row $m+1$. Again, $n_{\ell-m}$ is inserted before $(n-e)_{\ell-m-k}$ for $e,k\geq 0$ to row $m+1$ by \eqref{eq:row_order} and \eqref{eq:more_row_order}. Furthermore, no letter $(n-e)_{\ell-k}$ for $e\geq0$ and $k<m$ is in row $m+1$, or in that row but unprimed, by the inductive hypothesis. Thus, $n_{\ell-m}$ is inserted to the diagonal of row $m+1$ and never South of it. 

\medskip
\noindent
   (2): We again proceed by induction, this time on $j$. For $j=0$, our inductive argument for item (1) proves that $1_\ell$ is inserted to the diagonal of row $1$. If $i_\ell$ is such that $i>1$ we then have that it becomes primed, while if $i=1$ it remains unprimed because $T$ is a semistandard shifted tableau. Suppose then that the statement holds for all $j<k$ and consider the letter $n_{\ell-k}.$ If $n>k+1$ and $n_{\ell-k}$ is not primed in a row $b<k+1$, then it must be inserted to row $k+1$ by hypothesis.

   Since for all $j<k$ the letter $(j+1)_{\ell-j}$ does not become primed by the inductive hypothesis, it follows from the lemma hypothesis that it is inserted to row $j+1$; furthermore, it is inserted to the diagonal position of that row by the reasoning for the inductive step within item $(1)$. Observe also that we have the insertion order
   \[ 1_\ell \rightarrow 2_{\ell-1} \rightarrow \dots \rightarrow (k+1)_{\ell-k}  \]
   because, again, none of the letters in this list are primed so that Lemma~\ref{lem:bigRowLemma} applies. Accordingly, when $(k+1)_{\ell-k}$ is inserted to a row North of row $k+1$, it occupies a non-diagonal position. Equivalently, it does not become primed North of row $k+1.$ But once in row $k+1$ we have by the reasoning of item $(1)$ that it will be inserted to the diagonal position. Finally, Lemma~\ref{lem:rowandColBoundedLemma} ensures that in this position it will not become primed. \qedhere
\end{proof}

\begin{lemma}\label{lem:entriesOfShiftedLatticeAreInsertedToRows}
    Let $\nu$ be a strict partition, $w$ a shifted lattice word with $\ct(w)=\ct(\hat{y}_\nu)$. Then, for all $r>0$,  $i \geq r$, and $j\geq r-1$, the letter $i_{\ell(\nu) - j}$ is inserted to row $r$ during the mixed insertion of $w$ or becomes primed in a row $f<r$.
\end{lemma}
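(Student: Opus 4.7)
The plan is to induct on $r$. The base case $r = 1$ holds immediately because every letter of $w$ begins its mixed insertion in row $1$, so every letter $i_{\ell(\nu)-j}$ with $i \geq 1$ and $j \geq 0$ passes through row $1$ during the insertion process.

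For the inductive step, I fix $r \geq 2$ and assume the statement for every $r' < r$. Given $i \geq r$ and $j \geq r-1$, I first apply the inductive hypothesis at $r' = r-1$ (the hypotheses $i \geq r-1$ and $j \geq r-2$ are satisfied) to conclude that $i_{\ell-j}$ is inserted to row $r-1$ or becomes primed in some row $f < r-1$; the latter case yields the claim directly, so I may assume the former. Observe also that the inductive hypothesis supplies, at every row $r' \leq r-1$, exactly the assumption required by Lemma~\ref{lem:insertionOfShrinkingSequence}, so the inner induction contained in its proof can be run up through level $y = r-2$, pinning down that each letter $n_{\ell-y}$ with $y \leq r-2$ lies weakly north of row $y+1 \leq r-1$, and is primed there whenever $n > y+1$. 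In particular this identifies the only unprimed entries that can occupy cells of row $r-1$ strictly northeast of the diagonal when $i_{\ell-j}$ arrives there.

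I then argue that $i_{\ell-j}$ cannot remain unprimed in row $r-1$ indefinitely. Since $w$ is shifted lattice, it is interlacing by Lemma~\ref{lem:shiftedIsInterlacing}, so the structural results of Section~\ref{sec:interlacingTableaux} apply. By Lemma~\ref{lem:bigRowLemma}, after $i_{\ell-j}$ enters row $r-1$ the letters $(i-1)_{\ell-j}, (i-2)_{\ell-j}, \ldots$ are forced into row $r-1$ in that order (or become primed above), and Lemma~\ref{lem:repeatingLemma} prohibits a second unprimed instance of value $i$ appearing in row $r-1 < i$. When the first smaller letter reaches row $r-1$, it must row-bump some entry; tracing this bump shows that it either hits $i_{\ell-j}$ directly, or first displaces an intermediate primed $i'$ which in turn column-bumps $i_{\ell-j}$. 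Either way $i_{\ell-j}$ is either pushed into row $r$ or column-bumped from the diagonal cell of row $r-1$ (at which point it becomes primed), yielding the inductive conclusion.

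The main obstacle is the fine-grained case analysis of the row-$(r-1)$ configuration at the moment the smaller letter arrives. The three configurations to rule out are: $(i-1)_{\ell-j}$ adjacent to $i_{\ell-j}$; $(i-1)_{\ell-j}$ separated from $i_{\ell-j}$ by one or more primed entries in row $r-1$; and $i_{\ell-j}$ itself occupying the diagonal cell of row $r-1$. In each subcase Lemmas~\ref{lem:rowandColBoundedLemma},~\ref{lem:bigRowLemma}, and~\ref{lem:noTwoRepeatedCols} restrict which primed and unprimed values can sit between and around the relevant cells, forcing the bumping cascade to reach $i_{\ell-j}$ and ruling out the pathological possibility that $i_{\ell-j}$ is left stranded unprimed in row $r-1$ forever.
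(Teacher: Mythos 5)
Your proof is correct and follows essentially the same route as the paper's: induction on $r$, with the base case from the fact that every letter enters row $1$, the inductive hypothesis at $r-1$ placing $i_{\ell-j}$ in row $r-1$, the re-run of the inner argument of Lemma~\ref{lem:insertionOfShrinkingSequence} (correctly noting it can only be carried to level $r-2$ to avoid circularity), and then Lemma~\ref{lem:bigRowLemma} forcing $(i-1)_{\ell-j}$ into row $r-1$ where the resulting bump (direct or via an intervening primed $i'$) dislodges $i_{\ell-j}$. The paper's version is slightly leaner — it additionally observes that $i_{\ell-j}$ cannot occupy the diagonal of row $r-1$, so the diagonal subcase you enumerate does not arise — but the substance is the same.
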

\begin{proof}
    Let us prove the statement by induction on $r$. For $r=1$, we know that for all $i,j>0$ the letter $i_{\ell(\nu)-j}$ is inserted to the first row regardless of whether it is later bumped to a subsequent row. 

    Assume then that the statement holds for all $r<k$, and let $i\geq k$ and  $j\geq k-1$ be fixed. Since $i\geq k-1$ and $j\geq k-2$ as well, the inductive hypothesis implies that $i_{\ell-k}$ was inserted to row $k-1$ or became primed in a row $f< k-1.$ In the latter case, the desired conclusion follows, so let us assume that it was inserted to row $k-1.$ 
    By the same argument as that made for the proof of Lemma~\ref{lem:insertionOfShrinkingSequence}, $i_{\ell-k}$ is not inserted to the diagonal position of row $k-1$ and since $i_{\ell-k}$ is not primed, neither is $(i-1)_{\ell-k}$. Moreover, by Lemma~\ref{lem:bigRowLemma}, there are no repetitions of unprimed instances of $i$ in row $k-1$ as $i\geq k > k-1$. Hence, when $(i-1)_{\ell-k}$ is inserted to row $k-1$, which it is because of the inductive hypothesis as $i-1\geq k-1$, it is displacing $i_{\ell-k}$, or else $i_{\ell-k}$ had already been bumped to row $k$. In both cases the inductive step holds. 
\end{proof}

\begin{theorem}\label{thm:inBarelyYamanouchiClassiffshiftedYamanouchi}
    Let $\nu$ be a strict partition. Then, $w\sim \hat{y}_\nu$ if and only if
    \begin{itemize}
        \item $\ct(w)=\ct(\hat{y}_\nu)$ and
        \item the word $w$ is shifted lattice. 
    \end{itemize}
\end{theorem}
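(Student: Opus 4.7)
The forward implication is exactly the content of Lemma~\ref{lem:barelyYamanouchiareShiftedLattice}, so the entire task is the converse: if $w$ has content $\mathrm{ct}(w) = \mathrm{ct}(\hat{y}_\nu)$ and is shifted lattice, then $P_\mix(w) = \hat{Y}_\nu$. My plan is to show that the combinatorial lemmas developed in Sections~\ref{sec:barelyYamanouchi}--\ref{sec:interlacingTableaux} pin down $P_\mix(w)$ uniquely, by matching its structure against the explicit description of $\hat{Y}_\nu$ given in Lemma~\ref{lem:charScarcelyYamanouchiTableaux}.

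First I would observe that, by Lemma~\ref{lem:shiftedIsInterlacing}, $w$ is interlacing, so every result of Section~\ref{sec:interlacingTableaux} applies to the mixed insertion of $w$. In particular, Lemma~\ref{lem:entriesOfShiftedLatticeAreInsertedToRows} verifies the hypothesis of Lemma~\ref{lem:insertionOfShrinkingSequence}. Applying the latter then yields, for every $i \geq 1$ and $j \geq 0$, the two structural facts: (a) the letter $i_{\ell(\nu)-j}$ never descends below row $j+1$, and (b) $i_{\ell(\nu)-j}$ becomes primed in some row $\leq j+1$ precisely when $i > j+1$, and otherwise remains unprimed. In particular, for each $j$, the unprimed entries of the $(\ell(\nu)-j)$-th shrinking sequence are exactly $1_{\ell(\nu)-j}, 2_{\ell(\nu)-j}, \dots, (j+1)_{\ell(\nu)-j}$.

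Next, using Lemmas~\ref{lem:bigRowLemma} (no two unprimed $i$'s in a row $<i$) and~\ref{lem:rowandColBoundedLemma}, I would argue that the unprimed entries of the $(\ell(\nu)-j)$-th shrinking sequence must occupy the diagonal cell of row $j+1$ and the cells directly above it in the same column, reproducing verbatim the vertical-column structure of Lemma~\ref{lem:charScarcelyYamanouchiTableaux}. The diagonal cells then form the staircase forcing the shape to be $\nu$. For the primed entries, the uniqueness of their positions follows by combining Lemmas~\ref{lem:GodzillaLemma},~\ref{lem:smallSeqWestofBigSeqPrimed},~\ref{lem:sameEntryPrimedDiffSeq},~\ref{lem:eastSameSeqLemma}, and~\ref{lem:noTwoRepeatedCols}: these enforce the exact horizontal-strip arrangement described in the last two bullets of Lemma~\ref{lem:charScarcelyYamanouchiTableaux}, once the columns of the unprimed entries are fixed. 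Since $\hat{Y}_\nu$ itself satisfies all of these constraints, the positions match letter-by-letter and $P_\mix(w) = \hat{Y}_\nu$.

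The main obstacle will be the bookkeeping in step~3, namely verifying that the column positions of the primed entries produced by Lemma~\ref{lem:insertionOfShrinkingSequence} are rigidly determined by the prior interlacing-tableau results. The unprimed entries are straightforward because the diagonal placements, the no-repetition property, and the content together force them; by contrast, the primed entries require one to chain together several of the column-comparison lemmas to conclude that $(j+k)'_{\ell(\nu)-j}$ lies in column $j+k$ of the row-$j+1$ strip (relative to the appropriate offset), matching the configuration in Lemma~\ref{lem:charScarcelyYamanouchiTableaux}. Once this is established, the two shifted tableaux have identical shape, identical unprimed skeleton, and identical primed decoration, so they coincide, completing the proof.
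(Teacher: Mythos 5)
Your proposal is correct and follows essentially the same route as the paper's own proof: the forward direction is delegated to Lemma~\ref{lem:barelyYamanouchiareShiftedLattice}, and the converse is proved by using Lemma~\ref{lem:entriesOfShiftedLatticeAreInsertedToRows} to feed Lemma~\ref{lem:insertionOfShrinkingSequence}, pinning down the rows and primed/unprimed status of each $i_{\ell(\nu)-j}$, fixing the unprimed vertical columns via Lemmas~\ref{lem:bigRowLemma} and~\ref{lem:rowandColBoundedLemma}, and then determining the primed horizontal strips by an induction chaining the column-comparison lemmas against the description in Lemma~\ref{lem:charScarcelyYamanouchiTableaux}. The ``bookkeeping'' you flag for the primed entries is exactly what the paper resolves with its induction on the shrinking-sequence index, using Lemmas~\ref{lem:smallSeqWestofBigSeqPrimed} and~\ref{lem:eastSameSeqLemma} to force each $i'_{\ell-k}$ directly below $(i+1)'_{\ell-k+1}$ (or into the first row).
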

\begin{proof}
    $\implies)$ This direction is Lemma~\ref{lem:barelyYamanouchiareShiftedLattice}.
    
\medskip
\noindent

    $\impliedby$) By Lemma~\ref{lem:insertionOfShrinkingSequence} and Lemma~\ref{lem:entriesOfShiftedLatticeAreInsertedToRows} the unprimed entries in the tableaux are all $i_{\ell-j}$ such that $i\leq j+1$, while the primed entries are those satisfying $i>j+1$.

    Fix $j\geq 0$ and consider the location of the letters $(j+1)_{\ell-j}, j_{\ell-j}, \ldots, 1_{\ell-j}.$ We claim that $(e+1)_{\ell-j}$ is North of $e_{\ell-j}$ throughout the insertion of $w$ and for all $1\leq e \leq j$. First, note that by Lemma~\ref{lem:shiftedIsInterlacing} and Lemma~\ref{lem:bigRowLemma}, $(e+1)_{\ell-j}$ is inserted to all rows before $e_{\ell-j}.$ Hence, the only alternative is for $(e+1)_{\ell-j}$ and $e_{\ell-j}$ to be in the same row $f$ for some $f$ by Lemma~\ref{lem:bigRowLemma}. However, we must then have that $f=e+1$ since that is the only way that $e_{\ell-j}$ is inserted to row $f$ without bumping $(e+1)_{\ell-j}.$ But this yields a contradiction by Lemma~\ref{lem:rowandColBoundedLemma} since $e_{\ell-j}$ could not have been inserted to that row in the first place.

    Accordingly, $(j+1)_{\ell-j}$ is North of $j_{\ell-j}$, which is North of $(j-1)_{\ell-j}$, and so on, so that for all $m\leq j+1$, the letter $(m)_{\ell-j}$ is in row $m$. Indeed, by Lemma~\ref{lem:insertionOfShrinkingSequence} an unprimed entry $i_{\ell-j}$ never lies South of row $j+1$ so that this is the only possible arrangement.

    Note that our argument so far determines the unprimed entries and their rows. Observe also that the characterization of primed and unprimed entries given before implies that the unprimed entries of row $e$ consist of $\ell+1-e$ copies of the entry $e$. Since the primed entries in a row $e$ are a subset of the entries $i_{\ell-j+1}$ with $j\geq e$ and $i> e$ by Lemma~\ref{lem:insertionOfShrinkingSequence}(2) and the characterization, this means that the unprimed entries are all left of all the primed entries. Thus, the position of unprimed entries is completely determined, and matches that of the unprimed entries in $\hat{Y}_\nu.$
    
    Let us now turn our attention to the primed entries and describe their positions as identical to that of primed entries in $\hat{Y}_\nu$ by induction on $j$. For the base case, it is sufficient to note that $i_{\ell}$ is primed and in row $1$ for $i>1$ by Lemma~\ref{lem:insertionOfShrinkingSequence}. The primed entries of the $\ell$-th shrinking sequence can only be bumped eastwards, and by unprimed entries, if the order of a semistandard tableau is to be respected.

    Let the inductive hypothesis hold for all $j<k$. Again, Lemma~\ref{lem:insertionOfShrinkingSequence} determines the primed entries of the $(\ell-k)$-th shrinking sequence to be the letters $i_{\ell-k}$ with $i>k+1$; moreover, the primed entries are also forced to be in a row $r\leq k+1$ so that in particular, having values greater than $k+1$ they occur right of all unprimed entries. As the primed entries of the $(\ell-j)$-th sequence agree with their position in $\hat{Y}_\nu$ for $j<k$ by the inductive hypothesis, the position of the primed entries $i'_{\ell-k}$ in the $\ell-k$-th shrinking sequence is uniquely determined by $i_{\ell-k}'$ having to be weakly west of $(i-1)_{\ell-k+1}'$ as proved in Lemma~\ref{lem:smallSeqWestofBigSeqPrimed}; and their being a horizontal strip, as established from Lemma~\ref{lem:eastSameSeqLemma} combined with the fact that the bumping path of primed entries is northeastwards and the position of $i'_{\ell-k}$ in a column to which $(i+1)'_{\ell-k}$ has been inserted before is South of the position $(i+1)'_{\ell-k}$ had when it was there. Indeed, for $i'_{\ell-k}$ to satisfy these conditions, it must be directly below $(i+1)'_{\ell-k+1}$ if it exists and in the first row otherwise; in concordance with Lemma~\ref{lem:charScarcelyYamanouchiTableaux} and hence with $\hat{Y}_\nu.$ 

    Since $P_\mix(w)=\hat{Y}_\nu$ we have $w \sim \hat{y}_\nu.$ \qedhere
\end{proof}

Recall that our goal is to find a new shifted Littlewood--Richardson rule. By Theorem~\ref{thm:placticLRrule}, to compute $b_{\lambda, \mu}^{\nu}$ it is enough to count the number of pairs of tableaux $(T_\lambda, T_\mu)$ of shapes $\lambda$ and $\mu$ respectively, such that 
\begin{equation}\label{eq:placticMult}
    T_\lambda \cdot T_\mu = \hat{Y}_\nu 
\end{equation}
However, if we write $T_\lambda = P_\mix(w_1)$ and $T_\mu= P_\mix(w_2)$, we have that $w_1\cdot w_2 \sim \hat{y}_\nu$ and $w_2$ also follows the shifted lattice condition by Definition~\ref{def:shiftedLatticeCondition}. We can ensure that the content of $w_2$ is of a special form by the next lemma.

\begin{lemma}\label{lem:rightHasContent}
    If $e$ is a right factor of a barely Yamanouchi word, then $\ct(e)=\ct(\hat{y}_\mu)$ for some strict partition $\mu.$ 
\end{lemma}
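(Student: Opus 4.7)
The plan is to reduce the statement to a purely combinatorial claim about the content of any suffix of a shifted lattice word. Let $w$ be a barely Yamanouchi word with $w = d \cdot e$; by Theorem~\ref{thm:inBarelyYamanouchiClassiffshiftedYamanouchi}, $w$ is shifted lattice. I will then apply Definition~\ref{def:shiftedLatticeCondition} directly to the suffix $e$: reading $w$ from right to left, at the moment we have read exactly the letters of $e$, the cumulative counts are precisely $c_i(e)$ (the multiplicity of $i$ in $e$) and $c_{i+1}(e)$. The shifted lattice condition therefore forces $c_i(e) - c_{i+1}(e) \in \{0,1\}$ for every $i \geq 1$.

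Given this, I would construct the strict partition $\mu$ explicitly by setting $\mu_j \coloneqq \max\{i \geq 1 : c_i(e) \geq j\}$ for $1 \leq j \leq c_1(e)$, with $\ell(\mu) = c_1(e)$. The sequence $(c_i(e))_{i \geq 1}$ is nonnegative, weakly decreasing (since differences lie in $\{0,1\}$), and eventually zero, so each $\mu_j$ is well-defined. To see $\mu$ is strict, note that $\mu_j = \mu_{j+1}$ would mean some $i$ satisfies $c_i(e) \geq j+1$ while $c_{i+1}(e) < j$, contradicting $c_i(e) - c_{i+1}(e) \leq 1$; hence $\mu_1 > \mu_2 > \cdots > \mu_{\ell(\mu)} \geq 1$.

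Finally, I would verify $\ct(\hat{y}_\mu) = \ct(e)$. Since $\hat{y}_\mu = \seq(\mu_{\ell(\mu)}) \cdot \seq(\mu_{\ell(\mu)-1}) \cdots \seq(\mu_1)$, the letter $i$ appears in $\hat{y}_\mu$ exactly $|\{j : \mu_j \geq i\}|$ times. By construction of $\mu$, this cardinality equals $c_i(e)$, so the two content vectors agree coordinatewise.

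No step here is a genuine obstacle: the content constraint on $e$ is immediate from the lattice condition, and the passage from a weakly decreasing $\{0,1\}$-difference sequence to a strict partition is a standard bijection. The only care needed is to record that ``right factor'' means suffix in the free monoid, which is the sense in which Definition~\ref{def:shiftedLatticeCondition} directly applies.
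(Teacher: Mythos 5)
Your proof is correct and follows essentially the same route as the paper: both apply the shifted lattice condition to the suffix $e$ to conclude $c_i(e)-c_{i+1}(e)\in\{0,1\}$ for all $i$, and then read off the strict partition $\mu$ from the positions where this difference equals $1$. The only divergence is in the final verification of $\ct(e)=\ct(\hat{y}_\mu)$ --- the paper strips shrinking sequences iteratively via Lemma~\ref{lem:removingYamanouchi}, while you use a direct conjugate-partition count, which is a slightly more elementary way to finish the same argument.
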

\begin{proof}
    Read from right to left the word $e$. By Definition~\ref{def:shiftedLatticeCondition}, we have that for all $i$ the number of total occurrences of $i$ in $e$ is equal to the number of instances of $i+1$, or bigger by exactly one additional $i$. Let $k = |\{i : \#i(e)>\#(i+1)(e) \}|$, where $\#i(e)$ denotes the total number of letters with value $i$ in $e$. For $1 \leq c \leq  k$, define $c_i$ by the rule \[ c_i \coloneqq \min \{ j > c_{i-1} \: : \:  \#j(e)>\#(j+1)(e) \},\]
   where we treat $c_0 = 0$. Then, we have that $\mu \coloneqq (c_k, c_{k-1}, c_{k-2}, \ldots, c_1)$ is a strict partition. 

   Let $c_i$ be arbitrary. By definition, $\# c_i (e) > \#(c_i + 1)(e)$ so that by induction on $i$, with base case $i=k$, at least one of the sequences in the shrinking decomposition of $e$ starts with $c_i$. After removing all such sequences from $e$ we are left with a word that is again shifted lattice because of Lemma~\ref{lem:removingYamanouchi}. Repeating sequentially this process until $i=1$ we exhaust all the letters of $e$, so that $\ct(e) = \ct(\hat{y}_\mu)$.
\end{proof}

\begin{corollary}\label{cor:rightisbarelyYam}
    If $e$ is a right factor of a barely Yamanouchi word, then $e$ is a barely Yamanouchi word.
\end{corollary}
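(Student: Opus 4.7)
The plan is to combine the characterization of barely Yamanouchi words from Theorem~\ref{thm:inBarelyYamanouchiClassiffshiftedYamanouchi} with Lemma~\ref{lem:rightHasContent}. Since a barely Yamanouchi word $w$ is characterized by having content of the form $\ct(\hat{y}_\nu)$ together with the shifted lattice condition, to show that a right factor $e$ of $w$ is also barely Yamanouchi it suffices to verify these two properties for $e$.

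First, I would invoke Lemma~\ref{lem:rightHasContent} directly: it already gives that $\ct(e) = \ct(\hat{y}_\mu)$ for some strict partition $\mu$, so the content requirement is free. The remaining task is to show that $e$ itself satisfies the shifted lattice condition of Definition~\ref{def:shiftedLatticeCondition}.

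The key observation is that the shifted lattice condition is defined by reading the word from \emph{right to left} and comparing cumulative counts of $i$'s and $(i+1)$'s at every prefix of that right-to-left reading. Since $e$ is a right factor of $w$, the right-to-left reading of $e$ is an initial segment of the right-to-left reading of $w$. By Lemma~\ref{lem:barelyYamanouchiareShiftedLattice}, the word $w$ is shifted lattice, so at every stage of the right-to-left scan of $w$ the required count inequalities hold; in particular, they hold during the initial segment corresponding to $e$. Hence $e$ is shifted lattice.

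With both hypotheses of the converse direction of Theorem~\ref{thm:inBarelyYamanouchiClassiffshiftedYamanouchi} verified for $e$ (namely, $\ct(e) = \ct(\hat{y}_\mu)$ and $e$ shifted lattice), that theorem immediately yields $e \sim \hat{y}_\mu$, so $e$ is a barely Yamanouchi word. There is no real obstacle here; the entire content of the corollary was already packaged into the two preceding results, and the proof amounts to stringing them together after the one-line remark that right factors trivially inherit the shifted lattice condition.
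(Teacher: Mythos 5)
Your proposal is correct and follows essentially the same route as the paper's proof: the paper likewise cites Lemma~\ref{lem:rightHasContent} and Theorem~\ref{thm:inBarelyYamanouchiClassiffshiftedYamanouchi}, together with the observation that the shifted lattice condition is transparently closed under taking right factors. Your write-up merely spells out the latter observation (via Lemma~\ref{lem:barelyYamanouchiareShiftedLattice} and the initial-segment argument for the right-to-left reading) in more detail than the paper does.
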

\begin{proof}
    By Theorem~\ref{thm:inBarelyYamanouchiClassiffshiftedYamanouchi} and Lemma~\ref{lem:rightHasContent}, noting that the class of shifted lattice words is transparently closed under taking right factors.
\end{proof}

\subsection{A shifted Littlewood--Richardson rule}

We now establish a shifted Littlewood--Richardson rule, which we will build upon to prove the main result, Theorem~\ref{thm:main}.

\begin{theorem}\label{thm:placticLRSophisticatedVersion}
    Let $\lambda$, $\mu$, and $\nu$ be strict partitions. Then $b_{\lambda,\mu}^\nu$ is equal to the number of tableaux $T_\lambda$ of shape $\lambda$ such that
    \[T_\lambda\cdot \hat{Y}_\mu = \hat{Y}_\nu.\]
\end{theorem}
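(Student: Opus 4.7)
The strategy is to derive this from Serrano's shifted plactic Littlewood--Richardson rule, Theorem~\ref{thm:placticLRrule}, by showing that once the right-hand factor is required to have shape $\mu$, it is forced to be the specific tableau $\hat{Y}_\mu$.

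To begin, I would apply Theorem~\ref{thm:placticLRrule} with the fixed tableau $T \coloneqq \hat{Y}_\nu$ of shape $\nu$. This reduces the task to counting pairs $([U],[V])$ with $U\in\shSSYT(\lambda)$, $V\in\shSSYT(\mu)$, and $[U]\cdot[V]=[\hat{Y}_\nu]$ in $\sPlactic$. For any such pair, choose word representatives $w_U\in[U]$ and $w_V\in[V]$; the concatenation $w_U\cdot w_V$ lies in $[\hat{y}_\nu]$ and is therefore barely Yamanouchi. Since $w_V$ is a right factor of this barely Yamanouchi word, Corollary~\ref{cor:rightisbarelyYam} shows that $w_V$ is itself barely Yamanouchi, and hence $V = P_\mix(w_V) = \hat{Y}_\kappa$ for some strict partition $\kappa$. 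By Corollary~\ref{cor:shapeLemma} the shape of $\hat{Y}_\kappa$ is $\kappa$, and since $V$ has shape $\mu$ by hypothesis we conclude $\kappa = \mu$, so $V = \hat{Y}_\mu$.

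Conversely, any tableau $T_\lambda\in\shSSYT(\lambda)$ satisfying $[T_\lambda]\cdot[\hat{Y}_\mu]=[\hat{Y}_\nu]$ yields an eligible pair by setting $U = T_\lambda$ and $V = \hat{Y}_\mu$. Because $P_\mix$ is a bijection between shifted plactic classes and shifted tableaux, the assignment $T_\lambda \longleftrightarrow ([T_\lambda],[\hat{Y}_\mu])$ is a bijection between the set counted in the theorem statement and the set of pairs counted by Theorem~\ref{thm:placticLRrule}. This gives the desired equality with $b_{\lambda,\mu}^\nu$.

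The entire argument rests on the closure property of barely Yamanouchi words under right factors (Corollary~\ref{cor:rightisbarelyYam}); once this is granted, the proof is a short manipulation of Serrano's rule, with no genuine obstacle. This closure property is precisely the shifted analogue of the classical fact that Yamanouchi words are closed under right factors, which plays the same role in the Lascoux--Schützenberger proof of the ordinary Littlewood--Richardson rule. The advantage of choosing the barely Yamanouchi class over, say, the shifted plactic class of $\yama_\nu$ is exactly that this closure property holds for it, as illustrated by the counterexample given at the end of Section~\ref{sec:isotropic}.
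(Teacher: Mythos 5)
Your proposal is correct and follows essentially the same route as the paper: apply Serrano's rule (Theorem~\ref{thm:placticLRrule}) with $T=\hat{Y}_\nu$, then use Corollary~\ref{cor:rightisbarelyYam} to force the right-hand factor of shape $\mu$ to be $\hat{Y}_\mu$. The only difference is that you spell out the identification $\kappa=\mu$ via Corollary~\ref{cor:shapeLemma}, which the paper leaves implicit.
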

\begin{proof}
    By Equation~\eqref{eq:placticMult}, we have that $b_{\lambda,\mu}^\nu$ is equal to the count of pairs $(T_\lambda,T_\mu)$ such that
    \[T_\lambda \cdot T_\mu = \hat{Y}_\nu. \]
    Suppose that $P_\mix(w_2)\coloneqq T_\mu$. Then, by Corollary~\ref{cor:rightisbarelyYam}, $w_2$ is barely Yamanouchi and mixed inserts to $\hat{Y}_\mu$ as this is the only barely Yamanouchi tableau of shape $\mu.$ 
    
    We can then say that $b_{\lambda, \mu}^{\nu}$ is the number of tableaux of shape $\lambda$ such that
\[ T_\lambda \cdot \hat{Y}_{\mu}  =  \hat{Y}_\nu   \]
for the fixed barely Yamanouchi tableaux $\hat{Y}_{\mu}$ and $\hat{Y}_\nu.$
\end{proof}

A natural approach at this point is to count all possible left factors by finding their mixed reading word, concatenating it with a fixed word for $\hat{Y}_\mu$, and determining if the resulting word is shifted lattice. However, finding that mixed reading word proves quite difficult in general. 
Instead, we take advantage of the fact that we can also express $b_{\lambda, \mu}^\nu$ as the number of words $w$ with insertion of shape $\lambda$ such that the equality
$ w \cdot \hat{y}_\mu = \hat{y}_\nu  $
holds in $\sPlactic$.

\subsection{Left factors of barely Yamanouchi words are interlacing}
We now turn to analyzing such left factors.

\begin{proposition}\label{prop:barelyYamhaspartitionshape}
    If $w$ is barely Yamanouchi with shrinking decomposition $\{ (d^{(k)}_{\nu_j})_k \}_{1 \leq j \leq \ell}$, then $\nu$ is a strict partition.
\end{proposition}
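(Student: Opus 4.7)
The plan is to show that the shrinking sequences of a barely Yamanouchi word $w$ are exactly $\seq(\nu_1), \seq(\nu_2), \ldots, \seq(\nu_\ell)$ with $\nu_1 > \nu_2 > \cdots > \nu_\ell > 0$, which both confirms that $\nu$ is strict and shows that in the notation of the statement the inner index $k$ is always a singleton.

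By Lemma~\ref{lem:barelyYamanouchiareShiftedLattice}, the word $w$ is shifted lattice. I would first observe that the maximum value $m$ of $w$ occurs exactly once: since no instance of $m+1$ appears in $w$, Definition~\ref{def:shiftedLatticeCondition} applied to the full word forces $m(w) \leq (m+1)(w) + 1 = 1$.

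Next I would show that the first shrinking sequence of $w$ is exactly $\seq(m)$. Suppose the greedy construction has reached a position $q$ with $w_q = k \geq 2$. Letting $s$ denote the suffix of $w$ starting at position $q$, Definition~\ref{def:shiftedLatticeCondition} applied to $s$ yields $(k-1)(s) \geq k(s) \geq 1$; since $w_q = k \neq k-1$, there must be an instance of $k-1$ strictly to the right of $q$, so the greedy construction extends by one more step. Iterating, the first shrinking sequence descends all the way to $1$, giving $\nu_1 = m$.

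Finally, by Lemma~\ref{lem:removingYamanouchi} the subword $\hat w$ obtained by removing this first sequence is again shifted lattice; and since $m$ appeared only once in $w$, the maximum of $\hat w$ is at most $m - 1 < \nu_1$. Applying the same reasoning to $\hat w$ (equivalently, inducting on $|w|$) produces $\nu_2 < \nu_1$ with a first shrinking sequence $\seq(\nu_2)$, and iterating yields the chain $\nu_1 > \nu_2 > \cdots > \nu_\ell > 0$ with each $\nu_j$ arising only once. Positivity of the $\nu_j$ is automatic since every shrinking sequence is nonempty. This establishes that $\nu$ is a strict partition. No step poses a real obstacle; the crux is that the shifted lattice condition forces both the uniqueness of the maximum of $w$ and the availability of the next smaller letter to the right at every stage of the greedy construction.
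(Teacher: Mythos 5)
Your proposal is correct and follows essentially the same route as the paper: unique maximum from the shifted lattice condition, the first shrinking sequence descending greedily all the way to $1$ (the paper cites Corollary~\ref{cor:barelyYamisYam} for the suffix inequality $(k-1)(s)\geq k(s)$ that you re-derive directly from Definition~\ref{def:shiftedLatticeCondition}), then deletion of that sequence via Lemma~\ref{lem:removingYamanouchi} and iteration. The only cosmetic difference is that the paper also invokes Lemma~\ref{lem:rightHasContent} and the interlacing bullet points up front, which your argument makes unnecessary.
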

\begin{proof}
    By Lemma~\ref{lem:rightHasContent}, $w$ has strict content. By combining Lemmas~\ref{lem:consecutiveIs} and~\ref{lem:barelyYamanouchiareShiftedLattice}, $w$ satisfies the two bullet points of the definition of ``interlacing''. In particular, $w$ has a unique maximal letter $M$. The first shrinking sequence starts with this letter $M$ and has length $M$, since the word $w$ is classically Yamanouchi by Corollary~\ref{cor:barelyYamisYam}. After deleting the first shrinking sequence, the result is still shifted lattice by Lemma~\ref{lem:removingYamanouchi} (see also, Remark~\ref{remark:removingYamanouchi}). Again by Lemma~\ref{lem:consecutiveIs} this word has a unique (and strictly smaller) maximal letter $M'$. The next shrinking sequence starts with this letter $M'$ and has length $M'$, since the restricted word is also classically Yamanouchi by Corollary~\ref{cor:shiftedLatticeisYamanouchi}. The corollary then follows by iterating this process.
\end{proof}

\begin{corollary}\label{cor:bYam_is_interlacing}
    If $w$ is a barely Yamanouchi word, then $w$ is interlacing.
\end{corollary}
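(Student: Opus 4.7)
The proof is essentially a direct chaining of already-established results, so I expect no real obstacle. The plan is the following. Let $w$ be a barely Yamanouchi word. First I would apply Lemma~\ref{lem:barelyYamanouchiareShiftedLattice}, which immediately gives that $w$ satisfies the shifted lattice condition of Definition~\ref{def:shiftedLatticeCondition}. From there, Lemma~\ref{lem:shiftedIsInterlacing} yields that $w$ is interlacing.

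To be careful about the formal requirements of Definition~\ref{def:interlacing}, I would separately verify its two parts. For the combinatorial ``between'' conditions, Lemma~\ref{lem:consecutiveIs} applied to the shifted lattice word $w$ gives that between any two instances of $i > 1$ there is both an instance of $i-1$ and of $i+1$, and between any two instances of $1$ there is an instance of $2$. For the structural requirement that the shrinking decomposition of $w$ take the form $\{(d_{\nu_j}^{(k)} \setminus d_{\mu_j}^{(k)})_k\}_{1 \leq j \leq \ell}$ for strict partitions $\mu \subseteq \nu$, I would invoke Proposition~\ref{prop:barelyYamhaspartitionshape}, which establishes that the tuple $\nu$ indexing the shrinking decomposition of $w$ is a strict partition; taking $\mu = \emptyset$ then gives $d_{\nu_j}^{(k)} \setminus d_{\mu_j}^{(k)} = d_{\nu_j}^{(k)}$, so the decomposition has precisely the required shape.

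Since each ingredient is already in hand, the corollary follows at once. The only mild subtlety is being explicit about the choice $\mu = \emptyset$ to match Definition~\ref{def:interlacing} verbatim; otherwise the argument is a one-line invocation of prior results.
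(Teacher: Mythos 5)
Your proposal is correct and follows essentially the same route as the paper: the paper's proof likewise obtains the two ``between'' conditions from Lemmas~\ref{lem:consecutiveIs} and~\ref{lem:barelyYamanouchiareShiftedLattice} and then invokes Proposition~\ref{prop:barelyYamhaspartitionshape} for the shape of the shrinking decomposition (with the same implicit choice $\mu = \emptyset$). Your alternative one-line shortcut via Lemma~\ref{lem:shiftedIsInterlacing} is also valid, since that lemma precedes the corollary, but it is not a genuinely different argument.
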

\begin{proof}
    The two bullet points of the definition of ``interlacing'' hold by Lemmas~\ref{lem:consecutiveIs} and~\ref{lem:barelyYamanouchiareShiftedLattice}. The corollary then follows from Proposition~\ref{prop:barelyYamhaspartitionshape}.
\end{proof}

\begin{proposition}\label{prop:leftImpliesInterlacing}
    If $w$ is a left factor of a barely Yamanouchi word, then $w$ is interlacing. 
\end{proposition}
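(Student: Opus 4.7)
The plan is to verify the two requirements in the definition of an interlacing word for $w$: (i) the bullet-point conditions on pairs of consecutive values, and (ii) that the shrinking decomposition of $w$ has the form $\{(d^{(k)}_{\nu'_j} \setminus d^{(k)}_{\mu'_j})_k\}_j$ for some strict partitions $\mu' \subseteq \nu'$.

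Write $u = w \cdot v$ where $u$ is the ambient barely Yamanouchi word. By Lemma~\ref{lem:barelyYamanouchiareShiftedLattice} the word $u$ is shifted lattice, and by Proposition~\ref{prop:barelyYamhaspartitionshape} its shrinking decomposition $\{D_j\}_{j=1}^{\ell(\nu)}$ is indexed by a strict partition $\nu$, with each $D_j = (\nu_j, \nu_j-1,\ldots,1)$. Condition (i) is automatic: if two occurrences of a value $i$ both lie in $w$, then every position strictly between them is still inside $w$, and the instances of $i\pm 1$ between them in $u$ (which exist by Lemma~\ref{lem:consecutiveIs}) therefore lie in $w$.

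For (ii), within $u$ the letters of $D_j$ occupy positions $p_1^j < p_2^j < \cdots < p_{\nu_j}^j$ carrying the values $\nu_j, \nu_j-1, \ldots, 1$, by construction of the greedy shrinking procedure. Writing $n = |w|$ and $k_j := |\{i : p_i^j \leq n\}|$, the letters of $D_j$ inside $w$ form the \emph{top part} $T_j := (\nu_j, \nu_j-1, \ldots, m_j)$, where $m_j := \nu_j - k_j + 1$. I will show that $\{T_j : k_j > 0\}$ is precisely the shrinking decomposition of $w$. Let $j^\star$ be the least $j$ with $k_j > 0$: then $\nu_{j^\star}$ is the unique maximum of $w$ (no $D_j$ with $j < j^\star$ contributes, and strictness of $\nu$ makes $\nu_j < \nu_{j^\star}$ for $j > j^\star$), so greedy begins its first sequence at $\nu_{j^\star} \in D_{j^\star}$. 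Lemma~\ref{lem:combinatorialCharShiftedYamanouchi} implies that, for any value $v$ and any $j' > j^\star$, the copy $v \in D_{j'}$ lies to the left of $v \in D_{j^\star}$ in $u$; so no continuation candidate $\nu_{j^\star}-1, \nu_{j^\star}-2, \ldots$ drawn from a different sequence sits to the right of the current greedy position. Hence greedy traces out exactly $T_{j^\star}$ and halts this sequence, and iterating on what remains yields $\{T_j : k_j > 0\}$.

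It remains to confirm that $\nu' := (\nu_j)_{k_j > 0}$ and $\mu' := (m_j - 1)_{k_j > 0}$ (after dropping zero parts) are strict partitions with $\mu' \subseteq \nu'$. Strictness of $\nu'$ is inherited from $\nu$. For $\mu'$, suppose $j < j'$ both have $k_j, k_{j'} > 0$; I claim $m_j > m_{j'}$ whenever $m_j \geq 2$. If $m_j > \nu_{j'}$ this is immediate since $m_{j'} \leq \nu_{j'}$. Otherwise $m_j \in D_{j'}$ exists, and the leftward ordering above (applied with $v = m_j$) forces $m_j \in D_{j'}$ into $w$, giving $m_{j'} \leq m_j$. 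If we had $m_{j'} = m_j = v \geq 2$, then $v \in D_j$ and $v \in D_{j'}$ would lie in $w$ while $v-1$ would lie in neither. The first part of Lemma~\ref{lem:combinatorialCharShiftedYamanouchi} places $v-1 \in D_{j+1}$ to the left of $v \in D_j$, which is in $w$, so $v-1 \in D_{j+1}$ lies in $w$; the ``in particular'' clause then propagates this leftward through indices $j+2, \ldots, j'$, forcing $v-1 \in D_{j'}$ into $w$ and contradicting $m_{j'} = v$. Thus $\mu'$ is strict, and $\mu'_j = m_j - 1 < \nu_j = \nu'_j$ yields $\mu' \subseteq \nu'$. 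The main technical obstacle is identifying $w$'s shrinking decomposition in terms of $u$'s, where Lemma~\ref{lem:combinatorialCharShiftedYamanouchi} does the crucial work of pinning down the horizontal order of equal-valued entries from distinct sequences of $u$.
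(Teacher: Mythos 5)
Your proof is correct in substance, but it takes a genuinely different route from the paper's for the harder half of the statement. Both arguments dispose of the two bullet-point conditions the same way (they are inherited from the ambient word). For the shrinking-decomposition condition, however, the paper exploits the \emph{right} factor: writing the ambient word as $wu$, it invokes Corollary~\ref{cor:rightisbarelyYam} to see that $u$ is itself barely Yamanouchi, so by Proposition~\ref{prop:barelyYamhaspartitionshape} both $u$ and $wu$ have shrinking decompositions indexed by strict partitions $\mu$ and $\nu$; Lemma~\ref{lem:redundant_condition} then splits each shrinking sequence of $wu$ as a sequence of $w$ concatenated onto one of $u$, so $w$'s sequences are the differences $d_{\nu_j}\setminus d_{\mu_j}$. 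You instead work entirely inside the ambient word: you identify the prefix's portion of each $D_j$ as a top part $(\nu_j,\dots,m_j)$, show by a greedy analysis (via Lemma~\ref{lem:combinatorialCharShiftedYamanouchi}) that these top parts are exactly the shrinking decomposition of $w$, and then verify strictness of $\mu'=(m_j-1)_j$ by hand. Your route is more self-contained and actually supplies details that the paper's one-line appeal to Lemma~\ref{lem:redundant_condition} elides (namely, that restricting the shrinking decomposition of $wu$ to the prefix recovers the shrinking decomposition of $w$); the paper's route gets strictness of $\mu$ for free from the right factor. One imprecision you should repair: in the greedy step you justify ``no continuation candidate from another sequence sits to the right of the current greedy position'' by the \emph{in particular} clause ($v\in D_{j'}$ lies left of $v\in D_{j^\star}$), but that alone does not exclude $v\in D_{j'}$ sitting strictly between $(v+1)\in D_{j^\star}$ and $v\in D_{j^\star}$, where the greedy procedure would grab it. What you need is that $v\in D_{j'}$ is left of $(v+1)\in D_{j^\star}$; this follows from the first clause of Lemma~\ref{lem:combinatorialCharShiftedYamanouchi} (directly when $j'=j^\star+1$, and for larger $j'$ by the transitive chain $v\in D_{j'}$ left of $(v+1)\in D_{j'-1}$ left of $\cdots$, ending at an entry of $D_{j^\star}$ of value at least $v+1$), so the gap closes with the tools you already cite.
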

\begin{proof}
    Let $wu$ be a barely Yamanouchi word. By Corollary~\ref{cor:bYam_is_interlacing}, $wu$ is interlacing. Hence, $w$ also satisfies the two bullet points in the definition of ``interlacing.'' 
     It remains to check that the shrinking decomposition of $w$ gives a difference of two strict partitions. 
     
     By Corollary~\ref{cor:rightisbarelyYam}, $u$ is barely Yamanouchi. Thus, Proposition~\ref{prop:barelyYamhaspartitionshape} gives that the shrinking sequences of both $u$ and $wu$ give strict partitions. 
    By Lemma~\ref{lem:redundant_condition}, every shrinking sequence of $wu$ consists of a shrinking sequence of $w$ concatenated onto a shrinking sequence of $u$. The lemma follows.
\end{proof}

\subsection{Completions of constructed tableaux are barely Yamanouchi}

    To improve upon Theorem~\ref{thm:placticLRSophisticatedVersion}, we now study constructed tableaux in more detail.
    
\begin{lemma}\label{lem:diagEntriesIndicesConstructed}
    Let $\mu<\nu$ be strict partitions and $T$ be constructed from $\mu$ and $\nu.$ If the diagonal entries of $T$ are $d^{(1)}_{s_1}, d^{(2)}_{s_2}, \dots, d^{(\ell(\nu))}_{s_{\ell(\nu)}}$ from top to bottom, then we have 
    \[
    s_1 > s_2 > \dots > s_{\ell(\nu)}.
    \]
\end{lemma}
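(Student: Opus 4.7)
The plan is to proceed by induction on the number of entries of $T$, using Lemma~\ref{lem:constructedSmallChar} to describe the one-entry-at-a-time construction of $T$ from the empty tableau, maintaining at each step both that the current tableau is constructed and that its diagonal shrinking indices are strictly decreasing from top to bottom.

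As preparation, I would observe that every diagonal entry of $T$ is unprimed (since shifted semistandard tableaux place no primed entries on the main diagonal), and hence lies in the vertical strip $\xi^{(s_r)}/\pi^{(s_r)}$ of its Serrano--Pieri strip. By the row-ordering clause of Definition~\ref{def:constructibleDef}, the unprimed entries of every row are arranged left-to-right by decreasing shrinking index, so the diagonal entry at $(r,r)$, being the leftmost unprimed entry in row $r$, carries the \emph{maximum} shrinking index $s_r$ among unprimed entries of row $r$.

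The empty tableau is vacuously fine. For the inductive step, suppose $T_\lambda'$ is obtained from a constructed tableau $T_\lambda$ by adding $i_j$ as prescribed by Lemma~\ref{lem:constructedSmallChar}. If the addition does not occupy a new diagonal box, the diagonal entries of $T_\lambda$ persist unchanged in $T_\lambda'$, so the strictly decreasing property passes up by the inductive hypothesis. Otherwise $i_j$ occupies the new bottom diagonal $(r^*,r^*)$, and the task is to verify $s^{T_\lambda}_{r^*-1} > j$. Of the four conditions of Lemma~\ref{lem:constructedSmallChar}, condition~(1) cannot apply to a new bottom diagonal (it would force $i_j$ to lie weakly north of an entry already in $T_\lambda$), and condition~(3) cannot apply (it requires $i_j$ primed). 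Under condition~(2), $(i-1)_{j+1}$ is unprimed and in row $r^*-1$, so row $r^*-1$ contains an unprimed entry of sequence $j+1$ and therefore $s^{T_\lambda}_{r^*-1} \ge j+1 > j$. Under condition~(4), $(i-1)_{j+1}$ is primed and so by Lemma~\ref{lem:rowandColBoundedLemma} lies in a row $\le r^*-2$; combining this with Lemma~\ref{lem:primePlacementLemma} and Lemma~\ref{lem:constructedAncillaLemma}, together with the row/column-ordering clauses of Definition~\ref{def:constructibleDef}, one argues that some sequence of index exceeding $j$ must nevertheless contribute an unprimed entry to row $r^*-1$, again forcing $s^{T_\lambda}_{r^*-1} > j$. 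Combined with the inductive hypothesis we conclude $s_1 > s_2 > \cdots > s_{r^*-1} > j = s^{T_\lambda'}_{r^*}$.

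The main obstacle I anticipate is the analysis of condition~(4) (and the analogous degenerate subcase where $(i-1)_{j+1}$ does not exist): because $(i-1)_{j+1}$ is primed (or absent) it does not itself furnish an unprimed entry in row $r^*-1$, so identifying a higher-indexed sequence whose unprimed portion reaches row $r^*-1$ requires careful bookkeeping using the structural constraints of Section~\ref{sec:construct}. Specifically, one must use that the primed entries of lower-index sequences lie below unprimed entries of higher-index sequences in any column, and that primed/unprimed statuses propagate consistently across the pairs $(i,j),(i+1,j-1)$, to pin down the relevant unprimed occupant of row $r^*-1$.
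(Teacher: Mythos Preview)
Your approach is genuinely different from the paper's, and it has real gaps.

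The paper gives a short direct contradiction: if $s_e < s_{e+1}$ for some $e$, then (because $\mu<\nu$ are strict) the entries $(d^{(e)}+1)_{s_e},\dots,(d^{(e+1)}+1)_{s_e}$ all exist in $T$; since $d^{(e)}_{s_e}$ sits on the diagonal they are all primed; order considerations force $(d^{(e+1)})'_{s_e}$ and hence $(d^{(e+1)}+1)'_{s_e}$ to lie east of the unprimed $d^{(e+1)}_{s_{e+1}}$, so one can extend the Serrano--Pieri strip containing $d^{(e+1)}_{s_{e+1}}$, contradicting that $T$ is constructed. No induction, no case analysis on Lemma~\ref{lem:constructedSmallChar}.

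Your inductive route has two problems. First, iterating Lemma~\ref{lem:constructedSmallChar} from the empty tableau up to $T$ requires each intermediate tableau to be constructed from \emph{strict partitions} $\tau<\psi$ (that is the lemma's hypothesis). But intermediate contents need not be realizable by strict partitions at all: already for $\mu=\emptyset$, $\nu=(2,1)$, after inserting $1_2$ and $1_1$ the content is $\{1,1\}$, and no pair of strict partitions yields two intervals both equal to $(0,1]$. So the induction as stated does not get off the ground; you would need to extend Lemma~\ref{lem:constructedSmallChar} to general sequences $\alpha<\beta$, which you do not do.

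Second, your handling of condition~(4) (and of the degenerate case where $(i-1)_{j+1}$ does not exist) is not a proof. When $i_j$ lands on a new bottom diagonal $(r^*,r^*)$ with $r^*<i$, conditions (1)--(3) are indeed excluded, and you must produce an unprimed entry of shrinking index $>j$ in row $r^*-1$. Nothing in Lemma~\ref{lem:primePlacementLemma}, Lemma~\ref{lem:constructedAncillaLemma}, or the ordering clauses of Definition~\ref{def:constructibleDef} furnishes such an entry directly from the fact that $(i-1)_{j+1}$ is primed. The cleanest way to close this gap is to note that if $s_{r^*-1}\le j$ then the two adjacent diagonal entries $d^{(r^*-1)}_{s_{r^*-1}}$ and $i_j$ already give $s_{r^*-1}\le s_{r^*}$ in the \emph{constructed} tableau $T_\lambda'$, and then run exactly the paper's extension argument on this pair. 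But once you do that, the induction and the case split on Lemma~\ref{lem:constructedSmallChar} are doing no work: the paper's one-paragraph direct argument already proves the lemma for any constructed tableau.
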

\begin{proof}
    Suppose that the lemma does not hold. Then there is a smallest row $e$ such that $s_e < s_{e+1}$. By order considerations, we have $d^{(e)} < d^{(e+1)}$. Hence, the entries 
    \begin{equation}\label{eq:list}
          (d^{(e)}+1)_{s_e}, (d^{(e)}+2)_{s_e}, \dots, d^{(e+1)}_{s_e}, (d^{(e+1)}+1)_{s_e}  
    \end{equation}
    also exist in $T$ by $\mu$ and $\nu$ being strict partitions. Since $d^{(e)}_{s_e}$ is on the diagonal, all the entries of \eqref{eq:list} are primed in $T$. By order considerations, $(d^{(e+1)}_{s_e})'$ must appear east of $d^{(e+1)}_{s_{e+1}}$. Hence, $(d^{(e+1)}+1)_{s_e})'$ is East of $d^{(e+1)}_{s_{e+1}}$. Therefore, the Serrano--Pieri strip containing $d^{(e+1)}_{s_{e+1}}$ can be extended by using $(d^{(e+1)}+1)_{s_e})'$ and all larger entries of that shrinking sequence.
\end{proof}

\begin{lemma}\label{lem:insertionOfShrinkingSequenceConstructedCase}
    Let $\mu <\nu$ be strict partitions and $T$ be constructed from $\mu$ and $\nu.$ Suppose that $w$ is a shifted lattice word with $\ct(w)=\ct(\hat{y}_\nu)$. Moreover, suppose that for all $r>0$,  $i \geq r$, and $j\geq r-1$, at some point of the insertion 
     \[ T \leftarrow \seq(\mu_{\ell(\mu)}, 1) \leftarrow \seq(\mu_{\ell(\mu)-1}, 1) \leftarrow \dots \leftarrow \seq(\mu_1, 1 ) \]
    (possibly before any entry has been inserted), 
    the letter $i_{\ell(\nu) - j}$ is weakly south of row $r$, or is a primed letter in a row $f<j+1$. Then, for all $i,j>0$:
   \begin{itemize}
   \item the letter $i_{\ell(\nu)-j}$ is never inserted South of row $j+1;$
   \item if $i>j+1$, then the letter $i_{\ell(\nu)-j}$ becomes primed at some point of the insertion in a row $b\leq j+1$; and
       \item if $i\leq j+1$, then $i_{\ell(\nu)-j}$ remains unprimed throughout the insertion.
   \end{itemize}  
   \end{lemma}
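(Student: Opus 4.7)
The plan is to mirror the inductive argument in the proof of Lemma~\ref{lem:insertionOfShrinkingSequence}, substituting for each appeal to Section~\ref{sec:interlacingTableaux} (valid when inserting into an empty tableau from an interlacing word) the corresponding result from Section~\ref{sec:insertions} (valid when inserting into a constructed tableau). The bridge is Lemma~\ref{lem:puttingAllTogetherCompletionConstructed}: applied iteratively down the list $\seq(\mu_{\ell(\mu)},1), \seq(\mu_{\ell(\mu)-1},1),\dots,\seq(\mu_1,1)$, it guarantees that every intermediate tableau in the completion of $T$ satisfies the same no-repetition and ordering properties used in the older proof. In this way, Lemmas~\ref{lem:transitionLemmaConstructed}, \ref{lem:sameSeqConstructed}, \ref{lem:sameSeqPrimedConstructed}, \ref{lem:diffSeqPrimedConstructed}, and \ref{lem:noRepetConstructedTableaux} will play the roles played by Lemma~\ref{lem:bigRowLemma} and Lemma~\ref{lem:GodzillaLemma} in the older proof.

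My first step is to recover the two insertion-order chains used in Lemma~\ref{lem:insertionOfShrinkingSequence}: so long as $i_{\ell-j}$, $(i-1)_{\ell-j}$, and $(i+1)_{\ell-j-1}$ are unprimed, $i_{\ell-j}$ is inserted into each row before $(i-1)_{\ell-j}$ and before $(i+1)_{\ell-j-1}$, and hence before $i_{\ell-j-1}$. These precedences follow from Lemmas~\ref{lem:sameSeqConstructed} and~\ref{lem:transitionLemmaConstructed}, with the no-repetition hypothesis needed to apply them supplied by Lemma~\ref{lem:noRepetConstructedTableaux}.

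With these chains in hand, I will induct on $y \geq 0$ to establish the three conclusions simultaneously: that $n_{\ell-y}$ is never inserted south of row $y+1$, that $(y+1)_{\ell-y}$ comes to rest on the diagonal of row $y+1$ and remains unprimed (yielding the third bullet), and that $n_{\ell-y}$ with $n>y+1$ is eventually primed in some row $b \leq y+1$ (yielding the second bullet). The base $y=0$ uses the lemma's hypothesis to locate $1_\ell$ on the diagonal of row $1$ and the chains to force every other $n_\ell$ into row $1$, where Lemma~\ref{lem:sameSeqPrimedConstructed} gives the priming order. The inductive step mirrors the analogous step of Lemma~\ref{lem:insertionOfShrinkingSequence}: the hypothesis forces $n_{\ell-m}$ to row $m+1$, the chains and the inductive hypothesis force $(m+1)_{\ell-m}$ to be the unique candidate for the diagonal there, and Lemma~\ref{lem:rowandColBoundedLemma} prevents both its migration further south and any subsequent priming.

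The main obstacle I anticipate is interference from entries already present in $T$: unlike the empty-tableau setup, the diagonal positions and bumping paths can be pre-populated. Here Lemma~\ref{lem:diagEntriesIndicesConstructed} is the key new ingredient, as it forces the diagonal entries of $T$ to be indexed in strictly decreasing order from top to bottom. Combined with the hypothesis of the current lemma (which controls where each $i_{\ell-j}$ can initially lie), this should prevent pre-existing entries from occupying diagonals the induction intends for newly inserted letters, and conversely prevent newly inserted letters from unexpectedly bumping pre-existing diagonal occupants. Orchestrating this synchronization cleanly, so that each inductive step inherits a configuration whose local structure matches that expected from Lemma~\ref{lem:insertionOfShrinkingSequence}, will be the most delicate part of the argument.
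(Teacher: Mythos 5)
Your overall strategy is the one the paper follows: mirror the induction of Lemma~\ref{lem:insertionOfShrinkingSequence}, replacing the Section~\ref{sec:interlacingTableaux} inputs by Lemmas~\ref{lem:transitionLemmaConstructed}, \ref{lem:sameSeqConstructed}, \ref{lem:sameSeqPrimedConstructed}, \ref{lem:noRepetConstructedTableaux} and~\ref{lem:puttingAllTogetherCompletionConstructed}, and you correctly single out Lemma~\ref{lem:diagEntriesIndicesConstructed} as the new ingredient needed to control the pre-populated diagonal. The paper's proof does exactly this for the dynamic part of the argument.

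There is, however, a concrete gap in your treatment of the entries already sitting in $T$. Before the induction can start, one must show that \emph{in $T$ itself} (i) every letter $i_{\ell-j}$, primed or not, lies weakly north of row $j+1$, and (ii) $i_{\ell-j}$ is unprimed in $T$ exactly when $i\leq j+1$. The lemma's hypothesis does not supply the upper bound in (i): it only asserts that each $i_{\ell-j}$ is at some point weakly \emph{south} of row $r$ or primed in a row $f<j+1$, and the first disjunct gives no ceiling. Lemma~\ref{lem:diagEntriesIndicesConstructed} does handle the unprimed case of (i) (the diagonal indices decrease down the rows, so an unprimed $i_{\ell-j}$ cannot sit in a row whose diagonal entry has index $\geq \ell-j$ too far down), but it says nothing about a primed $i_{\ell-j}'$ lying in a row whose diagonal entry has a \emph{smaller} index than $\ell-j$. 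The paper closes this case with a separate pigeonhole argument: using Lemma~\ref{lem:constructedAncillaLemma}(1) and (3) one forces, for each unprimed entry left of the leftmost primed entry in that row, a distinct primed copy of the same value weakly west of it, and two of these copies would have to share a column, contradicting Lemma~\ref{lem:rowCharConstructedTableaux}(2). Likewise, (ii) needs its own induction on $j$ via Lemma~\ref{lem:constructedAncillaLemma}(1). Without these two static facts about $T$, the base configuration of your induction is not pinned down and the synchronization you describe in your last paragraph cannot get started.
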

\begin{proof}
    Suppose the hypotheses hold and fix $i,j > 0$.
    Let us first see that in $T$, the letter $i_{\ell-j}$ is never South of row $j+1$ whether primed or unprimed. We start by considering the position in $T$ of $i_{\ell-j}$ when unprimed.

    Let $x_c$ be the entry in the diagonal position of the same row of $T$ as $i_{\ell-j}$, where possibly $x_c=i_{\ell-j}.$ Since $T$ is constructible (Definition~\ref{def:constructibleDef}), we have $\ell-j \leq c$. Moreover, for any row North of $x_c$, the entry in its diagonal, which we denote by $y_b$, also satisfies that $c<b$ by Lemma~\ref{lem:diagEntriesIndicesConstructed}. Accordingly, if we let $k \coloneqq \ell - c$, it follows that the southernmost row $x_c$ can occupy is $k+1$ as the shrinking sequence subindex must decrease by at least one for each row and the best starting entry would be an entry in the $\ell$-th sequence. But then, $\ell-j<c$ also forces $k<j$. That is, $i_{\ell-j}$ occupies a row weakly north of row $j+1.$

    Now let us prove that the primed letter $i_{\ell-j}'$ must also be weakly north of row $j+1$ in $T$. Let $x_c$ be the diagonal entry lying in the same row as $i_{\ell-j}'$. If $\ell-j \leq c$, the argument for unprimed $i_{\ell-j}$ applies again and $i_{\ell-j}'$ is weakly north of row $j+1.$ Otherwise $c< \ell-j$. In this case, let $u_{\ell-m}'$ be the leftmost primed entry in the row of $i_{\ell-j}'$ (and of $x_c$). A direct application of Lemma~\ref{lem:constructedAncillaLemma}(3) shows that $\ell-m >\ell-j$, so that $c<\ell-m$ as well. Similarly, for all unprimed entries $e_d$ between $x_c$ and $u_{\ell-m}'$ in the same row, we have $d<c$ by Definition~\ref{def:constructibleDef}, and in particular $d<\ell-m$ also holds. Therefore, for all such unprimed entries $e_d$, where possibly $e_d=x_c$, there must exist a primed entry $(u-1)_d'$ because of Lemma~\ref{lem:constructedAncillaLemma}(1). The letter $(u-1)_d'$ is also weakly west of $u_{\ell-m}'$ by Lemma~\ref{lem:constructedAncillaLemma}, so that we require as many copies of $(u-1)'$ as unprimed entries left of $u_{\ell-m}'$ and in its same row, to lie weakly west of $u_{\ell-m}'$. Considering that $(u-1)_d'$ is further prohibited from being weakly northwest of $e_d$ by order considerations, it becomes apparent that at least two distinct copies of $(u-1)'$ must lie in the same column. This yields a contradiction by Lemma~\ref{lem:rowCharConstructedTableaux}(2).  

    With the aim of making the present lemma analogous to Lemma~\ref{lem:insertionOfShrinkingSequence} it is also important to show that for all $i,j>0$ the letter $i_{\ell-j}$ is not primed in $T$ for $i \leq j+1$. Once that is established, it is immediate that if in $T$ the letters $i_{\ell-j}, (i-1)_{\ell-j}, \dots, (i-j-1)_{\ell-j}$ all exist for some $i>j+1$, then $i_{\ell-j}$ is primed. Indeed, an unprimed instance of $i_{\ell-j}$ would need to be in a vertical strip of height at least $j+1$, contradicting that $i_{\ell-j}$ cannot be South of row $j+1.$

    We establish this statement by induction on $j$. For the base case, note that $1_\ell$ cannot be primed because $T$ is a semistandard shifted tableau. Suppose then that the statement holds for all letters $i_{\ell-j}$ with $j<k$ and $i\leq j+1$. If $i_{\ell-k}$ is unprimed for all $i$, we can conclude the inductive step, so assume $i_{\ell-k}'\in T$ for some $i\leq k+1$ and let us consider its position in $T$.
     
     Say that $\gamma^{(s)}$ is the Serrano-Pieri strip containing $i_{\ell-k}'$. Then, as $i\neq 1$ because $i_{\ell-k}$ is primed, and since $T$ is constructed from strict partitions $\mu<\nu$ where an instance of $i$, primed or not, is part of $T$; it also follows that $(i-1)_{\ell-k+1}$ is an entry of $T$. Moreover, the inductive hypothesis forces $(i-1)_{\ell-k+1}$ to be unprimed. However, this contradicts Lemma~\ref{lem:constructedAncillaLemma}(1).

    The rest of the argument is analogous to Lemma~\ref{lem:insertionOfShrinkingSequence} since the order of priming is identical by Lemma~\ref{lem:sameSeqPrimedConstructed} and Lemma~\ref{lem:weaklyWestCompletionOfConstructed}, and the order in which entries appear to a row is likewise analogous by Lemma~\ref{lem:puttingAllTogetherCompletionConstructed} Lemma~\ref{lem:transitionLemmaConstructed} and Lemma~\ref{lem:sameSeqConstructed}. 
\end{proof}

\begin{lemma}
    Let $T$ be constructed from strict partitions $\mu <\nu$ and $w$ be a shifted lattice word with $\ct(w)=\ct(\hat{y}_\nu)$. Then for all $r>0$,  $i \geq r$, and $j\geq r-1$, at some point of the insertion \[ T \leftarrow \seq(\mu_{\ell(\mu)}, 1) \leftarrow \seq(\mu_{\ell(\mu)-1}, 1) \leftarrow \dots \leftarrow \seq(\mu_1, 1 ), \] possibly before any entry has been inserted,  
    the letter $i_{\ell(\nu) - j}$ is weakly south of row $r$, or is a primed letter in a row $f<j+1$. 
\end{lemma}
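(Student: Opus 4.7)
The plan is to proceed by induction on $r$, in direct parallel to the proof of Lemma~\ref{lem:entriesOfShiftedLatticeAreInsertedToRows}, but invoking the constructed-tableau analogs of the insertion-order lemmas established throughout Section~\ref{sec:insertions}. The base case $r=1$ is immediate, since every letter $i_{\ell(\nu)-j}$ lies in some row $\geq 1$ at each moment in which it is present in the tableau.

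For the inductive step with $r \geq 2$, fix $i \geq r$ and $j \geq r-1$, and split on whether $i_{\ell(\nu)-j}$ ever becomes primed during the insertion. If it becomes primed in some row $f'$, then just before priming, $i_{\ell(\nu)-j}$ is unprimed on the diagonal of row $f'$; if $f' \geq r$, this moment witnesses ``weakly south of row $r$'', while if $f' \leq r-1 \leq j$, then the priming itself witnesses ``primed in row $f' < j+1$''.

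Suppose instead that $i_{\ell(\nu)-j}$ is never primed during the insertion. The inductive hypothesis at $r-1$ applied to $i_{\ell(\nu)-j}$ (valid since $i \geq r-1$ and $j \geq r-2$) places it in row $\geq r-1$ at some moment; if at any such moment it lies in row $\geq r$ we are done, so we may assume it arrives at row $r-1$. Applying the same hypothesis to $(i-1)_{\ell(\nu)-j}$ gives either priming in some row $<j+1$ or arrival at row $\geq r-1$ unprimed. The former case is ruled out, because Lemma~\ref{lem:sameSeqPrimedConstructed}, lifted to the full completion via Lemma~\ref{lem:puttingAllTogetherCompletionConstructed}, would then force $i_{\ell(\nu)-j}$ to be primed first, contradicting our standing assumption. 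Thus $(i-1)_{\ell(\nu)-j}$ reaches row $r-1$ unprimed at some moment, and at that moment Lemma~\ref{lem:noRepetConstructedTableaux} (again lifted via Lemma~\ref{lem:puttingAllTogetherCompletionConstructed}) verifies the hypothesis of Lemma~\ref{lem:sameSeqConstructed}; the latter then forces $i_{\ell(\nu)-j}$ to sit strictly south of $(i-1)_{\ell(\nu)-j}$, i.e., in row $\geq r$.

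The principal technical subtlety is the passage from the single-right-multiplication insertion-order results of Section~\ref{sec:insertions} to the full multi-sequence completion $T \leftarrow \seq(\mu_{\ell(\mu)},1) \leftarrow \cdots \leftarrow \seq(\mu_1,1)$; this reduction is precisely what Lemma~\ref{lem:puttingAllTogetherCompletionConstructed} accomplishes, so the induction closes without additional combinatorial case analysis.
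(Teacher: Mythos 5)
Your overall strategy is exactly the one the paper intends: its proof of this lemma is literally the single sentence ``completely analogous to Lemma~\ref{lem:entriesOfShiftedLatticeAreInsertedToRows},'' and your induction on $r$, with the Section~\ref{sec:insertions} order lemmas lifted to the full completion via Lemma~\ref{lem:puttingAllTogetherCompletionConstructed}, is the correct instantiation of that analogy. The only stylistic difference is at the closing step: the template lemma argues that $(i-1)_{\ell(\nu)-j}$ physically displaces $i_{\ell(\nu)-j}$ out of row $r-1$, whereas you invoke the positional conclusion of Lemma~\ref{lem:sameSeqConstructed} directly; these are two phrasings of the same fact and both are fine.

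There is, however, one genuine gap, and it is precisely the point where the constructed-tableau setting differs from the empty-tableau setting of Lemma~\ref{lem:entriesOfShiftedLatticeAreInsertedToRows}: your dichotomy ``becomes primed during the insertion'' versus ``never primed during the insertion'' does not cover letters that are \emph{already primed in $T$ before any entry is inserted}. This is not an exotic case --- for $i>j+1$ the letter $i_{\ell(\nu)-j}$ is typically primed in $T$ from the outset. For such a letter there is no priming event to exhibit, and the second branch of your argument cannot be used either, since Lemma~\ref{lem:sameSeqConstructed} explicitly excepts primed entries. What is needed instead is the structural analysis of where primed entries can sit in a constructed tableau (the first half of the proof of Lemma~\ref{lem:insertionOfShrinkingSequenceConstructedCase}, resting on Lemma~\ref{lem:diagEntriesIndicesConstructed}, Lemma~\ref{lem:constructedAncillaLemma}, and Lemma~\ref{lem:rowandColBoundedLemma}), which places $i'_{\ell(\nu)-j}$ in a row $f<j+1$ already in $T$; one then observes that column bumps only move primed entries weakly north, so this persists throughout the completion. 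Adding that case would close the argument.
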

\begin{proof}
    The proof is completely analogous to Lemma~\ref{lem:entriesOfShiftedLatticeAreInsertedToRows}.
\end{proof}

\begin{theorem}\label{thm:constructedIsLeft}
    Let $T$ be constructed from strict partitions $\mu < \nu$. There exists a word $w\in \cN^*$ such that
    \[ T \cdot w = \hat{y}_\nu \]
\end{theorem}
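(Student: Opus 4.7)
The plan is to take $w := \seq(\mu_{\ell(\mu)},1)\cdot \seq(\mu_{\ell(\mu)-1},1)\cdots \seq(\mu_1,1)$, which is precisely $\hat{y}_\mu$, and to prove that the mixed insertion $\hat{T}$ of the concatenation $T\cdot w$ coincides with $\hat{Y}_\nu$. Since $\hat{Y}_\nu = P_\mix(\hat{y}_\nu)$, this yields $T\cdot w\sim \hat{y}_\nu$ in $\sPlactic$, as required. The choice is natural: $T$ already carries the entries indexed by $\bigcup_i (\mu_i,\nu_i]$, while $w$ supplies precisely the missing values $\bigcup_i (0,\mu_i]$, so that $\ct(T\cdot w)=\ct(\hat{y}_\nu)$.

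Lemma~\ref{lem:puttingAllTogetherCompletionConstructed} guarantees that $\hat{T}$ inherits every structural property established for a single right multiplication of a constructed tableau in Section~\ref{sec:insertions}: no two unprimed entries of value $i$ share a row $r<i$ (Lemma~\ref{lem:noRepetConstructedTableaux}), no entries repeat in any column (Lemma~\ref{lem:noTwoRepeatedColsCompletionConstructed}), and the priming order and relative positions of letters $i_j$, $(i-1)_j$, $(i-1)_{j+1}$, together with those of their primed counterparts $i_j'$ and $(i+1)_{j-1}'$, obey the conclusions of Lemmas~\ref{lem:transitionLemmaConstructed}--\ref{lem:weaklyWestCompletionOfConstructed}. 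With these invariants in place, I would apply Lemma~\ref{lem:insertionOfShrinkingSequenceConstructedCase} to conclude that in $\hat{T}$, for every $i,j\geq 1$, the letter $i_{\ell(\nu)-j}$ sits weakly north of row $j+1$, is primed exactly when $i>j+1$, and is unprimed exactly when $i\leq j+1$. In particular, row $r$ of $\hat{T}$ must contain exactly the unprimed copies of the value $r$ indexed by $j=r-1,\dots,\ell(\nu)-1$, occupying its leftmost $\ell(\nu)-r+1$ cells, in agreement with Lemma~\ref{lem:charScarcelyYamanouchiTableaux}.

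The positions of the primed entries of $\hat{T}$ are then uniquely pinned down by the remaining lemmas of Section~\ref{sec:insertions}, most importantly Lemma~\ref{lem:weaklyWestCompletionOfConstructed} (giving $(i+1)_{j-1}'$ weakly west of $i_j'$), Lemma~\ref{lem:eastSameSeqCompletionCompletionConstructed} ($(i+1)_j'$ east of $i_j'$), and Lemma~\ref{lem:noTwoRepeatedColsCompletionConstructed} (column-uniqueness of primed values). Together with the already-fixed unprimed skeleton, these constraints match exactly the positional description of the primed part of $\hat{Y}_\nu$ given in Lemma~\ref{lem:charScarcelyYamanouchiTableaux}, so $\hat{T}=\hat{Y}_\nu$. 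The main obstacle is verifying that the Section~\ref{sec:insertions} lemmas, originally stated for a single right multiplication $T\leftarrow \mu_{\ell(\mu)}$, propagate through the entire iterated insertion of $w$ while the shrinking-index interpretation of the freshly inserted letters remains compatible with that of the letters already in $T$; this is precisely the bookkeeping done by Lemma~\ref{lem:puttingAllTogetherCompletionConstructed}, but care is required to confirm that each inserted letter of $w$ acquires the correct shrinking index so that the hypotheses of Lemma~\ref{lem:insertionOfShrinkingSequenceConstructedCase} hold at every stage.
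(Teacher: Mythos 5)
Your proposal is correct and follows essentially the same route as the paper: the paper's proof simply declares the argument analogous to the converse implication of Theorem~\ref{thm:inBarelyYamanouchiClassiffshiftedYamanouchi}, citing Lemmas~\ref{lem:eastSameSeqCompletionCompletionConstructed} and~\ref{lem:noTwoRepeatedColsCompletionConstructed}, and you have correctly identified $w = \hat{y}_\mu$, invoked Lemma~\ref{lem:puttingAllTogetherCompletionConstructed} for the propagation of the single-insertion invariants, and used Lemma~\ref{lem:insertionOfShrinkingSequenceConstructedCase} together with the positional lemmas to pin down $\hat{T}=\hat{Y}_\nu$ exactly as the paper intends. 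Your closing caveat about the shrinking-index bookkeeping is precisely the content the paper delegates to Lemma~\ref{lem:puttingAllTogetherCompletionConstructed}, so nothing is missing.
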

\begin{proof}
    The proof is analogous to the converse implication of Theorem~\ref{thm:inBarelyYamanouchiClassiffshiftedYamanouchi} by virtue of Lemma~\ref{lem:eastSameSeqCompletionCompletionConstructed} and Lemma~\ref{lem:noTwoRepeatedColsCompletionConstructed}.
\end{proof}

\subsection{Interlacing tableaux are left factors of barely Yamanouchi tableaux}

We previous established in Proposition~\ref{prop:leftImpliesInterlacing} that left factors of barely Yamanouchi words are interlacing. We now establish a converse.

\begin{theorem}\label{thm:interlacingIsLeft}
    Let $w$ be an interlacing word such that $\ct(w)=\ct(T)$ for some tableau $T$ constructed from strict partitions $\mu<\nu$. Then, $w$ is a left factor of the barely Yamanouchi word $\hat{y}_\nu$.
\end{theorem}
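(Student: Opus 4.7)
The plan is to show that the mixed insertion $P_\mix(w)$ is itself a tableau constructed from $\mu < \nu$; granted this, Theorem~\ref{thm:constructedIsLeft} immediately produces a word $v$ with $P_\mix(w) \cdot v = \hat{y}_\nu$ in $\sPlactic$, hence $wv \sim \hat{y}_\nu$, exhibiting $w$ as a left factor of $\hat{y}_\nu$.

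To begin, I would observe that because $w$ is interlacing with $\ct(w) = \ct(T)$, the shrinking decomposition of $w$ determines precisely the intervals $(\mu_j, \nu_j]$ that index the Serrano--Pieri strips of $T$: an interlacing word of given strict content has a unique shrinking decomposition into intervals with strictly decreasing maxima, as follows by the same argument used to prove Lemma~\ref{lem:rightHasContent} for shifted lattice words. This assigns every letter of $w$, and hence every box of $P_\mix(w)$, a canonical label $i_j$ (together with a primed or unprimed tag recorded by the mixed insertion).

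I would then partition the boxes of $P_\mix(w)$ according to $j$ and verify that each resulting collection $\gamma^{(s)}$ is a Serrano--Pieri strip fitting together as in Definition~\ref{def:constructibleDef}. The unprimed portion of $\gamma^{(s)}$ is a vertical strip by Lemma~\ref{lem:bigRowLemma}; the primed portion is a horizontal strip by combining Lemma~\ref{lem:eastSameSeqLemma} with Lemma~\ref{lem:noTwoRepeatedCols}; and the unprimed values are strictly smaller than the primed values by an insertion-trajectory argument analogous to Lemma~\ref{lem:insertionOfShrinkingSequence}(2), which used only the interlacing property and not barely Yamanouchi specifically. The row and column compatibility for entries drawn from distinct shrinking sequences follows from Lemma~\ref{lem:Imalemma} in the unprimed case and from Lemmas~\ref{lem:sameEntryPrimedDiffSeq} and~\ref{lem:smallSeqWestofBigSeqPrimed} in the primed case.

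The hard step is verifying that no $\gamma^{(s)}$ can be extended, so that this decomposition actually exhibits $P_\mix(w)$ as constructed rather than merely constructible. Any hypothetical extension of $\gamma^{(s)}$ to a larger Serrano--Pieri strip $\gamma'$ would have to borrow either an unprimed entry of smaller value from an adjacent shrinking sequence (forcing that entry to sit at the top of the extended vertical strip) or a primed entry of larger value (forcing it to sit at the right end of the extended horizontal strip). In the first case, the borrowed entry would need to lie strictly north of every unprimed label of value $\nu_j$ in $\gamma^{(s)}$, contradicting the placement of repeated unprimed values across shrinking sequences forced by Lemma~\ref{lem:Imalemma}; in the second case, the borrowed primed entry would have to lie east of the primed labels of value $\nu_j$ in $\gamma^{(s)}$, contradicting the relative positioning of primed entries of neighboring shrinking sequences forced by Lemmas~\ref{lem:eastSameSeqLemma} and~\ref{lem:smallSeqWestofBigSeqPrimed}. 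These contradictions complete the identification of $P_\mix(w)$ as constructed from $\mu < \nu$, at which point Theorem~\ref{thm:constructedIsLeft} delivers the desired $v$ and hence the conclusion.
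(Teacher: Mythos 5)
Your route is genuinely different from the paper's: the paper never touches the insertion tableau, but instead writes down the explicit completion word $w^{\twiddle} = \seq(\mu_{\ell(\mu)})\cdots\seq(\mu_1) = \hat{y}_\mu$, verifies by a direct letter-counting argument that $w\cdot w^{\twiddle}$ satisfies the shifted lattice condition, and concludes via Theorem~\ref{thm:inBarelyYamanouchiClassiffshiftedYamanouchi}. Your plan instead passes through $P_\mix(w)$ and Theorem~\ref{thm:constructedIsLeft}. That is not formally circular (Theorem~\ref{thm:constructedIsLeft} and the lemmas of Section~\ref{sec:interlacingTableaux} do not depend on the present theorem), but it front-loads the claim that interlacing tableaux are constructed, which the paper only obtains \emph{downstream} of this theorem, inside the proof of Theorem~\ref{thm:main_fullversion} and Corollary~\ref{cor:constructed=interlacing} --- and even there only for words already known to be left factors, using $[w]\cdot[\hat{y}_\mu]=[\hat{y}_\nu]$ and Lemma~\ref{lem:sameShrinkingLemma} to pin down the shrinking decomposition.

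The genuine gap is in your non-extendability step, which you correctly identify as the crux but do not actually carry out. First, the case analysis is incomplete: an extension $\gamma'\supsetneq\gamma^{(s)}$ with letters forming an interval $(k,e]\supsetneq(\mu_j,\nu_j]$ need not consist of a single borrowed entry at the top of the vertical strip or the right end of the horizontal strip; when $\theta^{(s)}/\eta^{(s)}$ is empty, for instance, the extension can append unprimed entries of \emph{larger} value below the vertical strip or convert the split point between vertical and horizontal parts. Second, the contradictions you claim are not derived from the lemmas you cite: Lemma~\ref{lem:Imalemma} is a statement about the left-to-right positions of letters in the \emph{word}, not about rows and columns of the tableau, so it cannot directly rule out a borrowed unprimed entry sitting north of the strip; the tableau-position statements you need are Lemmas~\ref{lem:rowOrderForSameElementDiffSeq}, \ref{lem:repeatingLemma}, and \ref{lem:bigRowLemma}, and even then the relevant comparison is between the borrowed entry $(\mu_j)_k$ and the \emph{smallest} unprimed entry $(\mu_j+1)_j$ of the strip, not the entries of value $\nu_j$ as you write. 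A similar objection applies to the primed case. Finally, your opening reduction of the shrinking decomposition to the intervals $(\mu_j,\nu_j]$ cites ``the same argument as Lemma~\ref{lem:rightHasContent},'' but that lemma's proof uses the shifted lattice condition, which a merely interlacing word need not satisfy; the correct argument (strictly decreasing maxima and minima of the shrinking sequences, extracted from the two bullets of Definition~\ref{def:interlacing}) is precisely the first half of the paper's own proof and must be supplied. Until the non-extendability argument is made precise, the proposal does not establish the theorem.
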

\begin{proof}
    Obtain the shrinking decomposition of $w$ and let $M$ be the greatest element of the first shrinking sequence. Let $w'$ be the word obtained by removing all letters from the first shrinking sequence. If $w'$ has more than one letter of maximum value, denote two of those by $m_i$ and $m_j$. Without loss of generality, assume $m_i$ appears left of $m_j$. In that case, it then follows from Definition~\ref{def:interlacing} that between $m_i, m_j$ in $w$ there is an $m+1$, which must be an element of $w \setminus w'$. Since $m_j$ is not part of the first shrinking sequence, the entry of the first shrinking sequence with value $m$ must appear strictly left of $m_j$. But then by Definition~\ref{def:interlacing}, $w$ has an instance of $m+1$ that is strictly right of the $m+1$ in the first shrinking sequence, contradicting the maximality of $m$ in $w'$.
     Hence, the maximum element of each shrinking sequence forms a strictly decreasing list, or equivalently, a strict partition. 

    If the minimum letter $b_i$ in a given shrinking sequence has value $b>1$, then the next shrinking sequence must have a strictly smaller minimum value. Otherwise, designating by $a_{i+1}$ the smallest element in the next sequence, there are two cases in which we obtain contradictions.

    {\sf Case 1 ($b<a$):}
    Let $B$ be the largest value in the $i$th shrinking sequence. Then $a < B$ by the above argument.
    Therefore, $(a+1)_i$ exists and by Definition~\ref{def:interlacing}, the letter $a_{i+1}$ is left of $(a+1)_i$, which is also left of $a_i$. Therefore, $a_{i+1}$ is left of $a_i$ and there is an instance of $a-1$ between both; in particular, right of $a_{i+1}.$ Let $(a-1)_j$ be the leftmost such $a-1$. If $j<i$, then using the same argument $a_{i+1}$ is left of $a_j$ and there is another instance of $a-1$ to the right of $a_{i+1},$ contradicting our choice of $(a-1)_j$ leftmost. Since $j \neq i$, we therefore have $j = i+1$, contradicting $a_{i+1}$ smallest in its shrinking sequence.

    {\sf Case 2 ($a=b$):}
    By Definition~\ref{def:interlacing}, we have $a_{i+1}$ left of $a_i$.
    Hence, there is an instance of $a-1$ between them. By the same arguments used in {\sf Case 1}, we may show that $a-1$ belongs to the $i+1$-st shrinking sequence; however, this contradicts the choice of $a_{i+1}$ minimal.
    \smallskip

    Hence, the shrinking decomposition can be represented by two strict partitions $\mu < \nu$; that is, the unprimed elements of the $k$-th sequence belong to the interval $(\mu_k, \nu_k]$. Designate now by $w^{\twiddle}$ the word \[ w^{\twiddle} \coloneqq \seq(\mu_{\ell(\mu)}) \cdot \seq(\mu_{\ell(\mu)-1}) \cdot \dots \cdot \seq(\mu_1). \]
    We want to prove that $w\cdot w^{\twiddle}$ is shifted Yamanouchi. Suppose toward a contradiction that it is not. Then, as $w^{\twiddle} = \hat{y}_\mu$, it is immediate that $w^{\twiddle}$ is itself shifted Yamanouchi, so that for some position in its left factor and $i>0$, it holds that $i(w\cdot w^{\twiddle})< (i+1)(w\cdot w^{\twiddle})$. In particular, we may assume without loss of generality that in this position $i(w\cdot w^{\twiddle}) = (i+1)(w\cdot w^{\twiddle}) -1$ for the first time in the word from right to left. 

    Accordingly, in this position we must have a letter $(i+1)_j$. We know that right of $(i+1)_j$ there must exist at least one instance of $i$ in $w\cdot w^{\twiddle}$ because of the way that $w^{\twiddle}$ is defined, and recalling that $(i+1)_j$ has to be located in $w.$ Consider then the first letter of value $i$ right of $(i+1)_j$ and denote it by $i_k$. Between $(i+1)_j$ and $i_k$ there are no entries of value $(i+1)$, as otherwise there would be an even earlier instance of $i$ because of Definition~\ref{def:interlacing}. Therefore, we obtain that in the position of $i_k$, $i(w\cdot w^{\twiddle}) = (i+1)(w\cdot w^{\twiddle}).$ But then in the position immediately following $i_k$, which exists if that equality is to be had at $i_k$, we obtain that $i(w\cdot w^{\twiddle}) < (i+1)(w\cdot w^{\twiddle})$ contradicting the choice of $(i+1)_j.$ 
\end{proof}

\subsection{Proof of Theorem~\ref{thm:main}}
We are now ready for the main theorem. First, we need the following lemma.

\begin{lemma}\label{lem:sameShrinkingLemma}
    Let $\nu$ be a strict partition and $w$ be such that $[w]=[\hat{y}_\nu]$, i.e., a barely Yamanouchi word. Then the shrinking decompositions of $w$ and $\hat{y}_\nu$ are the same. 
\end{lemma}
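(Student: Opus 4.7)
The plan is to show that the shrinking decomposition of any shifted lattice word is determined entirely by its content, so that $w$ and $\hat{y}_\nu$, which share content by Theorem~\ref{thm:inBarelyYamanouchiClassiffshiftedYamanouchi}, must yield decompositions with the same sequence of value-sets. Observe first that, directly from Definition~\ref{def:barelyYamanouchi}, the content $\ct(\hat{y}_\nu)$ satisfies
\[
\#\{i\text{-letters}\} = |\{j : \nu_j \geq i\}|
\]
for each $i \geq 1$. In particular, the unique maximum value in $\hat{y}_\nu$ is $\nu_1$ and it appears exactly once; by Theorem~\ref{thm:inBarelyYamanouchiClassiffshiftedYamanouchi}, the same is true of $w$.

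I would then proceed by induction on the index $k$ of the shrinking sequence. For the base case $k=1$: the lone copy of $\nu_1$ in $w$ forces the first shrinking sequence to begin with $\nu_1$, and (since $w$ is classically Yamanouchi by Corollary~\ref{cor:barelyYamisYam}) the greedy construction of a shrinking sequence picks off exactly one letter of each value $\nu_1, \nu_1-1, \ldots, 1$, producing a shrinking sequence of length $\nu_1$ whose value-multiset is $[1, \nu_1]$ --- the same as the first shrinking sequence of $\hat{y}_\nu$. For the inductive step, assume the first $k-1$ shrinking sequences of $w$ carry the value-multisets $[1,\nu_1], \ldots, [1, \nu_{k-1}]$. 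By Remark~\ref{remark:removingYamanouchi}, after deleting these sequences the remaining subword $\hat{w}$ is still shifted lattice, and its content satisfies
\[
\#\{i\text{-letters in }\hat{w}\} = |\{j \geq k : \nu_j \geq i\}|.
\]
This is exactly the content of $\hat{y}_{(\nu_k, \nu_{k+1}, \ldots, \nu_\ell)}$, so the base-case argument applied to $\hat{w}$ shows that its first shrinking sequence (which is the $k$-th of $w$) has length $\nu_k$ and value-multiset $[1, \nu_k]$.

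Since the same conclusion applies to $\hat{y}_\nu$ (its $k$-th shrinking sequence trivially has values $[1,\nu_k]$), the two shrinking decompositions agree sequence-by-sequence as lists of value-multisets, which is the statement of the lemma. The only delicate point is checking that the greedy shrinking procedure indeed extracts a full interval $[1,\nu_1]$ on the first pass; this follows because the classical Yamanouchi property of $w$ guarantees that, to the right of the unique $\nu_1$, there is at least one $\nu_1 - 1$, and to the right of that an $\nu_1 - 2$, and so on down to $1$, so no shrinking subsequence can terminate prematurely. I expect no significant obstacle beyond bookkeeping the content formula carefully.
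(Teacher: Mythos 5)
Your proof is correct and takes a genuinely different route from the paper's. The paper argues by tracking the shrinking decomposition through the chain of shifted plactic relations connecting $w$ to $\hat{y}_\nu$: it invokes the case analysis behind Lemma~\ref{lem:barelyYamanouchiareShiftedLattice} to claim that no relation ever exchanges adjacent letters of values $i$ and $i+1$, so the relative order of any two letters lying in the same shrinking sequence is invariant under each relation, and the greedy decomposition therefore transports verbatim from $\hat{y}_\nu$ to $w$. You instead forget the equivalence chain entirely and show that the shrinking decomposition of a shifted lattice word is determined by its content: the unique maximum starts the first sequence, the classical Yamanouchi property (Corollary~\ref{cor:barelyYamisYam}) forces that sequence to descend by ones all the way to $1$, and Lemma~\ref{lem:removingYamanouchi} lets you induct on the remainder, whose content you correctly identify as $\ct(\hat{y}_{(\nu_k,\dots,\nu_\ell)})$. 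This is essentially the argument the paper already uses to prove Proposition~\ref{prop:barelyYamhaspartitionshape}, upgraded with the content bookkeeping needed to pin the $k$-th maximum down to $\nu_k$; it is arguably cleaner, since it avoids the somewhat delicate claim about relative positions being preserved by every relation. The trade-off is that your version establishes equality of the two decompositions only as lists of value sequences ($\nu_j,\nu_j-1,\dots,1$ for each $j$), whereas the paper's argument, taken at face value, also identifies which physical letters group together across the equivalence; the value-sequence statement is all that is actually used in the proof of Theorem~\ref{thm:main_fullversion}, so nothing is lost.
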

\begin{proof}
    Consider the shrinking decomposition of $\hat{y}_\nu$. Since $\nu$ is a strict partition, there is a unique entry of maximal value in $\hat{y}_\nu$, denoted $M_1$. After the shrinking sequence starting with $M_1$ is removed from $\hat{y}_\nu$, there is again a new unique entry of maximal value, that we refer to as $M_2$. By the same token, there are letters $M_3,M_4,\ldots,M_n$ presiding over their respective shrinking sequences, so that
    \[\bigsqcup_i \seq(M_i) \]
    exhausts all letters of $\hat{y}_\nu$, and is its shrinking decomposition. 

    Consider now the analogous process for $w$. The forward implication of Theorem~\ref{thm:inBarelyYamanouchiClassiffshiftedYamanouchi} showed that no chain of shifted equivalences, however long, exchanges an instance of $i$ with an instance of $i+1$ when adjacent to each other. But then, even though in different positions, the relative locations of any two letters in the same shrinking sequence of $\hat{y}_\nu$ will be the same after an arbitrary shifted equivalence. That is, if $i_j$ appeared in $\hat{y}_\nu$ before $(i-1)_j$, it will do so again after any application of a shifted relation. 

    Consequently, the process of decomposing $\hat{y}_\nu$ from the sequence $(M_i)_i$ applies verbatim to $w$. Hence, the shrinking decomposition of $w$ is identical to that of $\hat{y}_\nu. $
    \end{proof}

The following establishes Theorem~\ref{thm:main}, as well as the fact that constructed tableaux are exactly the same as interlacing tableaux.

\begin{theorem}\label{thm:main_fullversion}
    Let $\lambda,\mu,$ and $\nu$ be strict partitions with $|\lambda|+|\mu|=|\nu|$ and $\mu<\nu$. Then, the tableaux $T_\lambda$ of shape $\lambda$ such that 
    \[T_\lambda \cdot \hat{Y}_\mu = \hat{Y}_\nu\]
    are exactly the tableaux of shape $\lambda$ constructed from the partitions $\mu<\nu.$ 

    In particular, $b_{\lambda, \mu}^{\nu}$ equals the number of tableaux of shape $\lambda$ constructed from $\mu<\nu.$
\end{theorem}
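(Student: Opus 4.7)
The plan is to leverage Theorem~\ref{thm:placticLRSophisticatedVersion}, which already identifies $b_{\lambda,\mu}^\nu$ with the number of shape-$\lambda$ tableaux $T_\lambda$ satisfying $T_\lambda\cdot\hat{Y}_\mu = \hat{Y}_\nu$ in $\sPlactic$. Hence it suffices to prove the first (stronger) sentence, namely the set equality between tableaux satisfying the Yamanouchi equation and tableaux constructed from $\mu<\nu$. I will establish this equality by a pair of containments.

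For the forward containment, suppose $T_\lambda$ is constructed from $\mu<\nu$. Theorem~\ref{thm:constructedIsLeft} produces a word $w\in\cN^*$ with $T_\lambda\cdot w = \hat{y}_\nu$ in $\sPlactic$. Since content is additive and $\ct(\hat{y}_\nu)=\ct(T_\lambda)+\ct(\hat{y}_\mu)$ on the level of integer values, I read off $\ct(w)=\ct(\hat{y}_\mu)$. Moreover, $w$ is a right factor of the barely Yamanouchi word $T_\lambda\cdot w$, so Corollary~\ref{cor:rightisbarelyYam} makes $w$ itself barely Yamanouchi. By Theorem~\ref{thm:inBarelyYamanouchiClassiffshiftedYamanouchi}, the class of barely Yamanouchi words with a fixed strict content is a single shifted plactic class, so $w\sim\hat{y}_\mu$, giving $P_\mix(w)=\hat{Y}_\mu$ and therefore $T_\lambda\cdot\hat{Y}_\mu=\hat{Y}_\nu$.

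For the reverse containment, assume $T_\lambda\cdot\hat{Y}_\mu=\hat{Y}_\nu$ and pick any $v$ with $P_\mix(v)=T_\lambda$. Then $v\cdot\hat{y}_\mu$ mixed-inserts to $\hat{Y}_\nu$ and is therefore barely Yamanouchi, with $v$ a left factor. Proposition~\ref{prop:leftImpliesInterlacing} then identifies $v$ as an interlacing word. The fact (from the proof of that proposition) that every shrinking sequence of $v\cdot\hat{y}_\mu$ is a concatenation of a shrinking sequence of $v$ with one of $\hat{y}_\mu$ forces the shrinking decomposition of $v$ to group its letters precisely by the intervals $(\mu_j,\nu_j]$. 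What remains is to upgrade this grouping to a Serrano--Pieri strip decomposition witnessing that $T_\lambda$ is constructed from $\mu<\nu$.

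The main obstacle is this final structural verification. I would read off the needed data from the lemmas of Section~\ref{sec:interlacingTableaux}: Lemma~\ref{lem:bigRowLemma} confirms that the unprimed entries of $(\mu_j,\nu_j]$ form a vertical strip of $T_\lambda$ with the left-to-right ordering within rows required by the second bullet of Definition~\ref{def:constructibleDef}; Lemmas~\ref{lem:eastSameSeqLemma} and~\ref{lem:noTwoRepeatedCols} arrange the primed entries of $(\mu_j,\nu_j]$ into a horizontal strip, and Lemma~\ref{lem:smallSeqWestofBigSeqPrimed} together with Lemma~\ref{lem:Imalemma} supply the column ordering between distinct shrinking sequences needed for the third bullet. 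Combined with Lemma~\ref{lem:primePlacementLemma}-style positioning (primed entries of a later sequence sitting below unprimed entries of an earlier one), this realizes $T_\lambda$ as constructible from $\mu<\nu$. Finally, for the non-extensibility clause of Definition~\ref{def:constructedDef}, I will argue that any putative Serrano--Pieri extension of the $j$-th strip would reassign letters from a different shrinking sequence into the interval $(\mu_j,\nu_j]$ and thereby contradict the greedy maximality built into the construction of the shrinking decomposition of $v$ (equivalently, it would strictly enlarge one of the hooks whose lengths are pinned down by Lemma~\ref{lem:hookSequencesLemma} and Theorem~\ref{thm:greeneGeneralization}). With constructibility and non-extensibility both in hand, $T_\lambda$ is constructed from $\mu<\nu$, completing the proof.
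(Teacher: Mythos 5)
Your overall architecture coincides with the paper's: the forward containment via Theorem~\ref{thm:constructedIsLeft}, the reverse via Proposition~\ref{prop:leftImpliesInterlacing} followed by identifying the shrinking decomposition of $v$ with the intervals $(\mu_j,\nu_j]$ and invoking the positional lemmas of Section~\ref{sec:interlacingTableaux} to exhibit the Serrano--Pieri decomposition. Your forward direction is in fact slightly more self-contained than the paper's, since you recover $P_\mix(w)=\hat{Y}_\mu$ from the content count plus Corollary~\ref{cor:rightisbarelyYam}, rather than relying on the completion word in Theorem~\ref{thm:constructedIsLeft} being literally $\hat{y}_\mu$. In the reverse direction you should additionally cite Lemma~\ref{lem:sameShrinkingLemma} (or Proposition~\ref{prop:barelyYamhaspartitionshape} together with a content count) to know that the shrinking sequences of $v\cdot\hat{y}_\mu$ are exactly the runs $\seq(\nu_j)$; only then does the concatenation fact pin the shrinking sequences of $v$ to the intervals $(\mu_j,\nu_j]$. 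The paper spends several sentences on precisely this matching.

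The one step that does not work as you have justified it is the non-extensibility clause. An extension in the sense of Definition~\ref{def:extend} is a re-grouping of the boxes of the \emph{fixed} tableau $T_\lambda$; it changes neither the word $v$ nor its hook subwords, whose lengths are governed by the shape $\lambda$ (not by the interval lengths $\nu_j-\mu_j$) via Theorem~\ref{thm:greeneGeneralization}. So an extension cannot ``enlarge a hook,'' and appealing to the greedy maximality of the shrinking decomposition conflates a word-level decomposition with a tableau-level one --- the compatibility of these two is exactly what the theorem is establishing, so the appeal is circular. The repair is already in your toolkit: by Lemma~\ref{lem:constructedSmallChar}, non-extensibility reduces to the local relative positions of $i_j$ versus $(i-1)_{j+1}$ (rows, for unprimed entries) and of $i_j'$ versus $(i+1)_{j-1}'$ (columns, for primed entries), and these are precisely what Lemmas~\ref{lem:bigRowLemma}, \ref{lem:GodzillaLemma}, and~\ref{lem:smallSeqWestofBigSeqPrimed} supply for interlacing tableaux. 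Route the final step through those statements and the argument closes.
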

\begin{proof}
The second sentence follows from the first by Theorem~\ref{thm:placticLRSophisticatedVersion}. It remains to establish the first sentence of the theorem.

    If $T$ is constructed from $\mu < \nu$, then $T$ is a left factor of $\hat{Y}_\nu$ because of Theorem~\ref{thm:constructedIsLeft}. Hence, we also have
    \[T\cdot \hat{Y}_\mu = \hat{Y}_\nu.\]

    Now we consider the converse direction.
    By Proposition~\ref{prop:leftImpliesInterlacing}, all left factors $w$ of the barely Yamanouchi word $\hat{y}_\nu$ are interlacing. Hence, all tableaux of shape $\lambda$ such that $T_\lambda \cdot \hat{Y}_\mu = \hat{Y}_\nu$ are interlacing tableaux. We will now show that all such interlacing tableaux are tableaux constructed from $\mu<\nu$. 
    First note that $T_\lambda \cdot \hat{Y}_\mu = \hat{Y}_\nu$ implies in particular that $\ct(T_\lambda \cdot \hat{Y}_\mu) = \ct(\hat{Y}_\nu).$ 

    Now, let $P_\mix(w)=T_\lambda$. From the hypotheses, we have that $\mu<\nu$; let $k$ be minimal such that $\mu_k<\nu_k$. We know that $[w]\cdot [\hat{y}_\mu]=[\hat{y}_\nu]$ and that the $j$-th shrinking sequence of $\hat{y}_\nu$ is $\nu_j \: \nu_j-1 \: \ldots \: 1$ for all $j$. Since $\hat{y}_\nu$ and $w\cdot \hat{y}_\mu$ must have the same shrinking decomposition by Lemma~\ref{lem:sameShrinkingLemma}, the first $k-1$ shrinking sequences of $w\cdot \hat{y}_\mu$ must all exclusively contain letters from $\hat{y}_\mu$ and match the shrinking sequences of $\hat{y}_\nu$. In particular, after removing those shrinking sequences from $\hat{y}_\nu$ and $\hat{y}_\mu$,  we are left with the words $\hat{y}_{\dot{\nu}}$ and $\hat{y}_{\dot{\mu}}$ for $\dot{\nu}=(\nu_k,\nu_{k+1}, \nu_{k+2},\ldots,\nu_{\ell(\nu)})$ and $\dot{\mu}=(\mu_k,\mu_{k+1}, \mu_{k+2},\ldots,\mu_{\ell(\mu)})$, respectively.
    
    Accordingly, as all the letters in $\hat{y}_{\dot{\mu}}$ are all smaller than $\nu_k$, the word $w$ is forced to contain an instance of $\nu_k$. The $k$th shrinking sequence of $w\cdot \hat{y}_\mu$ must then start with this $\nu_k$ in $w$ and decrease by $1$ until reaching the value $1$. Let $m$ be the smallest element of $w$ in this shrinking sequence. 
    Furthermore, $m-1=\max(\hat{y}_{\dot{\mu}})$ by Lemma~\ref{lem:Imalemma}. 

    Thus the elements of the first shrinking sequence of $w$ are the letters in $(m-1,\nu_k]=(\mu_k, \nu_k]$. Noting that the intervals $(\mu_j,\nu_j]$ are empty for $j<k$ and that the same argument applies for subsequent values of $k$, we obtain that the shrinking decomposition of $w$ is given by the sequences $(\mu_j,\nu_j]_{1 \leq j \leq \ell(\nu)}$. 

    Our next step is to show that the collection of shrinking sequences $(\mu_j,\nu_j]_{1 \leq j \leq \ell(\nu)}$ is in fact a collection of Serrano--Pieri strips making $T_\lambda$ constructible. 
    We will routinely use Proposition~\ref{prop:leftImpliesInterlacing} (that left factors of barely Yamanouchi words are interlacing) without mention. 

    For arbitrary $j$, the unprimed elements of the shrinking sequence $(\mu_j, \nu_j]$ form a vertical strip in $T_\lambda$ because of Lemma~\ref{lem:bigRowLemma}(1) and (3). (Note that part (1) applies by Lemma~\ref{lem:rowandColBoundedLemma}.) Similarly, for $j$ arbitrary the primed entries of $(\mu_j,\nu_j]$ form a horizontal strip by Lemma~\ref{lem:eastSameSeqLemma}. If we let the entries of the vertical strip correspond to the shape $\xi/\pi$ and those of the horizontal strip to the shape $\theta/\eta$, then we further obtain $\xi \subseteq \eta$ as all the primed entries of a given segment $(\mu_j,\nu_j]$ must be greater than all the unprimed entries by Lemma~\ref{lem:bigRowLemma}(3) and the definition of mixed insertion. Thus, $\xi /\pi \olessthan \theta/\eta$ is a Serrano--Pieri strip inside $T_\lambda.$

Let us now see that these Serrano--Pieri strips make $T_\lambda$ a constructed tableau. 

    Consider now two such strips $(\mu_j,\nu_j]$ and $(\mu_m,\nu_m]$ for $j<m$. For every row $r$, iterating Lemma~\ref{lem:bigRowLemma} shows that either $(i-1)_m$ is inserted before $i_j$ to $r$ , or $r=i$. In the first case, it suffices to recall that the unprimed letters in $(\mu_m,\nu_m]$ form a vertical strip to conclude that $i_j$ is east of the only unprimed entry of $(\mu_m,\nu_m]$ in its row. Otherwise, $r=i$ and the fact that $(i-1)_m$ is inserted to row $r-1$ before $i_j$, coupled with Lemma~\ref{lem:rowandColBoundedLemma}, allows us to infer that $i_m$ is inserted to row $r$ before $i_j$, and that it is the only unprimed entry of $(\mu_m,\nu_m]$ therein. Again, we conclude that $i_j$ is east of all unprimed entries in $(\mu_m,\nu_m]$. As the choice of $i$ was arbitrary throughout, the unprimed entries of $(\mu_j,\nu_j]$ occur after the unprimed entries of $(\mu_m,\nu_m]$ in all rows.  

     Similarly, either the primed entries of $(\mu_m,\nu_m]$ are all less than all the primed entries of $(\mu_j,\nu_j]$, and thus after the primed entries of $(\mu_j,\nu_j]$ in every column; or there is at least one primed letter in $(\mu_m,\nu_m]$ that has value greater than a primed entry of $(\mu_j,\nu_j]$. Let $(i+1)_j'$ be a primed entry such that there exists a primed entry greater than or equal to it in $(\mu_m,\nu_m]$. Lemma~\ref{lem:noTwoRepeatedCols} forbids $(i+1)_j'$ and $(i+1)_m'$ from being in the same column, so it is enough to restrict our attention to the case where there exists an entry $(i+k)'_m$ for $k>1$. Recall that $(i+1)_j'$ is weakly west of $i_m'$, and that $i_m'$ exists by to Lemma~\ref{lem:constructedAncillaLemma}(2). Hence, the fact that the primed entries of $(\mu_m,\nu_m]$ form a horizontal strip implies that $(i+k)_m'$ is not in the same column as $(i+1)_j'$. That is, the primed entries of $(\mu_j,\nu_j]$ happen after the primed entries coming from $(\mu_m,\nu_m]$ in every column where there is at least one of both. Thus, the Serrano--Pieri strips defined by the segments $(\mu_j,\nu_j]_{1 \leq j \leq \ell(\nu)}$ witness that $T_\lambda$ is constructed from $\mu < \nu$. 
\end{proof}

\begin{corollary}\label{cor:constructed=interlacing}
    A tableau $T$ of shape $\lambda$ is interlacing if and only if it is constructed from some pair of strict partitions $\mu < \nu$ with $|\nu| - |\mu| = |\lambda|$.
\end{corollary}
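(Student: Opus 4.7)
The plan is to show this corollary by stringing together three results already established earlier in the paper: Theorem~\ref{thm:interlacingIsLeft}, Theorem~\ref{thm:constructedIsLeft}, Proposition~\ref{prop:leftImpliesInterlacing}, and the key identification made in the proof of Theorem~\ref{thm:main_fullversion}. The two directions are essentially symmetric, each passing through the intermediate class of left factors of barely Yamanouchi words.

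For the forward direction, suppose $T$ is an interlacing tableau of shape $\lambda$, so $T = P_\mix(w)$ for some interlacing word $w$. By Definition~\ref{def:interlacing}, the shrinking decomposition of $w$ produces strict partitions $\mu \subseteq \nu$ with $\mu_i < \nu_i$ wherever the corresponding shrinking sequence is nonempty; after padding $\mu$ with zero entries so that $\ell(\mu) = \ell(\nu)$, we obtain $\mu < \nu$ in the sense of Definition~\ref{def:constructibleDef}, with $|\nu| - |\mu| = |w| = |\lambda|$. Since $\ct(w) = \ct(T)$ by construction, Theorem~\ref{thm:interlacingIsLeft} applies to give that $w$ is a left factor of $\hat{y}_\nu$. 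Translating to tableaux, $T \cdot \hat{Y}_\mu = \hat{Y}_\nu$ in $\sPlactic$, and hence by Theorem~\ref{thm:main_fullversion}, $T$ is constructed from $\mu < \nu$.

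For the reverse direction, suppose $T$ is constructed from strict partitions $\mu < \nu$ with $|\nu| - |\mu| = |\lambda|$. By Theorem~\ref{thm:constructedIsLeft}, there exists a word $u$ such that $\reading(T) \cdot u$ represents $\hat{y}_\nu$ in $\sPlactic$; equivalently, $\reading(T)$ lies in the shifted plactic class of a left factor of $\hat{y}_\nu$. Applying Proposition~\ref{prop:leftImpliesInterlacing} to such a left factor, we conclude that some word representing $T$ in $\sPlactic$ is interlacing. Hence $T$ itself is an interlacing tableau by Definition~\ref{def:interlacing}.

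There is no real obstacle here because the heavy lifting has all been done: the nontrivial work of matching interlacing words with left factors (Theorem~\ref{thm:interlacingIsLeft}), of matching constructed tableaux with left factors (Theorem~\ref{thm:constructedIsLeft}), and of extracting the shrinking/Serrano--Pieri structure from either side has already been established. The only mild subtlety is book-keeping about the convention $\mu < \nu$ versus $\mu \subseteq \nu$: one must verify that the strict partitions produced by the shrinking decomposition of an interlacing word can be read, after appropriate zero-padding, as the strict partitions appearing in the definition of a constructed tableau. This amounts to observing that the indexing conventions line up so that $|\nu| - |\mu|$ equals the number of letters in the word, hence the size of the shape $\lambda$.
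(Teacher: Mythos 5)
Your proof is correct and takes essentially the same route as the paper, whose own proof simply points to the proof of Theorem~\ref{thm:main_fullversion}: both directions pass through the intermediate class of left factors of barely Yamanouchi words, via Theorem~\ref{thm:interlacingIsLeft} and Theorem~\ref{thm:constructedIsLeft} together with Proposition~\ref{prop:leftImpliesInterlacing}, exactly as you describe. One small caution for the forward direction: Theorem~\ref{thm:interlacingIsLeft} is stated with the hypothesis that $\ct(w)$ equals the content of some tableau already known to be constructed, which you cannot supply at that point; this is harmless because its proof only uses that the shrinking decomposition of $w$ is encoded by strict partitions $\mu < \nu$, which Definition~\ref{def:interlacing} provides directly, but you should say so rather than silently citing the theorem with an unverified hypothesis.
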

\begin{proof}
    This is immediate from the proof of Theorem~\ref{thm:main_fullversion}.
\end{proof}

\section{Application of Theorem~\ref{thm:main}}\label{sec:examples}

In this section, we explain a method for implementing Theorem~\ref{thm:main} in practice. We do so in a way that is convenient for manual calculation of shifted Littlewood--Richardson coefficients. (In Section~\ref{sec:complexity}, we introduce a slight variation that is more computer-friendly and analyze its running time, showing that is faster than Stembridge's rule \cite{Stembridge} for a family of structure constants.) A Python implementation of the rule is available at \cite{shifted-code}.

Let $\lambda,$ $\mu$, and $\nu$ be strict partitions, set $\ell = \ell(\nu)$, and suppose that we wish to find the structure coefficient $b_{\lambda,\mu}^\nu$. We will determine it by counting all possible fillings of $D_\lambda$ with the entries of $\cup_{i} (\mu_i, \nu_i]$, some possibly primed, satisfying certain conditions. Throughout the process, the addition of an entry must result in a partial shifted tableau contained in $D_\lambda$. 

\begin{algorithmic}
    \FOR{$i = 1$ to $\ell$}
        \STATE Choose $m_i \in \mathbb{N}$ such that $\mu_{\ell-i+1} \leq m_i \leq \nu_{\ell-i+1}$, ensuring:
        \IF{$\theta^{(i-1)}/\eta^{(i-1)}$ is non-empty}
            \STATE $m_i+1$ is strictly greater than the smallest primed entry in $\theta^{(i-1)}/\eta^{(i-1)}$.
        \ELSE
            \STATE $m_i+1$ is at least two greater than the largest entry in $\xi^{(i-1)}/\psi^{(i-1)}$.
        \ENDIF

        \STATE Successively place entries of $(\mu_{\ell-i+1}, m_i]$ as unprimed entries in $D_\lambda$ ensuring:
        \STATE \hspace{1em} $\bullet$ The placement forms a vertical strip $\xi^{(i)}/\pi^{(i)}$ where $\xi^{(e)} \subseteq \pi^{(i)}$ for all $e<i$.
        \STATE \hspace{1em} $\bullet$ If placing in a cell $\bbb$ with a primed entry, displace all entries weakly east of $\bbb$ in its row one column to the right (moving up a row if the box above is empty) or displace all entries weakly south of $\bbb$ in its column one row down (moving left if the boxes to the left are empty).
        \STATE \hspace{1em} $\bullet$ Each $j \in \xi^{(i)}/\pi^{(i)}$ is weakly north of $j-1$ from any $\gamma^{(k)}$ with $k < i$, unless $j-1$ is in row $j-1$, in which case $j$ may be in the next row.

        \STATE Successively place entries of $(m_i, \nu_i]$ as primed entries in $D_\lambda$ ensuring:
        \STATE \hspace{1em} $\bullet$ The placement forms a horizontal strip $\theta^{(i)}/\eta^{(i)}$ where $\xi^{(e)}, \theta^{(e)} \subseteq \eta^{(i)}$ for all $e<i$.
        \STATE \hspace{1em} $\bullet$ Each $j' \in \theta^{(i)}/\eta^{(i)}$ is weakly west of $(j-1)' \in \theta^{(i-1)}/\eta^{(i-1)}$ if it exists.
    \ENDFOR
\end{algorithmic}

We proceed to consider two examples, but let us note before that Lemma~\ref{lem:constructedSmallChar} guarantees that the output of this algorithm is equal to the number of tableaux constructed from $\mu<\nu.$ Observe that checking if any shrinking sequence inside the tableau extends a previous sequence was reduced to verifying that for any sequence, it does not extend the previous sequence. Furthermore, to decide if an element of a sequence is extending the previous sequence, it suffices to look at exactly one element from the previous sequence. 

\begin{example}\label{ex:Example}
    Let $\nu=(11,9,5)$, $\mu=(4,2)$, and $\lambda=(8,7,4)$. We need to place the elements of 
    \[\bigcup_{i}(\mu_i,\nu_i] = (0,5] \cup (2,9] \cup (4,11]\]
    according to our rule.

    We start by adding the letters of $(0,5]$. We must insert an initial interval of these letters to form a vertical strip, and then the remaining letters to form a horizontal strip. A choice of initial interval is equivalent to a choice of $m_1$ in the language of the algorithm. 
    
    In this case, we add first the letters in $(\mu_\ell, m_1]=(0,1]=\{1\}.$ There is only one way to insert them and have a valid shifted tableau. 

    Next, add the elements in $(m_1,\nu_\ell]$ so that the resulting shape is a valid shifted shape, and they form a horizontal strip inside it. Again, there is a unique way of doing so, illustrated in Figure~\ref{fig:ins1}.

        \begin{figure}[htbp]
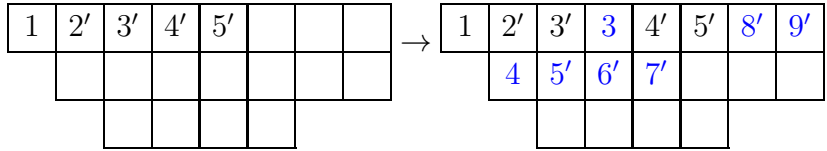
    
       \ytableausetup{boxsize =normal}
           \[ \begin{ytableau}
        1   &2' & 3' & 4'              & 5' & & &\\
        \none& & & & & &   &    \\
        \none&\none            &                 & &             &   
    \end{ytableau}  \rightarrow  \begin{ytableau}
        1   &2' & 3' &{\color{blue}{3}} & 4'              & 5' &{\color{blue}{8'}}&{\color{blue}{9'}}\\
        \none&{\color{blue}{4}}& {\color{blue}{5'}}& {\color{blue}{6'}}&{\color{blue}{7'}}&   &  &    \\
        \none&\none            &               & &               &   
    \end{ytableau}  \]
        \caption{Insertion of the first two shrinking sequences. }
        \label{fig:ins1}
    \end{figure}

    Now, we consider the letters of $(\mu_2,\nu_2]=(2,9].$ This time there are two ways to choose an initial interval and assign its letters to a vertical strip. We opt for the interval $(\mu_2,m_2] = (\mu_2, 4]=(2,4]$, and place $3$ in the first row, $4$ in the second. Since there were primed entries from the first sequence in that row before the placement of $3$, it displaces them eastwards. 

    The letter $4$ from the previous sequence is primed, so $3$ from the new sequence cannot extend it; as is $5$ from the previous sequence so that $4$ from the new one cannot either. 

    The addition of the letters in $(m_2,\nu_2]$ is performed so that they are primed and constitute a horizontal strip inside $D_\lambda.$ We must be careful so that each entry $i$ is inserted weakly west of $(i-1)'$ from the previous sequence. The new sequence is so inserted then, that it does not extend the first sequence.

 Finally, we turn our attention to the position of the letters in $(\mu_1,\nu_1]$. We decide to extract letters from $(\mu_1,m_3]=(\mu_1,6]$ for the unprimed vertical strip, and the remaining letters for the horizontal primed strip. 

 Note that the position of $5$ is weakly south of $4$ from the previous sequence. The letter $5$ from the previous sequence is primed, hence $6$ from the new sequence does not extend it. Again, when inserting the primed letters we just place $i'$ so that it is weakly west of $(i-1)'$ from the previous sequence. The outcome of the process is the constructed tableau:
       \ytableausetup{boxsize =normal}
           \[   \begin{ytableau}
        1   &2' & 3' &{\color{blue}{3}} & 4'              & 5' &{\color{blue}{8'}}&{\color{blue}{9'}}\\
        \none&{\color{blue}{4}}& {\color{blue}{5'}}&{\color{red}{5}}&{\color{blue}{6'}}&{\color{blue}{7'}}&   {\color{red}{10'}}  &{\color{red}{11'}}     \\
        \none&\none            & {\color{red}{6}}                &{\color{red}{7'}}&   {\color{red}{8'}}            &  {\color{red}{9'}}  
    \end{ytableau}. \]

    Using our rule, we also obtain the following tableaux constructed from $\mu<\nu$ and of shape $\lambda$:
    \[
            \begin{ytableau}
                1   &2' & 3' &{\color{blue}{3}} & 4'              & 5' &{\color{red}{5}}&{\color{blue}{9'}}\\
                \none&{\color{blue}{4}}& {\color{blue}{5'}}&{\color{blue}{6'}}&{\color{blue}{7'}}& {\color{blue}{8'}} &   {\color{red}{10'}}  &{\color{red}{11'}}     \\
                \none&\none            & {\color{red}{6}}                &{\color{red}{7'}}&   {\color{red}{8'}}            &  {\color{red}{9'}}  
            \end{ytableau} \quad \quad \quad \quad \quad 
            \begin{ytableau}
                1   &2' & 3' &{\color{blue}{3}} & 4'              & 5' &{\color{red}{5}}&{\color{blue}{9'}}\\
                \none&{\color{blue}{4}}& {\color{blue}{5'}}&{\color{blue}{6'}}& {\color{red}{6}} &{\color{blue}{7'}}& {\color{blue}{8'}} &   {\color{red}{11'}}    \\
                \none&\none            & {\color{red}{7}}                &{\color{red}{8'}}&   {\color{red}{9'}}            &  {\color{red}{10'}}  
            \end{ytableau}
    \]

   The tableau on the right is such that an entry from a previous segment, $8'$, occurs south of an entry from a later segment, $5$. This is in agreement with the procedure to gradually build a constructed tableau, as $8'$ is primed and $5$ is unprimed.

   If the situation were the opposite, that is, if a primed entry from a later sequence were north of an unprimed entry from a previous sequence; then the tableau would not be constructed. 

   As we were able to find $3$ tableaux of shape $\lambda$ constructed from $\mu<\nu$, we learn that $b_{\lambda,\mu}^\nu = 3.$
\end{example}

\begin{example}
    Let $\lambda$, $\mu$, and $\nu$ be as for Example~\ref{ex:Example}. One can validly produce the partial tableau
    \[\begin{ytableau}
        1   &2' & 3' & 4'              & 5' &{\color{blue}{7'}}& {\color{blue}{8'}}  & {\color{blue}{9'}} \\
        \none&{\color{blue}{3}}& {\color{blue}{4'}} & {\color{blue}{5'}}& {\color{blue}{6'}} & & &     \\
        \none&\none            &             & &              &  
    \end{ytableau}\]
    as part of the process to build a constructed tableau. However, regardless of how the entries in the last shrinking sequence are added to it, the resulting tableau will not be constructed. 
\end{example}

For another example of the application of Theorem~\ref{thm:main}, see \cite[\S 5]{EstupinanSalamanca.Pechenik:FPSAC}.

\section{A new proof of the shifted Pieri rule}\label{sec:Pieri}
The classical Pieri rule is the following. 

\begin{theorem}[Pieri rule]\label{thm:pieri}
    The Littlewood--Richardson coefficient $c_{(p),\mu}^\nu$ in
    \[
    s_{(p)} \cdot s_\mu = \sum_{\nu} c_{(p),\mu}^\nu s_\nu
    \]
    is 
    \[
    c_{(p),\mu}^\nu = \begin{cases}
        1, & \text{if } \nu / \mu \text{ is a horizontal strip of length } p; \\
        0, & \text{otherwise.}
    \end{cases}
    \]
\end{theorem}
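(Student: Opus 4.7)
My plan is to deduce Theorem~\ref{thm:pieri} by specializing the classical analogue of Theorem~\ref{thm:placticLRrule} to the shape $\lambda = (p)$. The classical analogue, whose proof is a direct translation of the argument in Section~\ref{sec:LR} (replacing $\sPlactic$, mixed insertion, and plactic $P$-Schur functions with their classical counterparts $\Plactic$, RSK insertion, and plactic Schur functions), identifies $c_{\lambda,\mu}^\nu$ with the number of tableaux $T \in \SSYT(\lambda)$ satisfying $[T] \cdot [Y_\mu] = [Y_\nu]$ in $\Plactic$. I would first set up this classical analogue and then analyze its specialization.

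For $\lambda = (p)$, the candidate tableaux $T$ are one-row tableaux $a_1 a_2 \cdots a_p$ with $a_1 \leq a_2 \leq \cdots \leq a_p$, and the reading word of $T$ is $T$ itself. The plactic identity $[T] \cdot [Y_\mu] = [Y_\nu]$ becomes $P\bigl(\reading(Y_\mu) \cdot a_1 a_2 \cdots a_p\bigr) = Y_\nu$. I would next analyze this RSK insertion directly: after obtaining $Y_\mu$ from $\reading(Y_\mu)$, each subsequent row-insertion of $a_j$ must preserve Yamanouchi form. The rigid structure of $Y_\mu$ (row $i$ is constant-valued equal to $i$) constrains $a_j$ to append a new box to row $a_j$ of the current tableau, and the weakly increasing order $a_1 \leq a_2 \leq \cdots \leq a_p$ ensures the successive new boxes lie in strictly increasing columns, so that $\nu/\mu$ is a horizontal strip of size $p$. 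Conversely, a horizontal strip $\nu/\mu$ of size $p$ determines a unique weakly increasing $T$ by recording the row indices of its boxes in column order; the previous analysis then shows that this $T$ satisfies $[T] \cdot [Y_\mu] = [Y_\nu]$. Hence $c_{(p),\mu}^\nu = 1$ exactly when $\nu/\mu$ is a horizontal strip of size $p$, and $0$ otherwise.

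The main obstacle will be the careful verification of the central insertion claim: row-inserting a weakly increasing sequence of length $p$ into $Y_\mu$ yields another Yamanouchi tableau $Y_\nu$ if and only if the added boxes form a horizontal strip. The base case $p = 1$ is an immediate calculation, tracing the unique bumping path of a single insertion into the constant-valued rows of $Y_\mu$. The inductive step reduces to checking that inserting $a_{j+1} \geq a_j$ after the first $j$ insertions either appends a box strictly to the right of $a_j$'s box (if $a_{j+1} = a_j$) or appends to a strictly lower row of the tableau (if $a_{j+1} > a_j$), so that the constant-rows property is maintained throughout and no two added boxes share a column.
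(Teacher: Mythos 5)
First, a remark on scope: the paper does not prove Theorem~\ref{thm:pieri} at all --- it is quoted as classical background, and the Pieri-type argument the paper actually carries out (Section~\ref{sec:Pieri}) is for the \emph{shifted} rule, Theorem~\ref{thm:shifted_Pieri}, deduced from Theorem~\ref{thm:main}. Your overall plan --- specialize a one-sided plactic Littlewood--Richardson rule to $\lambda=(p)$ and analyze the single strip directly --- is the exact classical analogue of that shifted argument, so the strategy is reasonable. The correct one-sided formulation is $c_{\lambda,\mu}^\nu = \#\{T\in\SSYT(\lambda) : [T]\cdot[Y_\mu]=[Y_\nu]\}$, where $[T]\cdot[Y_\mu]$ is the plactic class of $\reading(T)\cdot\reading(Y_\mu)$: it is the \emph{right} factor that is forced to equal $Y_\mu$ (Yamanouchi words are closed under taking suffixes), while the left factor $T$ is the object being counted.

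The execution, however, contains a genuine error. You translate $[T]\cdot[Y_\mu]=[Y_\nu]$ into $P\bigl(\reading(Y_\mu)\cdot a_1a_2\cdots a_p\bigr)=Y_\nu$ and then row-insert the letters $a_j$ into $Y_\mu$. This computes $[Y_\mu]\cdot[T]$ rather than $[T]\cdot[Y_\mu]$, and --- independently of conventions --- the central bumping claim is false: row $1$ of $Y_\mu$ consists entirely of $1$'s, so Schensted row insertion of any letter $a\geq 1$ finds no entry strictly greater than $a$ and simply appends $a$ to the end of row $1$. Thus ``$a_j$ appends a new box to row $a_j$'' fails already at the first insertion, the tableau stops having constant rows, and your procedure yields $Y_\nu$ only when every $a_j=1$ and $\nu/\mu$ lies entirely in row $1$. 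Concretely, for $\mu=(2,1)$ and $p=1$ it finds only $\nu=(3,1)$ and misses $(2,2)$ and $(2,1,1)$. To repair the argument you must either analyze $P\bigl(a_1\cdots a_p\cdot\reading(Y_\mu)\bigr)$, i.e.\ row-insert the word $\ell^{\mu_\ell}\cdots 2^{\mu_2}1^{\mu_1}$ into the one-row tableau $T$ (a different and somewhat longer bumping analysis), or else column-insert the $a_j$ from right to left into $Y_\mu$, where the ``new box lands at the end of row $a_j$'' behaviour you want does hold and the horizontal-strip conclusion follows.
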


Theorem~\ref{thm:pieri}
shows that $s_\nu$ appears in the expansion of $s_\lambda \cdot s_\mu$ if and only if $\nu/\mu$ is a horizontal strip. In the shifted setting, a similar description applies, with the notion of semi-horizontal strip playing a comparable---if not completely analogous---role. 

\begin{definition}
    A \emph{rim} is a skew shifted shape $\nu/ \mu$ for some strict partitions $\mu<\nu$ containing no $2 \times 2$ square of boxes.
\end{definition}

The shifted Pieri rule is originally due to H.~Hiller and B.~Boe \cite{Hiller.Boe}. We state an equivalent version in terms of the combinatorial framework of \cite{Stembridge}. For completeness, we include a sketch of how to extract this Pieri rule from Stembridge's general formula.

\begin{theorem}[Shifted Pieri rule \cite{Hiller.Boe}]\label{thm:shifted_Pieri}
    Consider strict partitions $\mu<\nu$. Further, suppose that $|\nu| = |\mu| + p$ and let $c$ denote the number of columns in $\nu/\mu$ with more than one cell. Then,
    \[   
b_{(p),\mu}^{\nu} = 
     \begin{cases}
       2^{\ell(\nu)-c-1}, &\quad\text{if $\nu/\mu$ is a rim of size $p$;}\\ 
       0, &\quad\text{otherwise.}
     \end{cases}
\]
\end{theorem}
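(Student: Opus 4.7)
The strategy is to apply Theorem~\ref{thm:main} and directly count the tableaux of shape $\lambda=(p)$ constructed from $\mu<\nu$. Any such tableau is a single row of $p$ cells in $\cD$, weakly increasing with the leftmost (diagonal) entry unprimed. Since a single-row shape admits at most one cell in any vertical strip, the Serrano-Pieri strip for each nonempty interval $(\mu_j,\nu_j]$ must be either a horizontal strip consisting entirely of primed cells, or one unprimed cell holding $\mu_j+1$ followed by primed cells for the remaining values $\mu_j+2,\dots,\nu_j$. Combined with the ordering conditions of Definition~\ref{def:constructibleDef}, this severely restricts the possible rows: essentially, each interval contributes at most a single binary choice, namely whether to prime the smallest value or leave it unprimed.

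I would first prove $b_{(p),\mu}^\nu=0$ whenever $\nu/\mu$ fails to be a rim. If $\nu/\mu$ contains a $2\times 2$ block of cells at rows $i, i+1$ and columns $c, c+1$, then the intervals $(\mu_i,\nu_i]$ and $(\mu_{i+1},\nu_{i+1}]$ must overlap in at least two consecutive integer values. For any priming compatible with constructibility, I would show that cells of one of the two overlapping strips combine with cells of the other (along with a suitable portion of the existing strip) to form a strictly larger Serrano-Pieri strip whose values form a strictly larger interval, violating the non-extendability of Definition~\ref{def:constructedDef}.

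When $\nu/\mu$ is a rim, I would decompose $\nu/\mu$ into its connected components and count constructed tableaux component by component. In a rim, consecutive intervals overlap in at most one value, and within a single connected component I would show that the non-extendability condition forces the primings of all entries, while between successive connected components there is an independent binary choice governing the treatment of the topmost value of the next component's interval. This yields $2^{r-1}$ constructed tableaux, where $r$ is the number of connected components of $\nu/\mu$. I would conclude by identifying $r$ with $\ell(\nu)-c$: each column of $\nu/\mu$ containing more than one cell corresponds to a vertical adjacency in the rim between cells in consecutive rows, and every such adjacency merges what would otherwise be two separate connected components into one.

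The main obstacle will be the careful analysis of the non-extendability condition in the rim case, specifically proving that within a single connected component every priming is forced once a single binary boundary choice is made. This requires tracking how the Serrano-Pieri strips of adjacent intervals interlock in a single-row shape, and ruling out spurious extensions that combine cells from three or more strips. A secondary difficulty is verifying the promised equality $r=\ell(\nu)-c$ in all corner cases, for instance when rows of $\nu/\mu$ are empty.
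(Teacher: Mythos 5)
Your overall strategy is the same as the paper's: apply Theorem~\ref{thm:main}, observe that in a single-row shape each Serrano--Pieri strip has at most one unprimed cell (necessarily the least value of its interval), rule out non-rims by exhibiting an extension, and count the surviving binary choices. Your connected-component bookkeeping in the rim case is equivalent to the paper's row-by-row count: a row opens a new component exactly when none of its cells has a cell of the next row beneath it, the bottom row's choice is forced, and each vertical adjacency (one per column counted by $c$) kills one choice, giving $2^{\ell(\nu)-c-1}$ either way. Also, under the paper's convention $\mu<\nu$ already means $\mu_i<\nu_i$ for every $i\le\ell(\nu)$, so the empty-row corner case you flag does not arise.

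There is, however, a concrete error in your non-rim step. A $2\times 2$ block of $\nu/\mu$ in rows $j,j+1$ does \emph{not} force $(\mu_j,\nu_j]$ and $(\mu_{j+1},\nu_{j+1}]$ to overlap in two values. Since $\mu$ and $\nu$ are strict, the columns common to rows $j$ and $j+1$ of $\nu/\mu$ form the range $[j+\mu_j,\,j+\nu_{j+1}]$, so a $2\times2$ block is equivalent to $\nu_{j+1}\ge\mu_j+1$, i.e.\ to the intervals sharing at least the single value $\mu_j+1$; an overlap of two values corresponds to a $2\times3$ block. As written, your argument would therefore only exclude shapes containing a $2\times3$ block and would miss genuine non-rims where the overlap is exactly one value. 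The repair is what the paper does: with $m=\mu_j+1$ lying in both intervals, the entry $(m+1)_j$ exists (by strictness of $\nu$) and is forced to be primed, since only the least value $\mu_j+1$ of sequence $j$ can occupy the one-cell vertical strip; then $(m+1)'_j$ together with the larger primed entries of sequence $j$ extends the $(j+1)$-st Serrano--Pieri strip to the interval $(\mu_{j+1},\nu_j]$, violating Definition~\ref{def:constructedDef}. With that correction your proof goes through.
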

\begin{proof}[Proof of Theorem~\ref{thm:shifted_Pieri} via \cite{Stembridge}]
    Suppose that $\nu/\mu$ is a rim. Then, Stembridge's rule implies that $b_{\lambda,\mu}^{\nu}$ is the number of skew tableaux $\nu/\mu$, with the first entry in reading order unprimed, and having entries $1$ or $1'$. Subject to that restriction on the content, no two adjacent rows contain a box of cells, i.e., $2$ adjacent cells immediately below $2$ other adjacent cells. Hence, only rims can be lattice tableaux.

    Given one such rim $\nu/\mu$, the requirement imposing that the starting entry of the southmost row be unprimed, determines the entries of the first row: all are forced to be $1.$ For the remaining rows, if the leftmost cell has a cell immediately below it, we must have a $1'$ in it; otherwise it is free to contain a $1$ or a $1'$. And again, after the first cell of each row has been determined, the remaining entries are easily seen to be instances of $1$ by order considerations.

    Note now that the number of cells without a cell below it, is equal to $\ell(\nu)-c-1$. As the number of lattice tableaux is a function of one of two choices made for each of those cells, we find that it is equal to $2^{\ell(\nu)-c-1}.$ 
\end{proof}

Using our shifted Littlewood--Richardson rule (Theorem~\ref{thm:main}), we can give a new proof. 

\begin{proof}[Proof of Theorem~\ref{thm:shifted_Pieri} via Theorem~\ref{thm:main}]
By Theorem~\ref{thm:main}, the Pieri coefficient $b_{(p),\mu}^{\nu}$ is the number of tableaux of shape $(p)$ constructed from $\mu < \nu$.

    First we see that a requirement for the multiplicity to be nonzero is for the shape $\nu/\mu$ to be a rim.
    
    Let $j$ be arbitrary with the property that $\nu/\mu$ contains a $2 \times 2$ square in rows $j$ and $j+1$. This means that $(\mu_j, \nu_j]\cap (\mu_{j+1}, \nu_{j+1}] \neq \emptyset$. Let $m =  \min (\mu_j, \nu_j]\cap (\mu_{j+1}, \nu_{j+1}]$ be the smallest integer in the intersection. 

    Consider now the placement of the entries of $(\mu_j,\nu_j]$ inside a constructed tableau of shape $\lambda=(p).$ By Definition~\ref{def:constructedDef}, those interval entries are only inserted after the letters of $(\mu_{j+1},\nu_{j+1}]$ have all been placed. Accordingly, $m_{j+1}$ is already in the partial tableau when they are being placed, and as $m_j$ must be inserted primed or unprimed in cells of $\lambda=(p)$, the next letter, $(m+1)_j$, must be primed when it is placed. There is indeed such a next letter because $\nu$ is a strict partition. But then $(m+1)'_j$ can be employed to extend $m_{j+1}.$ 

    Let us then assume that $\nu/\mu$ is a rim and show that the multiplicity matches $2^{\ell(\nu)-c-1}$. First, recall that this is the total number of cells without a cell below it. It therefore suffices to prove that there are two ways to place $(\mu_j,\nu_j]$ in a constructed tableaux where the previous shrinking sequences have already been inserted, if and only if the cells of the $j$-th row have no cells below them. 

    If they have no cells below them, then the least element of $(\mu_j,\nu_j]$ is strictly greater than the greatest value of $(\mu_{j+1},\nu_{j+1}]$. No matter the way in which the letters of the first interval are marked when inserted, they will never extend the second interval. That is, the resulting tableaux will be constructed in all circumstances. Since there are two ways to mark the smallest entry of $(\mu_j,\nu_j]$, and the rest of the letters must be marked when inserted, this results in two constructed tableaux.

    If the cells of the $j$-th row have cells below them, we know that exactly one such cell has a cell below it, for $\nu/\mu$ is assumed to be a rim. But then, the smallest entry of $(\mu_j,\nu_j]$ is exactly one greater than the greatest entry of $(\mu_{j+1},\nu_{j+1}]$. Marking the smallest entry would then result in a configuration where one of the sequences can be extended. Hence, the placement of the first entry of the $j$-th row is determined and as before, the remaining letters in the same row must be primed. Consequently, only one way of inserting the letters is possible for this case. 
\end{proof}

\section{Some remarks on computational complexity}\label{sec:complexity}
In this section, we discuss the computational complexity of the shifted Littlewood--Richardson rule of Theorem~\ref{thm:main} and compare it with that of Stembridge \cite{Stembridge}.

To carry out this analysis, we need an explicit algorithm for implementing the rule of Theorem~\ref{thm:main}. We now describe such an algorithm. This algorithm could be optimized further, but is sufficient for the current purposes. An unoptimized Python implementation is available at \cite{shifted-code}.

\subsection{An algorithm}
The inputs of the algorithm are the three strict partitions $\lambda, \mu$, and $\nu$. Let $D_\lambda$ denote the Young diagram of $\lambda$.

We start by constructing a $2$-dimensional array \textbf{\textit{tableau}} with an item $0$ for every coordinate $(i,j)$ inside $D_\lambda$. We will then place the entries of the strips $\cup_{i} (\mu_i, \nu_i]$ into an empty array, eventually producing an array $T_\lambda$, representing a constructed tableau of shape $\lambda$. We will do this in all possibly ways, thereby producing the set of all constructed tableaux contributing to the structure constant according to Theorem~\ref{thm:main}.

Recall that the number of shrinking sequences in a tableau constructed from $\mu<\nu$ is given by the number of parts of $\nu$. (The coordinates employed in the algorithm are in the ``English'' matrix notation convention, so that row $1$ is on the top.)

The array \textbf{\textit{pos}} is initialized and modified so that it first contains all free positions in the first row, then those of the second, and so on. 
It is assumed that when the method is first called, it is called with tableau as a $2$-dimensional array \textbf{\textit{tableau}} with an item $0$ for every coordinate $(i,j)$ inside the partition $\lambda$.

\begin{algorithm}[htbp]
\caption{\textsf{LR-RULE} (\textbf{Inputs:} Partitions $\lambda, \mu, \nu$)}\label{alg:lr-rule} 
\begin{algorithmic}
    \STATE {\bf initialize} \textbf{\textit{count}} $= 0$ and an empty array $T_\lambda$.
    \STATE {\bf initialize} a 2-dimensional array \textbf{\textit{tableau}} with an item $0$ for every coordinate $(i,j)$ inside $D_\lambda$.
    \STATE {\bf initialize} an array \textbf{\textit{pos}} with all the coordinates of $0$-cells of \textbf{\textit{tableau}}, listed inside element arrays corresponding to their rows:
    \STATE \hspace{1em} $\bullet$ The coordinates of $0$-cells in the first row form a single list inside \textbf{\textit{pos}}.
    \STATE \hspace{1em} $\bullet$ The coordinates in each row are listed from left to right.
    \STATE {\bf set} $L \coloneqq \ell(\nu)$ and $m \coloneqq \mu_L +1$. 
    \STATE {\bf initialize} a boolean array \textbf{\textit{primed}} with $L$ elements of value \FALSE.
    \STATE {\bf call} \textsf{CONSTRUCTOR} with inputs $\lambda, \mu, \nu$, $T_\lambda$, \textbf{\textit{pos}}, \textbf{\textit{primed}}, \textbf{\textit{count}}, $L$, and $m$.
    \RETURN \textbf{\textit{count}}
\end{algorithmic}
\end{algorithm}
\medskip

To run Algorithm~\ref{alg:lr-rule}, we will need the internal method \textsf{CONSTRUCTOR}, described in Algorithm~\ref{alg:constructor}. 
 
\begin{algorithm}[htbp]
\caption{\textsf{CONSTRUCTOR} (\textbf{Inputs:} Partitions $\lambda, \mu, \nu$, arrays $T_\lambda$, \textbf{\textit{pos}},  \textbf{\textit{primed}}, and integers \textbf{\textit{count}}, $L$, $m$)}\label{alg:constructor} 
{\scriptsize
\begin{algorithmic}
    \FOR{$i = m$ to $\nu_1$}
        \FOR{$j = L$ down to $1$}
            \IF{$i \in (\mu_j, \nu_j]$}
                \FOR{$r = 0$ to $\text{len}(\textbf{\textit{pos}}) - 1$}
                    \STATE {\bf delete} \textbf{\textit{pos}}$[r][0] = (i,j)$ from \textbf{\textit{pos}}$[r]$
                    \STATE Verify that $j \geq i+1$ and $j \leq \lambda[r+1]$. \algcomment{This guarantees that the addition of a new position results in a shape that is properly contained in $\lambda.$}
                    \STATE Verify that inserting $i'_u$ in $(i,j)$ extends the horizontal strip formed by the previous entries of the $u$th shrinking sequence.
                    \STATE Verify that inserting $i'_u$ does not extend prior shrinking sequences. \algcomment{This involves four direct checks as per Lemma~\ref{lem:constructedSmallChar}.}
                    \IF{all verifications succeed}
                        \IF{\textbf{\textit{pos}}$[r+1]$ exists}
                            \STATE {\bf append} $(i+1,j)$ to \textbf{\textit{pos}}$[r+1]$
                        \ELSE
                            \STATE {\bf append} $[(i+1,j)]$ to \textbf{\textit{pos}}
                        \ENDIF
                        \IF{$(i,j) = (1,j)$ and $(i,j)$ was the last entry of \textbf{\textit{pos}}$[r]$}
                            \STATE {\bf append} $(i,j+1)$ to \textbf{\textit{pos}}$[r]$
                        \ENDIF
                        \STATE {\bf set} $T_\lambda[\textbf{\textit{pos}}[r][0]] = i'_u$
                        \STATE {\bf initialize} \textbf{\textit{primed}}$'$ as a copy of \textbf{\textit{primed}} with $\textbf{\textit{primed}}'[u] = \text{True}$.
                        \IF{$L= 1$}
                            \STATE {\bf set} $L = \ell(\nu)$ and $m = i+1$
                        \ELSE 
                            \STATE {\bf set} $L = u-1$ and $m = i$
                        \ENDIF
                        \STATE {\bf call} \textsf{CONSTRUCTOR} with inputs $\lambda, \mu, \nu$, the modified $T_\lambda$, \textbf{\textit{pos}}, \textbf{\textit{primed}}, \textbf{\textit{count}}, $L$, and $m$
                    \ENDIF
                    \IF{\textbf{\textit{primed}}$[j]$ is \FALSE}
                        \STATE {\bf delete} \textbf{\textit{pos}}$[r][0] = (i,j)$ from \textbf{\textit{pos}}$[r]$
                        \STATE Verify that $j \geq i+1$ and $j \leq \lambda[r+1]$. \algcomment{This guarantees that the addition of a new position does not result in a shape that is not properly contained in $\lambda.$}
                        \STATE Verify that inserting $i_u$ in $(i,j)$ extends the vertical strip formed by the previous entries of the $u$th shrinking sequence.
                        \STATE Verify that inserting $i_u$ does not extend prior shrinking sequences. \algcomment{This involves four direct checks as per Lemma~\ref{lem:constructedSmallChar}.}
                        \IF{all verifications succeed}
                            \IF{\textbf{\textit{pos}}$[r+1]$ exists and $(i+1,j)$ is not its final entry}
                                \STATE {\bf append} $(i+1,j)$ to \textbf{\textit{pos}}$[r+1]$
                            \ELSE
                                \STATE {\bf append} $[(i+1,j)]$ to \textbf{\textit{pos}}
                            \ENDIF
                            \IF{$(i,j) = (1,j)$ and $(i,j)$ was last in \textbf{\textit{pos}}$[r]$}
                                \STATE {\bf append} $(i,j+1)$ to \textbf{\textit{pos}}$[r]$
                            \ENDIF
                            \STATE {\bf set} $T_\lambda[\textbf{\textit{pos}}[r][0]] = i_u$
                            \IF{$L= 1$}
                                \STATE {\bf set} $L = \ell(\nu)$ and $m = i+1$
                            \ELSE 
                                \STATE {\bf set} $L = u-1$ and $m = i$
                            \ENDIF
                            \STATE {\bf call} \textsf{CONSTRUCTOR} with inputs $\lambda, \mu, \nu$, the modified $T_\lambda$, \textbf{\textit{pos}}, \textbf{\textit{primed}}, \textbf{\textit{count}}, $L$, and $m$.
                        \ENDIF
                    \ENDIF
                \ENDFOR
            \ENDIF
        \ENDFOR
    \ENDFOR
    \IF{\textbf{\textit{pos}} is empty}
        \STATE {\bf set} \textbf{\textit{count}} $= \textbf{\textit{count}} +1$
    \ENDIF
\end{algorithmic}
}
\end{algorithm}

\subsection{Complexity analysis}

We claim that all the steps of Algorithm~\ref{alg:lr-rule} (\textsf{LR-RULE}) are $\mathcal{O}(|\lambda|^2)$, except for the number of times that Algorithm~\ref{alg:constructor} (\textsf{CONSTRUCTOR}) is called, which may be exponentially-many times. This latter number may be obtained as a function of $|\SYT(\lambda)|$, the number of standard Young tableaux of shifted shape $\lambda$. 

We will argue that there is an injection from the set of partial tableaux $T_\lambda$ produced by \textsf{LR-RULE} that are not further enlarged (either because they have shape $\lambda$ or because no consistent extension is possible), into $\SYT(\lambda)$. Hence, the number of calls of \textsf{CONSTRUCTOR} is polynomial in the number of such tableaux.

To be precise, \textsf{CONSTRUCTOR} will be called every time an entry is added to a tableau whose process is ultimately successful; but also throughout the construction of a tableau whose last insertion does not extend the corresponding vertical or horizontal strip, or extending one nonetheless fails the test set by Lemma~\ref{lem:constructedSmallChar}. Note that when a certain entry cannot be inserted, the construction of its tableau stops {\it ipso facto}. This means that the final tableau would not have shape $\lambda$, making it difficult to represent it as a member of $\SYT(\lambda)$. For the sake of the injection, we therefore find convenient to allow \textsf{CONSTRUCTOR} to proceed as though the insertion of the final entry was valid. In like manner, supposing it were to fail at a later point due to a different entry, we allow it to insert the entry and stop only when the tableau is complete.

The family of tableaux coming from a successful application of the algorithm or from the extension just described, can then be standardized so that entries indexed by greater shrinking sequences are smaller than those indexed by smaller shrinking sequences; e.g., the value of $i_j$ after standardization is smaller than that of $i_{j-1}$. The resulting tableau respects the column and row order because in \textsf{CONSTRUCTOR}, entries with greater subindices are inserted first to every row and every column. Note that the resulting standard tableaux may contain primed entries off the diagonal. 

The time complexity of the algorithm is bounded above by the time complexity of its modified version, permitted to neglect one or more of the verifications to insert an entry. Indeed, every part of the process can now be understood as either a step to initialize \textbf{\textit{pos}}, \textbf{\textit{primed}}, and the remaining global variables; or a step in the construction of one and only one of the resulting tableaux (not all of them constructed). Hence, it suffices to consider the time complexity of constructing one such configuration and multiplying it by $|\SYT(\lambda)|.$ 

First, the  initializing \textbf{\textit{pos}}, an array of size $\lambda_1$, requires at most $\mathcal{O}(\sqrt{|\lambda|})$ steps. The number $L$ of shrinking sequences is bounded by $|\lambda|$ so that initializing \textbf{\textit{primed}} and \textbf{\textit{pos}} requires at most $\mathcal{O}(|\lambda|)$ steps. 

Now, for every element in $\cup_i (\mu_i,\nu_i]$, whose size is exactly $|\lambda|$, we can calculate the time complexity corresponding to its placement inside $T_\lambda$. Placing the elements of value $m$ and those of generic value is carried out separately throughout the procedure. However, as the generic case involves more (and more costly) steps than the case of $m$, it is sufficient to calculate the complexity associated to adding an entry in the generic case, and multiplying it by $|\lambda|.$

To add a generic entry, we must first delete an initial element of \textbf{\textit{pos}}; this step is naturally linear in time. Verifying that $j\geq i+1$ and $j\leq \lambda[r+1]$ is likewise linear. Whether the position $(i,j)$ would extend the vertical strip formed by the entries of the $u$th shrinking sequence can be ascertained from the position of the last entry belonging to that shrinking sequence. Finding that entry requires at most $|\lambda|$ steps, and deciding if $(i,j)$ is southwest of it is linear. Hence this step requires at most $\mathcal{O}(|\lambda|)$ steps. An analogous analysis for the insertion of a primed entry, and determining if it extends the horizontal strip formed by previous entries, shows that this process also requires $\mathcal{O}(|\lambda|)$ steps.  

Checking whether a copy of $i_j$ in the cell $(i,j)$ would be extending a shrinking sequence of greater index is equivalent to the corroboration of conditions (1)--(4) of Lemma~\ref{lem:constructedSmallChar}. They all require the position of $(i-1)_{j+1}$, which can be found after at most $|\lambda|$ steps, and the remaining verifications are linear. Thus, seeing if the insertion of $i_j$ would result in a tableau that is not constructed is $\mathcal{O}(|\lambda|)$. 

The corresponding modifications to \textbf{\textit{pos}}, $T_\lambda$, and \textbf{\textit{primed}} are linear in time. Creating a copy of primed, the array \textbf{\textit{primed}}$'$, is at most $\mathcal{O}(|\lambda|).$ 

Consequently, the cost of inserting one entry is $\mathcal{O}(|\lambda|)$ and for every such configuration there are $|\lambda|$ entries. We obtain the following result.
\begin{theorem}
    \pushQED{\qed}
    The time complexity of Algorithm~\ref{alg:lr-rule} (\emph{\textsf{LR-RULE}}) is bounded above by \begin{equation*} |\SYT(\lambda)|\cdot \mathcal{O}( |\lambda|^2).  \qedhere \end{equation*} \popQED
\end{theorem}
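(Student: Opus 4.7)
The plan is to decompose the running time into two factors: the number of recursive invocations of \textsf{CONSTRUCTOR}, and the intrinsic work performed in a single invocation exclusive of its subcalls. The second factor is straightforward; the first requires identifying each recursion branch with a distinct standard shifted tableau.

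First I would bound the cost of one invocation. Each call performs a constant number of array manipulations (pop, append, copy of \textbf{\textit{primed}}) on arrays of length at most $|\lambda|$, together with the shape-containment check, the strip-extension check for the $u$th shrinking sequence, and the four tests from Lemma~\ref{lem:constructedSmallChar}. The strip check and the Lemma~\ref{lem:constructedSmallChar} tests require locating at most one previously-placed entry carrying a prescribed shrinking-sequence label, which costs $O(|\lambda|)$ by a linear scan of $T_\lambda$; all other bookkeeping is $O(|\lambda|)$ as well. Hence each invocation does $O(|\lambda|)$ work, and since every root-to-leaf branch of the recursion tree places the $|\lambda|$ entries of $\cup_i (\mu_i, \nu_i]$ in sequence, the total work along one such branch is $O(|\lambda|^2)$.

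The heart of the argument is bounding the number of leaves of the recursion tree by $|\SYT(\lambda)|$. My strategy is to consider a slightly larger family of recursion trees: relax \textsf{CONSTRUCTOR} so that whenever an entry cannot be legally placed we allow it to be placed anyway and continue until the shape $\lambda$ is filled. This can only increase the count, so an upper bound here suffices. Given any such completed filling, I would standardize it by replacing the labels $i_j$ by the totally ordered symbols induced by reading entries in increasing integer value and, among entries of the same integer value, in decreasing order of the shrinking-sequence index $j$ (with primes resolved by placing each $i'_j$ immediately below $i_j$ in the order). This produces a shifted standard tableau of shape $\lambda$, because \textsf{CONSTRUCTOR} visits higher-indexed shrinking sequences first within each integer value, so that the standardization respects both the row-increasing and column-increasing conditions by the Serrano--Pieri strip structure. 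The map from recursion leaves to elements of $\SYT(\lambda)$ is injective since the shrinking-sequence data, together with the placement, determines the full insertion history.

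The main obstacle will be this last injectivity and well-definedness claim: one must verify carefully that the standardization really lands in $\SYT(\lambda)$ and that different recursion leaves yield different standardizations. Concretely, this reduces to checking that for each value $i$ the algorithm always places $i_j$ with larger $j$ weakly north and weakly west of $i_{j-1}$ throughout the construction, so that the reverse-in-$j$ standardization preserves both row- and column-orderings; the arguments in Section~\ref{sec:insertions}, especially Lemmas~\ref{lem:sameSeqConstructed}, \ref{lem:noRepetConstructedTableaux}, and \ref{lem:eastSameSeqCompletionCompletionConstructed}, should supply the needed monotonicity. Combining the leaf count $|\SYT(\lambda)|$ with the per-branch cost $O(|\lambda|^2)$ yields the stated bound.
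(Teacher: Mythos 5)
Your proposal is correct and follows essentially the same route as the paper: the same $O(|\lambda|)$ per-insertion cost analysis giving $O(|\lambda|^2)$ per branch, the same relaxation of \textsf{CONSTRUCTOR} to complete failed branches to full shape-$\lambda$ fillings, and the same standardization (larger shrinking-sequence index $\mapsto$ smaller standardized value) injecting the branches into $\SYT(\lambda)$, justified by the insertion order within rows and columns. The only cosmetic difference is that the paper notes the standardized tableaux may carry primed entries off the diagonal, a detail that does not affect the counting bound.
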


\subsection{Comparison with Stembridge's rule in a particular case}

Now consider the case where $\mu$ and $\nu$ are such that $\nu \coloneqq \mu+\vec{1}=(\mu_1+1,\mu_2+1,\ldots,\mu_{\ell(\mu)}+1)$ and $\lambda$ has staircase shape. We analyze the rule from \cite{Stembridge} without recalling the details, for which see \cite{Stembridge}.

The time complexity of Stembridge's rule is bounded below by $\mathcal{O}(|\SYT(\nu/\mu)|)\cdot |\lambda|$, since at least it is necessary to produce all skew tableaux of shape $\nu/\mu$ before checking whether they satisfy the lattice condition, and constructing one such tableau requires at least linear time in $|\nu / \mu| = |\lambda|$. By the choice of $\mu$ and $\nu$, the cardinality of $\SYT(\nu/\mu)$ is found to be
\[ \binom{|\lambda|}{\lambda_1, \lambda_2, \ldots, \lambda_{\ell(\lambda)}} = \frac{|\lambda|!}{\lambda_1!\cdot \dots \cdot \lambda_{\ell(\lambda)}!}\] 
modulo priming of the entries. (This follows for example from the general formula of \cite{Naruse.Okada}, but in this case can be computed easily from first principles.) On the other hand, the complexity of our rule from Theorem~\ref{thm:main} is a function of $|\SYT(\lambda)|$, which by the shifted hook formula \cite{Thrall} (see also, \cite{Sagan:random}) is
\[\frac{|\lambda|!}{\prod_{i\in D_\lambda}h(i)}\]
again, modulo priming of the entries; where $h_i$ is the classical hook of a cell inside the symmetric counterpart of the shifted diagram $D_\lambda$. Note now that for positions $i$ strictly above the diagonal of $D_\lambda$,
\[h(i) \geq 2\cdot a(i),     \]
where $a(i)$ denotes the number of cells weakly east of, i.e.\ including, $i$. Similarly, for cells on the diagonal one has the lower bound:
\[h(i)\geq 2a(i)-1.\]
In fact, the stronger statement of equality is true in this case. Accordingly, we have
\begin{align*}
    \prod_{i\in D_\lambda}h(i)  &=  \left( \prod_{(j,j) \: : \: j\in [1,\ell(\lambda)]}h(j,j) \right)\left( \prod_{(c,d)\in D_\lambda \: : \: c\neq d } h(c,d) \right) \\
                                                                &\geq   \left( \prod_{(j,j) \: : \: j\in [1,\ell(\lambda)]}a(j,j) \right)\left( \prod_{(c,d)\in D_\lambda \: : \: c\neq d } 2a(c,d) \right) \\
                                                                &= 2^{|\lambda|-\ell(\lambda)} \prod_{i\in D_\lambda}a(i),
\end{align*}
as $|\lambda|-\ell(\lambda)$ corresponds to the number of cells in $D_\lambda$ strictly above the diagonal. Observing that the values of $a(i)$ over every row correspond to the factorial of the elements in the row,
\[\frac{|\lambda|!}{\prod_{i\in D_\lambda}h(i)} \leq \frac{|\lambda|!}{2^{|\lambda|-\ell(\lambda)}  \lambda_1!\lambda_2!\ldots \lambda_{\ell(\lambda)}!} \leq  \frac{1}{2^{|\lambda|-\ell(\lambda)}} \binom{|\lambda|}{\lambda_1, \lambda_2, \ldots, \lambda_{\ell(\lambda)}}.  \]
Accordingly, $|\SYT(\lambda)| \leq \frac{1}{2^{|\lambda|-\ell(\lambda)}} |\SYT(\nu/\mu)| $, using that $\lambda$ has staircase shape.

Taking now into account the primed entries, a tableau $T\in \SYT(\lambda)$ may contain such entries everywhere apart from the diagonal. A tableau $S\in \SYT(\nu/\mu)$ on the other hand, is free to have any entries primed except possibly its southernmost entry. We obtain the sharper estimate
\[|\SYT(\lambda)| \leq \frac{1}{2^{|\lambda|-\ell(\lambda)}} |\SYT(\nu/\mu)|\cdot \frac{1}{2^{\ell(\lambda)-1}} = \frac{1}{2^{|\lambda|-1}} |\SYT(\nu/\mu)|. \]

Hence, the time complexity of our algorithm is bounded above by
\[|\SYT(\lambda)|\cdot \mathcal{O}(|\lambda|^2) \leq \frac{1}{2^{|\lambda|-1}} |\SYT(\nu/\mu)| \cdot \mathcal{O}(|\lambda|^2),\]
so that for $|\lambda| \gg 0$, the time complexity of our rule, Theorem~\ref{thm:main}, is less than than of Stembridge in these cases. 
    
\section*{Acknowledgements}
The authors are grateful for useful conversations with Olya Mandelshtam,  Alejandro Morales, Colleen Robichaux, Bruce Sagan, Kartik Singh, Jer\'onimo Valencia-Porras, and Dave Wagner. We also thank Andrew Naguib for reading and commenting on a draft of Section~\ref{sec:complexity}.

Both authors were partially supported by a Discovery Grant (RGPIN-2021-02391) and Launch Supplement (DGECR-2021-00010) from
the Natural Sciences and Engineering Research Council of Canada. SES also acknowledges partial support from a William Tutte Postgraduate Scholarship from the University of Waterloo.

\bibliographystyle{amsalpha} 
\bibliography{shifted.bib}

\newcommand{\etalchar}[1]{$^{#1}$}
\providecommand{\bysame}{\leavevmode\hbox to3em{\hrulefill}\thinspace}
\providecommand{\MR}{\relax\ifhmode\unskip\space\fi MR }
\providecommand{\MRhref}[2]{%
  \href{http://www.ams.org/mathscinet-getitem?mr=#1}{#2}
}
\providecommand{\href}[2]{#2}
\begin{thebibliography}{GJK{\etalchar{+}}14}

\bibitem[Ass18]{Assaf}
Sami Assaf, \emph{Shifted dual equivalence and {S}chur {$P$}-positivity}, J.
  Comb. \textbf{9} (2018), no.~2, 279--308.

\bibitem[BR12]{Buch.Ravikumar}
Anders~Skovsted Buch and Vijay Ravikumar, \emph{Pieri rules for the
  {$K$}-theory of cominuscule {G}rassmannians}, J. Reine Angew. Math.
  \textbf{668} (2012), 109--132. \MR{2948873}

\bibitem[BS16]{Buch.Samuel}
Anders~Skovsted Buch and Matthew~J. Samuel, \emph{{$K$}-theory of minuscule
  varieties}, J. Reine Angew. Math. \textbf{719} (2016), 133--171.

\bibitem[Cho13]{Cho}
Soojin Cho, \emph{A new {L}ittlewood-{R}ichardson rule for {S}chur
  {$P$}-functions}, Trans. Amer. Math. Soc. \textbf{365} (2013), no.~2,
  939--972.

\bibitem[CK18]{Choi.Kwon}
Seung-Il Choi and Jae-Hoon Kwon, \emph{Crystals and {S}chur {$P$}-positive
  expansions}, Electron. J. Combin. \textbf{25} (2018), no.~3, Paper No. 3.7,
  27 pages.

\bibitem[CTY14]{Clifford.Thomas.Yong}
Edward Clifford, Hugh Thomas, and Alexander Yong, \emph{{$K$}-theoretic
  {S}chubert calculus for {${\rm OG}(n,2n+1)$} and jeu de taquin for shifted
  increasing tableaux}, J. Reine Angew. Math. \textbf{690} (2014), 51--63.
  \MR{3200334}

\bibitem[EP25a]{EstupinanSalamanca.Pechenik:FPSAC}
Santiago Estupi{\~{n}}\'{a}n{-}Salamanca and Oliver Pechenik, \emph{A new
  shifted {L}ittlewood--{R}ichardson rule and related developments}, Sém.
  Lothar. Combin. \textbf{89B} (2025), to appear, 12 pages.

\bibitem[EP25b]{EstupinanSalamanca.Pechenik}
Santiago Estupi{\~n}\'an{-}Salamanca and Oliver Pechenik, \emph{A universal
  characterization of the shifted plactic monoid}, Semigroup Forum \textbf{110}
  (2025), no.~3, 566--587. \MR{4920045}

\bibitem[Est25]{shifted-code}
Santiago Estupi{\~{n}}\'{a}n{-}Salamanca, \emph{Shifted littlewood--richardson
  rule code}, 2025, GitHub,
  \url{https://github.com/SantiagoSES/Shifted_LR_Rule/blob/main/Shifted_LR_Rule.ipynb}.

\bibitem[Fom95]{Fomin}
Sergey Fomin, \emph{Schur operators and {K}nuth correspondences}, J. Combin.
  Theory Ser. A \textbf{72} (1995), no.~2, 277--292. \MR{1357774}

\bibitem[Ful97]{Fulton:YT}
William Fulton, \emph{Young tableaux. {W}ith applications to representation
  theory and geometry}, London Mathematical Society Student Texts, vol.~35,
  Cambridge University Press, Cambridge, 1997.

\bibitem[GJK{\etalchar{+}}14]{Grancharov.Jung.Kang.Kashiwara.Kim}
Dimitar Grantcharov, Ji~Hye Jung, Seok-Jin Kang, Masaki Kashiwara, and Myungho
  Kim, \emph{Crystal bases for the quantum queer superalgebra and semistandard
  decomposition tableaux}, Trans. Amer. Math. Soc. \textbf{366} (2014), no.~1,
  457--489.

\bibitem[GLP20]{Gillespie.Levinson.Purbhoo}
Maria Gillespie, Jake Levinson, and Kevin Purbhoo, \emph{A crystal-like
  structure on shifted tableaux}, Algebr. Comb. \textbf{3} (2020), no.~3,
  693--725.

\bibitem[Hai89]{Haiman}
Mark~D. Haiman, \emph{On mixed insertion, symmetry, and shifted {Y}oung
  tableaux}, J. Combin. Theory Ser. A \textbf{50} (1989), no.~2, 196--225.

\bibitem[HB86]{Hiller.Boe}
Howard Hiller and Brian Boe, \emph{Pieri formula for {${\rm SO}_{2n+1}/{\rm
  U}_n$} and {${\rm Sp}_n/{\rm U}_n$}}, Adv. in Math. \textbf{62} (1986),
  no.~1, 49--67. \MR{859253}

\bibitem[HCL23]{Huang.Chu.Li}
Fang Huang, Yanjun Chu, and Chuanzhong Li, \emph{Littlewood-{R}ichardson rule
  for generalized {S}chur {Q}-functions}, Algebr. Represent. Theory \textbf{26}
  (2023), no.~6, 3143--3165. \MR{4681346}

\bibitem[Knu70]{Knuth}
Donald~E. Knuth, \emph{Permutations, matrices, and generalized {Y}oung
  tableaux}, Pacific J. Math. \textbf{34} (1970), 709--727. \MR{272654}

\bibitem[Lot02]{Lothaire}
M.~Lothaire, \emph{Algebraic combinatorics on words}, Encyclopedia of
  Mathematics and its Applications, vol.~90, Cambridge University Press,
  Cambridge, 2002, A collective work by Jean Berstel, Dominique Perrin, Patrice
  Seebold, Julien Cassaigne, Aldo De Luca, Steffano Varricchio, Alain Lascoux,
  Bernard Leclerc, Jean-Yves Thibon, Veronique Bruyere, Christiane Frougny,
  Filippo Mignosi, Antonio Restivo, Christophe Reutenauer, Dominique Foata,
  Guo-Niu Han, Jacques Desarmenien, Volker Diekert, Tero Harju, Juhani
  Karhumaki and Wojciech Plandowski, With a preface by Berstel and Perrin.

\bibitem[LS81]{Lascoux.Schutzenberger:plaxique}
Alain Lascoux and Marcel-Paul Sch\"{u}tzenberger, \emph{Le mono\"{\i}de
  plaxique}, Noncommutative structures in algebra and geometric combinatorics
  ({N}aples, 1978), Quad. ``Ricerca Sci.'', vol. 109, CNR, Rome, 1981,
  pp.~129--156.

\bibitem[Ngu22]{Nguyen}
Duc~Khanh Nguyen, \emph{On the shifted {L}ittlewood-{R}ichardson coefficients
  and the {L}ittlewood-{R}ichardson coefficients}, Ann. Comb. \textbf{26}
  (2022), no.~1, 221--260. \MR{4407061}

\bibitem[NO19]{Naruse.Okada}
Hiroshi Naruse and Soichi Okada, \emph{Skew hook formula for {$d$}-complete
  posets via equivariant {$K$}-theory}, Algebr. Comb. \textbf{2} (2019), no.~4,
  541--571. \MR{3997510}

\bibitem[Pra91]{Pragacz}
Piotr Pragacz, \emph{Algebro-geometric applications of {S}chur {$S$}- and
  {$Q$}-polynomials}, Topics in invariant theory ({P}aris, 1989/1990), Lecture
  Notes in Math., vol. 1478, Springer, Berlin, 1991, pp.~130--191.

\bibitem[PY17]{Pechenik.Yong:genomic}
Oliver Pechenik and Alexander Yong, \emph{Genomic tableaux}, J. Algebraic
  Combin. \textbf{45} (2017), no.~3, 649--685. \MR{3627499}

\bibitem[Rob38]{Robinson}
G.~de~B. Robinson, \emph{On the {R}epresentations of the {S}ymmetric {G}roup},
  Amer. J. Math. \textbf{60} (1938), no.~3, 745--760. \MR{1507943}

\bibitem[RYY22]{Robichaux.Yadav.Yong}
Colleen Robichaux, Harshit Yadav, and Alexander Yong, \emph{Equivariant
  cohomology, {S}chubert calculus, and edge labeled tableaux}, Facets of
  algebraic geometry. {V}ol. {II}, London Math. Soc. Lecture Note Ser., vol.
  473, Cambridge Univ. Press, Cambridge, 2022, pp.~284--335. \MR{4381918}

\bibitem[Sag80]{Sagan:random}
Bruce Sagan, \emph{On selecting a random shifted {Y}oung tableau}, J.
  Algorithms \textbf{1} (1980), no.~3, 213--234. \MR{604864}

\bibitem[Sag87]{Sagan}
Bruce~E. Sagan, \emph{Shifted tableaux, {S}chur {$Q$}-functions, and a
  conjecture of {R}. {S}tanley}, J. Combin. Theory Ser. A \textbf{45} (1987),
  no.~1, 62--103. \MR{883894}

\bibitem[Sch61]{Schensted}
C.~Schensted, \emph{Longest increasing and decreasing subsequences}, Canadian
  J. Math. \textbf{13} (1961), 179--191. \MR{121305}

\bibitem[Ser10a]{Serrano}
Luis Serrano, \emph{The shifted plactic monoid}, Math. Z. \textbf{266} (2010),
  no.~2, 363--392.

\bibitem[Ser10b]{SerranoThesis}
Luis~Guillermo Serrano{~}Herrera, \emph{Noncommutative {S}chur {P}-functions
  and the shifted plactic monoid}, ProQuest LLC, Ann Arbor, MI, 2010, Thesis
  (Ph.D.)--University of Michigan.

\bibitem[Shi99]{Shimozono}
Mark Shimozono, \emph{Multiplying {S}chur {$Q$}-functions}, J. Combin. Theory
  Ser. A \textbf{87} (1999), no.~1, 198--232. \MR{1698253}

\bibitem[Shi17]{Shigechi}
Keiichi Shigechi, \emph{Shifted tableaux and products of {S}chur's symmetric
  functions}, preprint (2017), 57 pages, \arxiv{1705.06437}.

\bibitem[Ste89]{Stembridge}
John~R. Stembridge, \emph{Shifted tableaux and the projective representations
  of symmetric groups}, Adv. Math. \textbf{74} (1989), no.~1, 87--134.

\bibitem[Thr52]{Thrall}
R.~M. Thrall, \emph{A combinatorial problem}, Michigan Math. J. \textbf{1}
  (1952), 81--88. \MR{49844}

\bibitem[TY18]{Thomas.Yong:HT}
Hugh Thomas and Alexander Yong, \emph{Equivariant {S}chubert calculus and jeu
  de taquin}, Ann. Inst. Fourier (Grenoble) \textbf{68} (2018), no.~1,
  275--318. \MR{3795480}

\bibitem[Wor84]{Worley}
Dale~Raymond Worley, \emph{A theory of shifted {Y}oung tableaux}, ProQuest LLC,
  Ann Arbor, MI, 1984, Thesis (Ph.D.)--Massachusetts Institute of Technology.

\end{thebibliography}
\end{document}